\numberwithin{equation}{section}
\newcommand{\bP}{\mathbb{P}}
\newcommand{\A}{\mathbb{A}}
\newcommand{\Y}{\mathbb{Y}}
\newcommand{\C}{\mathbb{C}}
\newcommand{\E}{\mathbb{E}}
\newcommand{\N}{\mathbb{N}}
\newcommand{\D}{\mathbb{D}}
\newcommand{\LL}{\mathbb{L}}
\newcommand{\R}{\mathbb{R}}
\newcommand{\HH}{\mathbb{H}}
\newcommand{\eps}{\epsilon}
\newcommand{\cM}{\mathcal{M}}
\newcommand{\cF}{\mathcal{F}}
\newcommand{\cS}{\mathcal{S}}
\newcommand{\cP}{\mathcal{P}}
\newcommand{\cB}{\mathcal{B}}
\newcommand{\cU}{\mathcal{U}}
\newcommand{\op}{\bar{p}}
\newtheorem{theorem}{Theorem}[section]
\newtheorem{lemma}[theorem]{Lemma}
\newtheorem{proposition}[theorem]{Proposition}
\newtheorem{corollary}[theorem]{Corollary}
\newtheorem{remark}[theorem]{Remark}
\newtheorem{definition}[theorem]{Definition}
\begin{document}

\setstretch{1.15}

\title[Compact support for heat equations with stable noise]{The compact support property for solutions to stochastic heat equations with stable noise}

\author{Thomas Hughes}
\address{Department of Mathematical Sciences, University of Bath, Bath, UK.}
\email{th2275@bath.ac.uk}

\subjclass[2010]{60H15, 60G52, 60G17}
\keywords{Stochastic partial differential equations, stable noise, compact support, path properties.}
\maketitle

\begin{abstract} We consider non-negative weak solutions to the stochastic partial differential equation 
\begin{equation*}
\partial_t Y(t,x) = \Delta Y(t,x) + Y(t,x)^\gamma \dot{L}(t,x), 
\end{equation*}
for $(t,x) \in \R_+ \times \R^d$, where $\gamma > 0$ and $\dot{L}$ is a one-sided white stable noise of index $\alpha \in (1,2)$. We prove that solutions with compactly supported initial data have compact support for all times if $\gamma \in (2-\alpha, 1)$ for $d=1$, and if $\gamma \in [1/\alpha,1)$ in dimensions $d \in [2,2/(\alpha-1)) \cap \N$. This complements known results on solutions to the equation with Gaussian noise. 

We also establish a stochastic integral formula for the density of a solution and associated moment bounds which hold in all dimensions for which solutions are defined.
\end{abstract}

\maketitle

%%%%%%%%%%%%%%%%%%%%%%%%%%%%%%%%%%%%%%%%%%%%%%%%%%%%%%%%%%%%%%%%%%%
%%                                                               %%
%% No need for \maketitle.                                       %%
%%                                                               %%
%%%%%%%%%%%%%%%%%%%%%%%%%%%%%%%%%%%%%%%%%%%%%%%%%%%%%%%%%%%%%%%%%%%

%%%%%%%%%%%%%%%%%%%%%%%%%%%%%%%%%%%%%%%%%%%%%%%%%%%%%%%%%%%%%%%%%%%
%%                                                               %%
%% Please replace what follows by the body of your article       %%
%% (up to the bibliography):                                     %%
%%                                                               %%
%%%%%%%%%%%%%%%%%%%%%%%%%%%%%%%%%%%%%%%%%%%%%%%%%%%%%%%%%%%%%%%%%%%

\section{Introduction and main results} \label{s_intro}
In this paper, we consider the behaviour of non-negative solutions to the parabolic stochastic partial differential equation (SPDE)
\begin{equation} \label{e_spde1}
\partial_t Y(t,x) = \Delta Y(t,x) + Y(t,x)^\gamma \dot{L}(t,x)
\end{equation}
over the space-time domain $(t,x) \in \R_+ \times \R^d$, with $\R_+ = (0,\infty)$, where $\gamma > 0$ and, for an index $\alpha \in(1,2)$, $\dot L$ is a spectrally positive $\alpha$-stable noise which is white in space and time. Our main result is a partial answer to the following question: for which values of $\gamma$ do solutions to \eqref{e_spde1} have compact support? Our work is motivated by the known results for the same equation with white Gaussian noise and to some extent by the theory of superprocesses. This is the first work to address the question of compact support for an SPDE with stable noise.

We consider weak solutions to \eqref{e_spde1}, which are defined in Section~\ref{ss_prel}. Our main result, Theorem~\ref{thm_compact}, states that for a range of values of $\gamma$ with $\gamma<1$, weak solutions to \eqref{e_spde1} have the compact support property, which means that the solution almost surely remains supported inside some random compact set until time $T$ for all $T>0$. Our other main result, Theorem~\ref{thm_stochinteg}, establishes certain fundamental properties of weak solutions. In particular, we show that there is a density process associated to a weak solution defined by a stochastic integral formula and prove several properties of this version of the density. The stochastic integral formula implies that the weak solutions we consider are also mild solutions.

The introduction is divided into three main sections. Section~\ref{ss_prel} contains some background on SPDEs with stable and L\'evy noise, the definition of weak solutions to \eqref{e_spde1}, and some discussion of related work. Section~\ref{ss_density} contains the statement of Theorem~\ref{thm_stochinteg}, our result on the stochastic integral formula for the density of a weak solution. The main result on the compact support property and our discussion thereof are in Section~\ref{ss_csp}.

\subsection{Preliminaries} \label{ss_prel} 
There is a growing literature on SPDEs driven by multiplicative white stable noise, or more generally by multiplicative heavy-tailed white L\'evy noise. Existence of solutions (and in some cases uniqueness) has been considered in works such as \cite{B2013, BCL2023, C2017b, C2017, M2021, M2002, YZ2017}. Path properties have been studied in \cite{CDH2019, YZ2017}, and intermittency in \cite{BCL2023, CK2019}. With the exception of \cite{M2002} and \cite{YZ2017}, however, these works consider equations with Lipschitz noise coefficients, which excludes \eqref{e_spde1} when $\gamma < 1$. More general work on equations with L\'evy noise, e.g. the Hilbert space-valued solution theory of \cite{PZbook}, also tend to exclude equations such as \eqref{e_spde1}, except when $\gamma = 1$. The literature on solutions to \eqref{e_spde1} with $\gamma < 1$, especially concerning their path properties, is therefore quite sparse. We discuss the existing work in greater detail shortly.

The equation \eqref{e_spde1} is formal, because, as with white Gaussian noise, the white $\alpha$-stable noise is too rough for solutions to be differentiable. Weak solutions are defined as satisfying a stochastic integration by parts formula when integrated against smooth test functions. Supposing $Y(t,x)$ were in fact a smooth solution to \eqref{e_spde1} with initial condition $Y(0,\cdot)$, and integrating it against a test function $\phi \in \cS$, we obtain from integration by parts that, for $t>0$, 
\begin{align*}
&\int \phi(x) Y(t,x) dx -\int \phi(x) Y(0,x) dx 
\\ &\hspace{2 cm}= \int_{(0,t] \times \R^d}   Y(s,x) \Delta \phi(x) ds dx + \int_{(0,t] \times \R^d} \phi(x) Y(s,x)^\gamma L(ds,dx).
\end{align*}
Solutions to \eqref{e_spde1} will be defined as satisfying, in an appropriate sense, the integration by parts formula above. The integrator $L(ds,dx)$ is a spectrally positive $\alpha$-stable martingale measure, defined e.g. by Mytnik \cite[Definition~1.2]{M2002}, of which the $\alpha$-stable noise $\dot{L}$ is the distributional derivative. We define the $\alpha$-stable martingale measure in Section~\ref{s_stablemeasures}, in particular see Definition~\ref{def_stablemeasure}, and the stochastic integral with respect to $L$ is discussed in Section~\ref{s_stochcalc}. For the time being, we simply note that the relationship between the spectrally positive $\alpha$-stable martingale measure and the $\alpha$-stable process with no negative jumps is analogous to the relationship between the Brownian sheet and Brownian motion (see e.g. \cite{Walsh}).

We now introduce some notation in order to define weak solutions. For $\alpha \in (1,2)$ and $\gamma > 0$, we define
\begin{equation*}
p := \alpha \gamma.
\end{equation*}
Let $\cM_f(\R^d)$ denote the space of finite, non-negative measures on $\R^d$, equipped with the topology of weak convergence. We denote by $\mathbb{D}([0,\infty),\cM_f(\R^d))$ the space of c\`adl\`ag paths in $\cM_f(\R^d)$ equipped with the Skorokhod topology. For $\mu \in \cM_f(\R^d)$ and a bounded or non-negative function $\phi:\R^d \to \R$, we write $\langle \mu, \phi \rangle = \int \phi(x) \mu(dx)$. We denote by $\cS$ the Schwartz space of smooth functions on $\R^d$ with rapidly decaying derivatives of all orders.

Our set-up is a stochastic basis, or filtered probability space, which we denote by $(\Omega, \cF, (\cF_t)_{t \geq 0}, \bP)$, with filtration $(\cF_t)_{t \geq 0}$ satisfying the usual conditions of completeness and right continuity, and we write $\E$ to denote the expectation associated to $\bP$. For the definition of the class of predictable processes on $\Omega \times \R_+ \times \R^d$, see Section~\ref{s_stochcalc}.  

\begin{definition} \label{def_sol}
A pair $(Y,L)$ defined on some stochastic basis $(\Omega, \cF, (\cF_t)_{t \geq 0}, \bP)$ is a weak solution to \eqref{e_spde1} with initial state $Y_0 \in \cM_f(\R^d)$ if the following hold:
\begin{itemize}
\item $L$ is spectrally positive $\alpha$-stable $\cF_t$-martingale measure on $\R_+ \times \R^d$.
\item $\{Y(t,x) : t > 0, x \in \R^d\}$ is predictable, non-negative, and satisfies
\begin{equation} \label{assumption_integ}
\int_{(0,t] \times \R^d} Y(s,x)^p \,dsdx < \infty \,\, \text{ for all $t>0$ a.s.}
\end{equation}
\item The measure-valued process $(1_{\{t >0\}}Y(t,x)dx + 1_{\{t = 0\}} Y_0(dx))_{t \geq 0}$ has a c\`adl\`ag version, denoted $(Y_t)_{t \geq 0} \in \mathbb{D}([0,\infty),\cM_f(\R^d))$, such that for all $\phi \in \cS$, with probability one,
\begin{equation} \label{e_spde1weak}
\langle Y_t, \phi\rangle - \langle Y_0, \phi\rangle = \int_{(0,t]} \langle Y_s, \Delta \phi\rangle ds + \int_{(0,t] \times \R^d} Y(s,x)^\gamma \phi(x) L(ds,dx), \quad t\geq 0.
\end{equation}
\end{itemize} 
\end{definition}

A few remarks are in order.

\begin{remark} \label{remark_densmeas} The statement that $(Y_t)_{t \geq 0}$ is a version of $(1_{\{t >0\}}Y(t,x)dx + 1_{\{t = 0\}} Y_0(dx))_{t \geq 0}$ means that $(Y_t)_{t \geq 0}$ has initial state $Y_0$ and
\begin{equation} \label{e_denssol}
\bP(Y_t(dx) = Y(t,x)dx) = 1 \,\, \text{ for all } t > 0.
\end{equation}
Indeed, an equivalent formulation of Definition~\ref{def_sol} is to define a weak solution as consisting of both a density process $\{Y(t,x) : t>0,x\in \R^d\}$, with the same assumptions as above, and a measure-valued process $(Y_t)_{t \geq 0} \in \mathbb{D}([0,\infty),\cM_f(\R^d))$ started from $Y_0$, such that \eqref{e_denssol} holds, and \eqref{e_spde1weak} is a.s. satisfied for every $\phi \in \cS$. While this would make for a less economical definition, this perspective will be useful in the sequel when we will need to compare Definition~\ref{def_sol} to a slightly different definition of weak solutions.
\end{remark}

\begin{remark}
The integrability assumption \eqref{assumption_integ} is natural and corresponds to imposing that the stochastic integral in \eqref{e_spde1weak} is well-defined when one takes $\phi \equiv 1$ (see Section~\ref{s_stochcalc}). It is unknown if there exist solutions not satisfying this property.\end{remark}

Finally, we emphasize that we do not assume that $t \to Y(t,\cdot)$ is an $\mathbb{L}^p(\R^d)$-valued process. See Remark~\ref{rmk_stochinteg} for a further discussion of this. 

For $\alpha \in (1,2)$ and $d \in \N$ with $d < \frac{2}{\alpha-1}$, $\gamma \in (0,1)$, and $Y_0 \in \cM_f(\R^d)$, there exist weak solutions to \eqref{e_spde1} in the sense of Definition~\ref{def_sol}. Existence of weak solutions to \eqref{e_spde1} was proved by Mytnik \cite{M2002}, whose result holds for $\alpha$ and $d$ with the same conditions and $\gamma \in (0,((2/d)+1)/\alpha)$. However, there is a small difference between Definition~\ref{def_sol} and the solutions constructed in \cite{M2002}. In order to contrast them, it is useful to view weak solutions as in Remark~\ref{remark_densmeas}, as consisting of a predictable density $\{Y(t,x) : t>0, x\in \R^d\}$ and a measure-valued process $(Y_t)_{t \geq 0} \in \mathbb{D}([0,\infty),\cM_f(\R^d))$, related by \eqref{e_denssol}, and satisfying \eqref{e_spde1weak} a.s. for all $\phi \in \cS$. The solutions in \cite{M2002} likewise consist of a density and measure-valued process, which we denote the same way, but instead of \eqref{e_denssol}, they satisfy
\begin{equation} \label{e_densprob2intro}
\bP \big(1_{(0,t]}(s)Y_s(dx)ds = 1_{(0,t]}(s)Y(s,x)dxds\big) = 1 \,\, \text{ for all } t > 0.
\end{equation}
(Of course, this implies that $Y_s(dx)ds = Y(s,x)dxds$ a.s. as measures on $\R_+ \times \R^d$.) The condition \eqref{e_densprob2intro} is strictly weaker than \eqref{e_denssol}. Indeed, under this assumption it is not difficult to construct examples of weak solutions for which there exist (non-random) times $t_0$ such that $\bP(Y_{t_0}(dx) = Y(t_0,x)dx) < ~1$. At such a time $t_0$, it is not obvious how to interpret \eqref{e_spde1weak}, because the equation involves a measure $Y_{t_0}$ which is not a.s. equal to $Y(t_0,x)dx$. Similarly, the fact that such times can exist under \eqref{e_densprob2intro} means that $(Y_t)_{t \geq 0}$ is not a priori a version of $(1_{\{t >0\}}Y(t,x)dx + 1_{\{t = 0\}} Y_0(dx))_{t \geq 0}$ under this assumption. 

For the reasons above, we prefer to work with weak solutions satisfying \eqref{e_denssol} instead of \eqref{e_densprob2intro}, and have made our definition accordingly. However, the construction in \cite{M2002} gives \eqref{e_densprob2intro}, so replacing it with \eqref{e_denssol}, and thereby obtaining weak solutions in the sense of Definition~\ref{def_sol}, must be justified. We provide such a justification in Section~\ref{s_stochintegralrep}. There we argue that it can be handled simultaneously with the proof of Theorem~\ref{thm_stochinteg}, which we prove for $\gamma \in (0,1)$. More precisely, we show that, if we can prove Theorem~\ref{thm_stochinteg} under slightly weaker assumptions, which essentially means relaxing \eqref{e_denssol} to \eqref{e_densprob2intro}, then it implies that a weak solution as in Definition~\ref{def_sol} can be constructed from a weak solution of the kind constructed in \cite{M2002}, thus proving the existence of the former. This allows us to simultaneously prove Theorem~\ref{thm_stochinteg} and, for $\gamma \in (0,1)$, bridge the small gap between the construction in \cite{M2002} and Definition~\ref{def_sol}. This argument is described in detail at the beginning of Section~\ref{s_stochintegralrep}.

For the rest of the introduction, and the rest of the paper with the exception of Section~\ref{s_stochintegralrep}, a weak solution to \eqref{e_spde1}, or simply a weak solution, when it is clear from context, indicates a weak solution as defined in Definition~\ref{def_sol}. We remind the reader that the sense in which ``weak" is meant is that, rather than a solution constructed with respect to a given $\alpha$-stable noise (a so-called strong solution), our solution is a pair $(Y,L)$. We remark that our solutions are also ``weak" in the PDE sense, meaning they satisfy a (stochastic) integration by parts formula; however, Theorem~\ref{thm_stochinteg} establishes that a weak solution is also the solution to the stochastic integral equation which corresponds to \eqref{e_spde1}, which is the definition of a so-called mild solution; see Remark~\ref{remark_mild}.

With these subtleties about the definition of solutions out of the way, we now summarize the literature concerning existence and uniqueness of solutions to \eqref{e_spde1}. We only consider the equation with $\alpha \in (1,2)$ but note that the $\alpha \in (0,1)$ case has been considered by Mueller \cite{M1998}. For $\alpha \in (1,2)$, as we have noted, weak solutions to \eqref{e_spde1} were constructed in a pioneering work of Mytnik \cite{M2002} for $p< 1  + \frac 2 d$ in spatial dimensions $d < 2/(\alpha-1)$ by constructing solutions to an associated martingale problem. The case $\gamma = 1/\alpha$ corresponds to super-Brownian motion with $\alpha$-stable branching \cite[Theorem~1.6]{M2002}. Hence, if $\gamma = 1/\alpha$ (i.e. $p=1$), uniqueness in law of solutions follows from uniqueness in law of the superprocess, which is itself a consequence of duality. This special case has recently been extended in a preprint of Maitra \cite{M2021}, who proved uniqueness in law of solutions to \eqref{e_spde1} when $\gamma \in (1/\alpha,1)$ in the one-dimensional case. Also in dimension one, and under some additional assumptions on solutions, Yang and Zhou \cite{YZ2017} established pathwise uniqueness under the condition $\gamma \in (2(\alpha-1)/(2-\alpha)^2, 1/\alpha + (\alpha-1)/2)$. In fact, their result applies to a class of SPDEs with more general coefficients. 

\subsection{The stochastic integral formula} \label{ss_density}
We now state a theorem concerning some basic properties of weak solutions. In particular, we construct a density process $\{\bar{Y}(t,x) : t>0 , x\in \R^d\}$ defined by a stochastic integral formula (see \eqref{e_thm_density}), sometimes called the Green's function representation. $\bar{Y}(t,x)$ and $Y(t,x)$ are essentially the same, in that they are equal a.e. on $(0,\infty)\times \R^d$ a.s. We also prove moment bounds and an approximation result for $\bar{Y}(t,x)$. Besides being useful in the proof of the compact support property, the properties established in this theorem are fundamental and will hopefully be useful in future work on \eqref{e_spde1}.

Let $(P_t)_{t \geq 0}$ denote the heat semigroup associated to the $d$-dimensional Laplacian, and let $p_t(\cdot)$ denote associated Gaussian heat kernel, that is
\begin{equation*}
p_t(x) = (4 \pi t)^{-d/2} \exp(-|x|^2/4t).
\end{equation*}
We denote by $C^\infty_c(\R^d)$ the space of compactly supported smooth functions on $\R^d$, we write $\psi * \mu$ to denote the convolution of a function $\psi$ and measure $\mu$, and we use the shorthand $\mu(1) = \langle \mu, 1 \rangle$ for the total mass of a measure. For $q \geq 1$, $\LL^q(\bP)$ denotes the space of $q$-integrable random variables with respect to $\bP$.

\begin{theorem} \label{thm_stochinteg} Let $\alpha \in (1,2)$, $\gamma \in (0,1)$, and $d \in [1,\frac{2}{\alpha-1})\cap \N$, and let $(Y,L)$ be a weak solution to \eqref{e_spde1} on $\R_+ \times \R^d$ with initial condition $Y_0 \in \cM_f(\R^d)$.% satisfying Assumptions~\ref{assumption1}.

{\bf (a) (Density formula)} For every $(t,x) \in \R_+ \times \R^d$, we may define
\begin{equation} \label{e_thm_density}
\bar{Y}(t,x) := P_tY_0(x) + \int_{(0,t] \times \R^d} p_{t-s}(x-y) Y(s,y)^\gamma \, L(ds,dy).
\end{equation}
For each $t>0$, $\bP(Y_t(dx) = \bar{Y}(t,x)dx) = 1$, and with probability one, $Y(t,x) = \bar{Y}(t,x)$ a.e. on $\R_+ \times \R^d$. Moreover, the process $\bar{Y} = \{\bar{Y}(t,x) : t >0 , x\in\R^d\}$ has a predictable version.

{\bf(b) (Moment bounds)} For all $(t,x) \in \R_+\times \R^d$ and $q \in (0,1]$,
\begin{equation} \label{e_thm_meanmeasure}
\E(\bar{Y}(t,x)^q) \leq P_tY_0(x)^q.
\end{equation}
Moreover, for each $q \in (1,\alpha)$, there exists a family of constants $C_T = C(T,\alpha,\gamma,d,q)$, increasing in $T>0$, such that the following holds: if $p > 1$, then for all $(t,x) \in (0,T] \times \R^d$,
\begin{equation} \label{e_thm_momentbd_pgo}
\E(\bar{Y}(t,x)^q) \leq C_T t^{-(\alpha - 1)\frac d 2 \frac q \alpha}[1+ t^{1-(p-1)\frac d 2} Y_0(1)^{p-1} P_t Y_0(x)]^{q/\alpha} + C_T P_tY_0(x)^q;
\end{equation}
and if $p \leq 1$, then for all $(t,x) \in (0,T] \times \R^d$,
\begin{equation} \label{e_thm_momentbd_plo}
\E(\bar{Y}(t,x)^q) \leq C_T t^{-(\alpha - 1)\frac d 2 \frac q \alpha}[1+ t P_t Y_0(x)]^{q/\alpha} + C_T P_tY_0(x)^q.
\end{equation}

{\bf(c) (Approximation)} Let $\psi \in C^\infty_c(\R^d)$ be non-negative and satisfy $\int \psi = 1$, and for $\eps>0$ define $\psi_\eps$ by $\psi_\eps(x) = \eps^{-d} \psi(x/\eps)$. Then for every $(t,x) \in \R_+ \times \R^d$,
\begin{equation*}
(\psi_\eps * Y_t)(x) \to \bar{Y}(t,x) \,\, \text{ in $\mathbb{L}^q(\bP)$ as $\eps \downarrow 0$}
\end{equation*}
for every $q \in [1,\alpha)$.
\end{theorem}

\begin{remark} \label{remark_mild} Since $\bar{Y}(t,x) = Y(t,x)$ a.e., their stochastic integrals with respect to $L$ are a.s. equal. In particular, this implies that \eqref{e_thm_density} holds when the integrand on the right hand side is replaced with $p_{t-s}(x-y)\bar{Y}(s,x)^\gamma$, and hence $\bar{Y}$ is the solution to the stochastic integral equation associated to \eqref{e_spde1}. In particular, $\bar{Y}$ is a {\it mild solution} to \eqref{e_spde1} with initial data $Y_0$, meaning for every $(t,x) \in \R_+ \times \R^d$, with probability one,
\begin{equation*}
\bar{Y}(t,x) = P_tY_0(x) + \int_{(0,t] \times \R^d}p_{t-s}(x-y) \bar{Y}(s,y)^\gamma L(ds,dy).
\end{equation*}
\end{remark}

\begin{remark} \label{rmk_stochinteg} In the case $d=1$, several other works have proved versions of the stochastic integral formula \eqref{e_thm_density} using proofs of various lengths and degrees of technicality, and with different assumptions on solutions. In \cite{YZ2017}, Yang and Zhou proved similar results to Theorem~\ref{thm_stochinteg}(a) and to the bounds \eqref{e_thm_momentbd_pgo} and \eqref{e_thm_momentbd_plo} for solutions to a class of equations containing those considered here, in one spatial dimension. This was done under an integrability condition stronger than \eqref{assumption_integ}.

In \cite{M2021}, again in $d=1$, \eqref{e_thm_density} is proved in several lines under the assumption that $s \to \|Y(s,\cdot)\|_p$ is a c\`adl\`ag map, where $\|\cdot\|_p$ is the usual $\mathbb{L}^p$-norm. However, this seems to require the assumption that $(Y_s)_{s \geq 0} \in \D([0,\infty),\mathbb{L}^p)$, which we do not assume.\end{remark} 

In view of the above remark, the considerable effort expended here in proving Theorem~\ref{thm_stochinteg} is justified for two reasons. The first and main one is that our proof holds in spatial dimensions greater than one. The second is that, a priori, the construction of solutions in \cite{M2002}, which is the only one we are aware of, does not imply the identification of $(Y_t)_{t \geq 0}$ with a $\mathbb{L}^p$-valued process, and our proof is the first to establish \eqref{e_thm_density} and related results without assuming either this or an integrability condition strictly stronger than \eqref{assumption_integ}.

Certain crucial arguments in the proof of Theorem~\ref{thm_stochinteg}, in particular those used to obtain moment estimates, are based on the arguments of Yang and Zhou in \cite{YZ2017}. Our weaker integrability assumption introduces some complications, but the more significant difference between our work and theirs is that our proof holds for spatial dimensions greater than one, and this requires several new arguments, primarily owing to the more singular behaviour of the heat kernel in higher dimensions. Nonetheless, our argument owes a significant debt to the work done in \cite{YZ2017}. We also take this opportunity to point out that \cite{YZ2017} also proves fixed-time H\"older regularity of solutions when $d=1$.

\subsection{The compact support property} \label{ss_csp}
Our main interest in this work is the support properties of weak solutions to \eqref{e_spde1}, in particular whether or not solutions are compactly supported. The problem of compactness versus non-compactness of the support of a non-negative solution to a stochastic heat equation with space-time white noise is well understood when the noise is Gaussian. Consider the stochastic equation
\begin{equation*} 
\partial_t X(t,x) = \Delta X(t,x) + X(t,x)^\gamma \dot \xi(t,x)
\end{equation*}
for $(t,x) \in \R_+ \times \R$, where $\dot \xi$ is a space-time white Gaussian noise. For a solution with compactly supported initial data, the following holds:
\begin{itemize}
\item If $\gamma \in (0,1)$, then for each $t>0$ there a.s. exists a compact set $K_t$ such that $X(s,x) = 0$ for all $(s,x) \in [0,t] \times K_t^c$.
\item If $\gamma \geq 1$, then $X(t,x) > 0$ for all $(t,x) \in \R_+ \times \R$ a.s.
\end{itemize}
The strict positivity result is due to Mueller~\cite{M1991}; in the case $\gamma = 1$, Moreno Flores \cite{MF2014} has given a shorter proof using the connection to the directed random polymer model. The compact support property for $\gamma \in (0,1)$ was proved in stages over a series of papers. For the special case $\gamma = 1/2$, $X(t,x)$ is the density of a binary-branching super-Brownian motion (see \cite{KS1988, R1989}), and the proof of compact support is due to Iscoe \cite{I1988}, who used superprocess duality. Shiga \cite{S1994} proved the result for $\gamma \in (0,1/2)$ with an argument essentially based on a comparison with super-Brownian motion. The first proof for all $\gamma \in (0,1)$ is due to Mueller and Perkins \cite{MP1992}, whose method was to construct solutions as the density of a super-Brownian motion with density-dependent branch rate and establish a historical modulus of continuity for this process. Krylov revisited the problem in \cite{Krylov1997} and gave a shorter proof purely using stochastic analysis for all $\gamma \in (0,1)$. The proof of our main result is based on Krylov's method. Recently, the compact support property has been proved for solutions to a family of parabolic SPDEs with coloured Gaussian noise, also using a proof based on Krylov's method, by Han, Kim and Yi \cite{HKY2023}.

As we have already noted, there is a superprocess connection in the stable noise regime as well. With $\alpha$-stable noise, when $\gamma = 1/\alpha$ (and hence $p=1$) we may interpret the solution as the density of a super-Brownian motion with an $\alpha$-stable branching mechanism \cite{M2002}. As in the Gaussian regime, the tools of branching processes, including duality, are at one's disposal in this case, and it is considerably simpler to prove the compact support property. Thus our compact support result, Theorem~\ref{thm_compact}, is known for $\gamma = 1/\alpha$. For example, see \cite[Theorem~9.2.2]{Dawson}. Otherwise it is new and is the first result concerning the supports of solutions to \eqref{e_spde1}.

Let $S(Y_s)$ denote the closed (topological) support of $Y_s$, and for $t>0$ define
\[ \mathbf{S}_t^* := \cup_{s \in [0,t]} S(Y_s). \]
Let $\mathbf{S}_t$ denote the closure of $\mathbf{S}_t^*$. We say that $Y$ has the \textit{compact support property} if 
\[ \bP(\mathbf{S}_t \text{ is compact for all } t >0) = 1.\]
Our theorem has separate statements for the cases $d=1$ and $d>1$ because the result holds for different parameter regimes in the two cases. Since $2-\alpha < 1/\alpha$ for $\alpha \in (1,2)$, the result is strictly stronger in dimension one.

\begin{theorem} \label{thm_compact} Fix $\alpha \in (1,2)$. 

{\bf (a) [Dimension one]} Let $d=1$ and suppose that $2-\alpha < \gamma <1$. Then for any weak solution $(Y,L)$ to \eqref{e_spde1} with compactly supported initial data $Y_0 \in \cM_f(\R)$, $Y$ has the compact support property.

{\bf (b) [Higher dimensions]} Let $d \in [2,\frac{2}{\alpha-1})\cap \N$ and suppose that $1/\alpha \leq \gamma < 1$. Then for any weak solution $(Y,L)$ to \eqref{e_spde1} with compactly supported initial data $Y_0 \in \cM_f(\R^d)$, $Y$ has the compact support property.
\end{theorem}

We remark that, in terms of the density, we can view the compact support property as follows: $\mathbf{S}_t \subset \R^d$ is an a.s. compact random set such that for all $(s,x) \in (0,t] \times \R^d$,
\[ \bar{Y}(s,x)1_{\{x \in \mathbf{S}_t^c\}} = 0 \, \text{ a.s.}\]
This is an immediate consequence of Theorem~\ref{thm_stochinteg}(c).

Our proof method requires that $Y_0$ has compact support. In analogy to what is known for super-Brownian motion, see e.g. \cite[Corollary~III.1.7]{P2002}, it is natural to expect that a solution with initial condition given by a finite measure with unbounded support will have compact support by time $t$ for any $t>0$, and we conjecture that this is the case. 

The restriction on the dimension in part (b) is to ensure that solutions exist, per the (sufficient) condition $d < \frac{2}{\alpha-1}$ from Mytnik's existence theorem \cite{M2002}. We note that, for superprocesses, i.e. when $p=1$, this condition is necessary as well (see \cite[Theorem~8.3.1]{Dawson}); it is likely that it is also necessary for the existence of solutions to \eqref{e_spde1}. Finally, we observe that $\gamma < 1$, along with $d<\frac{2}{\alpha-1}$, ensures the condition $p<1 + \frac 2 d$ is satisfied, and thus weak solutions exist for all the parameter regimes for which the theorem applies.

Let us comment as well on the restrictions on $\gamma$. Our proof is based on the one used by Krylov for the equation with Gaussian noise. Krylov's argument works for all $\gamma \in (0,1)$ in the Gaussian case; using the same strategy, we are able to prove the result for $\gamma$ satisfying the restrictions in Theorem~\ref{thm_compact}. In brief, the parameter restriction is due to several technical difficulties which arise due to the stable noise. In particular, solutions are unbounded and discontinuous and have infinite $q$th moments for $q \geq \alpha$. Another difficulty is posed by the fact that stochastic integrals with respect to the stable noise are discontinuous and are more difficult to control than their continuous counterparts. In the one-dimensional case, extending the result from $\gamma \in [1/\alpha,1)$ to $\gamma \in (2-\alpha,1)$ requires a technical argument based on the moment bounds obtained in Theorem~\ref{thm_stochinteg}.

Heuristic arguments suggest that the smaller $\gamma$ is, the ``easier" it should be for solutions of \eqref{e_spde1} to hit zero, and hence to have compact support. It is remarkable then that we can prove Theorem~\ref{thm_compact} for relatively large values of $\gamma$ but not small ones. (We note, however, that, heuristics aside, the equation is somewhat nicer to work with when $\gamma \geq 1/\alpha$, or equivalently $p \geq 1$.) We conjecture that the compact support property holds for $\gamma \in (0,1 / \alpha)$ for all $d \in [1, \frac{2}{\alpha-1}) \cap \N$. 

We now describe the proof method. In dimension one, it is conceptually the same as the method of Krylov, but in higher dimensions it must be modified. (In all dimensions, there are numerous technical challenges introduced by the stable noise.) First we consider the case $d=1$. The argument is based on an analysis of the local time, or occupation density $A_t(x) = \int_0^t \bar{Y}(s,x) ds$. The main principle is that if $\text{supp}(Y_0)$ is contained in a ball of radius $R>0$, and $A_t(x) = 0$ for some $x > R$ then no mass has reached $[x, \infty)$ by time $t$, because if any mass had ``passed through" the point $x$, then we would have $A_t(x)>0$. The main part of the proof is showing that with probability one, there exists some sufficiently large $x$ such that $A_t(x) = 0$. This argument proceeds first by obtaining a representation for $A_t(x)$ using integration by parts, and then derives delicate stochastic estimates which allow one to compare the distributions of $A_t(x_1)$ and $A_t(x_2)$ for nearby points $x_1 $ and $x_2$.

For $d>1$, the proof must be modified. (With Gaussian noise one is restricted to $d=1$ because solutions do not exist in higher dimensions, so Krylov's original argument was only in dimension one.) In dimension one, it suffices to show that $A_t(x) = 0$ for some $x$ because removing $x$ disconnects the space; there is no other way that mass can move from $(-\infty,x)$ to $(x,\infty)$ except through $x$. When $d>1$, we can no longer disconnect the space by removing a point, but must remove some surface of dimension $d-1$. Hence, instead of quantifying the mass which passes through a point, as with $A_t(x)$, we must quantify the mass which has passed through a surface. Our proof replaces the point with a hyperplane of dimension $d-1$ of the form $\{x\in \R^d : x_1 = R\}$. We define an occupation density on this hyperplane by considering the solution projected to the first coordinate axis. The principle is then the same: if the support of $Y_0$ is contained in the half-space to one side of this hyperplane and the occupation density on the hyperplane equals zero, then no mass has reached the other side of the hyperplane by time $t$. The analysis corresponding to the occupation density in this case is parallel to the arguments in the one-dimensional case, but several new arguments are required. We remark that the paper of Han, Kim and Yi \cite{HKY2023}, in which is proved the compact support property for a class of parabolic SPDEs with coloured-in-space, white-in-time Gaussian noise, similarly generalizes Krylov's method to higher dimensions.\\

\noindent {\bf Organization of the paper.} Section~\ref{s_stablecalc} contains basic results on stochastic integrals with respect to stable martingale measures. We state several important properties of the stable stochastic integral, including a representation as a time-changed stable process. All of the proofs of results in this section are given in an appendix.

We postpone the proof of Theorem~\ref{thm_stochinteg} to the end of the paper so that we may first prove the compact support property (Theorem~\ref{thm_compact}), whose proofs for $d=1$ and $d>1$ are given in Sections~\ref{s_dimone} and \ref{s_highdim}, respectively. The relatively long and technical proof of Theorem~\ref{thm_stochinteg} is then given in Section~\ref{s_stochintegralrep}. We also address a few issues concerning definitions of solutions at the beginning of this section. Note that, despite being later in the paper, the proof of Theorem~\ref{thm_stochinteg} is independent of everything in Sections~\ref{s_dimone} and \ref{s_highdim}, and so the use of Theorem~\ref{thm_stochinteg} in these sections is justified.

The appendix at the end of the paper includes some more background on stochastic integration and the proofs of several results in Section~\ref{s_stablecalc}.\\

\noindent {\bf Notation.} We write $C$ to denote any positive constant whose value is not important and may change line to line in a calculation. To emphasize that an inequality requires enlarging a constant $C$, we sometimes write the new constant as $C'$. To indicate that a constant's value depends on a parameter, e.g. $T>0$, but the value of the constant is unimportant, we may write $C(T)$. We occasionally number constants in an adhoc fashion ($C_1$, $C_2$, etc.) when it is useful to do so.

The symbol $\int$ without any upper or lower bounds of integration will always denote an integral over $\R^d$. We often adopt the shorthand
\begin{equation*}
\int_{(0,t] \times \R^d} \phi(s,x) dsdx = \int_0^t \int_{\R^d} \phi(s,x) dx ds,
\end{equation*}
and we use the same shorthand for stochastic integrals.

\section{Stable stochastic integration} \label{s_stablecalc}
\subsection{Stable processes} \label{s_stable}
Before defining stable martingale measures and their stochastic integrals we review a few important properties of stable processes. We refer to Chapter VIII of Bertoin \cite{Bertoin96} for a detailed discussion of these processes.

In this section and for the rest of the paper, a stable process is assumed to start at zero. We only consider stable processes in one dimension. Let $\alpha \in (1,2)$. A one-dimensional spectrally positive (or one-sided) $\alpha$-stable process $(W_t)_{t \geq 0 }$ is an $\R$-valued L\'{e}vy process with Laplace exponent
\begin{equation*}
\log \E (e^{-\lambda W_t}) = t \lambda^{\alpha}, \quad \lambda \in \R_+.
\end{equation*}
The L\'{e}vy measure of a one-sided $\alpha$-stable process is
\begin{equation} \label{def_jumpmeasure}
\nu(dr) = \sigma_\alpha r^{-1 - \alpha} 1_{\{r > 0\}} dr,
\end{equation}
where
\[\sigma_\alpha = \frac{\alpha(\alpha-1)}{\Gamma(2-\alpha)}.\]
$(W_t)_{t\geq 0}$ is a c\`adl\`ag, purely discontinuous martingale with no negative jumps. For $q>0$, the $q$th moment of $W_t$ is finite if and only if $q < \alpha$. Finally, stable processes are self-similar. In particular, the $\alpha$-stable process is self-similar with index $\alpha$, that is for $\lambda > 0$,
\begin{equation*}
(\lambda W_t)_{t \geq 0} \stackrel{d}{=}  (W_{\lambda^\alpha t})_{t \geq 0},
\end{equation*}
where $\stackrel{d}{=}$ indicates equality in distribution.

\subsection{Stable martingale measures} \label{s_stablemeasures}
We now define the $\alpha$-stable martingale measure $L$ and its distributional derivative, the $\alpha$-stable white noise $\dot L$. Our treatment follows that of Mytnik \cite{M2002}. We first recall the definition of a martingale measure, following Walsh \cite{Walsh}. We denote by $\cB(\R^d)$ the collection of Borel sets of $\R^d$, and write $\underline{\cB}(\R^d)$ for the collection of all Borel sets with finite Lebesgue measure.
\begin{definition} \label{def_martmeasure} Let $(\cF_t)_{t \geq 0}$ be a filtration. A process $M = \{M_t(A) : t \geq 0, A \in \underline{\cB}(\R^d)\}$ is a (local) martingale measure with respect to $(\cF_t)_{t \geq 0}$ if the following hold:

(i) $M_0(A) = 0$ a.s. for all $A \in \underline{\cB}(\R^d)$.

(ii) For $t>0$, $M_t(\cdot)$ is a $\sigma$-finite Borel measure.

(iii) $(M_t(A))_{t \geq 0}$ is a (local) $\cF_t$-martingale.
\end{definition}
Fix $\alpha \in (1,2)$. In the following we write $|A|$ to denote the Lebesgue measure of $A \subseteq \R^d$.
\begin{definition} \label{def_stablemeasure} A martingale measure $L$ is called a spectrally positive $\alpha$-stable martingale measure (or an $\alpha$-stable martingale measure with no negative jumps) if, for all $A \in \underline{\cB}(\R^d)$, the process $(W_t)_{t \geq 0}$ defined by $W_t := L_t(A)$ is a one-sided $\alpha$-stable process run at speed $|A|$. That is, for all $t>0$ and $\lambda>0$,
\begin{equation*}
\log \E( e^{-\lambda L_t(A)}) = t|A|\lambda^\alpha.
\end{equation*}
\end{definition}
The $\alpha$-stable white noise without negative jumps is then defined to be the distributional derivative $\dot L$ of an $\alpha$-stable martingale measure without negative jumps.

It is often useful to express the $\alpha$-stable martingale measure in terms of a compensated Poisson random measure. For a thorough treatment of this material, see Chapter II of Ikeda and Watanabe \cite{IK}, or Chapter II of Jacod and Shiryaev \cite{JS}. Let $N(ds,dx,dr)$ be a Poisson random measure on $\R_+ \times \R^d \times \R_+$ with compensator $\hat{N}(ds,dx,dr) = ds dx \nu(dr)$, where $\nu$ is the $\alpha$-stable jump measure from \eqref{def_jumpmeasure}. Let $\tilde{N}$ denote the compensated Poisson random measure. One can then realize the integral with respect to a stable martingale measure $L$ as
\begin{equation*}
L(ds,dx) = \int_0^\infty r \tilde{N}(ds,dx,dr).
\end{equation*}
That is, given such a compensated Poisson random measure realized on some probability space, $L$ defined as above has the law of a $\alpha$-stable martingale measure with no negative jumps. In general, given a stable martingale measure $L$, one can enlarge the probability space to express it in terms of a Poisson random measure $\tilde{N}$ as above, such that for a permissible integrand $\phi$,
\begin{equation} \label{e_Poisson_integral}
\int_{(0,t] \times \R^d} \phi(s,x) L(ds,dx) = \int_0^t \int_{\R^d} \int_0^\infty \phi(s,x) r \tilde{N}(ds,dx,dr), \quad t \geq 0.
\end{equation}
We will use this representation when proving certain properties of the stable stochastic integral in the appendix.

\subsection{Stochastic integration with stable noise} \label{s_stochcalc}
We now introduce the stochastic integral with respect to the $\alpha$-stable martingale measure and state some of its properties. Per the previous section, stochastic integration with respect to the $\alpha$-stable noise is a special case of stochastic integration with respect to compensated Poisson random measures, for which the basic theory can be found in \cite{IK, JS}. For a more general theory of stochastic integration with respect to random measures, we refer to \cite{CK2015} as well as the references mentioned in its introduction (especially \cite{Bichteler, BJ}). We briefly describe a construction of the integral in an appendix at the end of the paper. There we follow the construction of Balan \cite{B2013}, because this approach is tailored to the stable case and directly gives an optimal integrability criterion for integrands. 

The main purposes of this section are to introduce notation and to state some useful properties of the integral with respect to stable noise which will be used throughout the paper. Some of these properties are special to stable noise, owing to the self-similarity of the jump measure. 

The set-up is a filtered probability space $(\Omega, \cF, (\cF_t)_{t \geq 0}, \bP)$ on which $L$ is a spectrally positive $\alpha$-stable $\cF_t$-martingale measure. We assume here as elsewhere that $(\cF_t)_{t\geq 0}$ satisfies the usual conditions, that is, it is right continuous and $\cF_0$ contains all $\bP$-null sets. We write $\E$ for the expectation associated to $\bP$. 

We begin by defining some function spaces. Let $\cP_0$ denote the predictable $\sigma$-algebra on $\Omega \times \R_+$ associated to $(\cF_t)_{t\geq 0}$, and let $\cP = \cP_0 \times \mathcal{B}(\R^d)$, where the latter is the Borel $\sigma$-algebra on $\R^d$. We call $\phi: \Omega \times \R_+ \times \R^d$ predictable and write $\phi \in \cP$ if $\phi$ is $\cP$-measurable. We will generally omit the dependence on $\omega \in \Omega$ of $\phi$ and we occasionally refer to a process $\{\phi(t,x) : t>0, x\in \R^d\} \in \cP$ as a predictable random field.

For $q \geq 1$ and $t > 0$, let $\|\cdot\|_{q,t}$ denote the standard $\LL^q$-norm on $(0,t]\times \R^d$, and let $\LL^{q,t}$ denote the space of functions for which it is finite. We then define $\LL^{q,t}_{\text{a.s.}}$ by \begin{equation*}
\LL^{q,t}_{\text{a.s.}} = \{ \phi \in \cP : \|\phi\|_{q,t} < \infty \text{ a.s.}\}.
\end{equation*}
We have defined the spaces above for general $q\geq 1$, but it is the case $q = \alpha$ which is most important for our integration theory. For $t\geq 0$ and $\phi : (0,t] \times \R^d \to \R$, define $T_\phi(t) := \|\phi\|^\alpha_{\alpha,t}$. In other words,
\begin{equation*} 
T_\phi(t) := \int_{(0,t] \times \R^d}|\phi(s,x)|^\alpha dsdx,
\end{equation*}
with the convention that $T_\phi(0) = 0$. We remark that we may write $\LL^{\alpha,t}_{\text{a.s.}}$ as 
\begin{equation*}
\LL^{\alpha,t}_{\text{a.s.}} = \{\phi \in \cP : T_\phi(t) < \infty \text{ a.s.}\}.
\end{equation*}
Recalling that $\LL^\alpha(\bP)$ denotes the space of $\alpha$-integrable random variables, we define $\LL^\alpha(\LL^{\alpha,t}) := \{ \phi \in \cP : \|\phi\|_{\alpha,t} \in \LL^\alpha(\bP) \}$, which is equivalent to
\begin{equation*}
\LL^\alpha(\LL^{\alpha,t}) := \{ \phi \in \cP : \E(T_\phi(t)) < \infty \}.
\end{equation*}

We define the stochastic integral as a process indexed by $s \in (0,t]$ for fixed $t>0$, as we have no use for the stochastic integral defined on $(0,\infty)$ in this work. The extension to a process on $(0,\infty)$ is, however, standard. Our convention is that the stochastic integral at time $0$ is equal to $0$, and with this extension we define a process on $[0,t]$. For the class of integrands considered, this convention makes the integral right continuous at time $0$. (This can be proved as a consequence of Proposition~\ref{prop_isometryint}.) We adopt the notation
\begin{equation*}
(\phi \cdot L)_t := \int_{(0,t] \times \R^d} \phi(s,x) L(ds,dx) 
\end{equation*}
to denote the stochastic integral when it is well-defined. To denote the stochastic integral as a process on $[0,t]$ we will often simply write $(\phi \cdot L)$. Because it is an It\^{o} integral with respect to a martingale measure, $(\phi \cdot L)$ is a local martingale. Furthermore, we can, and always will, take c\`adl\`ag versions of our stochastic integrals.

For $t>0$, the stochastic integral $((\phi \cdot L)_s)_{s \in [0,t]}$ is defined for all $\phi \in \LL^{\alpha,t}_{\text{a.s.}}$. This is shown in \cite{B2013}. We refer to that work for details, and also to the appendix of the present work, which summarizes the construction. The rest of the current section contains the statements of several important results concerning the stochastic integral with respect to $L$. The proofs (of the results which are new) are postponed until the appendix.

$T_\phi(t)$ is the critical quantity in the analysis of $(\phi \cdot L)_t$. Indeed, as stated above, one can define the stochastic integral up to time $t$ for precisely the integrands for which $T_\phi(t) < \infty$. (In the non-spatial setting, the space analogous to $\LL^{\alpha,t}_{\text{a.s.}}$ is known to be the optimal space of integrands, c.f. \cite[Theorem~4.1]{RW1986}; we expect the same to be true for space-time integrals.) Moreover, in practice, controlling the stochastic integral $(\phi \cdot L)_t$ generally amounts to controlling $T_\phi(t)$, and the next three results allow one to analyze $(\phi \cdot L)_t$ via $T_\phi(t)$. The first two concern moments of $\sup_{s\in [0,t]} |(\phi\cdot L)_s|$, and the third states that an $\alpha$-stable stochastic integral is a time-changed $\alpha$-stable process.

\begin{proposition} \label{prop_isometryint} There exists a constant $C_\alpha > 1$ such that for all $t>0$ and all $\phi \in \LL^\alpha(\LL^{\alpha,t})$,
\begin{align} \label{e_prop_isometry}
C_\alpha^{-1} \E(T_\phi(t)) \leq  \sup_{\lambda > 0} \lambda^\alpha \bP\bigg( \bigg( \sup_{s \in [0,t]} |(\phi \cdot L)_s| \bigg) > \lambda\bigg) \leq C_\alpha \E(T_\phi(t)).
\end{align}
\end{proposition}
In the non-spatial setting with symmetric stable integrators, the upper bound is due to Gin\'{e} and Marcus \cite[Theorem~3.5]{GM1983} and the lower bound is due to Rosi\'{n}ski and Woyczy\'{n}ski \cite[Theorem~2.1]{RW1986}. The generalization of the upper bound to asymmetric stable noise and to the space-time setting is due to Balan \cite[Lemma~14]{B2013}. The lower bound for asymmetric stable processes does not appear in any of these papers, but the proof requires little modification. For completeness, we prove it in the appendix.

The following moment bound is an easy consequence of Proposition~\ref{prop_isometryint}. It is similar to what one obtains from applying the Burkholder-Davis-Gundy inequality, but it has the advantage that it allows one to bound the moments of stable stochastic integrals directly, without handling the small jumps and large jumps separately.

\begin{lemma} \label{lemma_bdg} Let $q \in [1,\alpha)$. There exists a constant $C_q > 1$ such that for all $t>0$ and all $\phi \in \LL^{\alpha,t}_{\text{a.s.}}$,
\begin{equation} \label{e_lemma_bdg}
\E \bigg( \bigg( \sup_{s \in [0,t]} |(\phi \cdot L)_s| \bigg)^q \bigg)\leq C_q \E(T_\phi(t))^{q/\alpha}.
\end{equation}
Moreover, we may take $C_q = (1 + \frac{q}{\alpha-q} C_\alpha)$, with $C_\alpha$ as in Proposition~\ref{prop_isometryint}.
\end{lemma}

\begin{remark} \label{remark_stop}
In both Proposition~\ref{prop_isometryint} and Lemma~\ref{lemma_bdg}, we may replace $t$ with any stopping time $\tau$ which is a.s. bounded above by $t$. Indeed, this follows by applying the results to the integrand $\hat{\phi}(s,x) = \phi(s,x) 1_{\{s \leq \tau\}}$, and the elementary facts that $(\hat{\phi}\cdot L)_t = (\phi \cdot L)_{t \wedge \tau}$ and $T_{\hat{\phi}}(t) = T_\phi(t \wedge \tau)$ almost surely.
\end{remark}

The next result establishes that stable stochastic integrals are time-changed stable processes. More precisely, due to self-similarity of $\alpha$-stable processes (and $\alpha$-stable martingale measures), $(\phi \cdot L)_s$ has the law of a time-changed $\alpha$-stable process, and the time-change, or inner clock, is $T_\phi(s)$. Because our stable processes are asymmetric, this representation is only true for non-negative integrands; for a general result, one can split into positive and negative parts (c.f. \cite{K1992}).

\begin{proposition} \label{prop_integral_rep} 
For any $t>0$ and non-negative $\phi \in \LL^{\alpha,t}_{\text{a.s.}}$, enlarging our probability space if necessary, there exists a one-sided $\alpha$-stable process $(W_s)_{s \geq 0}$ such that, $\bP$-a.s.,
\begin{equation*} 
(\phi \cdot L)_s  = W_{T_\phi(s)}, \,\, s \in [0,t]. 
\end{equation*}
\end{proposition}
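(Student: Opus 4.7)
The plan is to use the Poisson random measure (PRM) representation of $L$ from Section~\ref{s_stablemeasures}, compute the predictable compensator of the jump measure of $(\phi \cdot L)$, and then apply a time change combined with the Lévy characterization of stable processes. I would first reduce to $\phi \in \LL^\alpha(\LL^{\alpha,t})$, deferring the general case $\phi \in \LL^{\alpha,t}_{\text{a.s.}}$ to a localization argument via the stopping times $\tau_k(\phi)$ of~\eqref{def_tau_k}. Writing $L(du,dx) = \int_0^\infty r\,\tilde N(du,dx,dr)$ with $\hat N(du,dx,dr) = du\,dx\,\nu(dr)$, the martingale $M_s := (\phi \cdot L)_s$ is purely discontinuous with jumps of size $\phi(u_i, x_i) r_i$ at the atoms $(u_i, x_i, r_i)$ of $N$ with $u_i \leq s$.

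The key computation is that the $\cF_s$-predictable compensator of the integer-valued jump measure $\mu_M(du,dz) := \sum_i \delta_{(u_i,\,\phi(u_i,x_i) r_i)}$ of $M$ equals
\begin{equation*}
\hat\mu_M(du,dz) = \Bigl(\int_{\R^d} \phi(u,x)^\alpha\,dx\Bigr)\,du\,\nu(dz) = dT_\phi(u)\,\nu(dz).
\end{equation*}
This follows by pushing $\hat N$ forward under $(u,x,r) \mapsto (u, \phi(u,x) r)$ and using the scaling $\sigma_\alpha r^{-1-\alpha}\,dr \mapsto \phi(u,x)^\alpha \sigma_\alpha z^{-1-\alpha}\,dz$ that characterizes the $\alpha$-stable Lévy measure.

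Next I would let $\tau(v) := \inf\{s \geq 0 : T_\phi(s) > v\}$ be the right-continuous inverse of the continuous non-decreasing $T_\phi$, set $\cG_v := \cF_{\tau(v)}$, and define $W_v := M_{\tau(v)}$ for $v \in [0, T_\phi(t)]$, extending $W$ to larger $v$ by appending, on an enlarged probability space, an independent spectrally positive $\alpha$-stable process. Standard random-measure time-change results (e.g.\ Jacod--Shiryaev, Ch.~X) convert $\hat\mu_M = dT_\phi(u)\,\nu(dz)$ into the compensator $dv\,\nu(dz)$ for the jump measure of $W$ in the filtration $(\cG_v)$. The Lévy characterization of purely discontinuous martingales then identifies $(W_v)_{v \geq 0}$ as a spectrally positive $\alpha$-stable process. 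By construction $M_s = W_{T_\phi(s)}$ wherever $T_\phi$ is strictly increasing at $s$; on flat intervals of $T_\phi$ the integrand $\phi$ vanishes (in the relevant $\LL^\alpha$-sense), so $M$ is also constant there, and the identity extends to all of $[0,t]$.

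The main obstacle will be the technical bookkeeping around the time change: verifying that $(\cG_v)$ satisfies the usual conditions, handling the local (rather than true) martingale status of $M$ since stable processes fail to be square-integrable, and checking that the compensator identification survives the localization by $\tau_k(\phi)$. The localization step itself is routine: each $(\phi_k \cdot L)_s = (\phi \cdot L)_{s \wedge \tau_k(\phi)}$ admits the representation with inner clock $T_{\phi_k}$, and consistency of these representations as $k \to \infty$ (which is automatic because $\phi_{k+1}$ extends $\phi_k$) yields a single $W$ realizing $M_s = W_{T_\phi(s)}$ for all $s \in [0,t]$ almost surely.
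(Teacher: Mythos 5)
The paper itself omits the proof of Proposition~\ref{prop_integral_rep}, referring to \cite[Theorem~3.1]{RW1986} and noting that the spatial, asymmetric modifications are standard. Your argument---computing the predictable compensator of the jump measure of $M = (\phi\cdot L)$ via the pushforward of $\hat N$ under $(u,x,r)\mapsto(u,\phi(u,x)r)$, obtaining $\hat\mu_M(du,dz) = dT_\phi(u)\,\nu(dz)$, then time-changing by the right-continuous inverse of $T_\phi$ and invoking the Watanabe/L\'evy characterization of a purely discontinuous local martingale with deterministic jump compensator---is exactly the inner-clock argument the citation points to, and it is correct. Your key compensator computation checks out, including the correct handling of the set $\{\phi(u,x)=0\}$ (which contributes no jumps and no mass to $\int\phi^\alpha\,dx$), and the observation that on flat intervals of $T_\phi$ the integral $M$ is a.s.\ constant.

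Two small points to tighten. First, the consistency-in-$k$ claim needs a little more care: the independently appended tails of the candidate processes $W^{(k)}$ will disagree for different $k$ on $[T_\phi(\tau_k\wedge t),\infty)$. The clean fix is to define $W_v := M_{\tau(v)}$ for $v < T_\phi(t)$ directly from the un-truncated $T_\phi$ (this is well-defined because $M$ is already defined on all of $[0,t]$), use the $\phi_k$-truncation only to verify the compensator identity on $[0,\tau_k]$, and append a single independent stable tail only once, for $v \geq T_\phi(t)$. Second, be explicit that $M$ is a purely discontinuous local martingale (this is needed to pass from the compensator identity $\hat\mu_W(dv,dz)=dv\,\nu(dz)$ to the identification of $W$, via the representation $W_v = \int_0^v\!\int z\,\tilde\mu_W$ and Watanabe's theorem), and that the time-changed filtration $\cG_v = \cF_{\tau(v)}$ inherits the usual conditions because $\tau$ is right-continuous. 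With these clarifications the argument is complete.
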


We remark that the enlargement of the probability space in the above is only necessary so that we may define $W_s$ for all $s \geq 0$ and not just $s \in [0,T_\phi(t)]$, which will be convenient. No enlargement is required to define $W$ on $[0,T_\phi(t)]$. The non-spatial version of this result for stochastic integrals with respect to a symmetric $\alpha$-stable process is due to Rosi\'{n}ski and Woyczy\'{n}ski \cite[Theorem~3.1]{RW1986}, and a more general version with signed integrands and asymmetric stable processes is due to Kallenberg \cite[Theorem~4.1]{K1992}. Generalizing the proof to space-time integrals is straightforward, although our proof uses a slightly different argument. It is also simplified by our restriction to non-negative integrands. 

We conclude the section with two more general properties of the stochastic integral. The first is a standard stochastic Fubini-type theorem. 

\begin{lemma} \label{lemma_fubini1} Let $t>0$, $(G, \mathcal{G}, \mu)$ be a finite measure space and $\phi : \Omega  \times (0,t] \times \R^d \times G$ be jointly measurable with respect to $\cP \times \mathcal{G}$. Suppose that
\begin{equation*}
\E\bigg( \int_G \mu(dx) \bigg[ \int_{(0,t] \times \R^d} |\phi(s,y,x)|^\alpha ds dy\bigg] \bigg) < \infty.
\end{equation*}
Then the equality
\begin{align*}
\int_G \mu(dx) \bigg[ \int_{(0,t] \times \R^d} \phi(s,y,x) L(ds,dy) \bigg] =  \int_{(0,t] \times \R^d} \bigg[ \int_G \phi(s,y,x) \mu(dx) \bigg] L(ds,dy)
\end{align*}
holds with probability one.
\end{lemma}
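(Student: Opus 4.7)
My plan is the classical three-step stochastic Fubini argument: verify the identity on elementary product integrands by direct computation, extend by bilinearity, and then approximate a general integrand with quantitative control supplied by the stable BDG inequality (Proposition~\ref{prop_bdg}).

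First I would verify the identity for \emph{elementary product integrands} of the form
\[ \phi(\omega, s, y, x) = \xi(\omega) \mathbf{1}_{(t_1, t_2]}(s) \mathbf{1}_A(y) \mathbf{1}_B(x), \]
with $0 \le t_1 < t_2 \le t$, $\xi$ a bounded $\cF_{t_1}$-measurable random variable, and $A, B \in \underline{\cB}(\R^d)$. Using the explicit form \eqref{e_elem_integ} of the stochastic integral on elementary functions, both sides of the Fubini identity reduce to the same expression
\[ \xi \cdot \bigl( L_{t \wedge t_2}(A) - L_{t \wedge t_1}(A) \bigr) \cdot \int_B f(x) \, dx, \]
and by bilinearity the identity extends to all finite real linear combinations of such elementaries.

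For a general $\phi$ I would approximate by such linear combinations, first reducing via truncation to the case where $\phi$ is bounded with compact support in $(s,y,x)$. A standard density argument for finite tensor-product simple functions in $\mathbb{L}^\alpha((0,t] \times \R^d \times \R^d)$ yields an approximating sequence $\phi_n$, jointly measurable on $\cP \otimes \cB(\R^d)$ by construction. The quantitative tool for transferring this approximation to both sides of the identity is the $q=1$ case of Proposition~\ref{prop_bdg}, combined with Minkowski's integral inequality. Writing $T_\phi(t;x) := \int_0^t \int |\phi(s,y,x)|^\alpha dy \, ds$, Minkowski gives
\[ \biggl( \int_0^t \int \biggl| \int f(x) \phi(s,y,x) \, dx \biggr|^\alpha dy \, ds \biggr)^{1/\alpha} \le \int |f(x)| \, T_\phi(t;x)^{1/\alpha} dx, \]
so Proposition~\ref{prop_bdg} bounds the $\mathbb{L}^1(\bP)$-norm of the difference of the right-hand sides of the claimed identity applied to $\phi_n$ and $\phi$ by a constant times $\E \int |f(x)| T_{\phi_n - \phi}(t;x)^{1/\alpha} dx$. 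The same proposition applied at each fixed $x$, combined with dominated convergence in the outer $x$-integral, controls the left-hand side. Both estimates are driven by the uniform bound $\E \int |f(x)| T_\phi(t;x)^{1/\alpha} dx < \infty$, which follows from the stated hypothesis by applying the lower bound in Proposition~\ref{prop_bdg} at each fixed $x$ and invoking Tonelli's theorem for non-negative integrands. The truncation is then removed by a final dominated convergence argument driven by the same estimate.

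The main obstacle is bridging the mismatch between the $\mathbb{L}^1(\bP)$-type integrability condition in the hypothesis and the natural $\mathbb{L}^\alpha$-type norm that controls stable stochastic integrals; the $q=1$ endpoint of Proposition~\ref{prop_bdg} is precisely what closes this gap, turning the assumption into a dominating integrand usable on both sides of the identity. A secondary technicality is ensuring that the approximants $\phi_n$ are jointly measurable with respect to $\cP \otimes \cB(\R^d)$ rather than merely $\cP$-measurable in $(s,y)$ for each fixed $x$, which is why I would approximate using tensor-product simple functions from the outset.
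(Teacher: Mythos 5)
Your proposal runs along the same skeleton the paper points to (verify on elementary integrands, extend by bilinearity, approximate), but you replace the paper's suggested route through the weak-type isometry of Proposition~\ref{prop_isometryint} and approximation in $\LL^\alpha(\LL^{\alpha,t})$ by the $q=1$ endpoint of Proposition~\ref{prop_bdg} combined with Minkowski's integral inequality. That substitution is a reasonable and arguably cleaner way to carry the approximation across both sides of the identity, since the $\mathbb{L}^1(\bP)$-norm estimate is easier to manipulate under the outer $dx$-integral than the weak-$\alpha$ quasi-norm. The tensor-product approximation step and the Minkowski estimate $\big(\int_0^t\!\int |\!\int f\phi\,dx|^\alpha\big)^{1/\alpha} \le \int |f(x)| T_\phi(t;x)^{1/\alpha}dx$ are both correct.

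There is, however, a genuine gap in the final paragraph, at the step where you claim the uniform bound $\E\int|f(x)|\,T_\phi(t;x)^{1/\alpha}\,dx < \infty$ ``follows from the stated hypothesis by applying the lower bound in Proposition~\ref{prop_bdg} at each fixed $x$.'' The lower bound in Proposition~\ref{prop_bdg} reads
\begin{equation*}
c_1\,\E\big(T_\phi(t;x)^{1/\alpha}\big) \le \E\Big(\sup_{s\le t}\big|(\phi(\cdot,\cdot,x)\cdot L)_s\big|\Big),
\end{equation*}
whereas the stated hypothesis controls only $\E\int|f(x)|\,\big|(\phi(\cdot,\cdot,x)\cdot L)_t\big|\,dx$, i.e.\ the value at the terminal time, not the running supremum. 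For a local martingale $M$, $\E|M_t|<\infty$ does \emph{not} imply $\E\big(\sup_{s\le t}|M_s|\big)<\infty$ (the latter is equivalent, via BDG at $q=1$, to $\E([M]_t^{1/2})<\infty$, and there are strict local martingales for which $\E|M_t|<\infty$ while $\E([M]_t^{1/2})=\infty$). So the chain from the hypothesis to $\E\int|f|\,T_\phi(t;x)^{1/\alpha}\,dx<\infty$ does not close. To rescue the argument you either need to assume the dominating bound $\E\int|f(x)|\,T_\phi(t;x)^{1/\alpha}\,dx<\infty$ (or the stronger $\E\int|f(x)|\,T_\phi(t;x)\,dx<\infty$, which is what Lemma~\ref{lemma_fubini2} actually verifies before invoking this lemma) directly as the hypothesis, or need an independent argument showing that the relevant $(\phi(\cdot,\cdot,x)\cdot L)_s$ are true martingales on $[0,t]$ for a.e.\ $x$ in the support of $f$ — neither of which follows from the condition as written. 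The rest of the approximation argument (Minkowski for the right-hand side, Tonelli and dominated convergence for the left-hand side) is fine once the correct integrability input is in place.
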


Such results are standard and proceed by approximation with simple functions, for which the result is trivial. See Theorem 2.6 of Walsh \cite{Walsh} for all the details of the $\LL^2$ theory. In the present setting, analogous arguments can be made via approximation in $\LL^\alpha(\LL^{\alpha,t})$; this is justified, for example, by Proposition~\ref{prop_isometryint}. Later we prove another stochastic Fubini theorem, Lemma~\ref{lemma_fubini2}, which specifically pertains to solutions of \eqref{e_spde1} and allows for a partial relaxation of the integrability assumption. 

The following is a version of the dominated convergence theorem. We write the lemma for a family of functions indexed by the integers, but it is also true for families with an uncountable index set.

\begin{lemma}\label{lemma_dctbdg} Let $t>0$ and suppose that $(\phi_n)_{n \in \N}$ is a collection of functions in $\mathbb{L}^{\alpha,t}_{\text{a.s.}}$ such that $\phi_n \to 0$ Lebesgue-a.e. $\bP$-a.s. and $|\phi_n| \leq \phi$ $\bP$-a.s. for all $n \in \N$ for some $\phi \in \mathbb{L}^{\alpha,t}_{\text{a.s.}}$. Then \[\lim_{n \to \infty} \sup_{s \in [0,t]} |(\phi_n \cdot L)_s| = 0\] in probability.
\end{lemma}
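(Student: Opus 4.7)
The plan is to localize via the stopping times $\tau_k(\phi)$ from \eqref{def_tau_k} so that the BDG inequality of Proposition~\ref{prop_bdg} applies with finite bounds, then upgrade the resulting $\LL^q$ convergence back to convergence in probability on the original event.

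First, fix $q \in [1,\alpha)$ and $k \in \N$, and set
\begin{equation*}
\phi_n^k(s,x) := \phi_n(s,x) 1_{\{s \leq \tau_k(\phi)\}}, \qquad \phi^k(s,x) := \phi(s,x) 1_{\{s \leq \tau_k(\phi)\}}.
\end{equation*}
Since $\tau_k(\phi)$ is a stopping time and $\phi_n,\phi \in \cP$, both are previsible. By the definition of $\tau_k(\phi)$ we have $T_{\phi^k}(t) \leq k$ a.s., and since $|\phi_n^k| \leq \phi^k$ this gives $T_{\phi_n^k}(t) \leq k$ a.s. as well; in particular $\phi_n^k \in \LL^q(\LL^{\alpha,t})$, so Proposition~\ref{prop_bdg} yields
\begin{equation*}
\E\bigg(\bigg(\sup_{s \in [0,t]} |(\phi_n^k \cdot L)_s|\bigg)^q\bigg) \leq C_q \E\big( T_{\phi_n^k}(t)^{q/\alpha}\big).
\end{equation*}

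Next I would argue that $\E(T_{\phi_n^k}(t)^{q/\alpha}) \to 0$ as $n \to \infty$. Since $\phi_n \to 0$ Lebesgue-a.e. $\bP$-a.s.\ and $|\phi_n^k|^\alpha \leq \phi^\alpha 1_{\{s \leq \tau_k(\phi)\}}$, and the latter has integral bounded by $k$, the ordinary dominated convergence theorem on $((0,t]\times\R^d, ds\,dx)$ gives $T_{\phi_n^k}(t) \to 0$ $\bP$-a.s. Raising to the power $q/\alpha \leq 1$ and using the deterministic bound $T_{\phi_n^k}(t)^{q/\alpha} \leq k^{q/\alpha}$, bounded convergence gives $\E(T_{\phi_n^k}(t)^{q/\alpha}) \to 0$, and hence $\sup_{s \in [0,t]} |(\phi_n^k \cdot L)_s| \to 0$ in $\LL^q(\bP)$ and, a fortiori, in probability.

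Finally, I would combine this with a standard localization argument. By construction of the stochastic integral (see \eqref{def_phi_k} and the ensuing discussion), on the event $\{\tau_k(\phi) \geq t\}$ we have $(\phi_n \cdot L)_s = (\phi_n^k \cdot L)_s$ for all $s \in [0,t]$. Given $\eps,\delta > 0$, since $T_\phi(t) < \infty$ a.s.\ we may choose $k$ large enough that $\bP(\tau_k(\phi) < t) < \eps/2$, and then $n$ large enough that $\bP(\sup_{s \in [0,t]}|(\phi_n^k \cdot L)_s| > \delta) < \eps/2$. Union-bounding gives
\begin{equation*}
\bP\Big(\sup_{s \in [0,t]} |(\phi_n \cdot L)_s| > \delta\Big) \leq \bP(\tau_k(\phi) < t) + \bP\Big(\sup_{s \in [0,t]} |(\phi_n^k \cdot L)_s| > \delta\Big) < \eps,
\end{equation*}
which is the desired convergence in probability.

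The only mildly delicate point is the need to truncate: without the localization the hypotheses do not give $\phi_n,\phi \in \LL^q(\LL^{\alpha,t})$, so BDG cannot be applied directly. Once the truncation is in place the argument reduces to two successive uses of ordinary dominated convergence (one pathwise, one in expectation), so I expect no real obstacle beyond bookkeeping.
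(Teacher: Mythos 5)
Your proof is correct and follows essentially the same route as the paper's: localize with the stopping times $\tau_k(\phi)$, apply Proposition~\ref{prop_bdg} to the truncated/stopped integral, pass $n\to\infty$ by two applications of dominated convergence, then remove the localization using $\tau_k(\phi)\uparrow\infty$. The only cosmetic difference is that the paper stops the time index at $t\wedge\tau_k(\phi)$ rather than truncating the integrand by $1_{\{s\le\tau_k(\phi)\}}$, which conveniently avoids the (easily resolved) point that the construction in \eqref{def_phi_k} localizes $\phi_n$ via $\tau_k(\phi_n)$ rather than $\tau_k(\phi)$; since $|\phi_n|\le\phi$ forces $\tau_k(\phi_n)\ge\tau_k(\phi)$, your identification $(\phi_n\cdot L)_s=(\phi_n^k\cdot L)_s$ on $\{\tau_k(\phi)\ge t\}$ does hold.
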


\section{Compact support in dimension one}  \label{s_dimone}
This section contains the proof of the compact support property for $d=1$. Although Theorem~\ref{thm_stochinteg} is proved in Section~\ref{s_stochintegralrep}, at the end of the paper, none of the arguments there depend on the arguments in this section, and so we can and will use this result. Throughout this section we use the density process $\bar{Y}(t,x)$ constructed in Theorem~\ref{thm_stochinteg} for all integrals and stochastic integrals associated to the solution.

The set-up for this section is as follows: we assume that $(\Omega,\cF, (\cF_t)_{t \geq 0}, \bP)$ is a stochastic basis on which $(Y,L)$ is a weak solution to \eqref{e_spde1} in dimension $d=1$, for $\alpha \in (1,2)$, with initial data $Y_0 \in \cM_f(\R)$. These assumptions are in force throughout the section and are implicit in the statement of each result. We write $\E$ for the expectation associated to $\bP$. Although our main result is only proved for $\gamma \in (2-\alpha,1)$, most of the lemmas hold for all $\alpha \in (1,2)$ and $\gamma \in (0,1)$. Assumptions on $\alpha$ and $\gamma$ which are stronger than this are stated explicitly; otherwise, the result holds for all $\alpha \in (1,2)$ and $\gamma \in (0,1)$.
We prove the following.

\begin{theorem} \label{thm_onedim}
Suppose that $\gamma \in (2-\alpha, 1)$. If $Y_0([x_0,\infty)) = 0$ for some $x_0 \in \R$, then with probability one, for every $t>0$ there exists a (random) $y(t) \in \R$ such that $Y_s((y(t),\infty)) = 0$ for all $s \in [0,t]$.
\end{theorem}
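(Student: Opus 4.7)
The plan is to adapt Krylov's stochastic-calculus strategy to the stable setting, using the mild representation of the density from Theorem~\ref{thm_stochinteg}(a) together with the stable BDG inequality from Proposition~\ref{prop_bdg}. By spatial translation invariance of both the heat semigroup and the law of $L$, we may assume without loss of generality that $x_0 = 0$, so that $Y_0$ is supported in $(-\infty,0]$. Fix $T > 0$ and set
$$F_T(a) := \int_0^T Y_s([a,\infty))\,ds,$$
a non-negative, non-increasing (in $a$) random variable. The conclusion amounts to showing that almost surely $F_T(a) = 0$ for some finite $a$; taking a countable dense set of values of $T$ and using the c\`adl\`ag property of $t \mapsto Y_t$ yields the full statement. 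Because the events $\{F_T(n) > 0\}$ are monotone decreasing in $n$, it will suffice to bound $\bP(F_T(a_n) > 0)$ along a sequence $a_n \to \infty$ and invoke Borel--Cantelli.

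First I would derive a stochastic-integral identity for $F_T(a)$. Starting from the mild formula of Theorem~\ref{thm_stochinteg}(a), one integrates over $x \in [a,\infty)$ and $s \in [0,T]$ and applies the stochastic Fubini theorem, Lemma~\ref{lemma_fubini2}, to obtain
$$F_T(a) = D(a,T) + N(a,T),$$
where $D(a,T) = \int_0^T \int_a^\infty P_sY_0(x)\,dx\,ds$ is deterministic and decays Gaussian-fast in $a$ (since $Y_0$ is left-supported), and
$$N(a,T) = \int_{(0,T]\times\R} \Psi_T(s,y;a)\,Y_{s-}(y)^\gamma\,L(ds,dy), \qquad \Psi_T(s,y;a) := \int_s^T \int_a^\infty p_{t-s}(x-y)\,dx\,dt.$$
For $y$ bounded away from $a$ on the left, the kernel $\Psi_T(s,y;a)$ inherits Gaussian decay in $a$.

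Next I would estimate $\E[|N(a,T)|^q]$ for some $q \in [1,\alpha)$ using Proposition~\ref{prop_bdg}, which bounds it above by a constant times
$$\E\left[\left(\int_{(0,T]\times\R} \Psi_T(s,y;a)^{\alpha}\,Y_{s-}(y)^{p}\,ds\,dy\right)^{q/\alpha}\right],$$
with $p=\alpha\gamma$. Splitting the spatial integral at $y = a/2$: on $\{y \leq a/2\}$, the Gaussian decay of $\Psi_T$ in $a$ combines with the moment bounds from Theorem~\ref{thm_stochinteg}(b) on $\E[Y_{s-}(y)^p]$ to yield decay in $a$; on $\{y > a/2\}$, writing (when $p \geq 1$) $Y_{s-}(y)^p = Y_{s-}(y)^{p-1}\cdot Y_{s-}(y)$ and controlling the sub-unit factor using the same moment bound, one obtains a recursive inequality for $M_q(a) := \sup_{t \leq T,\,x \geq a}\E[\bar{Y}_t(x)^q]^{1/q}$ of the form $M_q(a) \leq f(a) + g(a)M_q(a/2)^{\kappa}$, with $f,g$ decaying and $\kappa$ determined by $\alpha$ and $\gamma$. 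Iterating along the geometric sequence $a_n = R\cdot 2^n$ then yields super-polynomial decay of $\E[|N(a_n,T)|^q]$.

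The main obstacle will be converting this moment/probability decay into the exact statement that $F_T(a) = 0$ for some finite random $a$: a Markov-type argument only gives $\bP(F_T(a) > \varepsilon) \to 0$ for each fixed $\varepsilon > 0$, not $\bP(F_T(a) > 0) \to 0$. To close this gap I would localize at the hitting times $\tau_a := \inf\{s > 0 : Y_s((a,\infty)) > 0\}$ and translate the decay of $\E[|N(a,T)|^q]$ into a summable bound on $\bP(\tau_a \leq T)$, using a stopping argument to absorb the ``first jump beyond $a$'' into the BDG estimate via the Poisson representation of $L$ from Section~\ref{s_stablemeasures}. The restriction $\gamma > 2-\alpha$ enters precisely here: combining the BDG exponent $\alpha$ with the integrability exponent $p = \alpha\gamma$ and the singularity $t^{-(\alpha-1)q/(2\alpha)}$ in the moment bound \eqref{e_thm_momentbd_pgo}--\eqref{e_thm_momentbd_plo} at $d=1$, the iteration and the localization at $\tau_a$ close only when $\gamma$ exceeds the threshold $2-\alpha$; smaller values of $\gamma$ create an integrability gap between the heat-kernel singularity and the stable BDG scaling which this argument cannot bridge.
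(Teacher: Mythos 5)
There is a genuine gap. Your plan correctly identifies the mild formula, the stable BDG inequality, and the moment bounds as ingredients, and you correctly recognize the hard part: moment or probability decay of the tail mass alone gives $\bP(F_T(a) > \varepsilon) \to 0$ for fixed $\varepsilon > 0$, not $\bP(F_T(a) > 0) \to 0$. But the fix you propose — localize at $\tau_a$, ``absorb the first jump beyond $a$ into the BDG estimate via the Poisson representation'' — is not an argument. It is not clear how a jump-by-jump decomposition of $L$ would yield the exact-zero statement, and no mechanism is offered for why $\{F_T(a) > 0\}$ should be controlled by a stopping-time event whose probability can be summed. This step is the whole theorem; leaving it as a hope leaves the proof without its engine.

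The paper's resolution of precisely this gap uses two structural ideas that do not appear in your proposal. First, instead of the raw tail mass $\int_a^\infty \bar Y_t(x)\,dx$, the paper works with the $(y-x)_+$-weighted tail mass, which by an integration-by-parts extension of \eqref{e_spde1weak} (Lemma~\ref{lemma_ibp_boundary}) satisfies
\begin{equation*}
\int_x^\infty (y-x)\,Y_t(dy) \;=\; A_t(x) + M_t(x), \qquad A_t(x) = \int_0^t \bar Y_s(x)\,ds,
\end{equation*}
with $M_t(x)$ a local martingale that, crucially, has no negative jumps (since $L$ is spectrally positive). Because the left-hand side is nonnegative, $M_t(x) \geq -A_t(x)$; this is the one-sided bound your decomposition $F_T(a) = D(a,T) + N(a,T)$ does not provide in a usable form, since $D(a,T)$ is merely small rather than tracking a hitting time of $A$. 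Second, this structure yields Lemma~\ref{lemma_crosstheline}: if $A_\tau(x) = 0$, then $M_{\cdot \wedge \tau \wedge \sigma}$ (with $\sigma$ the first time $A$ becomes positive) is a nonnegative local martingale started from zero and is therefore identically zero, which forces $\int_0^\tau Y_s([x,\infty))\,ds = 0$. This is the mechanism that converts ``local time is zero'' into ``tail mass is zero,'' and it has no analogue in your scheme. Without it, you cannot close the Borel--Cantelli argument. In addition, the probabilistic input the paper uses is a recursive bound on $\bP(A_t(x_1) \geq a)$ in terms of $\bP(A_t(x_0) \geq b)$ (Lemma~\ref{lemma_probbound}), obtained from a gambler's-ruin estimate for the time-changed stable process $M_s(x_0) = W_{T(s,x_0)}$ together with the BDG inequality applied to the stopped martingale — not a moment recursion of the form $M_q(a) \leq f(a) + g(a) M_q(a/2)^\kappa$. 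Finally, your localization of the threshold $2-\alpha$ is a guess: in the paper it arises concretely from the requirement that the interval $\bigl(\tfrac{1}{\alpha-1}, \tfrac{\alpha-1}{1-p}\bigr)$ be nonempty, which is needed to choose the truncation exponent in case (b) of Lemma~\ref{lemma_probbound}, and this happens if and only if $\gamma > 2-\alpha$; your description does not identify this.
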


To conclude that Theorem~\ref{thm_compact}(a) holds from this is straightforward. We have stated the result about the mass to the right of some point, but of course it also holds to the left if $Y_0((-\infty, x_0]) = 0$ for some $x_0$. Hence if $Y_0$ has compact support, given $t>0$, we can apply Theorem~\ref{thm_onedim} to the right and the left and conclude that there exists a random $R >0$ such that $Y_s([-R,R]^c) = 0$ for all $s \leq t$. This is the compact support property.

We introduce some notation. For the remainder of the section we assume that $Y_0$ satisfies $Y_0([x,\infty)) = 0$ for some $x \in \R$. We will denote the minimal such point by $x_r$, that is, $x_r$ is the right endpoint of the support of $Y_0$ defined by
\begin{equation*}
x_r := \inf \{x \in \R : Y_0([x, \infty)) = 0\}.
\end{equation*}
For $t>0$ we define
\begin{equation*} 
V_t := \langle Y_t, 1 \rangle = Y_t(1),
\end{equation*}
and for $(t,x) \in \R_+ \times \R$ we define
\begin{equation*}
A_t(x) := \int_0^t \bar{Y}(s,x) ds.
\end{equation*}
The map $t \to A_t(x)$ is an occupation density of the solution at $x$. From the definition, it is evident that $t \to A_t(x)$ is an a.s. continuous and non-decreasing process. The proof of Theorem~\ref{thm_onedim} is based on the idea that if $Y_0([x,\infty)) = 0$ and $A_t(x) = 0$, then we must have $Y_s((x,\infty)) = 0$ for all $s \in (0,t]$; see Lemma~\ref{lemma_crosstheline}. First, we state some more elementary properties. The first is an immediate consequence of \eqref{e_thm_meanmeasure} and the integrability of the one-dimensional heat-kernel. 

\begin{lemma} \label{lemma_At_momemt}
For all $(t,x)\in \R_+ \times \R$, $\E(A_t(x)) \leq \int_0^t P_s Y_0(x) ds < \infty$.
\end{lemma}

Next we prove an approximation result for $A_t(x)$.

\begin{lemma} \label{lemma_At_convergence} Let $\xi \in \C^\infty_c(\R)$ be non-negative and satisfy $\int \xi = 1$. For $\eps > 0$, define $\xi_\eps(y) = \eps^{-1} \xi(y/\eps)$, and for $(t,x) \in \R_+ \times \R^d$, define
\begin{equation*}
A_t^\eps(x) := \int_0^t (\xi_\eps * Y_s )(x). \end{equation*}

(a) For every $(t,x) \in \R_+ \times \R$, $A_t^\eps(x) \to A_t(x)$ in $\LL^1(\bP)$ as $\eps \downarrow 0$.

(b) For every $x \in \R$, there exists a sequence $(\eps_n)_{n \in \N}$ with $\eps_n \downarrow 0$ (which may depend on $\xi$ and $x$) such that \[\bP\bigg(\lim_{n \to \infty}A_t^{\eps_n}(x) = A_t(x) \text{ for all } t > 0\bigg) = 1.\]
\end{lemma}
\begin{proof}
Fix $x \in \R$ and $t>0$. To simplify notation we define 
\[ R_\eps(s) := \xi_\eps * Y_s(x) - \bar{Y}(s,x).\]
We will show that $\lim_{\eps \to 0} \E(|\int_0^t R_\eps(s) ds|) = 0$. Let $\delta \in (0, t)$. First we control the integral for small values of $s$. By Theorem~\ref{thm_stochinteg}(a), $Y_s(dx) = \bar{Y}(s,x)dx$ for a.e. $s \in (0,\delta]$ $\bP$-a.s., and hence by \eqref{e_thm_meanmeasure} we have
\begin{align} \label{e_L1integaux1}
\E \bigg( \bigg| \int_0^\delta R_\eps(s) \,ds \bigg| \bigg) & \leq 2 \int_0^\delta \sup_y \,\E(\bar{Y}(s,y)) ds \notag 
\\ &\leq 2C Y_0(1) \int_0^\delta s^{-1/2} ds \notag
\\& \leq C \delta^{1/2}.
\end{align}
for a constant $C>0$ which only depends on $Y_0$. Next, by Theorem~\ref{thm_stochinteg}(c), $R_\eps(s) \to 0$ in $\mathbb{L}^1(\bP)$ for all $s \in (0,t]$, and hence there must exist $\eps_0(\delta)>0$ such that for all $\eps \leq \eps_0(\delta)$, 
\begin{equation} \label{e_L1integaux2}
\text{Leb}(B_{\delta,\eps}) < \delta \,\, \text{ for all $\eps < \eps_0(\delta)$,} 
\end{equation}
where $B_{\delta,\eps} = \{s \in (\delta,t]: \E(|R_\eps(s)|) > \delta \}$ and $\text{Leb}(B_{\delta,\eps})$ denotes its Lebesgue measure. Arguing similarly to as in \eqref{e_L1integaux1}, for $\eps \leq \eps_0(\delta)$ we have
\begin{align} \label{e_L1integaux3}
\E \bigg( \bigg| \int_{B_{\delta,\eps}} R_\eps(s)\, ds \bigg| \bigg) & \leq 2 \delta \sup_{y \in \R, s \in [\delta,t]} \E(\bar{Y}(s,y)) \notag
\\ &\leq C \delta^{1/2},
\end{align}
where again the constant does not depend on $\delta$. Combining \eqref{e_L1integaux1}, \eqref{e_L1integaux2} and \eqref{e_L1integaux3}, we obtain that for a constant $C>0$,
\begin{align*}
\E \bigg( \bigg| \int_0^t R_\eps(s)\, ds \bigg| \bigg) &\leq \E \bigg( \bigg| \int_0^\delta R_\eps(s) \,ds \bigg| \bigg) + \E \bigg( \bigg| \int_{B_{\delta,\eps}} R_\eps(s) \,ds \bigg| \bigg) +  \int_{(\delta,t] \backslash B_{\delta,\eps}} \E(|R_\eps(s)|) ds 
\\ &\leq 2C \delta^{1/2} + t \delta 
\end{align*}
for all $\eps \leq \eps_0(\delta)$. Hence for any $\tilde{\delta} > 0$, we can choose $\delta \in (0,t)$ such that the right hand side of the above is less than $\tilde{\delta}$, which proves that $\E(|\int_0^t R_\eps(s)\,ds|) \to 0$ as $\eps \to 0$. This proves part (a).

Now we prove part (b). Fix $x \in \R$ and let $\mathcal{I}$ be a countable, dense subset of $\R_+$. By part (b), $A^\eps_{s}(x) \to A_{s}(x)$ in probability as $\eps \downarrow 0$ for each $s \in \mathcal{I}$, and hence a.s. along some subsequence. Since $\mathcal{I}$ is countable, by a standard argument we may take a diagonal subsequence $(\eps_n)_{n \in \N}$ with $\eps_n \downarrow 0$ such that $\lim_{n\to \infty} A^{\eps_n}_{s}(x) = A_s(x)$ a.s. for every $s \in \mathcal{I}$. In particular, there exists $\Omega_0 \in \cF$ such that $\bP(\Omega_0) = 1$ and for each $\omega \in \Omega_0$, $A^{\eps_n}_{s}(x)(\omega) \to A_{s}(x)(\omega)$ as $n \to \infty$ for every $s \in \mathcal{I}$.

We now fix $\omega \in \Omega_0$ and hereafter suppress dependence on it. Let $t>0$. Let $(a_m)_{m \in \N}$ and $(b_m)_{m \in \N}$ be sequences in $\mathcal{I}$ such that $a_m \uparrow t$ and $b_m \downarrow t$. Since $\xi_\eps \geq 0$, it is immediate from the definition of $A^\eps_s(x)$ that $s \to A^\eps_s(x)$ is non-decreasing for every $\eps>0$, and in particular we have
\begin{equation*}
A^{\eps_n}_{a_m}(x) \leq A^{\eps_n}_{t}(x) \leq A^{\eps_n}_{b_m}(x) 
\end{equation*}
for every $n, m \in \N$. Since $\omega \in \Omega_0$ and $a_m, b_m \in \mathcal{I}$, the left and right-hand sides converge respectively to $A_{a_m}(x)$ and $A_{b_m}(x)$ as $n \to \infty$. As this holds for every $m \in \N$, it follows that
\[ \sup_{m \in \N} A_{a_m}(x) \leq \liminf_{n \to \infty} A^{\eps_n}_{t}(x) \leq  \limsup_{n \to \infty} A^{\eps_n}_{t}(x) \leq \inf_{m \in \N} A_{b_m}(x).\] 
By continuity of $s \to A_s(x)$, the left- and right-hand sides equal $A_t(x)$, and hence $\lim_{n\to \infty} A^{\eps_n}_{t}(x) = A_t(x)$. Hence for $\omega \in \Omega_0$, $\lim_{n\to \infty} A^{\eps_n}_{t}(x)(\omega) = A_t(x)(\omega)$ for every $t>0$. This completes the proof. \end{proof}

For a real-valued stochastic process $(X_t)_{t \geq 0}$, we introduce the following notation:
\begin{align*}
X_t^* &:= \sup_{s \in [0,t]} X_s,  \\
|X_t|^* &:= \sup_{s \in [0,t]} |X_s|,  \\
X_{t,+} &:= (X_t \vee 0),  \\
X_{t,-} &:= - (X_t \wedge 0).  
\end{align*}
Note that $X_{t,-} \geq 0$ so we write $X_t = X_{t,+} - X_{t,-}$. Likewise, for a real-valued function $\phi$ we write $\phi_+ : = \phi \vee 0$ to denote its positive part. We also recall that $V_t = Y_t(1)$, the total mass of $Y_t$, and observe that since $(Y_t)_{t \geq 0} \in \mathbb{D}([0,\infty), \cM_f(\R))$, $V_t^* < \infty$ almost surely. We will use this fact freely in the sequel.

In Lemmas~\ref{lemma_integ}, \ref{lemma_pre_ibp_boundary} and \ref{lemma_ibp_boundary} we extend the stochastic integration by parts formula \eqref{e_spde1weak} to a function $\phi$ which is not smooth in order to obtain an integration by parts formula involving $A_t(x)$. Some of these results are true for all $x \in \R$ but we state and prove them for $x>x_r$ to simplify the proofs.

The next few lemmas use an auxiliary smooth function which for convenience we define here. We fix a function $\psi$ with the following properties:
\begin{align} \label{e_moll_prop}
&\psi \in C^\infty_c(\R), \quad \psi \geq 0, \quad \psi \text{ is even}, \quad  \psi \text{ is non-increasing on $[0,\infty)$,} \notag
\\ &\psi(y) = 1 \text{ for } y\in [0,1], \quad \psi(y) = 0 \text{ for } y \geq 2.
\end{align}

\begin{lemma} \label{lemma_integ}
Let $x > x_r$. Then with probability one, for all $t>0$,
\begin{equation*}
\sup_{s \in [0,t]} \int_{x}^\infty (y-x) Y_s(dy)< \infty.
\end{equation*}
\end{lemma}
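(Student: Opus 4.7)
The plan is to test the weak formulation \eqref{e_spde1weak} against a smooth, bounded approximation of the test function $(y-x)_+$, exploiting that $\langle Y_0,(y-x)_+\rangle=0$ since $\mathrm{supp}(Y_0)\subseteq(-\infty,x_r]$ and $x>x_r$. Morally, the distributional second derivative of $(y-x)_+$ is $\delta_x$, producing the local time $A_t(x)$, whose expectation is finite by Lemma~\ref{lemma_At_convergence}(a). Since $\sup_{s\leq t}\int_x^\infty(y-x)Y_s(dy)$ is non-decreasing in $t$, it suffices to fix $t>0$ and prove the supremum is a.s.\ finite.

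For each $K>0$, construct $\phi_K\in\cS$ with $0\leq\phi_K\leq(y-x)_+$, $\phi_K\equiv 0$ on $(-\infty,(x_r+x)/2]$ (so that $\langle Y_0,\phi_K\rangle=0$), $\phi_K(y)=y-x$ on $[x,x+K]$, compactly supported with a smooth (e.g.\ cosine) decay on $[x+K,x+2K]$, and $\phi_K\uparrow(y-x)_+$ pointwise as $K\to\infty$. Then one can write $\phi_K''=\rho-\eta_K+\zeta_K$, where $\rho\geq 0$ is a fixed mollifier of $\delta_x$ with $\int\rho=1$ (independent of $K$), $\eta_K\geq 0$ is a mollifier of $\delta_{x+K}$ with $\int\eta_K=1$, and $\zeta_K$ is bounded pointwise by $C/K$ (the second derivative of the smooth decay). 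The weak equation yields
\begin{equation*}
\langle Y_t,\phi_K\rangle = \int_0^t\langle Y_s,\rho\rangle\,ds - \int_0^t\langle Y_s,\eta_K\rangle\,ds + \int_0^t\langle Y_s,\zeta_K\rangle\,ds + M_t^K,
\end{equation*}
with $M_t^K:=\int_{(0,t]\times\R}Y_{s-}(y)^\gamma\phi_K(y)L(ds,dy)$. Dropping the non-positive $-\int\langle Y_s,\eta_K\rangle ds$ and using $|\zeta_K|\leq C/K$ gives
\begin{equation*}
\sup_{s\leq t}\langle Y_s,\phi_K\rangle \leq \int_0^t\langle Y_s,\rho\rangle ds + (C/K)\,tV_t^* + \sup_{s\leq t}|M_s^K|.
\end{equation*}

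For each fixed $K$, $\phi_K$ is bounded, so $T_{Y^\gamma\phi_K}(t)<\infty$ a.s.\ by \eqref{assumption_integ}, whence Proposition~\ref{prop_integral_rep} gives $\sup_{s\leq t}|M_s^K|<\infty$ a.s.; combined with $V_t^*<\infty$ and $\int_0^t\langle Y_s,\rho\rangle ds\leq\|\rho\|_\infty tV_t^*<\infty$ a.s., this gives $\sup_{s\leq t}\int_x^{x+K}(y-x)Y_s(dy)<\infty$ a.s. Since $\phi_K\uparrow(y-x)_+$ pointwise, monotone convergence reduces the proof to showing $\sup_K\E\sup_{s\leq t}\langle Y_s,\phi_K\rangle<\infty$. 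Taking expectations in the displayed bound, the first term is dominated by $\int_0^t\int\rho(y)P_sY_0(y)\,dyds<\infty$, uniformly in $K$, using Lemma~\ref{lemma_meanmeasure} and Fubini. For the stochastic integral, Proposition~\ref{prop_bdg} with $q=1$ followed by Jensen (since $1/\alpha<1$) yield $\E\sup_{s\leq t}|M_s^K|\leq C(\E T_{Y^\gamma\phi_K}(t))^{1/\alpha}$, and $\phi_K\leq(y-x)_+$ with Fubini give $\E T_{Y^\gamma\phi_K}(t)\leq\int_0^t\int(y-x)_+^\alpha\E Y_{s-}(y)^p\,dsdy$.

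The main obstacle is showing this last double integral is finite. The plan is to handle $p\leq 1$ via Jensen ($\E Y_{s-}(y)^p\leq(\E\bar Y_s(y))^p\leq(P_sY_0(y))^p$, using Lemma~\ref{lemma_meanmeasure}) and $p>1$ via the moment bound \eqref{e_thm_momentbd_pgo} of Theorem~\ref{thm_stochinteg}(b). In both cases, since $\mathrm{supp}(Y_0)\subseteq(-\infty,x_r]$ with $x_r<x$, $P_sY_0(y)$ carries a Gaussian factor $\exp(-(y-x_r)^2/(4s))$ for $y>x$, which decays super-polynomially in $y$ and hence makes $\int(y-x)_+^\alpha\E Y_{s-}(y)^p\,dy$ finite for each $s>0$; moreover, the same exponential factor dominates any polynomial-in-$s^{-1}$ singularity as $s\to 0^+$, giving integrability in $s$. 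This produces the uniform-in-$K$ bound and completes the argument.
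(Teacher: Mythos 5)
The paper's proof avoids moment estimates entirely: it applies \eqref{e_spde1weak} to the test functions $y\eta(y)\psi_n(y)$, observes that $\sup_n\sup_y|[y\eta(y)\psi_n(y)]''|\le C$ so the drift term is controlled by $CtV_t^*$, deduces the uniform bound $(M^n_{t,-})^*\leq CtV_t^*$ on the \emph{negative} parts of the local martingales, and then uses a stopping time/Doob argument to show this uniform negative bound forces tightness of $\{|M^n_t|^*:n\}$. Your route is genuinely different: you try to bound $\E\sup_{s\le t}|M^K_s|$ directly via Proposition~\ref{prop_bdg}, Jensen, and a Fubini reduction to $\int_0^t\int(y-x)_+^\alpha\,\E[Y_{s-}(y)^p]\,dy\,ds$, then close using mean-measure and moment estimates.

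The final step, however, fails when $p>1$. Your plan is to bound $\E[\bar Y_s(y)^p]$ by \eqref{e_thm_momentbd_pgo}, but as $y\to\infty$ with $s\in(0,T]$ fixed, $P_sY_0(y)\to 0$ and that bound tends to the positive constant $C_T\,s^{-(\alpha-1)\frac d2\frac p\alpha}$. The bound carries \emph{no} spatial decay at infinity: the Gaussian factor you invoke only lives inside the $[1+\cdots]$ bracket, and the $``1"$ survives after $P_sY_0(y)$ vanishes. Consequently $\int(y-x)_+^\alpha\,\E[\bar Y_s(y)^p]\,dy=\infty$, and the reduction breaks. (For $p\le 1$ your Jensen step $\E[Y_{s-}(y)^p]\le (P_sY_0(y))^p$ genuinely has Gaussian decay and the argument goes through, but the lemma is needed for all $\gamma\in(0,1)$.) The gap could conceivably be repaired by interpolating $\E[\bar Y_s(y)^p]\le(\E\bar Y_s(y)^u)^{(p-1)/(u-1)}(\E\bar Y_s(y))^{(u-p)/(u-1)}$ for $u\in(p,\alpha)$, so the second factor supplies the needed spatial decay, but as written the claim that ``in both cases the same exponential factor dominates'' is incorrect for $p>1$. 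The paper's tightness argument is cleaner precisely because it never needs to know that $\E[\bar Y_s(y)^p]$ decays in $y$ — a fact that is morally what one is trying to establish in the first place.
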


\begin{proof} 
Without loss of generality we assume that $x_r < x = 0$. Since the supremum over $[0,t]$ is non-decreasing in $t$, it suffices to prove the result for fixed $t>0$. We observe that
\begin{align*}
\int_0^\infty y Y_s(dy) &\leq \int_0^1 Y_s(dy)+ \int_1^\infty y Y_s(dy) 
\\ & \leq V_s + \int_1^\infty yY_s(dy)
\end{align*}
for all $s \in [0,t]$. Since $V_t^* < \infty$ a.s., it remains to show that the supremum of the second term above over $s \in [0,t]$ is a.s. bounded.

Let $\psi$ be as in \eqref{e_moll_prop}. For $n \in \N$, let $\psi_n(y) = \psi(y/n)$. Let $\eta$ be a smooth, non-decreasing function which vanishes on $(-\infty, \frac 1 4]$ and equals $1$ on $[\frac 3 4, \infty)$. By \eqref{e_spde1weak}, we have
\begin{equation} \label{e_intlemma_ibp1}
\int_{\frac 1 4}^\infty y \eta(y) \psi_n(y) Y_t(dy) = D^n_t  + M^n_t,
\end{equation}
where
\[ D^n_t = \int_0^t \int \left[y \eta(y) \psi_n(y) \right]'' Y_s(dy)ds\]
and  $M^n_t$ is the local martingale
\begin{equation*}
M^n_t = \int_{(0,t] \times \R} y \eta(y) \psi_n(y) \bar{Y}(s,y)^\gamma \,L(ds,dy).
\end{equation*}
An elementary calculation yields that $\sup_n \sup_y |[y \eta(y) \psi_n(y)]''| \leq C$ for some finite $C>0$. This implies that
\[ |D^n_s| \leq C \int_0^s V_u du \leq Ct V_t^*\]
for all $s \in [0,t]$ and $n \in \N$. Thus, the family $\{|D^n_t|^* : n \in \N\}$ is tight. Moreover, because the left-hand side of \eqref{e_intlemma_ibp1} is non-negative it follows that $M^n_s \geq - |D_s^n| \geq - Cs V^*_s$ for all $s >0$. Since $s\to sV^*_s$ is non-decreasing, we obtain
\begin{equation} \label{e_intlemma_mtgbd1}
(M^n_{t,-})^* \leq  Ct V^*_t \quad \forall \, n \in \N.
\end{equation}
We complete the proof by arguing that this uniform bound on the negative part of the family of local martingales implies that the local martingales themselves form a tight family. 

Let $\eps > 0$. Let $K \geq 1$ be sufficiently large so that 
\begin{equation} \label{e_intlemma_negbd}
\sup_{n \in \N} \, \bP( (M^n_{t,-})^* \geq K) < \eps /2.
\end{equation}
Such a $K$ exists by \eqref{e_intlemma_mtgbd1} and the fact that $V_t^*<\infty$ almost surely. Define the stopping times $\tau^n_K = \inf \{ s > 0 : M^n_s \leq - K\}$. Since $M^n_t$ has no negative jumps, $M^n_{\tau^n_K} = -K$ on $\{\tau_K^n < \infty\}$. Thus for each $n \in \N$, $M^n_{t \wedge \tau^n_K}$ is a local martingale bounded below by $-K$, and hence is a supermartingale. In particular, for all $n \in \N$, $M^n_{t \wedge \tau^n_K} + K$ is a non-negative supermartingale with initial value $K$. It then follows from Doob's supermartingale inequality (see e.g. Exercise II.1.15 of Revuz and Yor \cite{RevuzYor}) that for any $\lambda > K $,
\begin{equation*}
\bP((M^n_{t \wedge \tau_K^n})^* \geq \lambda - K) \leq \frac{K}{\lambda}. 
\end{equation*}
In particular, there exists $K' \geq K$ such that
\begin{equation} \label{e_intlemma_posbd}
\sup_{n \in \N} \, \bP( (M^n_{t \wedge \tau_K^n})^* \geq K') < \eps /2.
\end{equation}
It follows from \eqref{e_intlemma_negbd}, \eqref{e_intlemma_posbd} and $K \leq K'$ that
\begin{align*}
\bP(|M^n_t|^* \geq K') &\leq \bP((M^n_{t,-})^* \geq K) + \bP((M^n_{t})^* \geq K', (M^n_{t,-})^*< K)
\\ &\leq \bP((M^n_{t,-})^* \geq K) + \bP((M^n_{t \wedge \tau_K^n})^* \geq K')
\\& < \eps/2 + \eps/2 = \eps. 
\end{align*}
The second inequality uses the fact that, by the definition of $\tau^n_K$, if $(M_{t,-}^n)^* < K$, then $\tau^n_K >t$, which implies that $(M^n_{t \wedge \tau^n_K})^* = (M^n_t)^*$. Thus the family $\{|M^n_t|^* : n \in \N\}$ is tight, as is $\{|D_n^t|^* : n \in \N\}$, and we deduce from \eqref{e_intlemma_ibp1} that the family 
\begin{equation*}
\left\{ \sup_{s \in [0,t]} \int_{\frac 1 4}^\infty y \eta(y) \psi_n(y) Y_s(dy) : n \in \N \right\} 
\end{equation*}
is tight. We remark that
\[ \int_1^\infty y Y_s(dy) \leq \limsup_{n \to \infty} \int_{\frac 1 4}^\infty y \eta(y) \psi_n(y) Y_s(dy).\]
To see this, we note that the integrands on the right hand side increase to $y \eta(y)$, which exceeds the integrand on the left, and the inequality follows. This implies that 
\[\sup_{s \in [0,t]} \int_1^\infty y Y_s(dy) < \infty \text{ a.s.},\] and the result follows.
\end{proof}

We next establish a preliminary integration by parts formula for $A_t(x)$. The desired formula will be obtained by taking $n \to \infty$ in the representation obtained below.

\begin{lemma} \label{lemma_pre_ibp_boundary}
Let $\psi$ be as in \eqref{e_moll_prop}, $x > x_r$ and $n \in \N$. Then with probability one, 
\begin{align*}
&\int_x^\infty (y-x) \psi((y-x)/n) Y_t(dy)  
\\ &\hspace{1 cm}= A_t(x) + \int_{(0,t] \times \R} (y-x)_+ \psi((y-x)/n) \bar{Y}(s,y)^\gamma L(ds,dy) 
\\ &\hspace{1.3 cm}+ n^{-1} \int_0^t ds \int_x^\infty [n^{-1}(y-x) \psi''((y-x)/n) + 2 \psi'((y-x)/n)] Y_s(dy), \quad t\geq 0.
\end{align*}
\end{lemma}
\begin{proof}
We assume without loss of generality that $x_r < x = 0$. For $n \in \N$, let $\psi_n(\cdot) =\psi(\cdot/n)$. We also fix $\xi \in C^\infty_c(\R)$ such that $\xi \geq 0$ and $\int \xi = 1$, and for $\eps > 0$, let $\xi_\eps(y) = \eps^{-1} \xi(y/\eps)$. Finally let $h(y) = y_+ = y \vee 0$. Then the function $f_\eps$ defined by
\[f_\eps(y) = \psi_n(y) (\xi_\eps * h)(y)\]
is smooth and compactly supported, so by \eqref{e_spde1weak} we have
\begin{equation*} 
\langle Y_t, f_\eps\rangle = \langle Y_0 , f_\eps \rangle + \int_0^t \langle Y_s, f_\eps'' \rangle ds + \int_{(0,t] \times \R} f_\eps(y) \bar{Y}(s,y)^\gamma L(ds,dy), \quad t \geq 0.
\end{equation*}
Since $x_r < 0$, we have $\langle Y_0, f_\eps \rangle = 0$ for sufficiently small $\eps$, and hence for small $\eps$ we have
\begin{equation} \label{e_distibp_pre}
\langle Y_t, f_\eps\rangle =  \int_0^t \langle Y_s, f_\eps'' \rangle ds + \int_{(0,t] \times \R} f_\eps(y) \bar{Y}(s,y)^\gamma L(ds,dy), \quad t \geq 0.
\end{equation}
To complete the proof, it suffices to show that for every $N \in \N$, with probability one, simultaneously for all $t \in (0,N]$ we can pass to the limit in each term in the above as $\eps \downarrow 0$. (Remark that the desired equation holds trivially at time $t=0$ so we need not consider this case.)

Let $f(y) = \psi_n(y) h(y)$. We observe that $f_\eps \to f$ point-wise and $K:= \sup_{\eps >0} \|f - f_\eps\|_\infty < \infty$. Since $(Y_t)_{t \geq 0} \in \mathbb{D}([0,\infty),\cM_f(\R))$, $Y_t$ is a finite measure for all $t \in (0,N]$ a.s., and dominated convergence therefore implies that the left hand side of \eqref{e_distibp_pre} converges to $\langle Y_t, f \rangle$ as $\eps \downarrow 0$ for every $t \in (0,N]$, that is,
\begin{equation} \label{e_distibp_pre_1}
\bP\bigg(\lim_{\eps \downarrow 0} \langle Y_t , f_\eps \rangle = \langle Y_t , f \rangle \,\,\, \forall \, t \in (0,N]\bigg) = 1.
\end{equation}
The second derivative of $f_\eps$ is
\[f_\eps''(y) = n^{-2} \psi''(y/n)(\xi_\eps * h)(y) + 2n^{-1} \psi'(y/n) (\xi_\eps * 1_{\{y > 0\}})(y) + \psi(y/n) \xi_\eps(y),\]
where we have used the fact that $h' = 1_{\{y>0\}}$ and $h'' = \delta_0$ in the distributional sense. As $\eps \downarrow 0$, the first two terms converge (in the bounded point-wise sense) to $n^{-2}\psi''(y/n) h(y)$ and $2n^{-1} \psi'
(y/n) 1_{\{y > 0\}}$, which are bounded. Since with probability one, $\int_0^t Y_s(\cdot) ds$ is a finite measure for all $t \in [0,N]$, we can also pass to the limit simultaneously for all $t \in (0,N]$ for these integrands by dominated convergence.

For the last term in $f_\eps''$, we observe that $\psi(y/n) \xi_\eps =\xi_\eps$ for sufficiently small $\eps > 0$, because $\psi(y/n) = 1$ in a neighbourhood of zero, and that $(\xi_\eps)_{\eps >0}$ is an approximate identity satisfying the assumptions of Lemma~\ref{lemma_At_convergence}. Thus, restricted to the subsequence $(\eps_m)_{m \in \N}$ from part (b) of that lemma, we a.s. have
\begin{equation*}
\int_0^t \langle Y_s, \xi_{\eps_m} \rangle ds = \int_0^t (\xi_{\eps_m} * Y_s)(0) ds \to A_t(0) \,\, \text{ as $m \to \infty$ for all } t \geq 0.
\end{equation*}
Combined with the previous argument for the other terms in $f_\eps''$, we have therefore shown that
\begin{equation} \label{e_distibp_pre_2}
\bP\bigg(\lim_{m \to \infty} \int_0^t \langle Y_s, f_{\eps_m}''\rangle ds = A_t(0) + \int_0^t \langle Y_s, g_n\rangle ds \, \, \forall \, t \in (0,N] \bigg) = 1.
\end{equation} 
where, $g_n(y) = n^{-1}[n^{-1}\psi''(y/n) h(y) + 2\psi'(y/n) 1_{\{y > 0\}}]$.

Finally, consider the stochastic integral
\begin{equation*}
\int_{(0,t] \times \R} (f(y) - f_\eps(y)) \bar{Y}(s,y)^\gamma L(ds,dy).
\end{equation*}
The integrand vanishes point-wise a.e. as $\eps \downarrow 0$, and its absolute value is bounded above by $K\bar{Y}(s,y)^\gamma$, which is in $\mathbb{L}^{\alpha,N}_{\text{a.s.}}$ for every $N \in \N$. Hence, by Lemma~\ref{lemma_dctbdg}, the stochastic integral above vanishes in probability, uniformly for $t \in [0,N]$, as $\eps \downarrow 0$. In particular, there is a subsequence $(\eps_m')_{m \in \N}$ of the sequence $(\eps_m)_{m \in \N}$ along which the stochastic integrals vanish uniformly a.s., and hence, with probability one,
\begin{align*} 
\lim_{m \to \infty} \int_{(0,t] \times \R} f_{\eps_m'}(y) \bar{Y}(s,y)^\gamma L(ds,dy) = \int_{(0,t] \times \R} f(y) \bar{Y}(s,y)^\gamma L(ds,dy) \,\, \forall \, t \in (0,N].
\end{align*}
Taking $\eps\downarrow 0$ along the sequence $(\eps_m')_{m \in \N}$, \eqref{e_distibp_pre_1}, \eqref{e_distibp_pre_2}, and the above now imply that, with probability one, the desired formula holds for all $t \in (0,N]$, which implies the result. \end{proof}

We now complete the integration by parts formula for $A_t(x)$. Part (c) states a (uniform) continuity result for $A_t(x)$ in the spatial variable.

\begin{lemma}\label{lemma_ibp_boundary} (a) For every $x > x_r$, the function $\phi = \phi(s,y) = (y-x)_+ \bar{Y}(s,y)^\gamma$ is in $\mathbb{L}^{\alpha,t}_{\text{a.s.}}$ for every $t>0$, and hence the stochastic integral $(\phi \cdot L)_t$ is well-defined for all $t \geq 0$.

(b) For all $x > x_r$, we a.s. have
\begin{align} \label{e_main_formula}
\int_{x}^\infty (y-x)  Y_t(dy) &= A_t(x) +\int_{(0,t] \times \R} (y-x)_+ \bar{Y}(s,y)^\gamma  L(ds,dy), \quad t\geq 0.
\end{align}

(c) If $(x_n)_{n \in \N} \subset (x_r, \infty)$ and $\lim_{n \to \infty} x_n = x > x_r$, then for every $t>0$, $A_s(x_n) \to A_s(x)$ uniformly on $s \in [0,t]$ in probability as $n \to \infty$.
\end{lemma}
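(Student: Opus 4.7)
The three parts should be proved in order, with (b) depending on (a), and (c) being independent.

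\textit{Part (a).} I would split the integral
\[ \int_{(0,t]\times \R} ((y-x)_+)^\alpha Y_{s-}(y)^p \,ds\,dy \]
into the regions $\{x < y \leq x+1\}$ and $\{y > x+1\}$. The first region is immediate from \eqref{assumption_integ} since $(y-x)_+^\alpha \leq 1$. For the second, it suffices to show that the expectation is finite. Using the identification $Y_{s-}(y) = \bar Y_s(y)$ a.e.\ from Theorem~\ref{thm_stochinteg}(a) and Tonelli, I would bound $\E[Y_{s-}(y)^p]$ by combining the moment estimate \eqref{e_thm_momentbd_pgo} (with $q = p$) when $p > 1$, or Jensen's inequality applied to the mean-measure bound of Lemma~\ref{lemma_meanmeasure} when $p \leq 1$. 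The crucial ingredient is that $Y_0$ is supported in $(-\infty, x_r]$ and $x > x_r$, so $P_sY_0(y) \leq Y_0(1)\,p_s(y-x_r)$ provides Gaussian decay in $y$ for $y > x+1$; the $s$-integrals of this Gaussian factor (after a change of variables) yield pointwise bounds with Gaussian decay in $y$, which easily defeat the polynomial growth $(y-x)^\alpha$.

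\textit{Part (b).} I would approximate $(y-x)_+$ by Schwartz functions and pass to the limit in \eqref{e_spde1weak}. Fix a symmetric mollifier $\psi \in C_c^\infty([-1,1])$ with $\int\psi = 1$, set $\psi_n(z) := n\psi(nz)$, and define
\[ f_n(y) := \int_{-\infty}^y\!\int_{-\infty}^z \psi_n(w-x)\,dw\,dz, \]
so that $f_n \in C^\infty$, $f_n'' = \psi_n(\cdot - x)$, and $f_n \equiv (y-x)_+$ off $(x-1/n, x+1/n)$. Let $\eta_M \in C_c^\infty(\R)$ be a cutoff equal to $1$ on $[-M,M]$, vanishing outside $[-M-1,M+1]$, with $\|\eta_M^{(k)}\|_\infty$ bounded uniformly in $M$ for $k=1,2$. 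Apply \eqref{e_spde1weak} to $\phi_{n,M} := f_n\eta_M \in \cS$ and expand $\phi_{n,M}'' = f_n''\eta_M + 2f_n'\eta_M' + f_n\eta_M''$. Sending first $n\to\infty$ (for $M$ large enough that $\eta_M \equiv 1$ near $x$): the initial term $\langle Y_0, \phi_{n,M}\rangle \to 0$ since $(y-x)_+ \equiv 0$ on $\mathrm{supp}(Y_0)$; the dominant diffusion contribution $\int_0^t\!\int \psi_n(y-x)\eta_M(y)\,Y_s(dy)\,ds$ equals $\int_0^t (\psi_n*Y_s)(x)\,ds$ by symmetry of $\psi$ and converges to $A_t(x)$ in $L^1(\bP)$ by Lemma~\ref{lemma_At_convergence}(b); the two boundary corrections pass by bounded convergence; and the stochastic integral converges in probability by Lemma~\ref{lemma_dctbdg} (dominated by a constant times $\indc_{[-M-1,M+1]}(y)\,Y_{s-}(y)^\gamma \in \LL^{\alpha,t}_{\text{a.s.}}$ via \eqref{assumption_integ}). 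Sending $M\to\infty$: the LHS converges monotonically to $\int_x^\infty (y-x)\,Y_t(dy)$, finite by Lemma~\ref{lemma_integ}; the correction integrals are supported on $\{M\leq|y|\leq M+1\}$ and bounded by $C\int_0^t\int_M^\infty (y-x)_+\,Y_s(dy)\,ds$, which tends to $0$ via Lemma~\ref{lemma_integ} and dominated convergence (noting $(y-x)_+ \equiv 0$ on $[-M-1,-M]$ for $M > -x$); and the stochastic integral converges in probability by Lemma~\ref{lemma_dctbdg}, with dominating function $(y-x)_+\,Y_{s-}(y)^\gamma \in \LL^{\alpha,t}_{\text{a.s.}}$ furnished precisely by part (a).

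\textit{Part (c).} By Theorem~\ref{thm_stochinteg}(a), $\bar Y_s(x) = P_sY_0(x) + M_s(x)$ with $M_s(x) := \int_{(0,s]\times\R} p_{s-u}(x-y)\,Y_{u-}(y)^\gamma\,L(du,dy)$, and Lemma~\ref{lemma_meanmeasure} gives $\E|M_s(x)| \leq 2P_sY_0(x)$, integrable in $s$ on $[0,t]$ since $d=1$. Stochastic Fubini (Lemma~\ref{lemma_fubini1}) then yields
\[ A_t(x) = \int_0^t P_sY_0(x)\,ds + \int_{(0,t]\times\R} G_t(x-y,u)\,Y_{u-}(y)^\gamma\,L(du,dy),\qquad G_t(z,u) := \int_u^t p_{s-u}(z)\,ds. \]
In $d=1$ one has the uniform bound $G_t(z,u) \leq \int_0^t (4\pi s)^{-1/2}\,ds = \sqrt{t/\pi}$ independent of $z,u$, and $G_t(x_n-y,u) \to G_t(x-y,u)$ pointwise by continuity of $p_\cdot$ and dominated convergence. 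As $x_n \to x$, the deterministic term converges by continuity of the heat semigroup, and applying Lemma~\ref{lemma_dctbdg} to $[G_t(x_n-y,u) - G_t(x-y,u)]\,Y_{u-}(y)^\gamma$---dominated by $2\sqrt{t/\pi}\cdot Y_{u-}(y)^\gamma \in \LL^{\alpha,t}_{\text{a.s.}}$ via \eqref{assumption_integ}---gives the convergence of the stochastic term in probability.

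\textit{Main obstacle.} Part (b) is the technical heart, requiring the iterated limit $n\to\infty$, $M\to\infty$ with careful bookkeeping of the three components of $\phi_{n,M}''$; the $M\to\infty$ passage in the stochastic integral hinges on a dominating function from part (a), forcing the stated order. The subtlest step within (a) is the far-field control when $p > 1$, where the non-decaying \textquotedblleft$1$\textquotedblright\ inside the bracket of \eqref{e_thm_momentbd_pgo} must be handled by exploiting the Gaussian decay of $P_sY_0(y)$ coming from the compact support of $Y_0$ and the separation $x > x_r$.
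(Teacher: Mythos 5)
Your part (a) has a genuine gap in the case $p>1$ (i.e.\ $\gamma>1/\alpha$). You propose to prove $\int_{(0,t]\times\R}(y-x)_+^\alpha Y_{s-}(y)^p\,ds\,dy<\infty$ a.s.\ by showing the \emph{expectation} of the far-field piece $\{y>x+1\}$ is finite using \eqref{e_thm_momentbd_pgo} with $q=p$. But that estimate reads
\[
\E(\bar Y_s(y)^p)\le C_T\, s^{-(\alpha-1)\frac{p}{2\alpha}}\big[1+s^{1-\frac{p-1}{2}}Y_0(1)^{p-1}P_sY_0(y)\big]^{p/\alpha}+C_T\,P_sY_0(y)^p,
\]
and after the sub-additive split $[1+u]^{p/\alpha}\le 1+u^{p/\alpha}$ the leading term is $C_T\,s^{-(\alpha-1)p/(2\alpha)}$, which is \emph{independent of $y$}. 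The ``$1$'' is additive, not multiplicative, so the Gaussian decay of $P_sY_0(y)$ is powerless against it: $\int_{x+1}^\infty(y-x)^\alpha\,C_T\,s^{-\cdot}\,dy=+\infty$ and the expectation of your far-field integral diverges. (For $p\le 1$ your Jensen argument does give the decaying bound $P_sY_0(y)^p$ and is fine.) You acknowledge the ``$1$'' as ``the subtlest step'' but the resolution you sketch does not exist with the stated moment bounds. The paper therefore proves (a) without any moment estimate: it applies the weak formulation to smooth truncations to produce a family of local martingales $M^n_t$, uses Lemma~\ref{lemma_integ} together with a supermartingale argument to show the family $\{|M^n_t|^*:n\}$ is tight, converts this via Proposition~\ref{prop_integral_rep} into tightness of the inner clocks $T_n(t)$, and then applies monotone convergence to get $T(t)<\infty$ a.s. This soft argument is the reason (a) and (b) are proved together in the paper rather than (a) first.

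Your part (b) is the same double-limit (smooth approximation of $(y-x)_+$, then expanding cutoff) that the paper uses, and the individual limit passages (Lemma~\ref{lemma_At_convergence}(b) for the local time term, Lemma~\ref{lemma_dctbdg} for the stochastic integral) are correct; but since your final $M\to\infty$ step takes its dominating function $(y-x)_+Y_{s-}(y)^\gamma\in\LL^{\alpha,t}_{\text{a.s.}}$ from (a), the gap propagates.

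Your part (c) is correct and genuinely different from the paper's. The paper proves (c) by subtracting two instances of formula (b) at $x$ and $x_n$ and applying Lemma~\ref{lemma_dctbdg} to the difference of stochastic integrals, so their (c) depends on (b). You instead apply stochastic Fubini directly to the representation $\bar Y_s(x)=P_sY_0(x)+M_s(x)$, arriving at $A_t(x)=\int_0^t P_sY_0(x)\,ds+\int G_t(x-y,u)Y_{u-}(y)^\gamma\,L(du,dy)$ with $G_t$ the time-integrated kernel, and exploit the uniform bound $G_t\le\sqrt{t/\pi}$ in $d=1$ to invoke Lemma~\ref{lemma_dctbdg}. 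Your justification of the Fubini hypothesis via $\E|M_s(x)|\le 2P_sY_0(x)$ is correct. This route is arguably cleaner: it makes (c) independent of (a) and (b), and the dominating function $\sqrt{t/\pi}\,Y_{u-}(y)^\gamma$ needs only the basic assumption \eqref{assumption_integ} rather than part (a).
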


\begin{proof}
We assume without loss of generality that $x_r < x = 0$. From Lemma~\ref{lemma_pre_ibp_boundary}, for $n \in \N$, with probability one, for all $t \geq 0$ we have
\begin{align} \label{e_ibplemma11}
&\int_0^\infty y \psi_n(y) Y_t(dy)  = A_t(0) + M^n_t + n^{-1} \int_0^t ds \int_0^\infty [n^{-1} y \psi''(y/n) + 2 \psi'(y/n)] Y_s(dy),
\end{align}
where $\psi$ is as in \eqref{e_moll_prop} and
\begin{equation*}
M^n_t := \int_{(0,t] \times \R} y_+ \psi_n(y) \bar{Y}(s,y)^\gamma L(ds,dy).
\end{equation*}
To take $n\to \infty$ in \eqref{e_ibplemma11} and pass to the limit, we use martingale arguments as in the proof of Lemma~\ref{lemma_integ}. Since $\psi'(\cdot/n)$ and $\psi''(\cdot / n)$ are bounded and vanish on $[2n,\infty)$, the square bracketed term is bounded by a constant $C$ in absolute value. We can thus bound the absolute value of the rightmost term in \eqref{e_ibplemma11} as in the proof of Lemma~\ref{lemma_integ}, which yields
\begin{align} \label{e_ibplemma2}
n^{-1} \left| \int_0^t ds \int [n^{-1} y \psi''(y/n) + 2 \psi'(y/n)] Y_s(dy) \right| \leq  C t n^{-1} V_t^*.
\end{align}
Since the left-hand side of \eqref{e_ibplemma11} is non-negative, using the above bound we obtain that for $s \leq t$,
\begin{equation*}
M^n_s \geq -  A_s(0) - Cs n^{-1} V_s^*.
\end{equation*}
Since $A_s(0)$ and $s V_s^*$ are both non-decreasing in $s$, it follows that
\begin{equation*}
(M^n_{t,-})^* \leq  A_t(0) + Ct n^{-1} V_t^* \leq  A_t(0) + Ct V_t^* \quad \text{ for all } n \in \N.
\end{equation*}
On the other hand, from \eqref{e_ibplemma11} and \eqref{e_ibplemma2} we obtain that
\begin{equation*}
(M_{t,+}^n)^* \leq Ct n^{-1} V_t^* + \sup_{s \in [0,t]} \int_0^\infty y Y_s(dy) \quad \text{ for all } n \in \N.
\end{equation*}
By Lemma~\ref{lemma_integ}, the right-hand side above is a.s. finite. Since $(M^n_{t,-})^*$ and $(M^n_{t,+})^*$ both have a.s. finite upper bounds which are uniform in $n$, we conclude that the family $\{|M^n_t|^* : n \in \N\}$ is tight for every $t>0$.

From Proposition~\ref{prop_integral_rep}, for each $n \in \N$ there is a stable process, which we denote by $(W^n_t)_{t \geq 0}$, such that
\begin{equation} \label{e_ibplemma_timechange}
M^n_t = W^n_{T_n(t)},
\end{equation}
where 
\begin{equation} \label{e_ibplemma_Tn}
T_n(t) = \int_{(0,t] \times [0,\infty)} y^\alpha \psi_n(y)^\alpha \bar{Y}(s,y)^p dsdy.
\end{equation}
As a consequence of \eqref{e_ibplemma_timechange}, the tightness of $\{|M^n_t|^* : n \in \N\}$ implies the tightness of $\{T_n(t) : n \in \N\}$. We will prove this claim momentarily, but first use it to conclude the proof of part (a). Since $T_n(t)$ is increasing in $n$, tightness implies that $T_n(t)$ almost surely converges to a finite limit as $n \to \infty$. Applying monotone convergence in \eqref{e_ibplemma_Tn}, we obtain that, $T_n(t) \uparrow T(t) < \infty$ a.s., where
\begin{equation*}
T(t) := \int_{(0,t] \times [0,\infty)} y^\alpha \bar{Y}(s,y)^p dsdy.
\end{equation*}
It follows that the function $(s,y) \to y \bar{Y}(s,y)^\gamma$ is in $\mathbb{L}^{\alpha,t}_{\text{a.s.}}$, which is what we wanted to show.

We now prove the claim that the tightness of $\{|M^n_t|^* : n \in \N\}$ implies the tightness of $\{T_n(t) : n \in \N\}$. Suppose, to the contrary, that $\{T_n(t) : n \in \N\}$ is not tight. We will prove that $\{|M^n_t|^* : n \in \N\}$ is not tight and thereby obtain a contradiction. Under the assumption that $\{T_n(t) : n \in \N\}$ is not tight, there exists $\eps > 0$ such that for every $K \in \N$, there exists $n_K \in \N$ such that $\bP(T_{n_K}(t) \geq K) > \eps$. We choose such an $\eps$. Next, let $\lambda >0$. It is easy to show from the scaling property that there exists $K = K(\eps,\lambda) \in \N$ such that for an $\alpha$-stable process $(W_s)_{s \geq 0}$,
\begin{equation*}
\bP(|W_K| > \lambda) \geq 1 - \eps/2.
\end{equation*}
Fix such a $K$. Then for $n = n_K$ as above, by \eqref{e_ibplemma_timechange} we have
\begin{align*}
\bP(|M^n_t|^* > \lambda) \geq \bP(|W^n_{T_n(t)}|^* > \lambda, T_n(t) > K) &\geq \bP(|W_K^n| > \lambda, T_n(t) > K),
\end{align*}
since if $T_n(t) \geq K$ we have $|W^n_{T_n(t)}|^* \geq |W^n_K|^* \geq |W^n_K|$. Finally, by our choice of $n$ and $K$, we have $\bP(|W_K^n| > \lambda) \geq 1 - \eps/2$ and $\bP(T_n(t) \geq K) > \eps$. It follows that
\begin{align*}
\bP(|W_K^n| > \lambda, T_n(t) > K) &= \bP(|W_K^n| > \lambda) + \bP(T_n(t) > K) - \bP(|W_K^n| > \lambda \text{ or } T_n(t) > K)
\\&\geq (1 -\eps/2) + \eps - 1 = \eps/2.
\end{align*}
Hence, we have shown that $\bP(|M^n_t|^* > \lambda) \geq \eps/2$ for $n = n_K$. Since we can do this for every $\lambda > 0$, we have shown that $\{|M^n_t|^* : n \in \N\}$ is not tight, obtaining the desired contradiction. This proves the claim and thus completes the proof of part (a). 

In order to prove part (b) we return to \eqref{e_ibplemma11}. By \eqref{e_ibplemma2}, the rightmost term in \eqref{e_ibplemma11} vanishes uniformly for $t \in [0,N]$ a.s. as $n\to \infty$ for any $N \in \N$. For the term on the left hand side, monotone convergence implies that with probability one, for all $t \geq 0$,  
\[ \lim_{n \to \infty} \int_0^\infty y \psi_n(y) Y_t(dy) = \int_0^\infty y Y_t(dy). \]
Thus, it suffices to establish the convergence of $M^n_t$ to the desired stochastic integral. We write
\[ M_t = \int_{(0,t] \times [0,\infty)} y \bar{Y}(s,y)^\gamma L(ds,dy)\]
and observe that
\begin{equation*}
M_t - M^n_t = \int_{(0,t] \times [0,\infty)} y[1 - \psi_n(y)]\bar{Y}(s,y)^\gamma  L(ds,dy). 
\end{equation*}
The integrand vanishes point-wise and is bounded above by $y_+ \bar{Y}(s,y)^\gamma$, which is in $\LL^{\alpha,t}_{\text{a.s.}}$ by part (a). Hence $|M_t - M_t^n|^*$ converges to $0$ in probability as $n \to \infty$ by Lemma~\ref{lemma_dctbdg} for any $t>0$. Taking $t = N \in \N$ and restricting to a subsequence $(n_k)_{k \in \N}$ on which $|M_N - M_N^{n_k}|^*$ vanishes a.s., it follows that $\bP(\lim_{k \to \infty} M^{n_k}_t = M_t \, \text{ for all } t \in [0,N]) = 1$. Combined with what we have proved for the other terms, this implies that \eqref{e_main_formula} a.s. holds simultaneously for all $t \in [0,N]$ for every $N \in \N$, and hence for all $t \geq 0$. This completes the proof of (b).

We now prove part (c). Let $t>0$ and suppose that $(x_n)_{n \in \N}$ satisfies $x_n > x_r$ for all $n \in \N$ and $\lim_{n \to \infty} x_n = 0$. If $x_n < 0$, using the representation from part (b) at both $0$ and $x_n$ gives, for every $s \in [0,t]$,
\begin{align*}
A_s(0) - A_s(x_n) = -x_n Y_s([0,\infty)) + \int_{x_n}^{0} (y-x_n) Y_s(dy)  + M_s(x_n) - M_s(0),
\end{align*}
where 
\[M_s(x) := \int_{(0,s] \times [x,\infty)} (y-x) \bar{Y}(u,y)^\gamma L(du,dy)\]
for $x=0$ and $x=x_n$. The absolute value of the sum of the first two terms on the right hand side is bounded above by $|x_n| Y_s(1)$, and one obtains the same upper bound when $x_n > 0$. Hence, uniformly for $s \leq t$, these terms are bounded above in absolute value by $|x_n| V_t^*$. Taking the supremum over $s \in [0,t]$, we obtain
\begin{align*}
|A_t(0) - A_t(x_n)|^* \leq |x_n|V_t^* + |(F_n \cdot L)_t|^*,
\end{align*} 
where $F_n(s,y) = [(y)_+ - (y - x_n)_+]\bar{Y}(s,y)^\gamma$. Since $F_n(s,y)$ vanishes point-wise as $n \to \infty$ and $|F_n(s,y)| \leq |x_n| \bar{Y}(s,y)^\gamma \leq  \bar{Y}(s,y)^\gamma \in \LL^{\alpha,t}_{\text{a.s.}}$, where the last inequality holds for sufficiently large $n$, $|(F_n \cdot L)_t|^*$ converges to $0$ in probability as $n\to \infty$ by Lemma~\ref{lemma_dctbdg}. Hence, by the above, $|A_t(0) - A_t(x_n)|^*$ converges to $0$ in probability as $n \to \infty$. This proves part (c) and completes the proof of the lemma. \end{proof}

The next lemma formalizes the idea that, if the solution initially has no mass to the right of $x$, then a positive amount of mass must pass through $x$ for there to be mass to the right of $x$ at a future time.

\begin{lemma} \label{lemma_crosstheline} Suppose that $x > x_r$. Then for any a.s. bounded stopping time $\tau$, with probability one, 
\[ A_\tau(x) = 0 \Rightarrow Y_t((x,\infty)) = 0\,\, \text{ for all } t \leq \tau .\]
\end{lemma}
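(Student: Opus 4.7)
The plan is to use the decomposition \eqref{e_main_formula} from Lemma~\ref{lemma_ibp_boundary}(b),
\[
\int_x^\infty (y-x)\,Y_t(dy) = A_t(x) + M_t(x),
\]
where $M_t(x) := \int_{(0,t]\times \R} (y-x)_+ Y_{s-}(y)^\gamma\,L(ds,dy)$ is a local martingale (well-defined by Lemma~\ref{lemma_ibp_boundary}(a)). The key observation is that if $A_\cdot(x)$ has not yet started growing by time $\tau$, then the non-negativity of the left-hand side forces $M_\cdot(x)$ to be a non-negative local martingale started at zero, hence identically zero, from which we would recover $Y_t((x,\infty)) = 0$ for all $t \leq \tau$.

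Concretely, I would introduce the stopping time $\sigma_x := \inf\{t \geq 0 : A_t(x) > 0\}$. This is a stopping time because $A_t(x) = \int_0^t \bar{Y}_s(x)\,ds$ is continuous and adapted ($\bar{Y}_s(x)$ is the adapted density from Theorem~\ref{thm_stochinteg}) and the filtration satisfies the usual conditions. By continuity and monotonicity of $t \mapsto A_t(x)$, we have $A_t(x) = 0$ for every $t \leq \sigma_x$ a.s., and monotonicity also yields $\{A_\tau(x) = 0\} \subseteq \{\tau \leq \sigma_x\}$.

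From \eqref{e_main_formula} and the non-negativity of its left-hand side, for $t \leq \sigma_x$ we have $M_t(x) \geq 0$. Hence the stopped process $(M_{t \wedge \sigma_x}(x))_{t \geq 0}$ is a c\`adl\`ag non-negative local martingale starting at zero. Non-negative local martingales are supermartingales (Fatou applied to a localizing sequence), so $\E[M_{t \wedge \sigma_x}(x)] \leq 0$; combined with non-negativity this forces $M_{t \wedge \sigma_x}(x) = 0$ a.s.\ for each $t$, and by c\`adl\`ag regularity the process is identically zero on $[0,\infty)$ a.s. Substituting back into \eqref{e_main_formula} gives $\int_x^\infty (y-x)\,Y_t(dy) = 0$ for every $t \leq \sigma_x$, which in turn forces $Y_t((x,\infty)) = 0$ for every $t \leq \sigma_x$ a.s.

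To finish, restrict to the event $\{A_\tau(x) = 0\} \subseteq \{\tau \leq \sigma_x\}$: on this event $Y_s((x,\infty)) = 0$ for every $s \leq \tau$. Since by Definition~\ref{def_sol} the measure $\mathbf{Y}_\tau$ is absolutely continuous, $Y_s(\{x\}) = 0$ for Lebesgue-a.e.\ $s$, so $Y_s([x,\infty)) = 0$ for a.e.\ $s \in [0,\tau]$, and integrating gives $\int_0^\tau Y_s([x,\infty))\,ds = 0$. The only delicate point in this plan is invoking the non-negative supermartingale property to deduce $M_{\cdot \wedge \sigma_x}(x) \equiv 0$; once this is in hand, the rest is bookkeeping about the single-point discrepancy between $(x,\infty)$ and $[x,\infty)$, which is controlled by the absolute continuity of $\mathbf{Y}_\tau$.
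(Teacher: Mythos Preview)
Your proposal is correct and follows essentially the same argument as the paper: both introduce the stopping time $\sigma = \inf\{t>0 : A_t(x)>0\}$, use \eqref{e_main_formula} together with $A_{\sigma}(x)=0$ to see that the stopped local martingale $M_{\cdot \wedge \sigma}(x)$ is non-negative, deduce it is a supermartingale starting at zero and hence identically zero, and read off $\int_x^\infty (y-x)\,Y_t(dy)=0$ for $t\leq\sigma$. Your treatment of the singleton $\{x\}$ via the absolute continuity of $\mathbf{Y}_\tau$ is a detail the paper leaves implicit in its concluding ``the lemma follows.''
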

\begin{proof}
Without loss of generality suppose that $x_r < x = 0$. By Lemma~\ref{lemma_ibp_boundary}(b),
\begin{equation} \label{e_lemma_crosstheline1}
\int_{0}^\infty y Y_t(dy) = A_t + M_t, \quad t \geq 0,
\end{equation}
where $A_t = A_t(0)$ and $M_t$ is a local martingale. The left-hand side of \eqref{e_lemma_crosstheline1} is non-negative, and hence $A_t + M_t \geq 0$ for all $t\geq 0$. Let $\tau$ be any a.s. bounded stopping time and let $\sigma = \inf \{s > 0 : A_s > 0\}$. Since $A_t$ is continuous and non-decreasing, $A_{\sigma \wedge t} = 0$ for all $t \geq 0$ a.s. Arguing as in the proofs of Lemmas~\ref{lemma_integ} and \ref{lemma_ibp_boundary}, we see that the stopped local martingale $M_{t \wedge \tau \wedge \sigma}$ is in fact a supermartingale started from $0$. Since $A_t + M_t \geq 0$ and $t \to A_t$ is non-decreasing, we must have $M_{t \wedge \tau \wedge \sigma} \geq - A_{\sigma \wedge t} = 0$. Hence, with probability one, for all $t \geq 0$ the negative part of $M_{t \wedge \tau \wedge \sigma}$ equals zero, and we must have that $M_{t \wedge \tau \wedge \sigma}$ is identically zero. In particular, $M_t = 0$ for all $t \leq \tau \wedge \sigma$. It now follows from \eqref{e_lemma_crosstheline1} that $\int_{0}^\infty y Y_t(dy) = 0$ for all $t \leq \tau \wedge \sigma$ almost surely. The lemma follows. \end{proof}

Before we state and prove the last main technical lemma, we prove an elementary lemma about stable processes stopped upon leaving an interval. 

\begin{lemma} \label{lemma_stable_exit} Let $(W_s)_{s \geq 0}$ be a spectrally positive $\alpha$-stable process started from $0$, let $b \in (0,1]$, $\delta \in (0,1)$, and define the stopping times 
\[\hat{\tau}_1 = \inf\{s >0 : W_s \leq -b\}, \quad \hat{\tau}_2 = \inf \{s>0 : W_s \geq b^{1-\delta}\}, \quad \hat{\tau} = \hat{\tau}_1 \wedge \hat{\tau}_2.\]
The following hold:

(a) $\bP(\hat{\tau} = \hat{\tau_2}) = \bP(\hat{\tau}_2 < \hat{\tau}_1) \leq b^\delta$.

(b) $\sup_{\lambda >0} \lambda^\alpha \bP\left(|W_{\hat{\tau}}|^* > \lambda \right) \leq 2^\alpha b^{\alpha (1-\delta)}$.
\end{lemma}
\begin{proof}
\cite[Theorem~1]{Port1970} gives the distribution of a one-sided $\alpha$-stable process at the exit time of an interval. For the interval we consider, the distribution is given by
\begin{align}
&\bP(W_{\hat{\tau}} = -b) = \left(\frac{1}{1+b^{\delta}}\right)^{\alpha-1}, \notag
\\ &\bP(W_{\hat{\tau}} \in [b^{1-\delta},b^{1-\delta} + y]) \label{e_stablehitting}
\\ &\hspace{1 cm}= \kappa_\alpha b^{1 + (\alpha - 1)(1-\delta)}\int_0^y z^{-(\alpha-1)} (b^{1-\delta}+z)^{-1} (b+b^{1-\delta}+z)^{-1} dz, \notag
\end{align}
for all $y \geq 0$, where $\kappa_\alpha = \frac{\sin(\pi(\alpha-1))}{\pi}$. Note that $\bP(W_{\hat{\tau}} < -b) = 0$ because $W$ has no negative jumps.

The first equality in \eqref{e_stablehitting} can be used to prove part (a) but we opt for a cleaner martingale argument. It is straightforward to argue that $(W_{s \wedge \hat{\tau}})_{s \geq 0}$ is a uniformly integrable martingale and $\hat{\tau} < \infty$ a.s., so the martingale convergence theorem implies that
\begin{align*}
0 = \E(W_{\hat{\tau}}) = \E(W_{\hat{\tau}}1_{\{ \hat{\tau} = \hat{\tau}_1\}}) +\E(W_{\hat{\tau}}1_{\{ \hat{\tau} = \hat{\tau}_2\}})  &= -b \bP(\hat{\tau} = \hat{\tau}_1) + \E(W_\tau 1_{\{\hat{\tau} = \hat{\tau}_2\}}) 
\\ & \geq -b \bP(\hat{\tau} = \hat{\tau}_1) + b^{1-\delta} \bP(\hat{\tau} = \hat{\tau}_2).
\end{align*}
The inequality holds because $W_{\hat{\tau}_2} \geq b^{1-\delta}$. Since $\hat{\tau} = \hat{\tau}_2$ is equivalent to $\hat{\tau}_2 < \hat{\tau}_1$, rearranging we obtain that
\begin{equation*}
\bP( \hat{\tau}_2 < \hat{\tau}_1) \leq b^{1-(1-\delta)} \bP(\hat{\tau} = \hat{\tau}_1) \leq b^\delta.
\end{equation*}
This proves part (a).

We now prove part (b). We consider two different cases for values of $\lambda$. For $\lambda \in (0,2b^{1-\delta}]$, we use the trivial bound
\begin{align*}
\sup_{\lambda \in (0,2b^{1-\delta}]} \lambda^\alpha \bP( |W_{\hat{\tau}}| > \lambda) \leq 2^\alpha b^{\alpha(1-\delta)}.
\end{align*}
Now consider $\lambda > 2b^{1-\delta}$. Since $b \leq 1$, we have $ \lambda > 2b^{1-\delta} > b$, so if $|W_{\hat{\tau}}| > \lambda$, we in fact have $W_{\hat{\tau}} > \lambda$. Hence, using \eqref{e_stablehitting}, we obtain
\begin{align*}
\bP(|W_{\hat{\tau}}| > \lambda) &= \bP(W_{\hat{\tau}} \geq b^{1-\delta}) -  \bP(W_{\hat{\tau}} \in [b^{1-\delta},\lambda])
\\ &= \kappa_\alpha b^{1 + (\alpha - 1)(1-\delta)}\int_{\lambda - b^{1-\delta}}^\infty z^{-(\alpha-1)} (b^{1-\delta}+z)^{-1} (b+b^{1-\delta}+z)^{-1} dz
\\ &\leq \kappa_\alpha b^{1 + (\alpha - 1)(1-\delta)} \int_{\lambda-b^{1-\delta}}^\infty z^{-\alpha - 1} dz
\\ &\leq \frac{\kappa_\alpha}{\alpha} b^{1 + (\alpha - 1)(1-\delta)} (\lambda - b^{1-\delta})^{-\alpha}
\\ &\leq  \frac{2^\alpha \kappa_\alpha}{\alpha} b^{1 + (\alpha - 1)(1-\delta)} \lambda^{-\alpha}.
\end{align*}
The last line holds because, since $\lambda \geq 2b^{1-\delta}$, we have $\lambda - b^{1-\delta} \geq \lambda / 2$. Combining the bounds from the two cases, we obtain that
\[\sup_{\lambda > 0 }\lambda^{\alpha} \bP(|W_{\hat{\tau}}| > \lambda) \leq \max \left\{2^\alpha b^{\alpha(1-\delta)}, \frac{2^\alpha \kappa_\alpha}{\alpha} b^{1 + (\alpha - 1)(1-\delta)} \right\}.\]
The smaller of the exponents is $\alpha(1-\delta)$ and the larger of the constants is $2^\alpha$, so for all $b \in (0,1]$ the maximum is equal to $2^\alpha b^{\alpha(1-\delta)}$ and the claimed inequality follows.
\end{proof}

We now prove the main technical result used in the proof of Theorem~\ref{thm_onedim}. It is analogous to parts (ii) and (iii) of \cite[Lemma 2.1]{Krylov1997}, but both the statement and the proof are modified in our setting owing to technical issues arising due to the stable noise, such as unboundedness of the solution and discontinuity of related stochastic integrals. We point out in particular that the lower bound from Proposition~\ref{prop_isometryint} is used in a key step, whereas the analogous argument in \cite{Krylov1997} uses the Burkholder-Davis-Gundy inequality, which cannot be applied in the same way here.

\begin{lemma} \label{lemma_probbound}
(a) Let $\gamma \in [1/\alpha,1)$. There is a universal constant $C>0$ such that the following holds: for any $t>0$, $x_0 > x_r$, any $\delta \in (0,1-\gamma)$, and all $a, b, r \in (0,1)$ there exists $x_1 \in [x_0 + r, x_0 + 2r]$ such that
\begin{equation*} 
\bP (A_t(x_1) \geq a) \leq \bP(A_t(x_0) \geq b) + b^{\delta} + C \frac{t^{(p-1)/\alpha}}{r^{1 + 1/\alpha}}\left( \frac{b^{1-\delta}}{ a^\gamma} \right).
\end{equation*}

(b) Suppose $\gamma \in (0,1/\alpha)$, and let $\theta, \beta \in (1,\alpha)$. For any $t>0$, there is a universal constant $C_1>0$ and a constant $C_2(t,\theta,\beta, Y_0(1)) > 0$ such that the following holds: for any $x_0 \geq x_r + 1$ and $K \geq 1$, any $\delta \in (0,1-\gamma)$, and all $a, b, r \in (0,1)$, there exists $x_1 \in [x_0 + r, x_0 + 2r]$ such that 
\begin{align*} 
&\bP ( A_t(x_1) \geq a ) \\
& \hspace{0.8 cm}\leq \bP(A_t(x_0) \geq b) + b^{\delta} + C_1 \frac{K^{(1-p)/\alpha}}{r^{1 + 1/\alpha}}  \left( \frac{b^{1-\delta}}{ a^{1/\alpha}} \right) + C_2(t,\theta,\beta,Y_0(1)) \left( \frac{K^{-\beta(\theta-1)/\theta}}{a} \right). 
\end{align*}
\end{lemma}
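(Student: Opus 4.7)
The proof adapts Krylov's strategy \cite{Krylov1997} to the stable setting, relying on the Tanaka-type formula \eqref{e_main_formula} of Lemma~\ref{lemma_ibp_boundary}(b), the stable BDG inequality (Proposition~\ref{prop_bdg}), and an averaging argument over $x_1$. I describe the approach for (a); part (b) is analogous with an extra truncation.

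Define the stopping time $\sigma := \inf\{s > 0 : A_s(x_0) > b\}$. Since $s \mapsto A_s(x_0)$ is continuous, $A_{\sigma \wedge s}(x_0) \leq b$ and $\{A_t(x_0) < b\} \subseteq \{\sigma > t\}$. Applying Lemma~\ref{lemma_ibp_boundary}(b) at both $x_0$ and $x_1 \in [x_0+r, x_0+2r]$, subtracting the two identities, and using $\int(y-x_1)_+ Y_t(dy) \leq \int(y-x_0)_+ Y_t(dy)$, I obtain
\[ A_t(x_1) \leq A_t(x_0) + D_t(x_1), \quad D_t(x_1) := \int_{(0,t]\times\R} g_{x_1}(y) Y_{s-}(y)^\gamma L(ds,dy), \]
where $g_{x_1}(y) := (y-x_0)_+ - (y-x_1)_+$ is non-negative, supported on $[x_0,\infty)$, and bounded by $x_1-x_0 \leq 2r$. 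Stopping at $\sigma$ and restricting to $\{A_t(x_0) < b\}$ gives $A_t(x_1) \leq b + D_{t\wedge\sigma}(x_1)$, so
\[ \bP(A_t(x_1) \geq a) \leq \bP(A_t(x_0) \geq b) + \bP(D_{t\wedge\sigma}(x_1) \geq a-b). \]

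To control the second probability I introduce an auxiliary stopping time $\tau_\mu := \inf\{s : T_{g_{x_1}}(s) > \mu\}$ for a threshold $\mu>0$ to be optimized, where $T_g(s) := \int_0^s \int g^\alpha Y_{u-}^p dudy$ is the BDG-integrand. Then
\[ \bP(D_{t\wedge\sigma}(x_1) \geq a-b) \leq \bP(T_{g_{x_1}}(t\wedge\sigma) > \mu) + \bP(D_{t\wedge\sigma\wedge\tau_\mu}(x_1) \geq a-b). \]
Markov with exponent $\theta$ bounds the first term by $\E[T_{g_{x_1}}(t\wedge\sigma)^\theta]/\mu^\theta$, while Markov with some $q \in [1,\alpha)$ combined with Proposition~\ref{prop_bdg} bounds the second by $C\mu^{q/\alpha}/(a-b)^q$ (since $T_{g_{x_1}}(t\wedge\sigma\wedge\tau_\mu) \leq \mu$ a.s.). Averaging over $x_1 \in [x_0+r, x_0+2r]$ exploits the key pointwise estimate
\[ \int_{x_0+r}^{x_0+2r} g_{x_1}(y)^\alpha\, dx_1 \leq C r^{\alpha+1}, \quad y \geq x_0, \]
obtained by explicit computation on $[x_0,x_1]$ (where $g = y-x_0$) and $[x_1,\infty)$ (where $g = x_1 - x_0$). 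Via Fubini and Jensen, the averaged moments of $T_{g_{x_1}}$ reduce to moments of $\int Y_{s-}(y)^p dsdy$; in part (a), where $p \geq 1$, these are controlled using the moment bound \eqref{e_thm_momentbd_pgo} of Theorem~\ref{thm_stochinteg}(b) integrated in time, producing the $t^{(p-1)/\alpha}$ factor and (after combining with the $r^{\alpha+1}$ averaging factor and taking $q$-th roots) the $r^{-(1+1/\alpha)}$ factor.

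The main obstacle is the selection of $\mu$, $q$, and $\theta$ in terms of $b$, $\delta$, $\gamma$, $\alpha$. Since $\gamma < 1 \leq q$, matching the $a^{-\gamma}$ exponent in the statement cannot be done with a single Markov bound; rather, the auxiliary event mechanism partitions the deviation $\{D_{t\wedge\sigma}(x_1) \geq a-b\}$ into a ``small $T_{g_{x_1}}$'' regime (contributing $t^{(p-1)/\alpha} b^{1-\delta}/(r^{1+1/\alpha} a^\gamma)$) and a ``large $T_{g_{x_1}}$'' regime (contributing at most $b^\delta$), with the crossover set by the scaling of $\mu$ with $b$. By the pigeonhole principle, some $x_1 \in [x_0+r, x_0+2r]$ attains the averaged bound, proving (a). For part (b), $p = \alpha\gamma < 1$ means the bound \eqref{e_thm_momentbd_plo} no longer controls $\E[\int Y^p\, dyds]$ as tightly, so I further localize with the stopping time $\tau_K$ from \eqref{def_tau_k_Y}, which bounds $\int_0^{\tau_K} \int Y^p dyds \leq K$ deterministically and produces the $K^{(1-p)/\alpha}$ factor from the BDG step. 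The probability $\bP(\tau_K \leq t)$ is then controlled by Markov using higher moments from Theorem~\ref{thm_stochinteg}(b) together with a H\"older step with exponents $\beta \in (1,\alpha)$ and $\theta$, yielding the $K^{-\beta(\theta-1)/\theta}$ factor with a $Y_0(1)$- and $t$-dependent coefficient $C_2(t,\theta,\beta, Y_0(1))$.
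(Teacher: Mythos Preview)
Your approach has the right broad shape, but there is a genuine gap in how you control the time-change. You propose bounding $\bP(T_{g_{x_1}}(t\wedge\sigma) > \mu)$ via Markov and then estimating $\E[T_{g_{x_1}}(t\wedge\sigma)^\theta]$ by integrating the pointwise moment bounds of Theorem~\ref{thm_stochinteg}(b) over $y$. This fails: $g_{x_1}(y)$ equals the constant $x_1-x_0$ on $[x_1,\infty)$, while the moment bounds \eqref{e_thm_momentbd_pgo}--\eqref{e_thm_momentbd_plo} contain a term of order $s^{-(\alpha-1)d/(2\alpha)\cdot q}$ that does not decay in $y$, so $\int_{x_0}^\infty g_{x_1}(y)^\alpha \E(\bar Y_s(y)^p)\,dy = \infty$. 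The stopping time $\sigma$ controls only $A_s(x_0)$, not the spatial $L^p$ mass, so it does not rescue this. Relatedly, your route to the exponents $a^{-\gamma}$ and $b^\delta$ via an optimized threshold $\mu$ is left vague; a single Markov step at exponent $q\geq 1$ gives $a^{-q}$, and it is not clear any choice of $\mu$ recovers $a^{-\gamma}$ with $\gamma<1$.

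The paper avoids this by working with the martingale $M_t(x_0)$ from \eqref{e_main_formula} rather than the difference $D_t(x_1)$. In addition to $\sigma_1=\inf\{s:A_s(x_0)\geq b\}$ it introduces $\sigma_2=\inf\{s:M_s(x_0)\geq b^{1-\delta}\}$ and sets $\tau=t\wedge\sigma_1\wedge\sigma_2$. The term $b^\delta$ then arises from a gambler's ruin estimate for the spectrally positive stable process (Proposition~\ref{prop_integral_rep} plus optional stopping): $\bP(\sigma_2<\sigma_1,\sigma_2<\infty)\leq b^\delta$. The $a^{-\gamma}$ comes from Markov at the sub-unit exponent $\gamma$ applied directly to $A_\tau(x)$, followed by Jensen (using $p\geq 1$) to obtain $A_\tau(x)^\gamma\leq t^{(p-1)/\alpha}(\int_0^\tau \bar Y_s(x)^p\,ds)^{1/\alpha}$. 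After averaging over $x\in[x_0+r,x_0+2r]$ and inserting the weight $((x-x_0)/r)^\alpha\geq 1$, this becomes $T(\tau,x_0)^{1/\alpha}$, and the BDG inequality (Proposition~\ref{prop_bdg}) with $q=1$ gives $\E(T(\tau,x_0)^{1/\alpha})\leq c_1^{-1}\E(|M_\tau(x_0)|^*)\leq C b^{1-\delta}$, the last bound following because $M_{\tau,-}^*\leq b$ (from nonnegativity of the left side of \eqref{e_main_formula}) and $M_{\tau,+}^*$ is controlled by $\sigma_2$. For part~(b) the truncation is at the density level, splitting $A_\tau(x)$ into $\int_0^\tau \bar Y_s(x)1_{\{\bar Y_s(x)\leq K\}}ds$ and the complementary piece; the first uses $u\leq K^{1-p}u^p$ for $u\in[0,K]$ and then proceeds as in (a), while the second is bounded pointwise via H\"older and the moment bounds, yielding the $K^{-\beta(\theta-1)/\theta}/a$ term. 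Your $\tau_K$ localization does not produce this structure, since $\bP(\tau_K\leq t)$ carries no $a$-dependence.
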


\begin{proof}
Let $t>0$, $x_0 > x_r$. For $s \in [0,t]$, we define 
\begin{align}
M_s(x_0) &:= \int_{(0,s] \times \R} (y-x_0)_+ \bar{Y}(u,y)^\gamma L(du,dy),\notag\\
T(s,x_0) &:= \int_{(0,s] \times \R} (y-x_0)_+^\alpha \bar{Y}(u,y)^{p} du dy. \label{e_probbdlemma_Tdef}
\end{align}
Then from Lemma~\ref{lemma_ibp_boundary}(b), since $x_0 > x_r$, we almost surely have
\begin{equation} \label{e_probbdlemma_formula1}
\int_{x_0}^\infty (y-x_0) Y_t(dy) = A_t(x_0) + M_t(x_0), \quad t \geq 0.
\end{equation}
Now let $\delta \in (0,1-\gamma)$ and $a,b,r \in (0,1)$. We introduce the stopping times
\begin{align*}
 \sigma_1 &:= \inf \{s > 0: A_s(x_0) \geq b\}, 
\\  \sigma_2 &:= \inf \{s > 0 : M_s(x_0) \geq b^{1-\delta}\}, 
\\ \tau &:= t \wedge \sigma_1 \wedge \sigma_2.
\end{align*}
We observe that for any $x$,
\begin{align} \label{e_probbdlemma_ineq1}
&\bP(A_t(x) \geq a)  \notag
\\ &\hspace{0.8 cm} \leq \bP(A_t(x) \geq a, \tau = \sigma_1 ) + \bP( A_t(x) \geq a, \tau = t) + \bP(A_t(x) \geq a, \tau = \sigma_2 < \sigma_1 \wedge t) \notag
\\ &\hspace{0.8 cm} \leq \bP(A_t(x_0) \geq b) + \bP(A_\tau(x) \geq a, \tau = t) + \bP(\sigma_2 < \sigma_1 ,\sigma_2 < t) \notag
\\ &\hspace{0.8 cm} \leq \bP(A_t(x_0) \geq b) + \bP(A_\tau(x) \geq a) + \bP(\sigma_2 < \sigma_1 ,\sigma_2 < t).
\end{align}
Next we obtain an upper bound for $\bP(\sigma_2 < \sigma_1, \sigma_2 < t)$. Since the left hand side of \eqref{e_probbdlemma_formula1} is non-negative, it follows that for any stopping time $\sigma$, $M^*_{\sigma,-}(x_0) \leq A_\sigma(x_0)$. In particular, if we define
\[ \hat{\sigma}_1 := \inf \{s > 0: M_s(x_0) \leq -b\},\]
then $\hat{\sigma}_1 \geq \sigma_1$ a.s., and hence \begin{equation*}
\bP(\sigma_2 < \sigma_1, \sigma_2 < t) \leq \bP(\sigma_2 < \hat{\sigma}_1, \sigma_2 < t).
\end{equation*}
Thus, it suffices to find an upper bound for the right hand side above. By Proposition~\ref{prop_integral_rep}, there is an $\alpha$-stable process $(W_s)_{s \geq 0}$ such that $M_s(x_0) = W_{T(s,x_0)}$, $s \in [0,t]$. Since $\hat{\sigma}_1$ and $\sigma_2$ are first passage times for $M_s(x_0)$, their ordering is independent of time changes. In particular, if we define $\hat{\tau}_1$ and $\hat{\tau}_2$ as in Lemma~\ref{lemma_stable_exit}, i.e.
\[\hat{\tau}_1 = \inf\{s >0 : W_s \leq -b\}, \quad \hat{\tau}_2 = \inf \{s>0 : W_s \geq b^{1-\delta}\},\] 
then $\sigma_2 < \hat{\sigma}_1$ is equivalent to $\hat{\tau}_{2} < \hat{\tau}_1$, provided $\sigma_2 < t$. Noting that $\sigma_2 < t$ is equivalent to $\hat{\tau}_2 < T(t,x_0)$ and the latter is finite a.s., we conclude that
\[ \{\sigma_2 < \hat{\sigma}_1, \sigma_2 < t\} = \{\hat{\tau}_{2} < \hat{\tau}_{1}, \hat{\tau}_2 < T(t,x_0)\}.\]
Hence, 
\begin{align*}
\bP(\sigma_2 < \hat{\sigma}_1, \sigma_2 < t) &= \bP(\hat{\tau}_{2} < \hat{\tau}_{1}, \hat{\tau}_{2} < T(t, x_0)) \leq \bP(\hat{\tau}_{2} < \hat{\tau}_{1}) \leq b^\delta.
\end{align*}
The final inequality is a direct application of Lemma~\ref{lemma_stable_exit}(a). In particular, we have shown that
\begin{equation*}
\bP(\sigma_2 < \sigma_1 , \sigma_2 < t) \leq b^\delta.
\end{equation*}
Substituting this into \eqref{e_probbdlemma_ineq1}, we obtain
\begin{align} \label{e_probbdlemma_ineq11}
\bP(A_t(x) \geq a) &\leq \bP(A_t(x_0) \geq b) + b^\delta + \bP( A_\tau(x) \geq a).
\end{align} 
The rest of the proof is divided into the two cases (a) and (b) from the lemma's statement.

{\bf Case (a): $\gamma \in [\alpha^{-1}, 1)$.} We integrate \eqref{e_probbdlemma_ineq11} over $[x_0 + r, x_0 + 2r]$ and apply Markov's inequality to obtain
\begin{equation} \label{e_probbdlemma_ineq2}
\frac 1 r \int_{x_0+r}^{x_0+2r} \bP(A_t(x) \geq a) dx \leq \bP(A_t(x_0) \geq b) + b^{\delta} + a^{-\gamma} \frac 1 r  \int_{x_0+r}^{x_0+ 2r} \E(A_\tau(x)^\gamma) dx.
\end{equation}
Now, since $\gamma \geq \alpha^{-1}$, $p = \alpha \gamma \geq 1$, so by Jensen's inequality,
\begin{equation*}
A_{\tau}(x) = \tau \left(\frac 1 \tau \int_0^{ \tau} \bar{Y}(s,x) ds\right) \leq \tau^{1 - 1/p} \bigg( \int_0^{ \tau} \bar{Y}(s,x)^{p} ds \bigg)^{1/p}, 
\end{equation*} 
and hence
\begin{equation*}
A_\tau(x)^\gamma \leq t^{(p-1)/\alpha} \bigg( \int_0^\tau \bar{Y}(s,x)^p ds\bigg)^{1/\alpha}, 
\end{equation*}
where we have also used $\tau \leq t$ and $\gamma(1-1/p) = (p-1)/\alpha$. We use the above on the last term in \eqref{e_probbdlemma_ineq2} and apply Jensen's inequality, now with concave function $y \to y^{1/\alpha}$, to obtain the following:
\begin{align} \label{e_probbdlemma_ineqchain}
\frac 1 r \int_{x_0+r}^{x_0+ 2r} \E(A_\tau(x)^\gamma) dx & \leq \frac{t^{(p-1)/\alpha}}{r} \int_{x_0+r}^{x_0+2r}\E \bigg[  \bigg( \int_0^{\tau} \bar{Y}(s,x)^{p} ds \bigg)^{1/\alpha} \bigg] dx \notag
\\ &=t^{(p-1)/\alpha} \E \bigg[ \frac 1 r \int_{x_0+r}^{x_0+2r} \bigg( \int_0^{\tau} \bar{Y}(s,x)^{p}ds \bigg)^{1/\alpha} dx \bigg] \notag
\\ &\leq t^{(p-1)/\alpha} \E \bigg[  \bigg(\frac 1 r \int_{x_0+r}^{x_0+2r}  \int_0^{\tau} \bar{Y}(s,x)^{p}ds dx \bigg)^{1/\alpha} \bigg] \notag
\\ &\leq t^{(p-1)/\alpha} \E \bigg[ \bigg( \frac{1}{r^{1+ \alpha}}  \int_{x_0+r}^{x_0+2r} \int_0^{\tau} (x-x_0)^{\alpha}  \bar{Y}(s,x)^{p} ds dx  \bigg)^{1/ \alpha} \bigg] \notag
\\ &\leq \frac{t^{(p-1)/\alpha}}{r^{1+1/\alpha}}\, \E (T(\tau, x_0 )^{1/\alpha}),
\end{align}
where we recall the definition of $T(\tau, x_0)$ from \eqref{e_probbdlemma_Tdef}. In the second-to-last inequality we have used the fact that $((x-x_0) / r)^\alpha \geq 1$ for $x \in [x_0 +r,x_0+2r]$. Combining these estimates with \eqref{e_probbdlemma_ineq2}, we obtain
\begin{equation} \label{e_probbdlemma_ineq3}
\frac 1 r \int_{x_0+r}^{x_0+ 2r} \bP(A_t(x) \geq a) dx \leq \bP(A_t(x_0) \geq b) + b^{\delta} + a^{-\gamma} \frac{t^{(p-1)/\alpha}}{r^{1 + 1/\alpha}} \E(T(\tau,x_0)^{1/\alpha}).
\end{equation}
To obtain the desired bound, we need to control $\E(T(\tau,x_0)^{1/\alpha})$. However, we note that $T(s,x_0)$ is precisely the time-change/inner-clock associated to the stochastic integral $M_s(x_0)$ in the sense of Propositions~\ref{prop_isometryint} and \ref{prop_integral_rep}. In view of Remark~\ref{remark_stop}, we can apply Proposition~\ref{prop_isometryint} at the stopping time $\tau$.  Hence, by Jensen's inequality and the lower bound in \eqref{e_prop_isometry},
\begin{align}  \label{e_problemmabd_bdg}
\E(T(\tau,x_0)^{1/\alpha}) \leq \E(T(\tau,x_0))^{1/\alpha} \leq \left( C_\alpha \sup_{\lambda > 0} \lambda^\alpha \bP (|M_\tau(x_0)|^* > \lambda)\right)^{1/\alpha}.
\end{align}
To bound the quantity above, we again use the representation of $M_s(x_0)$ as a time-changed stable process, that is, $M_s(x_0) = W_{T(s,x_0)}$ for $s \in [0,t]$, and we recall that first passage times $\hat{\tau}_1$ and $\hat{\tau}_2$ associated to $W$ introduced earlier in the proof. Then
\begin{align*}
|M_\tau(x_0)|^* = \sup_{s \in [0,\sigma_1 \wedge \sigma_2 \wedge t]} |M_s(x_0)| &= \sup_{u \in [0,\hat{\tau}_1 \wedge \hat{\tau}_2 \wedge T(t,x_0)]} |W_u| \leq  \sup_{u \in [0,\hat{\tau}_1 \wedge \hat{\tau}_2 ]} |W_u| = |W_{\hat{\tau}}|^*,
\end{align*}
where $\hat{\tau} = \hat{\tau_1} \wedge \hat{\tau}_2$. By the above and Lemma~\ref{lemma_stable_exit}(b), we have
\[\sup_{\lambda > 0} \lambda^\alpha \bP (|M_\tau(x_0)|^* > \lambda) \leq \sup_{\lambda > 0} \lambda^\alpha \bP (|W_{\hat{\tau}}|^* > \lambda) \leq 2^\alpha b^{\alpha(1-\delta)}.\]
Thus, returning to \eqref{e_problemmabd_bdg}, we obtain 
\begin{equation} \label{e_probbdlemma_Tbd}
\E(T(\tau,x_0)^{1/\alpha})\leq C b^{1-\delta}
\end{equation}
with $C = 2 C_\alpha  ^{1/\alpha}$. Substituting this into \eqref{e_probbdlemma_ineq3} yields 
\begin{equation*} 
\frac 1 r \int_{x_0+r}^{x_0+2r} \bP(A_t(x) \geq a) dx \leq \bP(A_t(x_0) \geq b) + b^{\delta} + C \frac{t^{(p-1)/\alpha}}{r^{1 + 1/\alpha}}\left( \frac{b^{1-\delta}}{ a^\gamma} \right).
\end{equation*}
This implies that there must exist $x_1 \in [x_0+ r,x_0+2r]$ such that $\bP(A_t(x_1) \geq a)$ satisfies the same inequality as the integral, and the proof is complete.

{\bf Case (b): $\gamma \in (0,1/\alpha)$.}  Let $K \geq 1$ and write
\begin{align*}
A_\tau(x) &= \bigg( \int_0^\tau \bar{Y}(s,x) 1_{\{\bar{Y}(s,x) \leq K\}} ds \bigg) + \bigg( \int_0^\tau \bar{Y}(s,x) 1_{\{\bar{Y}(s,x) > K\}} ds \bigg)
\\ & =: A^1_\tau(x) + A^2_\tau(x).
\end{align*}
If $A_\tau(x) \geq a$, then either $A^1_\tau(x) \geq a/2$ or $A^2_\tau(x) \geq a/2$. Hence from \eqref{e_probbdlemma_ineq11}, we have
\begin{align} \label{e_problemmabd_ineq5}
\bP(A_t(x) \geq a) &\leq \bP(A_t(x_0) \geq b) + b^\delta + \bP(A_\tau^1(x) \geq a/2) + \bP(A_t^2(x) \geq a/2),
\end{align}
where we have used $\tau \leq t$ and hence $A_\tau^2(x) \leq A_t^2(x)$. We first consider the term with $A^1_\tau(x)$. By Markov's inequality,
\begin{equation} \label{e_problemmabd_markovA1}
\bP(A_\tau^1(x) \geq a/2) \leq (a/2)^{-1/\alpha}\E(A_\tau^1(x)^{1/\alpha}).
\end{equation}
Next, we observe that since $p < 1$, if $u \in [0,K]$ then $u \leq u^p K^{1-p}$, and hence
\begin{align*}
A^1_\tau(x) =  \int_0^{\tau} \bar{Y}(s,x) 1_{\{\bar{Y}(s,x) \leq K \}} ds &\leq K^{1 - p} \int_0^\tau \bar{Y}(s,x)^p 1_{\{\bar{Y}(s,x) \leq K\}} ds 
\\ & \leq K^{1 - p} \int_0^\tau \bar{Y}(s,x)^p ds.
\end{align*}
From \eqref{e_problemmabd_markovA1} and the above, arguing as in \eqref{e_probbdlemma_ineqchain}, we have
\begin{align}
&\frac 1 r \int_{x_0+r}^{x_0+2r} \bP(A_\tau^1(x) \geq a/2) dx  \notag
\\ & \hspace{1 cm}\leq (a/2)^{-1/ \alpha} K^{(1-p)/\alpha} \frac 1 r \int_{x_0+r}^{x_0+2r} \E\bigg[ \bigg( \int_0^\tau \bar{Y}(s,x)^p ds \bigg)^{1/\alpha}  \bigg] dx \notag
\\ & \hspace{1 cm} \leq (a/2)^{-1/ \alpha}K^{(1-p)/\alpha}  \E \bigg[ \bigg(\frac 1 r \int_{x_0+r}^{x_0+2r} \int_0^\tau \bar{Y}(s,x)^p ds dx \bigg)^{1/\alpha}  \bigg] \notag
\\ & \hspace{1 cm} \leq (a/2)^{-1/ \alpha} K^{(1-p)/\alpha}  \E \bigg[ \bigg(\frac{1}{r^{1 +\alpha}} \int_{x_0+r}^{x_0+2r} \int_0^\tau (x-x_0)^\alpha \bar{Y}(s,x)^p ds  dx \bigg)^{1/\alpha}  \bigg] \notag
\\ &\hspace{1 cm} \leq (a/2)^{-1/ \alpha} K^{(1-p)/\alpha} r^{-1-1/\alpha} \E(T(\tau,x_0)^{1/\alpha}) \notag
\\ &\hspace{1 cm} \leq C a^{-1/ \alpha} K^{(1-p)/\alpha} r^{-1-1/\alpha} b^{1-\delta}. \label{e_problemmabd_At1intbd}
\end{align}
The final inequality uses \eqref{e_probbdlemma_Tbd} (which we note holds for the parameter regimes of part (a) and (b) of the lemma). Now we consider the term from \eqref{e_problemmabd_ineq5} with $A^2_t(x)$. Let $\theta \in (1,\alpha)$ and $\beta \in (1,\alpha)$. Then by Markov's and H\"{o}lder's inequalities,
\begin{align} \label{e_probbdlemma_At2bd1}
\bP (A_t^2(x) \geq a/2) &\leq 2a^{-1} \int_0^t \E(\bar{Y}(s,x) 1_{\{\bar{Y}(s,x) > K\}}) ds \notag
\\ & \leq  2a^{-1} \int_0^t \E(\bar{Y}(s,x)^\theta)^{1/\theta} \bP(\bar{Y}(s,x) > K)^{(\theta-1)/\theta} ds \notag
\\ &\leq 2a^{-1}  K^{-\beta(\theta - 1)/\theta}  \int_0^t \E(\bar{Y}(s,x)^\theta)^{1/\theta} \E(\bar{Y}(s,x)^{\beta})^{(\theta-1)/\theta}ds. 
\end{align}
By Theorem~\ref{thm_stochinteg}(b), in particular \eqref{e_thm_momentbd_plo}, there is some constant $C(t,\theta,\beta)\geq 1$ such that
\begin{align*}
&\E(\bar{Y}(s,x)^\theta)^{1/\theta} \E(\bar{Y}(s,x)^{\beta})^{(\theta-1)/\theta}
\\ &\hspace{1 cm} \leq C(t,\theta,\beta)
\bigg(s^{-(\alpha - 1)\frac 1 2 \frac \theta \alpha} [1 + sP_sY_0(x)]^{\theta/\alpha} + P_sY_0(x)^\theta\bigg)^{1/\theta} 
\\ &\hspace{1.5 cm} \times \bigg(s^{-(\alpha-1)\frac 1 2 \frac \beta \alpha}[1 + s P_sY_0(x)]^{\beta/\alpha} + P_sY_0(x)^\beta \bigg)^{(\theta-1)/\theta}
\\ &\hspace{1 cm} \leq C(t,\theta,\beta)
\bigg(s^{-(\alpha - 1)\frac 1 2 \frac 1 \alpha} [1 + sP_sY_0(x)]^{1/\alpha} + P_sY_0(x) \bigg)
\\ &\hspace{1.5 cm} \times \bigg(s^{-(\alpha-1)\frac 1 2 \frac \beta \alpha \frac{(\theta-1)}{\theta}}[1 + s P_sY_0(x)]^{\beta(\theta-1)/(\alpha \theta)} + P_sY_0(x)^{\beta(\theta-1)/\theta} \bigg).
\end{align*}
Let $m = \sup_{s > 0} p_s(1)< \infty$. We then have
\begin{align*}
\sup_{x \geq x_r + 1} \sup_{ s > 0} P_s Y_0(x) & \leq Y_0(1)  \sup_{s > 0} p_s(1) = m Y_0(1).
\end{align*}
We may conclude from the previous bound that for some enlarged constant $C'(t,\theta,\beta,Y_0(1))$, for all $s \in (0,t]$ and $x \geq x_r + 1$,
\begin{align*}
&\E(\bar{Y}(s,x)^\theta)^{1/\theta} \E(\bar{Y}(s,x)^{\beta})^{(\theta-1)/\theta}
\\ &\hspace{1 cm} \leq C'(t,\theta,\beta, Y_0(1))[1 + s^{-(\alpha-1)}].
\end{align*}
(To obtain the final power of $s$, we use the facts that $\theta, \beta < \alpha$ and $\theta^{-1}, (\theta-1)/\theta < 1$, as well as $s \leq t$.) We now return to \eqref{e_probbdlemma_At2bd1}. Noting that the upper bound for the moments obtained above is integrable over $s \in (0,t]$, it follows that for all $x \geq x_r+1$, 
\begin{equation*}
\bP(A^2_t(x) \geq a/2) \leq C''(t,\theta,\beta,Y_0(1)) a^{-1} K^{-\beta(\theta-1)/\theta}
\end{equation*}
where $C''(t,\theta,\beta,Y_0(1)) = 2C'(t,\theta,\beta,Y_0(1)) [t + \int_0^t s^{-(\alpha-1)}ds]$. Finally, we integrate \eqref{e_probbdlemma_ineq11} over $[x_0+r,x_0+2r]$ and substitute the above into \eqref{e_problemmabd_At1intbd} to obtain
\begin{align*}
\frac 1 r \int_{x_0 +r}^{x_0+2r} \bP(A_t(x) \geq a) dx &\leq \bP(A_t(x_0) \geq b) + b^\delta + C \frac{K^{(1-p)/\alpha} b^{1-\delta}}{r^{1+1/\alpha} a^{1/\alpha}} 
\\ &\hspace{.5 cm}+ C''(t,\theta,\beta,Y_0(1)) \frac{K^{-\beta(\theta-1)/\theta}}{a}.
\end{align*}
In the same way as in case (a), we may conclude that there exists $x_1 \in [x_0 + r, x_0 + 2r]$ which satisfies the desired bound, completing the proof. \end{proof}

Using Lemma~\ref{lemma_probbound} we can show the following result, from which we easily derive the main theorem.
\begin{proposition} \label{prop_limitprobAt} Suppose that $2-\alpha < \gamma < 1$. For every $t>0$, there exists a non-random sequence $(y_n(t))_{n \in \N}$ such that $\lim_{n \to \infty} y_n(t) = \infty$ and $\lim_{n \to \infty} \bP(A_t(y_n(t)) > 0) = 0$.
\end{proposition}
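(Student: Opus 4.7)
The plan is to prove the proposition by iteratively applying Lemma~\ref{lemma_probbound} to construct a sequence $x_n \to \infty$ along which the right-tail probabilities $\bP(A_t(x_n) \geq b_n)$ are uniformly small for a rapidly vanishing sequence $b_n$, and then to pass to the limit. I describe the scheme for $\gamma \in [1/\alpha, 1)$ using Lemma~\ref{lemma_probbound}(a); the case $\gamma \in (2-\alpha, 1/\alpha)$ is handled analogously using part~(b), with an auxiliary polynomially growing truncation parameter $K_n$ chosen to render both extra correction terms summable. The condition $\gamma > 2-\alpha$ is precisely what permits a consistent choice of all parameters in case~(b).

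Fix $\delta \in (0, 1-\gamma)$, nonempty since $\gamma < 1$, and a contraction exponent $\mu \in (1, (1-\delta)/\gamma)$, nonempty because $\delta < 1-\gamma$ forces $(1-\delta)/\gamma > 1$. Set $b_n := b_0^{\mu^n}$ (so $b_{n+1} = b_n^\mu$) and $r_n := 1/(n+1)$. The per-step correction $r_n^{-(1+1/\alpha)} b_n^{1-\delta}/b_{n+1}^\gamma = (n+1)^{1+1/\alpha} b_n^{(1-\delta) - \mu\gamma}$ has strictly positive exponent $(1-\delta) - \mu\gamma > 0$ on $b_n$, so the super-exponential decay of $b_n^{(1-\delta)-\mu\gamma}$ dominates the polynomial growth $(n+1)^{1+1/\alpha}$, and both $\sum_n b_n^\delta$ and the sum of corrections are finite and of order $O(b_0^{\min(\delta,\, (1-\delta) - \mu\gamma)})$. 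Simultaneously, $\sum_n r_n = \infty$, guaranteeing $x_n \to \infty$.

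Given $\eps > 0$, I first pick $b_0 \in (0, 1)$ small enough that the total correction sum is less than $\eps/2$; then, using $\E(A_t(x_0)) \leq \int_0^t P_s Y_0(x_0)\, ds \to 0$ (Lemma~\ref{lemma_At_convergence}(a)) together with Markov's inequality, I pick $x_0 \geq x_r + 1$ so that $\bP(A_t(x_0) \geq b_0) < \eps/2$. Iterating Lemma~\ref{lemma_probbound}(a) with inputs $(x_n, b_n, b_{n+1}, r_n)$ produces $x_{n+1} \in [x_n + r_n, x_n + 2 r_n]$ whose telescoped single-step inequalities combine to give $\bP(A_t(x_n) \geq b_n) \leq \eps$ for every $n \in \N$; since $\sum_n r_n = \infty$, the sequence satisfies $x_n \to \infty$, and $b_n \to 0$ super-exponentially.

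The main obstacle is closing the gap between this tail bound $\bP(A_t(x_n) \geq b_n) \leq \eps$ (with $b_n \to 0$) and the target $\bP(A_t(x_n) > 0) \to 0$, since in principle $A_t(x_n)$ could take small positive values in $(0, b_n)$. I plan to address this by running the construction adaptively in $\eps$: for each $\eps_k := 1/k$, perform the iteration to obtain sequences $(x_n^{(k)}, b_n^{(k)})$, then select $n_k$ large enough that $y_k := x_{n_k}^{(k)} \geq k$, so $y_k \to \infty$. Controlling $\bP(0 < A_t(y_k) < b_{n_k}^{(k)})$---the delicate piece---I would handle by combining the iteration bound with (i) the super-polynomial decay of $\E(A_t(y_k)) \leq \int_0^t p_s(y_k - x_r) Y_0(1)\, ds$, and (ii) the supermartingale argument underlying Lemma~\ref{lemma_crosstheline}: stopping $M_s(y_k)$ at the first time $A_s(y_k)$ exceeds $b_{n_k}^{(k)}$ turns it into a supermartingale bounded below by $-b_{n_k}^{(k)}$, which via optional stopping pins down the forward mass $\int_{y_k}^\infty (y-y_k) Y_t(dy)$ on the event $\{A_t(y_k) < b_{n_k}^{(k)}\}$ and forces the residual event $\{0 < A_t(y_k) < b_{n_k}^{(k)}\}$ to have probability vanishing as $k \to \infty$. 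The diagonal sequence $(y_k)_{k \geq 1}$ then realizes the proposition.
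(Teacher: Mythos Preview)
Your iterative scheme using Lemma~\ref{lemma_probbound} is the right idea, but the step you yourself flag as ``the main obstacle'' is a genuine gap, and the fix you sketch does not work. Controlling $\bP(0 < A_t(y_k) < b_{n_k}^{(k)})$ is exactly the hard part, and neither of your two tools touches it. The mean bound $\E(A_t(y_k)) \to 0$ only controls $\bP(A_t(y_k) \geq c)$ for fixed $c$ via Markov, not the probability of landing in a shrinking window $(0,b_{n_k})$. The supermartingale argument you invoke is the one from Lemma~\ref{lemma_crosstheline}, which shows that $A_\tau(x)=0$ forces the forward mass to vanish; it does \emph{not} give any converse, and in particular says nothing quantitative about $\bP(0 < A_t(x) < b)$ for small $b$. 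Stopping $M_s(y_k)$ when $A_s(y_k)$ first exceeds $b$ gives you a supermartingale bounded below by $-b$, hence $\E(M_\tau) \leq 0$ and so the expected forward mass is at most $b$; but this does not force $A_t(y_k)$ to be either $0$ or $\geq b$, nor does it bound the probability of the residual event.

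The paper's proof avoids this difficulty entirely by making the opposite choice on $r_n$: it takes $r_n = (n+1)^{-2}$, which is \emph{summable}, so the iterated sequence $(x_n)$ converges to a finite point $y(x_0) \in [x_0 + \rho, x_0 + 2\rho]$. One then uses the continuity in probability of $z \mapsto A_t(z)$ (Lemma~\ref{lemma_ibp_boundary}(c)) together with Fatou to pass from the uniform bound $\bP(A_t(x_n) \geq \zeta e^{-n}) \leq \bP(A_t(x_0) \geq \zeta) + N\zeta^{\delta'}$ to $\bP(A_t(y(x_0)) > \eps) \leq$ the same right-hand side for every $\eps>0$, and hence to $\bP(A_t(y(x_0)) > 0)$. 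The divergence to infinity is then obtained by sending $x_0 \to \infty$ (so $y(x_0) \to \infty$ as well) and finally $\zeta \to 0$. In short: you tried to get $x_n \to \infty$ and $\bP(A_t(x_n)>0)\to 0$ in a single iteration, which forces you to confront the gap; the paper instead uses the iteration to manufacture, for each $x_0$, a single nearby point $y(x_0)$ at which $\bP(A_t(y(x_0))>0)$ is controlled, and only afterwards pushes $x_0$ off to infinity.
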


\begin{proof}
Fix $t>0$. We divide the proof into two cases: (i) $\gamma \in [1/\alpha,1)$ and (ii) $\gamma \in (2-\alpha,1/\alpha)$. We begin with case (i).  Fix $\delta \in (0, 1-\gamma)$. Let $\zeta \in (0,1)$ and, for $n \in \N \cup  \{0\}$, define $a_n = \zeta e^{-n}$ and $r_n = (n+1)^{-2}$. Let $x_0 \geq x_r + 1$. We define an increasing sequence $(x_n)_{n \in \N}$ by iteratively applying Lemma~\ref{lemma_probbound}(a). For $n \in \N$, given $x_{n-1}$, we apply Lemma~\ref{lemma_probbound}(a) with $a = a_n$, $b = a_{n-1}$ and $r= r_n$ to obtain that there exists $x_n \in [x_{n-1} + r_n,x_{n-1} + 2r_n]$ such that
\begin{align*}
&\bP(A_t(x_n) \geq a_{n}) 
\\ &\hspace{1 cm}\leq \bP(A_t(x_{n-1}) \geq a_{n-1}) + a_{n-1}^{\delta} + C \frac{t^{(p-1)/\alpha}}{r_n^{1+1/\alpha}} \bigg(\frac{a_{n-1}^{1-\delta}}{a_{n}^\gamma} \bigg) 
\\ &\hspace{1 cm}= \bP(A_t(x_{n-1}) \geq \zeta e^{-(n-1)}) + \zeta^\delta e^{-\delta (n-1)} + C t^{(p-1)/\alpha} (n+1)^{2 + 2/\alpha}\frac{(\zeta e^{-(n-1)})^{1-\delta}}{(\zeta e^{-n})^\gamma}.
\end{align*}
Applying this bound iteratively, it follows that
\begin{align*}
\bP(A_t(x_n) \geq \zeta e^{-n}) &\leq \bP(A_t(x_0) \geq \zeta) \\
&\hspace{.2 cm} + \zeta^{\delta} \sum_{k=1}^n e^{-\delta k + 1} + \zeta^{1-\gamma - \delta} C t^{\gamma - 1/\alpha} \sum_{k=1}^n (k+1)^{2+2/\alpha} e^{-(1-\gamma - \delta)k + 1}.
\end{align*}
As both sums are summable, it follows that for a sufficiently large value of $N = N(t)>0$ which is independent of $\zeta$ and $x_0$, for all $n \in \N$,
\begin{equation} \label{e_limitprop_ineq1}
\bP(A_t(x_n) \geq \zeta e^{-n}) \leq \bP(A_t(x_0) \geq \zeta) + N(\zeta^\delta + \zeta^{1-\gamma - \delta}).
\end{equation}
Since $x_n$ is increasing, $x_n - x_{n-1} \in [r_n,2r_n]$ and $r_n$ is summable, $x_n$ converges to some $y(x_0,\zeta) \in [x_0 + \rho, x_0 + 2\rho]$, where $\rho = \sum_{n=1}^\infty r_n$. The dependence on $\zeta$ is due to the fact that the sequence $(x_n)$ implicitly depends on $\zeta$. Let $\eps >0$. By Lemma~\ref{lemma_ibp_boundary}(c), there is a subsequence $(n_k)_{k \in \N}$ along which $A_t(x_{n_k}) \to A_t(y(x_0,\zeta))$ a.s., and hence
\begin{align} \label{e_cheeky_prob_estimate}
\bP(A_t(y(x_0,\zeta)) > \eps) &\leq \liminf_{k \to \infty} \bP(A_t(x_{n_k}) > \eps) \notag
\\ &\leq \bP(A_t(x_0) \geq \zeta) + N(\zeta^{\delta} + \zeta^{1-\gamma - \delta}).
\end{align}
The first inequality uses Fatou's Lemma, and the last inequality is from \eqref{e_limitprop_ineq1}. Taking $\eps \downarrow 0$, we obtain
\begin{equation} \label{e_limitprop_ineq11}
\bP(A_t(y(x_0,\zeta)) > 0) \leq \bP(A_t(x_0) \geq \zeta) + N(\zeta^\delta + \zeta^{1-\gamma-\delta}). 
\end{equation}
Thus, for every $x_0 \geq  x_r + 1$ and every $\zeta \in (0,1)$, there exists $y(x_0,\zeta)$ such that the above holds and $y(x_0,\zeta) - x_0 \in [\rho,2\rho]$. 

To complete the proof, we remark that by Markov's inequality and Lemma~\ref{lemma_At_momemt},
\begin{align*} 
\bP(A_t(x) \geq \zeta) \leq \zeta^{-1} \E(A_t(x)) & \leq \zeta^{-1} \int_0^t P_sY_0(x) ds
\\ &\leq \zeta^{-1} Y_0(1) \int_0^t p_s(x-x_r) ds.
\end{align*}
For $n \in \N$, we choose $\zeta_n = \frac{1}{2n}$. We may then choose $\hat{x}_n \in \R$ to be sufficiently large so that $\int_0^t p_s(\hat{x}_n-x_r) ds \leq n^{-2}/Y_0(1)$, in which case we have $\bP(A_t(\hat{x}_n) \geq \zeta_n) \leq 2/n$. Then \eqref{e_limitprop_ineq11} yields that
\begin{equation*}
\bP(A_t(y(\hat{x}_n,\zeta_n))>0) \leq \frac{2}{n} + 2N(n^{-\delta} + n^{-(1-\gamma - \delta)}) \to 0 \quad \text{as $n \to \infty$}.
\end{equation*}
In particular, $(y(\hat{x}_n,\zeta_n))_{n \in \N}$ is the desired sequence and the proof is complete in case (i).

We now give the proof in case (ii), when $\gamma \in (2-\alpha,1/\alpha)$. Let $\zeta \in (0,1)$ and define $a_n$ and $r_n$ as in the previous case, that is $a_n = \zeta e^{-n}$ and $r_n = (n+1)^{-2}$, $n \in \N \cup \{0\}$. A short calculation shows that when $2-\alpha < \gamma < 1/\alpha$, the inequality
\[\frac{1}{\alpha-1} < \frac{\alpha - 1}{1 - p}\]
holds, and hence the interval $(\frac{1}{\alpha-1}, \frac{\alpha - 1}{1 - p})$ is non-empty. Fix $q \in (\frac{1}{\alpha-1}, \frac{\alpha - 1}{1 - p})$, and then let $\beta \in (1 + 1/q, \alpha)$ and $\delta \in (0, 1 - 1/\alpha - q(1-p)/\alpha)$. We argue as in the previous case to iteratively define an increasing sequence $(x_n)_{n \in \N}$ satisfying certain estimates. Let $x_0 = x_r+1$. For $n \in \N$, given $x_{n-1}$, we apply Lemma~\ref{lemma_probbound}(b) with $a = a_n$, $b = a_{n-1}$, $K = K_n = a_n^{-q}$, $\beta$ and $\delta$ as above, and $\theta = \beta$, to obtain that there exists $x_n \in [x_{n-1} + r_n, x_{n-1} + 2r_n]$ such that
\begin{align*}
\bP(A_t(x_n) \geq \zeta e^{-n}) &\leq \bP(A_t(x_{n-1}) \geq \zeta e^{-n+1})  +  \zeta^{\delta} e^{-\delta (n-1)} 
\\ &\quad + C_1 (n+1)^{2 + 2/\alpha} \zeta^{-q(1-p)/\alpha} e^{n q(1-p)/\alpha} \bigg( \frac{\zeta^{1-\delta} e^{-(1-\delta)(n-1)}}{\zeta^{1/\alpha} e^{-n/\alpha}} \bigg)
\\ &\quad + C_2 \bigg(\frac{\zeta^{q(\beta - 1)}e^{-q(\beta - 1)n}}{\zeta e^{-n}}\bigg), 
\end{align*}
where the constant $C_1>0$ is universal and $C_2>0$ depends only on $t$, $q$, $\beta$, and $Y_0(1)$. As before, we apply the bound iteratively to obtain
\begin{align*}
\bP(A_t(x_n) \geq \zeta e^{-n}) &\leq \bP(A_t(x_0) \geq \zeta) + \zeta^\delta \sum_{k=1}^n e^{-\delta k + 1}  
\\ &\quad+ C_1 \zeta^{1 - q(1-p)/\alpha - 1/\alpha - \delta} \sum_{k=1}^n (k+1)^{2 + 2/\alpha}  e^{- (1 - 1/\alpha  - q(1-p)/\alpha - \delta)k+1} 
\\ & \quad +   C_2 \zeta^{q(\beta - 1) - 1} \sum_{k=1}^n e^{-(q(\beta - 1) - 1)k} .
\end{align*}
Because of choices of $q, \beta$ and $\delta$, the exponential rate in each term is negative, and hence each term is summable. Thus there is a constant $N= N(t)>0$ depending only on the parameters and $Y_0(1)$ and $t$ such that for all $n \in \N$,
\begin{equation*}
\bP(A_t(x_n) \geq \zeta e^{-n}) \leq \bP(A_t(x_0) \geq \zeta) + N(\zeta^{1 + q(1/\alpha - \gamma ) - 1/\alpha - \delta_2} +  \zeta^{\delta_2} + \zeta^{q(\alpha -1 - \delta_1) -1}). 
\end{equation*} 
Each power of $\zeta$ is positive, so the inequality above is analogous to \eqref{e_limitprop_ineq1}. The result now follows by the same argument used to prove the result in case (i) from \eqref{e_limitprop_ineq1}.
\end{proof}

\begin{proof}[Proof of Theorem~\ref{thm_onedim}] Fix $t>0$. By Proposition~\ref{prop_limitprobAt}, we may take a deterministic sequence $(y_n)_{n \in \N}$ such that $y_n > x_r$ for all $n$, $\lim_{n \to \infty} y_n = \infty$, and $\bP(A_t(y_n) > 0) \leq 2^{-n}$ for all $n$. This can be seen by taking a subsequence of the sequence from Proposition~\ref{prop_limitprobAt}. Applying Lemma~\ref{lemma_crosstheline} at each $y_n$, we obtain that
\begin{align*}
\bP \bigg( \bigcup_{n=1}^\infty \bigg\{A_t(y_n) = 0, \, Y_s((y_n,\infty)) > 0 \, \text{ for some } s \in [0,t] \bigg\} \bigg) = 0.
\end{align*}
Since $\sum_n \bP(A_t(y_n) > 0) < \infty$, by Borel-Cantelli, with probability one we have $A_t(y_n) = 0$ for some $n$. Thus, by the above, for $\omega$ outside of some $\bP$-null set there exists $n = n(\omega) \in \N$ such that $Y_s((y_n,\infty)) = 0$ for all $ s\in [0,t]$, which completes the proof.
\end{proof}

\section{Compact support for $d>1$} \label{s_highdim}
We now generalize the argument of the previous section to prove the compact support property in higher dimensions. The proof in dimensions $d>1$ goes along the same lines as the proof in one dimension. The main difference is that the occupation density at point $x$, $A_t(x)$, is replaced with the occupation density of the solution projected onto one of the coordinate axes, an object which we denote by $\A_t(z)$ for $z \in \R$. We prove an integration by parts formula involving $\A_t(z)$ analogous to \eqref{e_main_formula}, and ultimately a technical result akin to Lemma~\ref{lemma_probbound}, which we use to complete the proof in the same way.

We assume throughout the section that $(Y,L)$ is a weak solution to \eqref{e_spde1} with $\alpha \in (1,2)$, $d \in [2,\frac{2}{\alpha-1}) \cap \N$ and $\gamma \in (0,1)$, with initial data $Y_0 \in \cM_f(\R^d)$, defined on some probability space $(\Omega, \cF, (\cF_t)_{t \geq 0}, \bP)$. As in the previous section, we will use the density process $\bar{Y}(t,x)$ from Theorem~\ref{thm_stochinteg}.

Given $Y_t \in \cM_f(\R^d)$, we define a measure $\Y_t \in \cM_f(\R)$ by projecting $Y_t$ onto the first coordinate. We write $x = (x_1,\dots,x_d) \in \R^d$, and define $\Y_t \in \cM_f(\R)$ by
\begin{equation*}
\Y_t(A) := \int_{\R^d} 1_A(x_1) Y_t(dx),
\end{equation*}
for $A \subseteq \R$. Then $(\Y_t)_{t \geq 0} \in \mathbb{D}([0,\infty),\cM_f(\R))$ and is $\cF_t$-adapted. For $Y_0 \in \cM_f(\R^d)$, we write $\Y_0 \in \cM_f(\R)$ to denote its projection in the same way. 

The main result, which implies Theorem~\ref{thm_compact}(b), is the following, which is the higher dimensional analogue of Theorem~\ref{thm_onedim}. We remind the reader that, for $d>1$, we only prove the compact support property for $\gamma \in [1/\alpha,1)$.
\begin{theorem} \label{thm_highdim}
Suppose $1/\alpha \leq \gamma < 1$ and $Y_0 \in \cM_f(\R^d)$ has compact support. Then for $t>0$, with probability one there exists a random $z(t) \in \R$ such that $\Y_s((z(t),\infty)) ds = 0$ for all $s \in [0,t]$.
\end{theorem}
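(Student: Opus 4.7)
The plan is to adapt the proof of Theorem~\ref{thm_onedim} by replacing the role of the point $x_0 \in \R$ throughout with a hyperplane $\{x \in \R^d : x_1 = z_0\}$ and the local time $A_t(x_0)$ with the projected local time $\A_t(z_0) := \int_0^t \bar{\Y}_s(z_0)\,ds$. First I would establish an integration-by-parts identity analogous to \eqref{e_main_formula}: for $z_0$ strictly to the right of the support of the projected initial measure $\Y_0$,
\[
\int_{x_1 > z_0}(x_1-z_0)\bar{Y}_t(x)\,dx \;=\; \A_t(z_0) + \int_{(0,t]\times \R^d}(x_1-z_0)_+ Y_{s-}(x)^\gamma\,L(ds,dx).
\]
This comes from applying \eqref{e_spde1weak} to test functions of the form $(x_1-z_0)_+ \chi_n(x_1)\psi_n(x_2,\ldots,x_d)$, regularizing the kink at $x_1=z_0$, and removing both cutoffs via the approximation argument of Lemma~\ref{lemma_ibp_boundary} combined with Lemma~\ref{lemma_dctbdg} and a tightness bound on $\sup_{s\leq t}\int(x_1-z_0)_+ Y_s(dx)$ in the spirit of Lemma~\ref{lemma_integ}. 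The higher-dimensional analogue of Lemma~\ref{lemma_crosstheline} (namely, $\A_\tau(z_0)=0$ implies $\int_0^\tau \Y_s([z_0,\infty))\,ds = 0$) then follows from the same one-sided supermartingale argument applied to the projected martingale.

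With the IBP identity in hand, the remainder of the proof is the higher-dimensional analogue of Lemma~\ref{lemma_probbound}(a). Writing $M_t(z)$ for the stochastic integral above and $T(t,z) := \int_{(0,t]\times\R^d}(x_1-z)_+^\alpha Y_{s-}(x)^p\,ds\,dx$ for its time change, the stopping-time construction (defining $\sigma_1,\sigma_2$ via $\A$ and $M$), the time-change representation $M_s(z_0) = W_{T(s,z_0)}$ from Proposition~\ref{prop_integral_rep}, the optional-stopping estimate for the one-sided stable process, and the Proposition~\ref{prop_bdg} bound $\E(T(\tau,z_0)^{1/\alpha}) \leq C b^{1-\delta}$ all carry over essentially verbatim from the 1D proof, since they depend only on structural properties of the stable stochastic integral.

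The main obstacle is the step that in 1D converts $\E(T(\tau,x_0)^{1/\alpha})$ into an estimate on $\frac{1}{r}\int_{x_0+r}^{x_0+2r}\E(A_\tau(x)^\gamma)\,dx$: Krylov-style one applies Jensen ($p\geq 1$) to get $A_\tau(x)^\gamma \leq t^{(p-1)/\alpha}\bigl(\int_0^\tau \bar{Y}_s(x)^p\,ds\bigr)^{1/\alpha}$ and integrates against the weight $(x-x_0)^\alpha/r^\alpha \geq 1$ to recover $T(\tau,x_0)$. In higher dimensions the analogous manipulation produces $\bar{\Y}_s(z)^p=\bigl(\int \bar{Y}_s(z,y)\,dy\bigr)^p$, which for $p>1$ is \emph{not} dominated by $\int \bar{Y}_s(z,y)^p\,dy$, so the chain breaks down. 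The planned remedy is to integrate the stochastic integral formula \eqref{e_thm_density} over the transverse coordinates using (a truncated form of) Lemma~\ref{lemma_fubini2}: because $\int p^{(d)}_{s-u}((z,y)-w)\,dy = p^{(1)}_{s-u}(z-w_1)$, this yields
\[
\bar{\Y}_s(z) \;=\; P^{(1)}_s \Y_0(z) + \int_{(0,s]\times\R^d} p^{(1)}_{s-u}(z-w_1)\,Y_{u-}(w)^\gamma\,L(du,dw),
\]
where $p^{(1)}$ is the one-dimensional heat kernel. Applying Proposition~\ref{prop_bdg} directly to this representation (or, after a further stochastic Fubini, to the resulting representation of $\A_t(z)$ as a stable integral with kernel $\int_0^{t-u}p^{(1)}_v(z-w_1)\,dv$) and combining with the moment estimates of Theorem~\ref{thm_stochinteg}(b) and Minkowski's integral inequality should give the required control on $\int_{z_0+r}^{z_0+2r}\E(\A_\tau(z)^\gamma)\,dz$, with error terms summable along geometrically decaying $a_n,b_n$.

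Once the analogue of Lemma~\ref{lemma_probbound}(a) is available, the iterative construction of a sequence $z_n$ (with $a_n=\zeta e^{-n}$, $r_n=(n+1)^{-2}$) for which $\bP(\A_t(z_n) \geq a_n)$ is summable, together with Borel--Cantelli and the higher-dimensional Lemma~\ref{lemma_crosstheline}, closes the argument exactly as in Proposition~\ref{prop_limitprobAt} and the proof of Theorem~\ref{thm_onedim}, giving the claim.
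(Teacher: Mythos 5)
You correctly identify the high-level architecture (projected local time $\A_t(z)$, the integration-by-parts identity, the stopping-time construction and optional stopping, the $b^{1-\delta}$ bound on $\E(T(\tau,z_0)^{1/\alpha})$, the iteration with $a_n=\zeta e^{-n}$) and also the genuine obstacle: for $p>1$, $\bar{\Y}_s(z)^p = \bigl(\int \bar{Y}_s(z,y)\,dy\bigr)^p$ is not dominated by $\int \bar{Y}_s(z,y)^p\,dy$, so the Jensen step that in $d=1$ turns $\frac1r\int\E(A_\tau(x)^\gamma)\,dx$ into $r^{-1-1/\alpha}\E(T(\tau,x_0)^{1/\alpha})$ breaks. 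However, the remedy you propose does not close that gap. The projected representation $\bar{\Y}_s(z) = P^{(1)}_s\Y_0(z) + \int p^{(1)}_{s-u}(z-w_1)Y_{u-}(w)^\gamma\,L(du,dw)$ has a kernel that is \emph{strictly positive for all} $w_1$, not just $w_1>z_0$. Applying Proposition~\ref{prop_bdg} to it (or to the resulting representation of $\A_t(z)$) produces a time change that integrates $Y_{u-}(w)^p$ against $\bigl[\int_0^{t-u}p^{(1)}_v(z-w_1)\,dv\bigr]^\alpha$, which is dominated by the bulk of the mass near the support of $Y_0$ (i.e.\ $w_1\ll z_0$). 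That quantity has no relation to $T(\tau,z_0)=\int(w_1-z_0)_+^\alpha Y_{u-}(w)^p\,du\,dw$ and therefore cannot inherit the bound $\E(|M_\tau(z_0)|^*)\leq 2b+2b^{1-\delta}$ coming from the stopping construction. ``Combining with Theorem~\ref{thm_stochinteg}(b) and Minkowski'' gives only an unconditional estimate of $\E(\A_\tau(z)^\gamma)$, with no $b$-dependence; dividing by $a^\gamma$ with $a_n=\zeta e^{-n}$ yields geometrically \emph{growing} error terms, so the iteration cannot converge.

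The paper's fix (Lemma~\ref{lemma_probbound_highd}) is of a different nature. It splits $\bar{\A}_\tau(z)=\bar{\A}^1_\tau(z,R)+\bar{\A}^2_\tau(z,R)$, restricting the transverse integral to the ball $\Lambda_{R,d-1}$ and its complement. On the bounded ball, Jensen with the normalizing volume $\theta_{d-1}R^{d-1}\tau$ still works and yields
\[
\bar{\A}^1_\tau(z,R)^\gamma \leq (\tau\theta_{d-1}R^{d-1})^{(p-1)/\alpha}\Bigl(\int_0^\tau\int_{\Lambda_{R,d-1}}\bar{Y}_s(z,y)^p\,dy\,ds\Bigr)^{1/\alpha},
\]
so after integrating in $z$ against the weight $(z-z_0)^\alpha/r^\alpha\geq 1$ one recovers $\E(T(\tau,z_0)^{1/\alpha})\leq Cb^{1-\delta}$ at the cost of a factor $R^{(d-1)(p-1)/\alpha}$. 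The tail $\bar{\A}^2_\tau(z,R)$ is controlled via first moments (Lemma~\ref{lemma_meanmeasure}) together with the compact support of $Y_0$ and the Gaussian tail of the $(d-1)$-dimensional heat kernel, giving an error $\lesssim Y_0(1)\exp(-R^2/(16(d-1)t))/a$. Choosing $R=R_n=a_n^{-\delta_2}$ along the iteration makes both error terms summable and vanishing as $\zeta\downarrow 0$. This bounded-ball/far-tail split, with $R$ growing along the iteration, is the missing ingredient in your proposal; without it, or an equivalent device that ties the error to the stopped martingale, the argument does not close.
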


Given the above, one argues just as in the beginning of Section~\ref{s_dimone} that there a.s. exists a random $R_1>0$ such that $\Y_s([-R_1,R_1]^c) = 0$ for all $s \in [0,t]$. $\Y_t$ is defined by projecting $Y_t$ onto its first coordinate, but by rotational invariance of the equation we may define the projections onto each coordinate axis and similarly argue that there a.s. exists $R_2, \dots, R_d > 0$ such that the projections of $Y_s$ onto the $i$th coordinate put zero mass on $[-R_i, R_i]^c$ for all $s \in [0,t]$ almost surely. Then for $R = \max_{i=1,\dots,d} R_i$, we have $Y_s(([-R,R]^d)^c)=0$ for all $s \in [0,t]$, which proves Theorem~\ref{thm_compact}(b).

The proof method for Theorem~\ref{thm_highdim} is similar to the proof of Theorem~\ref{thm_onedim}. Many of the one-dimensional proof ingredients have analogous versions stated in terms of the projected process, and most of the martingale arguments from Section~\ref{s_dimone} do not depend on the spatial dimensions of the associated stochastic integrals. Thus, for several claims we do not give proofs, and just refer to the proofs of the analogous claims in the previous section. However, the higher-dimensional setting and the use of the projected process necessitate several substantial modifications, in particular in proving a result analogous to Lemma~\ref{lemma_probbound}. 

For the remainder of the section, we fix $Y_0 \in \cM_f(\R^d)$ with compact support. For $R>0$ we define the $d$-dimensional closed ball
\begin{equation*}
\Lambda_{R,d} =  \{x \in \R^d: |x| \leq R\}.
\end{equation*}
We define $R_0$ to be the radius of the smallest ball containing the support of $Y_0$, i.e.
\begin{equation*}
R_0 := \inf \{ R>0 : Y_0(\Lambda_{R,d}^c) = 0 \}.
\end{equation*}
For $z \in \R$ and $t>0$ we define
\begin{equation*} 
\bar{\Y}(t,z) := \int_{\R^{d-1}} \bar{Y}(t,(z,y))dy.
\end{equation*}
It is then immediate from Fubini's theorem that, $\bP$-a.s., $\Y_t(dz) = \bar{\Y}(t,z) dz$, that is, the above is a density for the projected measure almost surely. Analogously to the definition of $A_t(x)$ in the previous section, for $t>0$ and $z \in \R$ we define
\begin{equation*} 
\bar{\A}_t(z) := \int_0^t \bar{\Y}(s,z) ds. 
\end{equation*}
We observe that, just like $A_t(x)$ in the previous section, the process $t \to \bar{\A}_t(z)$ is a.s. increasing and continuous. If in addition we have $z > R_0$, provided the stochastic integral is well-defined, we define
\begin{equation}\label{def_AAt}
\A_t(z) := \int_{z}^\infty (w -z) \Y_t(dw) -  \int_{(0,t] \times \R^d} (x_1 - z)_+ \bar{Y}(s,x)^\gamma L(ds,dx).
\end{equation}
The equation defining $\A_t(z)$ is precisely the integration by parts formula for $\bar{\A}_t(z)$, i.e. the higher-dimensional analogue of \eqref{e_main_formula}, and we therefore expect that $\bar{\A}_t(z) = \A_t(z)$. Although we will indeed prove that $\A_t(z)$ is well-defined for all $z > R_0$, due to a technical issue we are only able to prove that it is equal to $\bar{\A}_t(z)$ almost everywhere. That is, we show that for a.e. $z > R_0$, $\bP(\bar{\A}_t(z) = \A_t(z)  \text{ for all }  t \geq 0) = 1$, which is sufficient for our purposes. 

We will continue to write $(P_t)_{t \geq 0}$ to denote the $d$-dimensional heat semigroup and we write $(\hat{P}_t)_{t \geq 0}$ to denote the one-dimensional heat semigroup. Similarly, we write $\hat{p}_t(\cdot)$ to denote the one-dimensional heat kernel. The next lemma gives first moment bounds for $\bar{\Y}(t,z)$ and $\bar{\A}_t(z)$.

\begin{lemma} \label{lemma_YYt_moment}
For all $t>0$ and $z \in \R$,
\begin{equation*}
\E(\bar{\Y}(t,z)) \leq \hat{P}_t \Y_0 (z)
\end{equation*}
and
\begin{equation*}
\E(\bar{\A}_t(z)) \leq \int_0^t \hat{P}_s \Y_0 (z) ds.
\end{equation*}
\end{lemma}
\begin{proof}
The first inequality is immediate from the definition of $\bar{\Y}(t,z)$ and \eqref{e_thm_meanmeasure}, and the second follows from the definition of $\bar{\A}_t(z)$ and the first inequality.
\end{proof}

For the remainder of the section we fix a mollifier $\xi \in C^\infty_c(\R)$ which is non-negative and satisfies $\xi \geq 0$ and $\int \xi = 1$. The set $U$ from the next lemma depends on $\xi$ and so further results involving this set implicitly depend on $\xi$ through $U$. The lemma is analogous to Lemma~\ref{lemma_At_convergence}. However, the argument used to prove $\LL^1$-convergence of $A^\eps_t(x)$ to $A_t(x)$ in Lemma~\ref{lemma_At_convergence}(a) does not work here, so the proof and statement are somewhat changed. 

\begin{lemma} \label{lemma_d_At_convergence}
There exists a set $U = U(\xi) \subseteq \R$ such that $U^c$ is Lebesgue-null and for all $z \in U$, $\bP(\lim_{\eps \downarrow 0} \bar{\A}^\eps_t(z) = \bar{\A}_t(z) \text{ for all }t \geq 0) = 1$, where
\[ \bar{\A}^\eps_t(z):= \int_0^t \xi_\eps * \Y_s(z) ds .\]
\end{lemma}

\begin{proof}
We begin by proving that for a fixed time $t>0$, there exists a set $U_t$ such that $U_t^c$ is Lebesgue-null, and for $z \in U_t$, $\lim_{\eps \downarrow 0} \A^\eps_t(z) = \A_t(z)$ a.s. We then upgrade the result to hold for all times simultaneously using the fact that $t \to \bar{\A}_t(z)$ is continuous and non-decreasing.

Let $t>0$. We first remark that $\bar{\A}_t \in \LL^1(\R)$ a.s. This can be seen by noting that, by Fubini's theorem, $\bar{\A}_t$ is a density for $\int_0^t \Y_s(\cdot)ds$, which a finite measure. Hence, by Fubini's theorem and a standard convolution result (see \cite[Theorem~8.14]{Folland}), with probability one we have
\begin{align*}
\bar{\A}_t^\eps(z) = \xi_\eps * \bigg( \int_0^t  \Y_s(\cdot) ds\bigg)(z) \to \bar{\A}_t(z) \text{ for a.e. $z \in \R$ as $\eps \downarrow 0$.}
\end{align*} 
We define the deterministic set $B_t \subset \R$ by
\[ B_t := \left\{ z \in \R : \bP ( \bar{\A}_t^\eps(z) \text{  does not converge to } \bar{\A}_t(z) \text{ as $\eps \downarrow 0$} ) > 0 \right\}.\]
It is elementary to argue that $\{(t,z,\omega) \in \R_+ \times \R \times \Omega : \bar{\A}_t^\eps(z) \text{ does not converge to } \bar{\A}_t(z) \}$ is jointly measurable, so Fubini's theorem and the property shown above allow us to conclude that $B_t$ is Lebesgue-null. Taking $U_t = B_t^c$ gives the set $U_t$ with the desired property.

Let $\mathcal{I} \subset \R_+$ be countable and dense, and define $B = \cup_{t \in \mathcal{I}} B_t$. Then $B$ is Lebesgue-null, and hence $U:=B^c = \cap_{t \in \mathcal{I}}U_t$ is a full-measure subset of $\R$ such that for every $z \in U$, $\bar{\A}^\eps_s(z) \to \bar{\A}_s(z)$ a.s. for every $s \in \mathcal{I}$. Now fix $z \in U$. Since $\mathcal{I}$ is countable, there is an event with probability one on which $\bar{\A}^\eps_s(z) \to \bar{\A}_s(z)$ as $\eps \to 0$ for all $s \in \mathcal{I}$. One can now argue in the same way as in the proof of Lemma~\ref{lemma_At_convergence}(b), using the fact that $t \to \bar{\A}_t(z)$ is continuous and non-decreasing, to prove that on this event we have $\lim_{\eps \downarrow 0} \bar{\A}^\eps_t(z)= \bar{\A}_t(z)$ for all $t \geq 0$. This completes the proof. \end{proof}

\begin{lemma} \label{lemma_ibp_d_Abar}
Let $z_0 \in U \cap (R_0,\infty)$. Then the function $\phi(s,x) = (x_1 - z_0)_+ \bar{Y}(s,x)^\gamma$ is in $\mathbb{L}^{\alpha,t}_{\text{a.s.}}$ for all $t>0$, and hence the stochastic integral in \eqref{def_AAt} exists for all $t \geq 0$. Moreover, we have $\bP(\bar{\A}_t(z_0) = \A_t(z_0) \text{ for all } t \geq 0) = 1$. In other words, for $z_0 \in (R_0,\infty) \cap U$, with probability one, 
\begin{equation} \label{e_Abarlemma_ibp}
\bar{\A}_t(z_0) = \int_{z_0}^\infty (z - z_0) \Y_t(dz) -  \int_{(0,t] \times \R^d} (x_1 - z_0)_+ \bar{Y}(s,x)^\gamma L(ds,dx), \quad t \geq 0.
\end{equation}
\end{lemma}

This establishes, at least for all $z_0 > R_0$ in the full-measure set $U$, the higher dimensional analogue of Lemma~\ref{lemma_ibp_boundary}(a)-(b). Its proof is nearly identical to the proofs of those results (and of the lemmas which precede them). We therefore omit the full proof and just sketch the few necessary changes. The first step in the proof is to establish that
\begin{equation} \label{e_d_integ}
\sup_{s \in [0,t]} \int_{z_0}^\infty (z-z_0) \Y_s(dz) < \infty  \, \quad\text{a.s.}
\end{equation}
This is proved along the same lines as Lemma~\ref{lemma_integ} and requires only one additional argument. One takes $\psi_n$ and $\eta$ as in the proof of Lemma~\ref{lemma_integ}, which are respectively smooth approximations of $1_{[-n,n]}$ and $1_{[1,\infty)}$. Then, for $m, n\in \N$, one applies \eqref{e_spde1weak} with $\phi(x) = (x_1-z_0) \psi_n(x_1-z_0) \eta(x_1-z_0) \prod_{i=2}^d \psi_m(x_i)$, which is in $C^\infty_c(\R^d)$. One then takes $m \to \infty$ so that $\prod_{i=2}^d \psi_m(x_i)$ converges to $1$, and after a short argument one obtains (relabelling $x_1$ as $z$)
\begin{align} \label{e_d_integ_aux}
&\int_{z_0 + \frac 1 4}^\infty (z-z_0) \eta(z-z_0) \psi_n(z-z_0) \Y_t(dz) = \int_{(0,t] \times \R}[(z-z_0) \eta(z-z_0 )\psi_n(z-z_0)]'' \Y_s(dz) ds \notag 
\\&\hspace{1.5 cm} + \int_{(0,t] \times \R^d} (x_1-z_0)_+ \eta(x_1-z_0) \psi_n(x_1-z_0) \bar{Y}(s,x)^\gamma L(ds,dx).
\end{align}
The limiting argument required to show this is straightforward because for fixed $n$, the integrands are monotone and uniformly bounded in $m$. We therefore omit the details. Given \eqref{e_d_integ_aux}, the proof of \eqref{e_d_integ} is identical to the proof of Lemma~\ref{lemma_integ} starting from \eqref{e_intlemma_ibp1}.

Using \eqref{e_d_integ}, the rest of the proof is likewise the same as the other arguments in Section~\ref{s_dimone}, in particular the proofs of Lemma~\ref{lemma_pre_ibp_boundary} and Lemma~\ref{lemma_ibp_boundary}(a)-(b), with two caveats. The first is that there is another simple limiting argument akin to the one used to show \eqref{e_d_integ_aux}. The second is that the $\bar{\A}_t(z)$ term in the integration by parts formula arises as the limit of $\bar{\A}_t^\eps(z)$, which is defined in Lemma~\ref{lemma_d_At_convergence} via the mollifer $\xi_\eps$. In particular, we pass to the limit using Lemma~\ref{lemma_d_At_convergence}, which is where the restriction $z_0 \in U$ arises. Making these small adjustments, the proof then follows the corresponding arguments in Section~\ref{s_dimone}.

Even though Lemma~\ref{lemma_ibp_d_Abar} shows that $\bar{\A}_t(z_0)$ satisfies \eqref{e_Abarlemma_ibp} all $z_0 \in (R_0,\infty) \cap U$, which has full Lebesgue measure in $(R_0,\infty)$, we will not always be able to choose $z_0 \in U$ in later arguments. Thus, we now establish a few properties of $\A_t(z_0)$, which we recall is {\it defined} by the integration by parts formula \eqref{def_AAt}. First, we show it is a well-defined process for all $z_0 > R_0$ (instead of when restricted to $z_0 \in U$ as in the previous lemma) and $t>0$. For $z_0 \not \in U$, because we do not necessarily have $\A_t(z_0) = \bar{\A}_t(z_0)$, $t \to \A_t(z_0)$ is not a priori continuous and non-decreasing. We show that these properties do in fact hold, and that $\A_t(z)$ satisfies a property akin to the one in Lemma~\ref{lemma_crosstheline}.

\begin{lemma} \label{lemma_d_ibp} Let $z_0 > R_0$.

(a) The stochastic integral in \eqref{def_AAt} exists for all $t>0$, and hence $(\A_t(z_0))_{t \geq 0}$ is well-defined.

(b) If $(z_n)_{n \in \N} \subset (R_0,\infty)$ and $\lim_{n\to\infty} z_n = z_0$, then for every $t>0$, $\A_s(z_n)$ converges to $\A_s(z_0)$ uniformly on $s \in [0,t]$ in probability.

(c) $(\A_t(z_0))_{t \geq 0}$ is a.s. continuous and non-decreasing.

(d) For any a.s. bounded stopping time $\tau$, with probability one, 
\[ \A_{\tau}(z_0) = 0 \Rightarrow \Y_t((z_0,\infty)) = 0 \,\text{ for all } t \leq \tau.\]
\end{lemma}

\begin{proof}
From Lemma~\ref{lemma_ibp_d_Abar}, for every $z_0 \in (R_0,\infty)\cap U$, $(x_1 - z_0)_+ \bar{Y}(s,x)^\gamma \in \LL^{\alpha,t}_{\text{a.s.}}$ for all $t>0$. To see that we can extend this to every $z_0 > R_0$ is simple. Let $z_0 > R_0$. Then since $U$ is dense in $\R$, there exists $w_0 \in (R_0, z_0) \cap U$, and \[(x_1 - z_0)_+ \bar{Y}(s,x)^\gamma  \leq (x_1 - w_0)_+\bar{Y}(s,x)^\gamma \, \text{ for all }(s,x) \in \R_+ \times \R^d.\] Since both integrands are non-negative and the integrand on the right is in $\LL^{\alpha,t}_{\text{a.s.}}$ for all $t>0$, so is the integrand on the left. This proves (a).

We now prove (b). Suppose that $(z_n)_{n \in \N}$ and $z_0$ are as in the statement. Arguing as in the proof of Lemma~\ref{lemma_ibp_boundary}(c), we obtain that for any $t>0$,
\begin{align*}
|\A_t(z_n) - \A_t(z_0)|^* \leq |z_n - z_0| V_t^* + |(F_n \cdot L)_t|^*
\end{align*}
where we recall that $V_t = Y_t(1) = \Y_t(1)$, and $F_n(s,x) := [(x_1-z_0)_+ - (x_1-z_n)_+]\bar{Y}(s,x)^\gamma$. Since $F_n$ vanishes pointwise as $z_n \to z_0$ and $|F_n(s,x)| \leq |z_0 - z_n|\bar{Y}(s,x)^\gamma \in \LL^{\alpha,t}_{\text{a.s.}}$, by Lemma~\ref{lemma_dctbdg}, $|(F_n \cdot L)_t|^*$ vanishes in probability as $n \to \infty$. Thus, by the above, $|\A_t(z_n) - \A_t(z_0)|^*$ vanishes in probability as $n \to \infty$, which proves (b).

To see part (c), we remark that if $z_0 \in U$, then $(\A_t(z_0))_{t\geq 0} = (\bar{\A}_t(z_0))_{t \geq 0}$ a.s. by Lemma~\ref{lemma_ibp_d_Abar}, and the latter is continuous and non-decreasing by definition, so the claim holds. If $z_0 \not \in U$, then let $(z_n)_{n \in \N} \subset U$ be a sequence converging to $z_0$. Then $(\A_t(z_n))_{t\geq 0} = (\bar{\A}_t(z_n))_{t \geq 0}$ for every $n$, and so for every $t>0$, by the first claim of part (b) we may restrict to a subsequence $(z_n')_{n \in \N}$ such that with probability one, $\bar{\A}_s(z_n) \to \A_s(z_0)$ uniformly on $[0,t]$. Hence $(\A_s(z_0))_{s \in [0,t]}$ is a.s. the uniform limit of continuous, non-decreasing functions and so is itself continuous and non-decreasing on $[0,t]$. This holds for every $t>0$, so the claim follows.

The proof of part (d) is the same as the proof of Lemma~\ref{lemma_crosstheline}. Indeed, all that is required for the proof is that $t \to \A_t(z_0)$ is continuous and non-decreasing, which hold by part (c), and satisfies \eqref{def_AAt}, which holds by definition. This completes the proof. \end{proof}

We have now established a $d$-dimensional version of all the main lemmas from Section~\ref{s_dimone} except for Lemma~\ref{lemma_probbound}. The following is the higher-dimensional version of Lemma \ref{lemma_probbound}, but there are several differences necessitated by working with the projected process. Moreover, the sub-optimality of our $q$th moment estimates for $\bar{Y}(t,x)$ for $q \in (1,\alpha)$ mean that we are not able to prove a useful version of the lemma when $\gamma < 1/\alpha$ in the higher-dimensional setting.
\begin{lemma} \label{lemma_probbound_highd}
Let $\gamma \in [1/\alpha,1)$ and $t>0$. There is constant $C = C(t,d) > 0$ such that the following holds: for $z_0 \geq R_0 + 1$, and any $\delta \in (0,1-\gamma)$, $a, b, r \in (0,1)$ and $R \geq 2R_0 \vee 2\sqrt{2(d-1)}$, there exists $z_1 \in [z_0 + r, z_0 + 2r]$ which satisfies
\begin{equation*} 
\bP (\A_t(z_1) \geq a) \leq \bP(\A_t(z_0) \geq b) + b^{\delta} + C \frac{R^{(d-1)(p-1)/\alpha}}{r^{1 + 1/\alpha}}\left( \frac{b^{1-\delta}}{ a^\gamma} \right) + C Y_0(1) \frac{\exp\left( - \frac{R^2}{16(d-1)t}\right)}{a}.
\end{equation*}
\end{lemma}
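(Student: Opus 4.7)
The proof follows the template of Lemma~\ref{lemma_probbound}(a), with one genuinely new ingredient: a spatial truncation of the $(d-1)$-dimensional transverse integration that defines $\bar{\Y}_s(z)$. Introduce the stable stochastic integral
\[M_t(z_0) := \int_{(0,t] \times \R^d} (x_1 - z_0)_+ Y_{s-}(x)^\gamma L(ds, dx)\]
together with its time-change $T(t,z_0) := \int_0^t \int (x_1 - z_0)_+^\alpha Y_{s-}(x)^p \,ds\,dx$; by Lemma~\ref{lemma_d_ibp}(a),
\[\int_{z_0}^\infty (z-z_0)\,\Y_t(dz) = \A_t(z_0) + M_t(z_0).\]
Define $\sigma_1, \sigma_2$ and $\tau = t \wedge \sigma_1 \wedge \sigma_2$ exactly as in the one-dimensional proof and run the same stable gambler's-ruin argument (via Proposition~\ref{prop_integral_rep} and optional stopping on the one-sided stable process representing $M_t(z_0)$) to obtain $\bP(\A_t(z) \geq a) \leq \bP(\A_t(z_0) \geq b) + b^\delta + \bP(\A_\tau(z) \geq a)$. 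The supermartingale/BDG chain that gave $\E(T(\tau,z_0)^{1/\alpha}) \leq C b^{1-\delta}$ in Section~\ref{s_dimone} depends only on $M_t(z_0)$ together with the non-negativity of the left-hand side above, so it transfers verbatim.

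The task then reduces to controlling $\bP(\A_\tau(z) \geq a)$ for $z \in [z_0+r, z_0+2r]$. Decompose $\A_\tau(z) = \A_\tau^{(R)}(z) + \A_\tau^{(R,c)}(z)$ where $\A_\tau^{(R)}(z) := \int_0^\tau \int_{|y| \leq R} \bar{Y}_s((z,y))\,dy\,ds$ and $\A_\tau^{(R,c)}$ is the corresponding $|y|>R$ piece, so that $\bP(\A_\tau(z)\geq a) \leq \bP(\A_\tau^{(R)}(z)\geq a/2) + \bP(\A_\tau^{(R,c)}(z)\geq a/2)$. For the truncated piece I apply Markov at exponent $\gamma$ and then, using $p \geq 1$, Jensen's inequality against the finite measure $1_{\{s \leq \tau\}}\,ds\,dy$ on $[0,t] \times \Lambda_{R,d-1}$:
\[\A_\tau^{(R)}(z)^p \leq (t |\Lambda_{R,d-1}|)^{p-1} \int_0^\tau \int_{|y| \leq R} \bar{Y}_s((z,y))^p \,dy\,ds.\]
This is where $R^{(d-1)(p-1)/\alpha}$ enters, via $|\Lambda_{R,d-1}| \asymp R^{d-1}$. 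Raising to the $1/\alpha$ power, averaging in $z$ over $[z_0+r, z_0+2r]$, pulling the expectation inside via concavity of $x \mapsto x^{1/\alpha}$, and inserting $((z-z_0)/r)^\alpha \geq 1$ (which extends the $y$-integral back to $\R^d$ and recovers the kernel $(x_1-z_0)_+^\alpha$) yields
\[\frac{1}{r} \int_{z_0+r}^{z_0+2r} \bP(\A_\tau^{(R)}(z)\geq a/2)\,dz \leq C \frac{R^{(d-1)(p-1)/\alpha}}{r^{1+1/\alpha}a^\gamma}\,\E(T(\tau,z_0)^{1/\alpha}),\]
which combined with the BDG bound $\E(T(\tau,z_0)^{1/\alpha}) \leq C b^{1-\delta}$ gives the third term of the lemma.

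For the tail piece, apply Markov at exponent one and the mean-measure bound (Lemma~\ref{lemma_YYt_moment}/\ref{lemma_meanmeasure}) to get $\bP(\A_\tau^{(R,c)}(z) \geq a/2) \leq 2a^{-1}\int_0^t \int_{|y|>R}P_sY_0((z,y))\,dy\,ds$. Factor the Gaussian kernel as $p_s((z,y)-\xi) = p_s^{(1)}(z-\xi_1)\,p_s^{(d-1)}(y-\xi_{2:d})$. Since $Y_0$ is supported in $\Lambda_{R_0,d}$ and $R \geq 2R_0$, the constraint $|y|>R$ forces $|y-\xi_{2:d}| \geq R/2$. A union bound over the $d-1$ transverse coordinates of the Gaussian $N(0, 2s I_{d-1})$, combined with the standard one-dimensional Gaussian tail, gives
\[\int_{|y|>R} p_s^{(d-1)}(y-\xi_{2:d})\,dy \leq 2(d-1)\exp\!\bigl(-R^2/(16(d-1)t)\bigr)\]
uniformly in $s \in (0,t]$ (the auxiliary hypothesis $R \geq 2\sqrt{2(d-1)}$ ensures we are safely in the effective Gaussian tail regime). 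The residual factor integrates to $\int_0^t \hat{P}_s\Y_0(z)\,ds \leq C\sqrt{t}\,Y_0(1)$ uniformly in $z > R_0$, producing the final summand of the lemma. Averaging the combined estimate over $z \in [z_0+r, z_0+2r]$ and selecting $z_1$ realizing the mean finishes the proof. The main obstacle is the truncation trade-off in $R$ between the polynomial growth $R^{(d-1)(p-1)/\alpha}$ and the Gaussian tail; critically, this step requires $p \geq 1$ in order to apply Jensen's inequality on $\Lambda_{R,d-1}$, which is ultimately why the higher-dimensional compact support theorem is confined to $\gamma \in [1/\alpha, 1)$.
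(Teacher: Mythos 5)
Your proof is correct and follows the paper's argument step for step: the same stopping-time gambler's-ruin reduction to $\bP(\A_\tau(z)\geq a)$, the same $R$-ball truncation of the transverse $(d-1)$-dimensional integral with Jensen's inequality (using $p\geq 1$), Markov at exponent $\gamma$, the BDG-based bound $\E(T(\tau,z_0)^{1/\alpha})\leq Cb^{1-\delta}$, and a Gaussian tail estimate for the complement. The only cosmetic deviations are that you integrate the one-dimensional heat-kernel singularity $s^{-1/2}$ rather than using $z_0\geq R_0+1$ to bound $p_s^{(1)}$ uniformly, and you elide the technical distinction between $\A_t(z)$ and $\bar{\A}_t(z)$, which the paper handles by working with $\bar{\A}$ and invoking the a.e.\ equality only at the final step.
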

\begin{proof}
Let $t>0$ and $z_0 \geq R_0 + 1$. For $s \in [0,t]$ we define the processes
\begin{align}
M_s(z_0) &:= \int_{(0,s] \times \R^d} (x_1 - z_0)_+ \bar{Y}(u,x)^\gamma L(du,dx), \label{e_prob_d_bound_M} \\
T(s,z_0) &:= \int_{(0,s] \times \R^d} (x_1-z_0)_+^\alpha \bar{Y}(u,x)^{p} du dx. \label{e_prob_d_bound_T} 
\end{align}
Then by definition of $\A_t(z)$,
\begin{equation*}
\int_{z_0}^\infty (z-z_0) \Y_t(dz) = \A_t(z_0) + M_t(z_0), \quad t \geq 0.
\end{equation*}
Now let $\delta \in (0,1-\gamma)$ and $a,b \in (0,1)$. Following the proof of Lemma~\ref{lemma_probbound}, we introduce the stopping times
\begin{align}
\sigma_1 &:= \inf \{s > 0: \A_s(z_0) \geq b\}, \notag
\\ \sigma_2 &:= \inf \{s > 0 : M_s(z_0) \geq b^{1-\delta}\}, \notag
\\ \tau &:= t \wedge \sigma_1 \wedge \sigma_2. \notag
\end{align}
Just as in the proof of Lemma~\ref{lemma_probbound}, we use Proposition~\ref{prop_integral_rep} and Lemma~\ref{lemma_stable_exit} to show that
\begin{equation*}
\bP(\sigma_2 < \sigma_1 , \sigma_2 < t) \leq b^\delta.
\end{equation*}
Let $z > z_0$. Proceeding as in the proof of Lemma~\ref{lemma_probbound}, we derive the following:
\begin{align} \label{e_probbdlemma_d_ineq1}
&\bP(\A_t(z) \geq a) \notag
\\ &\hspace{1 cm}\leq \bP(\A_t(z) \geq a, \tau = \sigma_1 ) + \bP( \A_t(z) \geq a, \tau = t) + \bP(\A_t(z) \geq a, \tau = \sigma_2 < \sigma_1 \wedge t) \notag
\\ &\hspace{1 cm}\leq \bP(\A_t(z_0) \geq b) + \bP(\A_t(z) \geq a, \tau = t) + \bP(\sigma_2 < \sigma_1 ,\sigma_2 < t) \notag
\\ & \hspace{1 cm}\leq \bP(\A_t(z_0) \geq b) + \bP(\A_t(z) \geq a, \tau = t) + b^\delta .
\end{align}
Let $R \geq 2R_0 \vee 2\sqrt{2(d-1)}$ and recall $\Lambda_{R,d-1}$ is the $(d-1)$-dimensional closed ball of radius $R$. We decompose $\A_t(z)$ into two terms corresponding to the contributions from $\bar{Y}(s,(z,y))$ with $y \in \Lambda_{R,d-1}$ and $y \in \Lambda_{R,d-1}^c$. We write
\begin{align} \label{def_AR12}
\A_t(z) &= \int_0^t ds \int_{\R^{d-1}} \bar{Y}(s,(z,y)) dy  \notag
\\ &= \int_0^t ds \int_{\Lambda_{R,d-1}} \bar{Y}(s,(z,y)) dy + \int_0^t ds \int_{\Lambda_{R,d-1}^c} \bar{Y}(s,(z,y)) dy \notag
\\ &=: \A^1_t(z,R) + \A^2_t(z,R).  
\end{align}
We remark that we have implicitly used $\A_t(z) = \bar{\A}_t(z)$ in the first equality above, and that Lemma~\ref{lemma_ibp_d_Abar} only proves this equality for $z \in U$. However, $U$ has full Lebesgue measure and we will ultimately integrate with respect to $dz$, and hence we may ignore the potential null set on which this equality fails and proceed as above. If $\A_t(z) \geq a$, then either $\A_t^1(z,R) \geq a/2$ or $\A_t^2(z,R) \geq a/2$, and so from \eqref{e_probbdlemma_d_ineq1} and \eqref{def_AR12} we obtain
\begin{align}\label{e_probbdlemma_d_ineq2}
\bP(\A_t(z) \geq a) &\leq \bP(\A_t(z_0) \geq b) + b^\delta + \bP(\A_t^1(z,R) \geq a/2, \tau = t) + \bP(\A_t^2(z,R) \geq a/2, \tau = t) \notag
\\&\leq \bP(\A_t(z_0) \geq b) + b^\delta + \bP(\A_\tau^1(z,R) \geq a/2) +\bP(\A_t^2(z,R) \geq a/2).
\end{align}
Consider $\A_\tau^1(z,R)$. Let $\theta_\kappa >0$ denote the volume of the $\kappa$-dimensional unit ball. Since $p \geq 1$, we obtain from Jensen's inequality that
\begin{align*}
\A_\tau^1(z,R)& =  \theta_{d-1}  R^{d-1} \tau \bigg( \frac{1}{\tau \theta_{d-1} R^{d-1}} \int_0^\tau \int_{\Lambda_{R,d-1}} \bar{Y}(s,(z,y))^{p/p} dy ds \bigg) 
\\ &\leq \theta_{d-1}  R^{d-1} \tau \bigg( \frac{1}{\tau \theta_{d-1}  R^{d-1} } \int_0^\tau \int_{\Lambda_{R,d-1}} \bar{Y}(s,(z,y))^p dy ds \bigg)^{1/p} 
\\ &= \theta_{d-1}^{(p-1)/p} R^{(d-1)(p-1)/p}  \tau^{(p-1)/p} \bigg( \int_0^\tau \int_{\Lambda_{R,d-1}} \bar{Y}(s,(z,y))^p dy ds \bigg)^{1/p} .
\end{align*}
Hence, by Markov's inequality and because $t \geq \tau$,
\begin{align*}
&\bP(\A_\tau^1(z,R) \geq a/2) \leq (a/2)^{-\gamma} \E(\A_\tau^1(z,R)^\gamma) 
\\ &\hspace{1 cm}\leq (a/2)^{-\gamma} \theta_{d-1}^{(p-1)/\alpha} t^{(p-1)/\alpha} (2R)^{(d-1)(p-1)/\alpha} \E \bigg( \bigg(\int_0^\tau \int_{\Lambda_{R,d-1}} \bar{Y}(s,(z,y))^p dy ds \bigg)^{1/\alpha} \bigg).
\end{align*}
Combining the constants into a single constant $C_0>0$, we integrate the above over $z \in [z_0 + r, z_0 + 2r]$ for $r \in (0,1]$ and argue as in \eqref{e_probbdlemma_ineqchain} to obtain the following:
\begin{align} \label{e_probbdlemma_d_ineqchain}
&\frac 1 r \int_{z_0+r}^{z_0+2r} \bP(\A_\tau^1(z,R) \geq a/2) dz  \notag
\\ &\hspace{1 cm} \leq C_0 \frac{(tR^{d-1})^{(p-1)/\alpha}}{a^\gamma} \E \bigg(\frac 1 r \int_{z_0 + r}^{z_0 + 2r} \bigg(\int_0^\tau \int_{\Lambda_{R,d-1}} \bar{Y}(s,(z,y))^p dy ds \bigg)^{1/\alpha} dz \bigg)  \notag
\\ &\hspace{1 cm} \leq C_0 \frac{(tR^{d-1})^{(p-1)/\alpha}}{a^\gamma} \E \bigg(\bigg( \frac 1 r \int_{z_0 + r}^{z_0 + 2r}  \int_0^\tau \int_{\Lambda_{R,d-1}} \bar{Y}(s,(z,y))^p dy ds dz \bigg)^{1/\alpha}  \bigg)  \notag 
\\ &\hspace{1 cm} \leq C_0 \frac{(tR^{d-1})^{(p-1)/\alpha}}{a^\gamma}\E \bigg(\bigg( \frac 1 r \int_{z_0 + r}^{z_0 + 2r} \frac{(z-z_0)_+^\alpha}{r^\alpha} \int_0^\tau \int_{\Lambda_{R,d-1}} \bar{Y}(s,(z,y))^p dy ds dz \bigg)^{1/\alpha}  \bigg)  \notag 
\\ &\hspace{1 cm}\leq C_0 \frac{(tR^{d-1})^{(p-1)/\alpha}}{a^\gamma r^{1 + 1/\alpha}} \E \bigg(\bigg( \int_{(0,\tau] \times \R^d} (x_1 - z_0)^\alpha_+ \bar{Y}(s,x)^p ds dx \bigg)^{1/\alpha} \bigg)  \notag 
\\ &\hspace{1 cm} = C_0 \frac{(tR^{d-1})^{(p-1)/\alpha}}{a^\gamma r^{1 + 1/\alpha}} \E(T(\tau,z_0)^{1/\alpha}). 
\end{align}
The second inequality uses Jensen's inequality; the fourth inequality follows from a change of variables and the fact that we may bound the integral of the non-negative function $(x_1 - z_0)^\alpha_+ \bar{Y}(s,x)^p$ over $[z_0 + r, z_0 + 2r] \times \Lambda_{R,d-1}$ by its integral over all of $\R^d$. The last line simply uses from the definition of $T(\tau,z_0)$. Recalling the definitions of $M_\tau(z_0)$ and $T(\tau,z_0)$ from \eqref{e_prob_d_bound_M} and \eqref{e_prob_d_bound_T}, by Jensen's inequality and Proposition~\ref{prop_isometryint} we have
\begin{equation*}
\E(T(\tau,z_0)^{1/\alpha}) \leq \E(T(\tau,z_0))^{1/\alpha} \leq \left( C_\alpha \sup_{\lambda > 0} \lambda^\alpha \bP(|M_\tau(z_0)|^* > \lambda) \right)^{1/\alpha}.
\end{equation*}
Since the stopping times $\sigma_1, \sigma_2$ and $\tau$ are defined in the same way as in the proof of Lemma~\ref{lemma_probbound}, the same argument using the representation of $M_s(z_0)$ as a time-changed $\alpha$-stable process and applying Lemma~\ref{lemma_stable_exit}(b) applies with no modifications required, and we conclude in the same fashion that $\E(T(\tau,z_0)^{1/\alpha}) \leq 2C_\alpha^{1/\alpha}b^{1-\delta}$. Thus, from the above and \eqref{e_probbdlemma_d_ineqchain}, we conclude that there is a universal constant $C_1 > 0$ such that
\begin{equation} \label{e_problemma_A1_d1}
\frac 1 r \int_{z_0+r}^{z_0+2r} \bP(\A_\tau^1(z,R) \geq a/2) dz \leq C_1 \frac{(tR^{d-1})^{(p-1)/\alpha}}{r^{1 + 1/\alpha}} \frac{b^{1-\delta}}{a^\gamma}.
\end{equation}
We now consider $\A^2_t(z_0,R)$, which we recall is defined in \eqref{def_AR12}. By Markov's inequality and \eqref{e_thm_meanmeasure},
\begin{align} \label{e_problemma_A2_d1}
\bP(\A_t^2(z,R) \geq a/2) &\leq 2a^{-1} \int_0^t ds \int_{\Lambda_{R,d-1}^c} \E(\bar{Y}(s,(z,y))) dy \notag
\\ &\leq 2a^{-1} \int_0^t ds \int_{\Lambda_{R,d-1}^c} P_s Y_0((z,y)) dy.
\end{align}
For the time being, we parametrize $x \in \R^d$ by $x = (w, \hat x)$, with $w \in \R$ and $\hat x = (\hat{x}_1, \dots, \hat{x}_{d-1}) \in \R^{d-1}$. Let $p_s^{(1)}(\cdot)$ and $p_s^{(d-1)}(\cdot)$ denote respectively the one-dimensional and $(d-1)$-dimensional heat kernels. Since $Y_0$ is supported on $\Lambda_{R_0,d}$,
\begin{align} \label{e_problemma_A2_d2}
P_s Y_0((z,y)) = \int_{\Lambda_{R_0,d}} p^{(1)}_s(z-w) p_s^{(d-1)}(y - \hat x) \, Y_0(dx).
\end{align}
For each $y \in \Lambda_{R,d-1}^c$ and $\hat{x} \in \Lambda_{R_0,d-1}$, \[|y - \hat{x}| \geq |y| - |\hat{x}| \geq |y| - R_0 \geq |y| / 2,\]
where the last inequality uses $R \geq 2 R_0$. We also observe that 
\[ c_1(t) :=\sup_{w \geq  1} \sup_{s \in (0,t]} p^{(1)}_s(w) = \sup_{s \in (0,t]} p^{(1)}_s(1) < \infty\]
and that we may apply the above estimate to $p_s^{(1)}(z-w)$ in \eqref{e_problemma_A2_d2} since $z \geq R_0 + 1$. Combining these estimates, it follows from \eqref{e_problemma_A2_d1} and \eqref{e_problemma_A2_d2} that
\begin{equation*}
\bP(\A_t^2(z,R) \geq a/2) \leq 2c_1(t) Y_0(1) a^{-1} \int_0^t ds \int_{\Lambda_{R,d-1}^c} p^{(d-1)}_s(y/2) dy.
\end{equation*}
For $\kappa \in \N$ let $(\xi^{\kappa}_s)_{s \geq 0}$ denote a $\kappa$-dimensional standard Brownian motion started from $0$. By a change of variables and Brownian scaling,
\begin{align*}
 \int_{\Lambda_{R,d-1}^c} p_s^{(d-1)}(y/2) dy &= 2^{d-1} \bP\left(|\xi^{(d-1)}_s| \geq \frac{R}{2\sqrt 2} \right )
\\ &\leq 2^{d-1}(d-1) \bP \left(|\xi^{(1)}_s| \geq \frac{R}{2\sqrt{2( d-1) }}\right)
\\ & \leq 2^{d-1}(d-1) \bP \left(|\xi^{(1)}_t| \geq \frac{R}{2\sqrt{2(d-1)}}\right)
 \\ &\leq C(d)t^{1/2} e^{-R^2/(16t(d-1))}.
\end{align*}
The second inequality can be easily derived from the reflection principle and the last line uses the following Gaussian tail estimate: 
\begin{align*}
\bP \left(|\xi^{(1)}_t| \geq \frac{R}{2\sqrt{2(d-1)}}\right) = \frac{2}{(2\pi t)^{1/2}} \int_{\frac{R}{2\sqrt{2(d-1)}}}^\infty e^{-x^2/2t}  dx &\leq \frac{2}{(2\pi t)^{1/2}} \int_{\frac{R}{2\sqrt{2(d-1)}}}^\infty  x e^{-x^2/2t} dx
\\ &= (2\pi t)^{-1/2} 2t e^{-R^2/(16t(d-1))},
\end{align*}
where the inequality uses the assumption that $R \geq 2\sqrt{2(d-1)}$. Finally, from the previous displays we conclude that there is a constant $C_2(t,d)>0$ such that for all $z \geq R_0 + 1$ and $R \geq 2R_0 \vee 2 \sqrt{2(d-1)}$,
\begin{equation*}
\bP(\A^2_t(z,R) \geq a/2) \leq C_2(t,d) Y_0(1) \frac{e^{-R^2/(16t(d-1))}}{a}.
\end{equation*}
We integrate \eqref{e_probbdlemma_d_ineq2} over $[z_0 + r, z_0 + 2r]$ and use \eqref{e_problemma_A1_d1} and the above to conclude that
\begin{align*}
&\frac 1 r \int_{z_0 + r}^{z_0 + 2r} \bP(\A_t(z) \geq a) dz 
\\ &\hspace{1 cm} \leq \bP(\A_t(z_0) \geq b) + b^\delta + C_1 \frac{(tR^{d-1})^{(p-1)/\alpha}}{r^{1+1/\alpha}} \frac{b^{1-\delta}}{a} + C_2(t,d) Y_0(1) \frac{e^{-R^2 /(16t(d-1))}}{a}.
\end{align*} 
This implies that there exists $z_1 \in [z_0 + r, z_0 + 2r]$ such that $\bP(\A_t(z_1) \geq a)$ is bounded above by the right-hand side, and the proof is complete.  \end{proof}

The following result, which we can now prove, implies Theorem~\ref{thm_highdim}. Its proof follows the same method as the proof of Proposition~\ref{prop_limitprobAt}.

\begin{proposition}\label{prop_limitprobAt_d} Let $\gamma \in [1/\alpha,1)$ and $t>0$. There exists a non-random sequence $(w_n(t))_{n \in \N}$ such that $\lim_{n \to \infty} w_n(t) = \infty$ and $\lim_{n \to \infty} \bP(\A_t(w_n(t)) > 0 ) = 0$. 
\end{proposition}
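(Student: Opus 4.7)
The plan is to imitate the proof of Proposition~\ref{prop_limitprobAt}(i), applying Lemma~\ref{lemma_probbound_highd} iteratively to build a convergent sequence $(z_n)_{n \in \N}$ along which $\bP(\A_t(z_n) \geq a_n)$ is controlled. The new wrinkle is the extra term in Lemma~\ref{lemma_probbound_highd} carrying the prefactor $Y_0(1)/a$, which must be absorbed by a careful choice of the radii $R_n$.

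Fix $\delta \in (0,1-\gamma)$ and $\zeta \in (0,1)$. Set $a_n = \zeta e^{-n}$, $r_n = (n+1)^{-2}$, and choose $R_n = R_n(\zeta)$ large enough that $R_n \geq 2R_0 \vee 2\sqrt{2(d-1)}$ and
\[
\frac{\exp(-R_n^2/(16(d-1)t))}{a_n} \leq \zeta^2\, e^{-n};
\]
this is achievable with $R_n = c(t,d)\sqrt{n+\log(1/\zeta)+1}$ for a sufficiently large $c(t,d)$. Starting from any $z_0 \geq R_0 + 1$, iterate Lemma~\ref{lemma_probbound_highd} with $(a,b,r,R)=(a_n,a_{n-1},r_n,R_n)$ (taking $a_0 = \zeta$) to produce $z_n \in [z_{n-1}+r_n,\, z_{n-1}+2r_n]$ satisfying
\begin{align*}
\bP(\A_t(z_n) \geq a_n) &\leq \bP(\A_t(z_{n-1}) \geq a_{n-1}) + a_{n-1}^\delta \\
&\quad + C\,\frac{R_n^{(d-1)(p-1)/\alpha}}{r_n^{1+1/\alpha}}\,\frac{a_{n-1}^{1-\delta}}{a_n^\gamma} + C\,Y_0(1)\,\frac{\exp(-R_n^2/(16(d-1)t))}{a_n}.
\end{align*}
Telescoping, the three error terms are all summable in $n$: the second sums to $N_1\zeta^\delta$; the third to $N_2(\zeta)\zeta^{1-\gamma-\delta}$ with $N_2(\zeta)$ at most polylogarithmic in $1/\zeta$ (the polynomial factors in $n$ and in $\log(1/\zeta)$ coming from $R_n^{(d-1)(p-1)/\alpha} r_n^{-(1+1/\alpha)}$ are dominated by the exponential decay $e^{(\gamma+\delta-1)n}$, since $\gamma+\delta-1<0$); and the fourth to at most $N_3\zeta^2$ by the construction of $R_n$. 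Hence, for a function $N(\zeta) \to 0$ as $\zeta \to 0$,
\[
\bP(\A_t(z_n) \geq \zeta e^{-n}) \leq \bP(\A_t(z_0) \geq \zeta) + N(\zeta) \quad \text{for all } n \in \N.
\]

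Since $\sum_n r_n < \infty$, the increasing sequence $(z_n)$ converges to some $w(z_0) \in [z_0+\rho,\, z_0+2\rho]$ with $\rho := \sum_{n\geq 1} r_n$. By Lemma~\ref{lemma_d_ibp}(b), $\A_t(z_n) \to \A_t(w(z_0))$ in probability, hence almost surely along a subsequence, so Fatou's lemma applied as in \eqref{e_cheeky_prob_estimate} gives
\[
\bP(\A_t(w(z_0)) > 0) \leq \bP(\A_t(z_0) \geq \zeta) + N(\zeta).
\]
By Markov's inequality and Lemma~\ref{lemma_YYt_moment},
\[
\bP(\A_t(z_0) \geq \zeta) \leq \zeta^{-1}\int_0^t \hat{P}_s \Y_0(z_0)\,ds,
\]
which vanishes as $z_0 \to \infty$ because $\Y_0$ has compact support. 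Letting first $z_0 \to \infty$ and then $\zeta \to 0$ yields $\lim_{z_0 \to \infty}\bP(\A_t(w(z_0))>0) = 0$, and taking $w_n := w(\hat z_n)$ for any $\hat z_n \to \infty$ produces the desired sequence.

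The central obstacle is the calibration of $R_n$: making $R_n$ large kills the exponential tail term but amplifies the polynomial factor $R_n^{(d-1)(p-1)/\alpha}$ in the third error term. The scaling $R_n \sim \sqrt{n+\log(1/\zeta)}$ resolves this tension, contributing only a polylogarithmic $\zeta$-overhead that is swallowed by the genuinely positive power of $\zeta$ produced by the ratio $a_{n-1}^{1-\delta}/a_n^\gamma$.
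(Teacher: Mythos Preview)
Your proof is correct and follows the same overall iterative scheme as the paper, but your calibration of $R_n$ is different. The paper takes $R_n = a_n^{-\delta_2}$ for a small $\delta_2 \in (0, \alpha(1-\gamma-\delta)/((d-1)(p-1)))$, so that $R_n$ grows exponentially in $n$; this makes the Gaussian tail term doubly exponentially small, and the factor $R_n^{(d-1)(p-1)/\alpha}$ contributes a clean exponential $e^{\delta_2(d-1)(p-1)n/\alpha}$ and a pure power $\zeta^{-\delta_2(d-1)(p-1)/\alpha}$, both of which are absorbed by the decay $e^{-(1-\gamma-\delta)n}$ and the factor $\zeta^{1-\gamma-\delta}$ thanks to the choice of $\delta_2$. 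Your choice $R_n \sim \sqrt{n+\log(1/\zeta)}$ is the minimal growth that makes the Gaussian tail summable; it introduces only polynomial-in-$n$ and polylogarithmic-in-$1/\zeta$ corrections in the third term, which are indeed harmless against the exponential decay. Both routes work; the paper's yields cleaner powers of $\zeta$, while yours avoids introducing the extra parameter $\delta_2$ at the cost of tracking a polylog factor.

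One small point to tighten: the Markov bound $\bP(\A_t(z_0)\geq\zeta)\leq \zeta^{-1}\int_0^t \hat P_s\Y_0(z_0)\,ds$ uses $\E(\A_t(z_0))=\E(\bar\A_t(z_0))$, which is only known for a.e.\ $z_0$ (where $\A_t=\bar\A_t$). The paper handles this by choosing the sequence $\hat z_n\to\infty$ inside the full-measure set; you should note the same.
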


\begin{proof} 
Fix $t>0$. We choose $\delta_1$ and $\delta_2$ such that
\[0 < \delta_1 < 1- \gamma  \,\,\text{ and } \,\,0 < \delta_2 < \alpha \frac{(1-\gamma-\delta_1)}{(d-1)(p-1)}.\] We will define a sequence of points $(z_n)_{n \geq 1}$ satisfying estimates courtesy of Lemma~\ref{lemma_probbound_highd}. Let $z_0 \geq R_0 + 1$. For $\zeta >0$ and $n \in \N \cup\{0\}$, we define $a_n = \zeta e^{-n}$. Given $z_{n-1}$, we apply Lemma~\ref{lemma_probbound_highd} with $a = a_n$, $b = a_{n-1}$, $r = r_n := (n+1)^{-2}$, and $R_n = a_n^{-\delta_2}$, which gives the following: for a constant $C = C(t,d)>0$, for all $n \in \N$ there exists $z_n \in [z_{n-1} + r_n, z_{n-1} + 2r_n]$ which satisfies
\begin{align*}
\bP(\A_t(z_n) \geq \zeta e^{-n}) &\leq \bP(\A_t(z_{n-1}) \geq \zeta e^{-(n-1)}) + \zeta^{\delta_1} e^{-\delta_1(n-1)} 
\\ & \hspace{.5 cm} + C (1+n)^{2+2/\alpha} \zeta^{-\delta_2 (d-1)(p-1)/\alpha} e^{n \delta_2(d-1)(p-1)/\alpha} \frac{\zeta^{1-\delta_1} e^{-(1-\delta_1)(n-1)}}{ \zeta^{\gamma}e^{-\gamma n}}
\\& \hspace{.5 cm} + CY_0(1) \zeta^{-1} e^n \exp \left(-\frac{\zeta^{-2 \delta_2} e^{2 \delta_2 n}}{16(d-1)t} \right).
\end{align*}
To apply the lemma we assume that $\zeta > 0$ is sufficiently small so that $2R_0 \vee 2\sqrt{2(d-1)} \leq (e/\zeta)^{\delta_2} = R_1 \leq R_n$ for all $n \in \N$. Applying the bound recursively, we obtain that for all $n \in \N$,
\begin{align*}
&\bP(\A_t(z_n) \geq \zeta e^{-n} ) 
\\ &\hspace{1 cm}\leq \bP(\A_t(z_0) \geq \zeta) + \zeta^{\delta_1} \sum_{k=1}^n \left[  e^{-\delta_1(k-1)}  \right]
\\ &\hspace{1 cm} \quad + C\zeta^{1 - \gamma - \delta_1 -\delta_2 (d-1)(p-1)/\alpha} \sum_{k=1}^n \left[ (1+k )^{2+2/\alpha} e^{k \delta_2(d-1)(p-1)/\alpha} \frac{e^{-(1-\delta_1)(k-1)}}{e^{-\gamma k}} \right]
\\&\hspace{1 cm} \quad + CY_0(1)  \left[\sum_{k=1}^n  \zeta^{-1} e^k \exp \left(-\frac{\zeta^{-2 \delta_2} e^{2 \delta_2 k}}{16(d-1)t} \right) \right].
\end{align*}
The first two sums are summable by our choices of $\delta_1$ and $\delta_2$, and the powers of $\zeta$ multiplying these terms are positive for the same reason. The third term is clearly summable, as the double exponential decay dominates the exponential growth. Moreover, as $\zeta \downarrow 0$, the value of the infinite series converges to $0$. Thus, it follows from the above that for all $n \in \N$,
\begin{equation*}
\bP(\A_t(z_n) \geq \zeta e^{-n}) \leq \bP(\A_t(z_0) \geq \zeta) + \eps(\zeta),
\end{equation*}
where $\lim_{\zeta \downarrow 0} \eps(\zeta) = 0$. Recall that $(r_n)_{n \in \N}$ is summable and write $\rho = \sum_{n=1}^\infty r_n$. Since $(z_n)_{n \in \N}$ is increasing, we must have $z_n \uparrow w(z_0,\zeta)$ for some $w(z_0,\zeta) \in [z_0 + \rho, z_0 + 2\rho]$. Recalling that $z \to \A_t(z)$ is continuous in probability (Lemma~\ref{lemma_d_ibp}(b)), we may argue as in \eqref{e_cheeky_prob_estimate} to obtain that
\begin{equation} \label{e_d_z_w_bd}
\bP(\A_t(w(z_0,\zeta)) > 0) \leq \bP(\A_t(z_0) \geq \zeta) + \eps(\zeta).
\end{equation}
For any $z_0 \geq R_0 +1$ and sufficiently small $\zeta >0$, such a $w(z_0,\zeta)$ exists. The proof from here is the same as the proof of Proposition~\ref{prop_limitprobAt} from \eqref{e_limitprop_ineq1}, and follows by choosing suitable sequences $(\zeta_n)_{n \in \N}$ and $(\hat{z}_n)_{n \in \N}$ such that $\zeta_n$ vanishes and $\hat{z}_n \to \infty$, and using these values (for $z_0$ and $\zeta$) in \eqref{e_d_z_w_bd}. The only difference is that in order to bound $\bP(\A_t(\hat{z}_n) \geq \zeta_n)$ using Markov's inequality, we must assume that $\hat{z}_n \in U$, so that $\A_t(\hat{z}_n) = \bar{\A}_t(\hat{z}_n)$ by Lemma~\ref{lemma_ibp_d_Abar} and we can bound its expectation using Lemma~\ref{lemma_At_momemt}. However, we are at liberty to choose $\hat{z}_n$ to be in $U$, so no changes are required. This yields a sequence $(w(\hat{z}_n,\zeta_n))_{n \in \N}$ which has the desired property, and the proof is complete.\end{proof}

The proof of Theorem~\ref{thm_highdim} given Proposition~\ref{prop_limitprobAt_d} is identical to the proof of Theorem~\ref{thm_onedim} given Proposition~\ref{prop_limitprobAt}, and simply uses Lemma~\ref{lemma_d_ibp}(d). The proof of Theorem~\ref{thm_highdim} is therefore complete.

\section{The stochastic integral formula} \label{s_stochintegralrep}
The main purpose of this section is to prove Theorem~\ref{thm_stochinteg}, which, given a weak solution $(Y,L)$ to \eqref{e_spde1} with $\gamma \in (0,1)$, establishes the existence of a density process $\{\bar{Y}(t,x) : t>0, x \in \R^d\}$ which satisfies a stochastic integral formula and has other useful properties. We note that this section is self-contained, i.e. none of the results proved here depend on the results of Section~\ref{s_dimone} and \ref{s_highdim}. As discussed in Section~\ref{ss_prel}, we also avail ourselves of the arguments in this section to bridge a small gap between the weak solutions to \eqref{e_spde1} constructed in \cite{M2002} and the ones which we have defined in Definition~\ref{def_sol}. We begin the section by briefly describing this issue and presenting our strategy for resolving it and proving Theorem~\ref{thm_stochinteg} simultaneously.

The solutions in \cite{M2002} are constructed as the limit of solutions to a sequence of martingale problems approximating \eqref{e_spde1}. In particular, for each $n\in\N$, there is a martingale problem with a solution consisting of a measure-valued process $(Y^n_t)_{t \geq 0} \in \mathbb{D}([0,\infty),\cM_f(\R^d))$ and density process $\{Y^n(t,x) : t >0, x\in \R^d\}$, and in fact $Y^n(t,\cdot)$ is defined as the density of $Y^n_{t-}$. It is shown that these sequences have subsequential limits and that their limits satisfy \eqref{e_spde1weak} for every $\phi \in \cS$ for some stable noise $L(ds,dx)$. The reason the solutions constructed there do not precisely correspond to Definition~\ref{def_sol} is the nature of the convergence of the approximating density processes. While $(Y^n_t)_{t \geq 0}$ converges in distribution on $\mathbb{D}([0,\infty),\cM_f(\R^d))$, the densities do not weakly converge as processes on a function space on $\R^d$. Instead, they converge in distribution as functions of space-time: for every $t>0$, $(s,x) \to Y^n(s,x)$ is tight with respect to the weak topology on $\LL^{q,t} = \LL^q((0,t] \times \R^d)$  for $q \in (1,1+ 2/d)$ (see \cite[Corollary~4.6]{M2002}), and it is in this space that its subsequential limits are taken. Convergence (weak or in norm) in $\LL^{q,t}$ does not preserve properties of the pre-limit at fixed times. Thus, even though $\bP(Y^n(t,x)dx = Y^n_{t-}(dx)) = 1$ for all $t>0$, given the nature of the convergence of the density, the strongest conclusion we can make about the relationship of the limiting density and measure-valued process is \eqref{e_densprob2intro}. Consequently, the construction in \cite{M2002} leads to the following definition.

\begin{definition} \label{def_sol2} A pair $(Y,L)$ defined on a filtered probability space $(\Omega, \cF, (\cF_t)_{t \geq 0}, \bP)$ is a type-$2$ weak solution to \eqref{e_spde1} with initial state $Y_0 \in \cM_f(\R^d)$ if the following hold:
\begin{itemize}
\item $L$ is spectrally positive $\alpha$-stable $\cF_t$-martingale measure on $\R_+ \times \R^d$.
\item $\{Y(t,x) : t>0, x \in \R^d\}$ is non-negative, predictable and satisfies \eqref{assumption_integ}.
\item There exists an $\cF_t$-adapted measure-valued process $(Y_t)_{t \geq 0} \in \mathbb{D}([0,\infty), \cM_f(\R^d))$ which satisfies \eqref{e_densprob2intro}, and such that \eqref{e_spde1weak} holds for every $\phi \in \cS$. That is, for every $\phi \in \cS$, with probability one,
\[ \langle Y_t, \phi\rangle - \langle Y_0, \phi\rangle = \int_{(0,t]} \langle Y_s, \Delta \phi\rangle ds + \int_{(0,t] \times \R^d} Y(s,x)^\gamma \phi(x) L(ds,dx), \quad t\geq 0, \tag{\ref{e_spde1weak}}\]
and
\[ \bP \big(1_{(0,t]}(s)Y_s(dx)ds = 1_{(0,t]}(s)Y(s,x)dxds\big) = 1 \,\, \text{ for all } t > 0. \tag{\ref{e_densprob2intro}} \]
\end{itemize}
\end{definition}

\begin{remark} In \cite{M2002}, the density process is assumed to be progressively measurable, not predictable. However, it is straightforward to show that, if the density is a priori progressively measurable, one can always obtain a predictable version using \eqref{e_densprob2intro} and the fact that $(Y_t)_{t \geq 0} \in \mathbb{D}([0,\infty), \cM_f(\R^d))$, and hence we can make our definition using a predictable density process. \end{remark}

In Definition~\ref{def_sol2}, the measure-valued process is not a version of the one defined in terms of the density, so denoting solutions by a pair $(Y,L)$ is a slight abuse of notation, but this should not cause any confusion. For all of the ensuing discussion, we will view our original weak solutions, from Definition~\ref{def_sol}, from the perspective of Remark~\ref{remark_densmeas}, with a measure-valued process and a density related by \eqref{e_denssol}. Thus, both types of solutions consist of a noise $L$, a density $\{Y(t,x) : t>0, x\in \R^d\}$, and a process $(Y_t)_{t \geq 0} \in \mathbb{D}([0,\infty),\cM_f(\R^d))$, which satisfy the stochastic integration by parts formula \eqref{e_spde1weak} for all $\phi \in \cS$. The assumptions on these processes are identical except for one difference: weak solutions satisfy \eqref{e_denssol}, whereas type-$2$ weak solutions satisfy \eqref{e_densprob2intro}. We remind the reader that \eqref{e_denssol} is the condition that
\[ \tag{\ref{e_denssol}}
\bP(Y_t(dx) = Y(t,x)dx) = 1 \,\, \text{ for all } t > 0.\]
Since \eqref{e_denssol} implies \eqref{e_densprob2intro}, a weak solution is a type-$2$ weak solution.

Instead of Theorem~\ref{thm_stochinteg}, we will prove the following stronger result.

\begin{theorem} \label{thm_stochint2}
Suppose $\alpha \in (1,2)$, $d \in [1,\frac{2}{\alpha-1}) \cap \N$, $\gamma \in (0,1)$, and $Y_0 \in \cM_f(\R^d)$. Then if $(Y,L)$ is a type-$2$ weak solution to \eqref{e_spde1} with initial state $Y_0$, all of the conclusions of Theorem~\ref{thm_stochinteg} hold.
\end{theorem}

This has two corollaries. One of them is Theorem~\ref{thm_stochinteg}, because a weak solution is a type-$2$ weak solution. The other is the following.

\begin{corollary} Let $\alpha \in (1,2)$, $d \in [1,\frac{2}{\alpha-1}) \cap \N$, $\gamma \in (0,1)$, and $Y_0 \in \cM_f(\R^d)$. Then there exists a weak solution (in the sense of Definition~\ref{def_sol}) to \eqref{e_spde1} with initial state $Y_0$.
\end{corollary}

\begin{proof} Let $(Y,L)$ be a type-$2$ weak solution with measure-valued process $(Y_t)_{t \geq 0}$ and density $\{Y(t,x) : t>0, x \in \R^d\}$. Then by Theorem~\ref{thm_stochint2}, there exists a predictable random field $\{\bar{Y}(t,x) : t>0,x \in \R^d\}$ such that $\bP(\bar{Y}(t,x)dx = Y_t(dx)) = 1$ for all $t>0$, and $\bar{Y}(t,x) = Y(t,x)$ for a.e. $(t,x) \in \R_+ \times \R^d$ with probability one. The first condition is exactly \eqref{e_denssol}, while the second guarantees that \eqref{assumption_integ} and \eqref{e_spde1weak} are satisfied, the latter for all $\phi \in \cS$. Thus, in view of Remark~\ref{remark_densmeas}, the processes $(Y_t)_{t \geq 0}$ and $\{\bar{Y}(t,x) : t>0,x \in \R^d\}$ along with the noise $L$ are a weak solution in the sense of Definition~\ref{def_sol}.\end{proof}

As discussed in Section~\ref{ss_prel}, there are natural reasons for preferring Definition~\ref{def_sol} to Definition~\ref{def_sol2}. The upshot of the discussion above is that the use of Definition~\ref{def_sol} is justified if we can prove Theorem~\ref{thm_stochint2}. As we have remarked, this will also imply Theorem~\ref{thm_stochinteg}, the proof of which is the main goal of this section. Hence, we can solve both of these problems simultaneously by proving Theorem~\ref{thm_stochint2}, which we do in the remainder of the section. Finally, let us note that the proof does not change at all by working with a type-$2$ weak solution. That is, the argument under assumption \eqref{e_densprob2intro} is much the same as it would be if instead we assumed \eqref{e_denssol}, so our approach of addressing both problems at once by using the weaker definition does not complicate the proof of Theorem~\ref{thm_stochinteg} in any significant way. In the sequel, when we refer to a part of Theorem~\ref{thm_stochint2} (e.g. Theorem~\ref{thm_stochint2}(b)), it refers to the corresponding statement in Theorem~\ref{thm_stochinteg} (i.e. Theorem~\ref{thm_stochinteg}(b)), but with the assumptions of Theorem~\ref{thm_stochint2}.

The rest of the section contains the proof of Theorem~\ref{thm_stochint2}. Let $\alpha \in (1,2)$, $d \in [1, \frac{2}{\alpha-1})\cap \N$, and $\gamma \in (0,1)$. These assumption are in force throughout the section. Let $Y_0 \in \cM_f(\R^d)$ and let $(Y,L)$ be a type-$2$ weak solution of \eqref{e_spde1} with initial state $Y_0$. We remind the reader one last time that this simply means that \eqref{e_denssol} is replaced with the weaker assumption \eqref{e_densprob2intro}.

Let us first state a specialized version of the stochastic Fubini theorem. We define the stopping times $\tau_k$, $k \in \N$, by 
\begin{equation} \label{def_tau_k_Y}
\tau_k := \inf \bigg\{t > 0 : \int_{(0,t] \times \R^d} Y(s,x)^p dsdx > k \bigg\}. 
\end{equation}
By \eqref{assumption_integ}, $\lim_{k \to \infty} \tau_k = \infty$ almost surely. We prove a stochastic Fubini theorem for integrands which can be localized by $\tau_k$. 
\begin{lemma} \label{lemma_fubini2}
Let $t>0$, $(G, \mathcal{G}, \mu)$ be a finite measure space, and $\phi : \Omega  \times (0,t] \times \R^d \times G \to \R$ be jointly measurable with respect to $\cP \times \mathcal{G}$. Suppose that
\begin{equation} \label{e_fublemma2_1}
\E \bigg(\int_G \mu(dx) \bigg[ \int_{(0,t]\times \R^d} 1_{\{s \leq \tau_k\}}|\phi(s,y,x)|^\alpha Y(s,y)^p \,ds dy \bigg] \bigg) < \infty
\end{equation}
for all $k \in \N$. Then with probability one, we have
\begin{align*} 
&\int_G \mu(dx) \bigg[\int_{(0,t] \times \R^d} \phi(s,y,x) Y(s,y)^\gamma L(ds,dy) \bigg]  
\\ & \hspace{2 cm} =\int_{(0,t] \times \R^d} \left[ \int_G \phi(s,y,x) \mu(dx) \right] Y(s,y)^\gamma L(ds,dy). 
\end{align*}
\end{lemma}
\begin{proof}
If $\phi$ and $\mu$ satisfy \eqref{e_fublemma2_1}, then $\phi_k$, defined by $\phi_k(s,y,x) = 1_{\{s \leq \tau_k\}} \phi(s,y,x) Y(s,y)^\gamma$, and $\mu$ satisfy the assumptions of Lemma~\ref{lemma_fubini1}. Hence with probability one we have
\begin{align*}
&\int \mu(dx) \bigg[\int_{(0,t] \times \R^d} 1_{\{s \leq \tau_k\}}  \phi(s,y,x) Y(s,y)^\gamma L(ds,dy) \bigg]  
\\ & \hspace{1.2 cm} =\int_{(0,t] \times \R^d} \left[ \int 1_{\{s \leq \tau_k\}} \phi(s,y,x)  Y(s,y)^\gamma \mu(dx) \right] L(ds,dy)
\\ & \hspace{1.2 cm} =\int_{(0,t] \times \R^d} \left[ \int 1_{\{s \leq \tau_k\}} \phi(s,y,x)  \mu(dx) \right] Y(s,y)^\gamma  L(ds,dy). 
\end{align*}
This holds for all $k \in \N$. Since $\lim_{k \to \infty} \bP(\tau_k < t) = 0$, the claim follows.
\end{proof}

We now prove Theorem~\ref{thm_stochint2} through a series of lemmas. We first use some integrability arguments to prove $\bar{Y}(t,x)$ is well-defined almost everywhere, and that $Y_t(dx) = \bar{Y}(t,x)dx$ a.s. for almost every $t>0$. We then prove several technical lemmas which we use to establish moment estimates. Finally, using the moment estimates, we can upgrade the existence of $\bar{Y}(t,x)$ from ``almost everywhere" to ``everywhere". We then refine the moment estimates and prove the remaining claims from Theorem~\ref{thm_stochint2}.

Recall that $(P_t)_{t \geq 0}$ denotes the heat semigroup and $p_t(\cdot)$ the associated heat kernel. We write
\begin{equation*}
Z(t,x) := \int_{(0,t] \times \R^d} p_{t-s}(x-y) Y(s,y)^\gamma L(ds,dy)
\end{equation*}
whenever the stochastic integral is well-defined. $Z(t,x)$ is the stochastic integral appearing in the definition of $\bar{Y}(t,x)$. The first step is to show that it is well-defined for a.e. $(t,x) \in \R_+ \times \R^d$. We define
\begin{equation*}
H_t(x) := \int_{(0,t] \times \R^d} p_{t-s}(x-y)^\alpha  Y(s,y)^{p} \,dsdy. 
\end{equation*}
From Section~\ref{s_stochcalc}, $Z(t,x)$ can be defined if $H_t(x) < \infty$ $\bP$-a.s. For $T>0$, define
\begin{equation*}
\HH_T := \int_{(0,T] \times \R^d} H_t(x) dtdx. 
\end{equation*}
We also need to consider related integrals along a sequence of stopping times. We recall the stopping times $\tau_k$ from \eqref{def_tau_k_Y}. For $k \in \N$, define
\begin{align*}
H_{t,k}(x) &= \int_{(0,t] \times \R^d} 1_{\{s \leq \tau_k\}} p_{t-s}(x-y)^\alpha Y(s,y)^p dsdy,
\\ \HH_{T,k} &= \int_{(0,T] \times \R^d} H_{t,k}(x) dt dx.
\end{align*}
\begin{lemma} \label{lemma_Htinteg} (a) $\HH_T < \infty$ a.s. for all $T>0$. 

(b) For some constant $C>0$, $\E(\HH_{T,k}) < C T^{1-(\alpha-1)\frac d 2} k$ for all $k \in \N$ and all $T>0$.
\end{lemma}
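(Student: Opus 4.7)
The plan is to prove part (b) first by a direct Fubini calculation using the explicit Gaussian form of the heat kernel, and then deduce part (a) by monotonicity in $k$ together with $\tau_k \uparrow \infty$ a.s.

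First, I would compute the integral $\int_{\R^d} p_u(z)^\alpha \, dz$ explicitly. Since $p_u(z) = (4\pi u)^{-d/2} \exp(-|z|^2/(4u))$, we have $p_u(z)^\alpha = (4\pi u)^{-d\alpha/2} \exp(-\alpha |z|^2/(4u))$, and the Gaussian integral gives
\[
\int_{\R^d} p_u(z)^\alpha \, dz = \alpha^{-d/2} (4\pi u)^{-d(\alpha-1)/2} =: c_{\alpha,d}\, u^{-d(\alpha-1)/2}.
\]
The key observation, which is exactly where the dimension restriction $d \in [1,\frac{2}{\alpha-1}) \cap \N$ enters, is that $d(\alpha-1)/2 < 1$, so $\int_0^{T} u^{-d(\alpha-1)/2}\, du$ is finite and bounded by $C_{\alpha,d}\, T^{1-d(\alpha-1)/2}$.

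For part (b), apply the Tonelli theorem to interchange the $(t,x)$ and $(s,y)$ integrations, which is legal since the integrand is non-negative:
\begin{align*}
\HH_{T,k} &= \int_{(0,T]\times\R^d} 1_{\{s \leq \tau_k\}} Y_{s-}(y)^p \left(\int_s^T \int_{\R^d} p_{t-s}(x-y)^\alpha \, dx\, dt \right) ds\, dy \\
&= c_{\alpha,d}\int_{(0,T]\times\R^d} 1_{\{s \leq \tau_k\}} Y_{s-}(y)^p \left(\int_0^{T-s} u^{-d(\alpha-1)/2}\, du \right) ds\, dy \\
&\leq C T^{1-d(\alpha-1)/2} \int_{(0,T\wedge \tau_k]\times \R^d} Y_{s-}(y)^p \, ds\, dy.
\end{align*}
By the definition of $\tau_k$ in \eqref{def_tau_k_Y}, the last integral is bounded by $k$ pointwise in $\omega$, so $\HH_{T,k} \leq C T^{1-d(\alpha-1)/2} k$ a.s., and taking expectations gives (b).

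For part (a), note that $H_{t,k}(x) \uparrow H_t(x)$ as $k \to \infty$ (pointwise in $\omega$, since $1_{\{s \leq \tau_k\}} \uparrow 1$), and by \eqref{assumption_integ} we have $\tau_k \to \infty$ a.s., so for each fixed $\omega$ in a probability one event, there exists $k_0 = k_0(\omega)$ with $\tau_{k_0} \geq T$; on this event $\HH_T(\omega) = \HH_{T,k_0}(\omega)$, which is finite by (b). This yields (a).

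There is no serious obstacle here — the argument is essentially a Fubini computation combined with the explicit Gaussian kernel bound, and the dimension restriction $d < 2/(\alpha-1)$ is exactly what makes the time integral of $u^{-d(\alpha-1)/2}$ converge at zero. The only subtlety is the application of Tonelli (ensured by non-negativity) and the book-keeping of where the stopping time localization is used; part (a) would not follow without localization because a priori we only know $\int_{(0,T]\times \R^d} Y_{s-}(y)^p\,ds\,dy$ is a.s. finite, not integrable.
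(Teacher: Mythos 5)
Your argument is correct and is essentially the same Fubini-plus-Gaussian-integral computation as the paper's proof. The only cosmetic differences are that the paper bounds $p_{t-s}^\alpha \leq C(t-s)^{-(\alpha-1)d/2}p_{t-s}$ and then uses $\|P_u f\|_1 = \|f\|_1$ rather than computing $\int p_u(z)^\alpha\,dz$ outright, and the paper proves (a) directly from \eqref{assumption_integ} and then reruns the computation with the indicator for (b), whereas you derive the pathwise bound $\HH_{T,k}\leq CT^{1-(\alpha-1)d/2}k$ first and deduce both parts from it — both routes work and neither buys anything over the other.
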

\begin{proof}
For $t>0$, by Fubini's theorem we have
\begin{align*}
\int_{\R^d} H_t(x) dx &= \int_0^t ds \iint Y(s,y)^p p_{t-s}(x-y)^\alpha \, dxdy
\\ & \leq C \int_0^t ds (t-s)^{-(\alpha - 1)\frac d 2} \iint Y(s,y)^p p_{t-s}(x-y) \, dxdy 
\\& = C \int_0^t ds (t-s)^{-(\alpha - 1)\frac d 2} \int  Y(s,y)^p dy.
\end{align*}
Hence, for $T>0$,
\begin{align*}
\HH_T &\leq C \int_0^T dt \int_0^t ds (t-s)^{-(\alpha - 1)\frac d 2} \int  Y(s,y)^p dy
\\ &= C \int_0^T ds \int  Y(s,y)^p dy \int_s^T  (t-s)^{-(\alpha-1)\frac d 2} dt 
\\ &\leq C \int_0^T ds \,(T-s)^{1 - (\alpha-1)\frac d 2}  \int  Y(s,y)^p dy 
\\&= C T^{1-(\alpha-1)\frac d 2} \int_{(0,T] \times \R^d} Y(s,y)^p \, dsdy. 
\end{align*}
By \eqref{assumption_integ} the integral above is finite almost surely, which proves (a).

To see that (b) holds, one carries out the same computation as in the proof of (a) but carries along the indicator function $1_{\{s \leq \tau_k\}}$. One then obtains
\begin{equation*}
\E(\HH_{T,k}) \leq CT^{1-(\alpha-1)\frac d 2} \E \bigg( \int_{(0,T]\times \R^d} Y(s,x)^p 1_{\{s \leq \tau_k\}} dsdx \bigg) \leq CT^{1-(\alpha-1)\frac d 2}k,
\end{equation*}
where the last inequality holds by the definition of $\tau_k$.
\end{proof}

\label{lemma_aeintegalpha} 
\begin{lemma} \label{lemma_aeintegalpha} (a) There is a non-random set $\,\cU \subset \R_+ \times \R^d$ such that $\cU^c$ is Lebesgue-null and $H_t(x) < \infty$ a.s. for each $(t,x) \in \cU$. In particular, $Z(t,x)$ is well-defined for each $(t,x) \in \cU$. 

(b) There is a non-random set $I \subset \R_+$ such that $I^c$ is Lebesgue-null and for each $t \in I$,
\begin{equation*}
\E \bigg( \int H_{t,k}(x) dx \bigg) < \infty \quad \text{ for all $k \in \N$.}
\end{equation*}
\end{lemma}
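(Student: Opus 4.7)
Both parts of the lemma will follow from the integrability estimates in Lemma~\ref{lemma_Htinteg} by applying Fubini and Tonelli's theorems to non-negative, jointly measurable integrands; no new stochastic input is required.

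For part (a), I will first note that $(\omega, t, x) \mapsto H_t(x, \omega)$ is jointly measurable, which follows from the previsibility (hence joint measurability) of $Y_{s-}$ together with the deterministic structure of the heat-kernel weight $p_{t-s}(x-y)^\alpha \mathbf{1}_{\{s \leq t\}}$. Lemma~\ref{lemma_Htinteg}(a) supplies $\int_{(0,T] \times \R^d} H_t(x)\, dt\, dx < \infty$ almost surely for every $T > 0$. A first application of Fubini to the sections in $\omega$ shows that, $\bP$-a.s., $H_t(x) < \infty$ for Lebesgue-a.e. $(t,x) \in \R_+ \times \R^d$. A second application, now to the set $\{(\omega, t, x) : H_t(x, \omega) = \infty\}$ under $\bP \otimes \mathrm{Leb}$, upgrades this to: for Lebesgue-a.e. $(t,x)$, $\bP(H_t(x) = \infty) = 0$. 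Defining $\cU := \{(t,x) : \bP(H_t(x) < \infty) = 1\}$ then provides the required deterministic set, and well-definedness of $Z(t,x)$ on $\cU$ is immediate from the construction in Section~\ref{s_stochcalc}, since the integrand $(s,y) \mapsto p_{t-s}(x-y) Y_{s-}(y)^\gamma$ lies in $\LL^{\alpha,t}_{\text{a.s.}}$ precisely when $H_t(x) < \infty$ a.s.

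For part (b), I will use Lemma~\ref{lemma_Htinteg}(b), which bounds $\E(\HH_{T,k})$ by a finite constant. Since $H_{t,k}(x)$ is non-negative and jointly measurable, Tonelli's theorem gives
\begin{equation*}
\int_{(0,T]} \E\!\left[\int H_{t,k}(x)\, dx\right] dt \;=\; \E(\HH_{T,k}) \;<\; \infty,
\end{equation*}
so for each fixed $(T, k)$, the set of $t \in (0,T]$ at which the inner expectation is infinite has Lebesgue measure zero. Taking the countable union over $T, k \in \N$ preserves the null property, and the complement $I$ of this union is the desired set.

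The only genuine (and quite mild) point requiring care is the passage in part (a) from the ``almost sure pointwise'' statement to the desired ``deterministic pointwise'' one; this is handled by the second Fubini step above. Everything else amounts to bookkeeping.
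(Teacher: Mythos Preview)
Your proposal is correct and follows essentially the same approach as the paper's proof: both parts are obtained by applying Fubini/Tonelli to the integrability bounds of Lemma~\ref{lemma_Htinteg}, and your ``second Fubini step'' in part (a) simply makes explicit the passage the paper summarizes as ``this in turn implies there must be a non-random set $\cU$.'' The treatment of part (b) via the countable union over $T,k \in \N$ is identical.
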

\begin{proof}
First we prove (a). Since $\HH_T = \int_0^T \int H_t(x) dx dt$, Lemma~\ref{lemma_Htinteg}(a) implies that $H_t(x) < \infty$ for a.e. $(t,x) \in (0,T] \times \R^d$ a.s. for all $T>0$, hence for a.e. $(t,x) \in \R_+ \times \R^d$. By Fubini's theorem, we obtain
\begin{equation*}
\E \bigg( \int_{\R_+ \times \R^d} 1_{\{H_t(x) = \infty\}} dt dx \bigg) = \int_{\R_+ \times \R^d} \bP(H_t(x) = \infty) dt dx.
\end{equation*}
(This requires the joint measurability of $\{(t,x,\omega) \in \R_+ \times \R^d \times \Omega : H_t(x)(\omega) = \infty\}$ with respect to the product $\sigma$-algebra $\cB(\R_+ \times \R^d) \times \cF$, which follows from a standard argument.) The integral inside the expectation on the left-hand side equals zero a.s. by the previous observation, and hence both sides of the above equal zero. This implies that $B_0 := \left\{ (t,x) \in \R_+\ \times \R^d : \bP (H_t(x) = \infty) > 0 \right\}$ is Lebesgue null. Taking $\mathcal{U} = B_0^c$ completes the proof.

To prove (b), we first observe that it is sufficient to prove that there is a subset of full measure $I_T \subset (0,T]$ with the desired property for every $T>0$. Let $T>0$ and $k \in \N$. By Lemma~\ref{lemma_Htinteg}(b), Fubini's theorem and the definition of $\HH_{T,k}$, there is at most a Lebesgue-null subset $F_{T,k} \subseteq (0,T]$ of times $t$ such that if $t \in F_{T,k}$,
\[ \E \bigg( \int H_{t,k}(x) dx \bigg) = \infty.\]
Then the set $I_T := (0,T] \cap ( \cup_{k =1}^\infty F_{T,k})^c$ has full Lebesgue measure has the desired property.
\end{proof}

Whenever $Z(t,x)$ is well-defined, we write
\begin{equation} \label{e_Y_Z_formula}
\bar{Y}(t,x) = P_t Y_0(x) + Z(t,x).
\end{equation}
The so-called mild form of \eqref{e_spde1} is the following integral equation for $\langle Y_t, f \rangle$:
\begin{align} \label{e_mild_general}
\langle Y_t, f \rangle = \langle Y_0, P_t f \rangle + \int_{(0,t] \times \R^d} P_{t-s} f(x) Y(s,x)^\gamma L(ds,dx).
\end{align}
It is a standard argument to show, starting from \eqref{e_spde1weak}, that for every $f \in \cS$ and $t>0$, \eqref{e_mild_general} holds a.s. (For a proof in the stable case in an almost identical setting, see the proof of the first part of Proposition~2.2 in \cite{YZ2017}.) The stochastic integral representation for the density of $Y_t$ is obtained formally by taking $f = \delta_x$ in \eqref{e_mild_general}. To establish that $\bar{Y}(t,x)$ is a density for $Y_t$, we need only apply our stochastic Fubini theorem, Lemma~\ref{lemma_fubini2}, to the mild form \eqref{e_mild_general}. The set $I$ in the sequel is the same whose existence was asserted in the Lemma~\ref{lemma_aeintegalpha}(b).

\begin{lemma} \label{lemma_prelim_density} (a) For each $t \in I$, $\bP(Y_t(dx) = \bar{Y}(t,x) dx) = 1$.

(b) With probability one, $Y(t,x) = \bar{Y}(t,x)$ for a.e. $(t,x) \in \R_+ \times \R^d$.
\end{lemma}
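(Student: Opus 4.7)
The plan is to derive both parts from the mild form \eqref{e_mild_general} by an application of the stochastic Fubini theorem (Lemma~\ref{lemma_fubini2}) at times $t$ for which the requisite integrability is available, namely $t \in I$.

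For part (a), fix $t \in I$ and $\phi \in \cS$. Using symmetry of the heat kernel we can write $P_{t-s}\phi(y) = \int p_{t-s}(x-y)\phi(x)\, dx$, so the stochastic integral appearing in \eqref{e_mild_general} takes the form
\begin{equation*}
\int_{(0,t]\times \R^d} \left[ \int p_{t-s}(x-y)\phi(x)\, dx\right] Y_{s-}(y)^\gamma\, L(ds,dy).
\end{equation*}
To apply Lemma~\ref{lemma_fubini2} with $f=\phi$ and the kernel $(s,y,x)\mapsto p_{t-s}(x-y)$, we must verify, for every $k\in \N$,
\begin{equation*}
\E\left( \int |\phi(x)| \int_{(0,t]\times \R^d} 1_{\{s\leq \tau_k\}} p_{t-s}(x-y)^\alpha Y_{s-}(y)^p \, ds\, dy\, dx\right) <\infty.
\end{equation*}
Since $\phi$ is bounded and $t \in I$, this is exactly $\|\phi\|_\infty \cdot \E(\int H_{t,k}(x)\, dx)$, which is finite by Lemma~\ref{lemma_aeintegalpha}(b). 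Hence the stochastic Fubini theorem applies, and combining with the trivial identity $\langle Y_0, P_t\phi\rangle = \int \phi(x) P_t Y_0(x)\, dx$ we obtain, $\bP$-a.s.,
\begin{equation*}
\langle Y_t,\phi\rangle = \int \phi(x)[P_t Y_0(x) + Z(t,x)]\, dx = \int \phi(x)\bar{Y}_t(x)\, dx.
\end{equation*}
Choosing a countable collection $\{\phi_n\}\subset \cS$ that is dense in $C_c(\R^d)$ in the uniform norm yields a single $\bP$-null set off of which the equality holds for every $\phi_n$ and hence, by approximation, for every $\phi \in C_c(\R^d)$. The Riesz representation theorem then gives $Y_t(dx) = \bar{Y}_t(x)\, dx$ almost surely.

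For part (b), combine part (a) with Fubini's theorem on $\Omega \times \R_+$: since $I^c$ is Lebesgue-null and part (a) holds on an event of full probability for every $t \in I$, with probability one $Y_t(dx) = \bar{Y}_t(x)\, dx$ for Lebesgue-a.e. $t \in \R_+$. As $t \mapsto Y_t$ is c\`adl\`ag in $\cM_f$ it has at most countably many discontinuities, so $Y_{t-} = Y_t$ for all but countably many $t$, and thus $Y_{t-}(dx) = \bar{Y}_t(x)\, dx$ for a.e. $t$ almost surely. On the other hand, by Definition~\ref{def_sol} and the definition \eqref{def_boldY} of $\mathbf{Y}_t$, for a.e. $s$ the measure $Y_{s-}$ has density $Y_{s-}(x)$ (in the version chosen to be consistent with $\mathbf{Y}_t$). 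Matching these two densities of the same measure then gives $Y_{s-}(x) = \bar{Y}_s(x)$ for Lebesgue-a.e. $x$ for Lebesgue-a.e. $s$, which by Fubini is the stated a.e. equality on $\R_+ \times \R^d$.

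The only genuinely subtle point is the stochastic Fubini step in part (a): one has to insist on $t \in I$ precisely so that the truncated integrability assumption of Lemma~\ref{lemma_fubini2} is available (this is why Lemma~\ref{lemma_aeintegalpha}(b) was proved in the form it was), and one needs the countable-dense-set argument to produce a single null set handling all test functions simultaneously. The remainder of the argument, and all of part (b), is soft bookkeeping with Fubini and the c\`adl\`ag property.
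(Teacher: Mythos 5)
Your proof is correct and follows essentially the same route as the paper: in part (a), apply Lemma~\ref{lemma_fubini2} to the mild form using the integrability guaranteed by $t \in I$ via Lemma~\ref{lemma_aeintegalpha}(b), then pass to a countable separating family of test functions; in part (b), combine the c\`adl\`ag property with Fubini over $\Omega\times\R_+$ and match the two densities of $\mathbf{Y}_t$. The only cosmetic difference is that you phrase the separating-class step via Riesz representation over $C_c(\R^d)$ while the paper speaks of a countable separating class in $\cS$ for $\cM_f$, which is the same idea.
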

\begin{proof}
Let $f \in \cS$. If $t \in I$, then
\begin{equation*}
\E \bigg( \int |f(x)| \bigg(\int_{(0,t] \times \R^d} 1_{\{s \leq \tau_k\}} p_{t-s}(x-y)^\alpha Y(s,y)^p ds dy \bigg) dx \bigg) \leq \|f\|_\infty \, \E \bigg( \int H_{t,k}(x) dx \bigg)
\end{equation*}
is finite for all $k \in \N$. We may thus apply Lemma~\ref{lemma_fubini2} with $\phi(s,y,x) = p_{t-s}(x-y)$ and $\mu(dx) = f(x)dx$, which yields 
\begin{align*}
&\int f(x) \left[ \iint_{(0,t] \times \R^d} p_{t-s}(x-y) Y(s,y)^\gamma L(dy, ds) \right] dx  
\\ &\hspace{2 cm} = \iint_{(0,t] \times \R^d} P_{t-s} f(y) Y(s,y)^\gamma L(dy,ds) \quad \text{$\bP$-a.s.}
\end{align*}
Substituting the above into \eqref{e_mild_general}, we obtain that for $f \in \cS$,
\begin{align*}
\langle Y_t, f \rangle &= \int f(x) \bigg[P_t Y_0(x) +  \int_{(0,t] \times \R^d} p_{t-s}(x-y) Y(s,y)^\gamma L(dy, ds)  \bigg] dx 
\\ &= \int f(x) \left[P_tY_0(x) + Z(t,x) \right] dx 
\\ &= \int f(x)\bar{Y}(t,x) dx 
\end{align*}
almost surely. We may then take a countable separating class for $\cM_f(\R^d)$, denoted $(f_n)_{n \in \N} \subset \cS$, such that the above holds a.s. with $f= f_n$ for all $n \in \N$. This implies that $Y_t(dx) = \bar{Y}(t,x)dx$ a.s. for each $t \in I$.

To prove part (b), we remark that from part (a) and \eqref{e_densprob2intro}, for any $t>0$, both $Y(s,x)$ and $\bar{Y}(s,x)$ are a.s. densities for $1_{(0,t]}(s) Y_s(dx) ds$ over $(0,t] \times \R^d$, and as such they are equal for a.e. $(s,x) \in (0,t] \times \R^d$. As this is true for all $t>0$, the result follows.
\end{proof}

Part (b) of the above will be particularly important in the arguments that follow, as it allows us to replace $Y(s,x)$ with $\bar{Y}(s,x)$ in all space-time integrals, i.e. integrals with respect to $ds dx$, as well as stochastic integrals with respect to $L(ds,dx)$, which follows by Proposition~\ref{prop_isometryint}. This is done frequently, so we will note it the first time and do so without comment thereafter. 

The next step is obtain moment estimates for $\bar{Y}(t,x)$. These estimates will be used to complete the proof of the stochastic integral formula, in particular to improve it from a.e. $(t,x) \in \R_+ \times \R^d$ to all $(t,x)$. We later refine the preliminary moment estimates to obtain those stated in the theorem. The cases $p < 1$ and $p \geq 1$ require somewhat different treatments. To unify them as much as we can, we introduce the parameter
\begin{equation*}
\op := p \vee 1. 
\end{equation*}
By \eqref{assumption_integ}, $Y \in \mathbb{L}^{p,t}_{\text{a.s.}}$. We observe that we also have $Y \in \LL^{1,t}_{\text{a.s.}}$. Indeed, since $(Y_t)_{t \geq 0} \in \D([0,\infty),\cM_f(\R^d))$, it follows that $\int_0^t Y_s(\cdot)ds$ is a.s. a finite measure. Thus, by \eqref{e_densprob2intro}, $Y(s,x)$ is the density on $(0,t] \times \R^d$ of a finite measure and hence is integrable a.s. In particular, this implies that
\begin{equation} \label{assump_opinteg}
\int_{(0,t] \times \R^d} Y(s,x)^{\op} ds dx < \infty \,\,\, \text{ a.s. for all $t>0$.}
\end{equation}
For $k \in \N$, define the stopping time
\begin{equation*}
\sigma_k := \inf \bigg\{t > 0 : \int_{(0,t] \times \R^d}  Y(s,x)^{\op} ds dx > k \bigg\}.
\end{equation*}
(Of course, if $p \geq 1$ then $\sigma_k = \tau_k$, where $\tau_k$ is from \eqref{def_tau_k_Y}.) By \eqref{assump_opinteg}, $\lim_{k\to \infty} \sigma_k = \infty$ almost surely. We also note by Lemma~\ref{lemma_prelim_density}(b) that \eqref{assump_opinteg} holds with $Y(s,x)$ replaced by $\bar{Y}(s,x)$ and also that $\sigma_k$ a.s. has the same value under this exchange.

\begin{lemma} \label{lemma_finitemomentae}
For any $t>0$,
\begin{equation*}
\E \bigg( \int_{(0,t] \times \R^d} p_{t-s}(x-y) Y(s,y)^{\op} 1_{\{s \leq \sigma_k\}} \,dsdy \bigg) < \infty \quad \text{ for all $k\in \N$}
\end{equation*}
for a.e. $x \in \R^d$.
\end{lemma}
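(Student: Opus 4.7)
The plan is to apply Fubini's theorem to integrate over the spatial variable $x$ before taking the expectation. Since we want to show a statement that holds for a.e.\ $x \in \R^d$, a natural approach is to show that the integral of the relevant expectation over all $x \in \R^d$ is finite, from which the conclusion follows immediately.

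More precisely, I would first observe that the integrand $(s,y,x) \mapsto p_{t-s}(x-y) Y_{s-}(y)^{\op} 1_{\{s \leq \sigma_k\}}$ is non-negative and jointly measurable, so Tonelli's theorem applies without any integrability hypothesis. Integrating over $x \in \R^d$ and using the fact that $\int p_{t-s}(x-y)\,dx = 1$ for every $s < t$ and $y \in \R^d$, one obtains
\begin{align*}
\int \E \bigg( \int_{(0,t] \times \R^d} p_{t-s}(x-y) Y_{s-}(y)^{\op} 1_{\{s \leq \sigma_k\}} \,dsdy \bigg) dx
&= \E \bigg( \int_{(0,t] \times \R^d} Y_{s-}(y)^{\op} 1_{\{s \leq \sigma_k\}} \,dsdy \bigg).
\end{align*}

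The right-hand side is bounded by $k$ by the definition of $\sigma_k$: indeed, the map $s \mapsto \int_{(0,s] \times \R^d} Y_{u-}(y)^{\op} du\,dy$ is (absolutely) continuous in $s$, so on $\{\sigma_k < \infty\}$ it takes the value $k$ at $s = \sigma_k$, and on $\{\sigma_k = \infty\}$ the indicator $1_{\{s \leq \sigma_k\}}$ does not stop the integral but the total integral up to time $t$ is still at most $k$ by definition of $\sigma_k$. In either case the bound by $k$ holds. Since the integral over $x$ of a non-negative function is finite, the integrand must be finite for Lebesgue-a.e.\ $x \in \R^d$, which proves the lemma.

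There is essentially no obstacle here — the content of the lemma is just a Fubini calculation combined with the construction of the stopping time $\sigma_k$, which was designed precisely so that $\bar{Y}_s(y)$ (equivalently, $Y_{s-}(y)$, by Lemma~\ref{lemma_prelim_density}(b)) is $\op$-integrable on $[0,\sigma_k] \times \R^d$ with bounded expectation. The only small subtlety worth noting is the left-continuity argument identifying $\int_{(0,\sigma_k] \times \R^d} Y_{s-}(y)^{\op}\,dsdy$ with $k$ on $\{\sigma_k < \infty\}$, but this is immediate from the continuity of the defining integral in $s$.
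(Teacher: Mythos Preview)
Your proposal is correct and is essentially identical to the paper's proof: integrate the expectation over $x \in \R^d$, use Tonelli and $\int p_{t-s}(x-y)\,dx = 1$, and bound the resulting integral by $k$ via the definition of $\sigma_k$. The only point the paper makes explicit that you leave implicit is that the null set of bad $x$ depends a priori on $k$, and one takes the countable union to get a single null set valid for all $k \in \N$; this is trivial.
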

\begin{proof}
Integrating the expectation in the lemma with respect to $x$ (over $\R^d$) and changing the order of integration, by definition of $\sigma_k$ we obtain that
\begin{align*}
& \int \E \left( \int_{(0,t] \times \R^d} p_{t-s}(x-y) Y(s,y)^{\op} 1_{\{s \leq \sigma_k\}} \,ds dy \right)  dx
\\ & \hspace{2 cm} = \E \left( \int_{(0,t] \times \R^d} Y(s,y)^{\op} 1_{\{s \leq \sigma_k\}} ds dy \right) \leq k. 
\end{align*}
Hence for each $k \in \N$ there is at most a Lebesgue-null set of values of $x$ for which the expectation is infinite. The union of this countable collection of exceptional sets is Lebesgue-null and the lemma follows.
\end{proof}

When $Z(t,x)$ is defined, it is useful to view it as the value at time $t$ of the process
\begin{equation*}
s \to  \int_{(0,s] \times \R^d} p_{t-u}(x-y) Y(u,y)^\gamma  L(du,dy),
\end{equation*}
for $s \in [0,t]$. In particular, this enables us to bound the moments of $|Z(t,x)|$ using Lemma~\ref{lemma_bdg}. We will implicitly use this approach without comment in the sequel.

The next several lemmas (Lemmas~\ref{lemma_aux_integ1}-\ref{lemma_moments_allT1}) follow the arguments used by Yang and Zhou in \cite{YZ2017} to prove similar moment bounds for solutions to a class of SPDEs including \eqref{e_spde1}. However, their argument was restricted to the case $d=1$. The changes to the argument in the higher-dimensional setting are non-trivial, and because the argument is quite technical, merely sketching the necessary changes would be unintelligible. We have therefore included the full proof.
 
\begin{lemma} \label{lemma_aux_integ1}
There exists $T_0 \in (0,1]$ and $K_1 = K_1(\alpha, \gamma, d) > 0$ such that for all $t \in (0,T_0]$, 
\begin{equation} \label{e_auxinteglemma1}
\E\Bigg( \int_{(0,t] \times \R^d} \bar{Y}(s,y)^{\op} p_{t-s}(x-y) dsdy \Bigg) \leq K_1  \big[1 + Y_0(1)^{\op-1} t^{1 - (\op - 1)\frac d 2} P_t Y_0(x)\big]
\end{equation}
for a.e. $x \in \R^d$.
\end{lemma}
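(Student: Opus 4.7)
The strategy is to split into the two regimes $p \le 1$ (where $\op=1$) and $p > 1$ (where $\op=p$), bring the expectation inside via Tonelli so that it suffices to bound $\int_0^t \!\int p_{t-s}(x-y)\,\E[\bar Y_s(y)^{\op}]\,dy\,ds$, and handle the nontrivial case through the mild-form decomposition together with the BDG inequality (Proposition~\ref{prop_bdg}). A priori finiteness will be ensured by working along the localizing sequence $\sigma_k$ from above the statement and sending $k \to \infty$ by monotone convergence at the end, so I will suppress the truncation in what follows.

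When $\op = 1$ (i.e., $p<1$), Lemma~\ref{lemma_meanmeasure} gives $\E[\bar Y_s(y)] \le P_sY_0(y)$, and the semigroup property $P_{t-s}P_s = P_t$ immediately yields the upper bound $tP_tY_0(x)$, which matches the right-hand side since $t^{1-(\op-1)d/2}=t$. When $\op = p \ge 1$, write $\bar Y_s(y) = P_sY_0(y) + Z(s,y)$ and use $(a+b)^p \le 2^{p-1}(a^p + |b|^p)$. For the drift part, the Gaussian bound $P_sY_0(y) \le Y_0(1)(4\pi s)^{-d/2}$ pulls out a factor $Y_0(1)^{p-1}(4\pi s)^{-d(p-1)/2}$ while leaving one copy of $P_sY_0(y)$; another application of the semigroup produces $P_tY_0(x)$, and the remaining $s$-integral converges thanks to $p < \alpha < 1 + 2/d$ (guaranteed by $d < 2/(\alpha-1)$), producing precisely the factor $t^{1-(p-1)d/2}Y_0(1)^{p-1}P_tY_0(x)$ claimed.

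The stochastic part is the heart of the proof. Apply Proposition~\ref{prop_bdg} with $q = p \in [1,\alpha)$ (valid since $\gamma < 1$) to get
\[
\E|Z(s,y)|^p \;\le\; C_p\,\E\!\left[\Big(\int_0^s\!\!\int p_{s-u}(y-z)^{\alpha}\,\bar Y_u(z)^{p}\,du\,dz\Big)^{p/\alpha}\right],
\]
then, since $p/\alpha < 1$, Jensen's inequality moves the expectation inside. The elementary identity $p_{s-u}(y-z)^{\alpha} = C_\alpha\,(s-u)^{-d(\alpha-1)/2}\,p_{(s-u)/\alpha}(y-z)$ isolates a power-type time singularity, while the remaining Gaussian factor convolves with $p_{t-s}(x-y)$ in the outer $y$-integral to give a single heat kernel in $x-z$ at an effective time. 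After Fubini, this expresses the contribution of the stochastic part in terms of a weighted average of $\E[\bar Y_u(z)^p]$, i.e.\ of the same quantity we are bounding, with a weight that is integrable in time on $(0,T_0]$.

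Closing the estimate is the main obstacle. The nonlinearity of exponent $p/\alpha$ is dealt with using $u^{p/\alpha} \le 1 + u$ (valid for $u \ge 0$ since $p/\alpha \le 1$); the constant contribution $1$ is absorbed into the leading $K_1$ in \eqref{e_auxinteglemma1}, while the linear remainder feeds a Grönwall-type iteration. Choosing $T_0 \in (0,1]$ small enough so that the contraction constant arising from the time-singular kernel is less than, say, $1/2$, one obtains the desired bound on the truncated quantity $\Phi_k(t,x) := \E\int_0^t\!\int p_{t-s}(x-y)\,\bar Y_s(y)^p 1_{\{s \le \sigma_k\}}\,dy\,ds$ uniformly in $k$; the \emph{a.e.}\ qualifier in $x$ is inherited from Lemma~\ref{lemma_finitemomentae}, which is what guarantees $\Phi_k(t,x) < \infty$ in the first place. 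Monotone convergence as $k \to \infty$ finishes the proof.
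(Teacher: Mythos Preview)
Your argument for $p > 1$ is essentially the paper's: BDG at exponent $\op$, Jensen (since $\op/\alpha = \gamma < 1$), the sublinearity step $u^\gamma \le 1+u$, and then a self-referential bound closed by choosing $T_0$ small. The paper makes the closure explicit by integrating the pointwise estimate on $\E|Z_k(t,x)|^{\op}$ against $p_{T-t}(x_0-x)\,dt\,dx$ and subtracting, which is exactly your contraction/Gr\"onwall step; the a.e.\ finiteness via Lemma~\ref{lemma_finitemomentae} and the monotone-convergence exit also match.

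The gap is in your $\op=1$ case. You invoke Lemma~\ref{lemma_meanmeasure}, but in the paper's logical order that lemma is proved in Section~\ref{s_properties}, \emph{after} all of Section~\ref{s_stochintegralrep}; its proof uses Theorem~\ref{thm_stochinteg}(a) (that $Y_t(dx)=\bar Y_t(x)\,dx$ for \emph{every} $t>0$), which in turn rests on the very moment estimates you are establishing here. So as written your $\op=1$ case is circular. The paper instead runs $p<1$ through the same self-referential mechanism as $p\ge 1$: linearize $Y_{s-}(y)^p \le 1 + Y_{s-}(y)$ \emph{before} applying Jensen at exponent $1/\alpha$, which produces an additional harmless term $\int p_{t-s}^\alpha\,ds\,dy \le C\,t^{1-(\alpha-1)d/2}$ and otherwise feeds the same inequality. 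You could try to rescue your shortcut by arguing that the mean-measure bound already follows from Lemma~\ref{lemma_prelim_density}(a) (density for $t$ in a set of full measure, hence dense), but that requires reworking the supermartingale argument of Lemma~\ref{lemma_meanmeasure} and is not what you wrote.
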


\begin{proof}
First, let $(t,x) \in \cU$ (as in Lemma~\ref{lemma_aeintegalpha}) so that $Z(t,x)$ is well-defined. For $k \in \N$, we define $Z_k(t,x)$ by
\begin{equation*}
Z_k(t,x) = \int_{(0,t] \times \R^d} p_{t-s}(x-y)  Y(s,y)^\gamma 1_{\{s \leq \sigma_k\}} L(ds,dy). 
\end{equation*}
By Lemma~\ref{lemma_bdg}, we have
\begin{align} \label{e_stochintegBDG1}
\E(|Z_k(t,x)|^{\op}) & \leq C \E \bigg( \int_{(0,t] \times \R^d} p_{t-s}(x-y)^\alpha Y(s,y)^p 1_{\{s \leq \sigma_k\}} ds dy\bigg)^{\op/\alpha}.
\end{align}
Suppose that $p \geq 1$. In this case, $\op = p$ and $\op / \alpha  = \gamma < 1 $. Using the inequality $u^\gamma \leq 1 + u$ for $u \geq 0$, we obtain 
\begin{align} \label{e_stochintegBDG2}
\E(|Z_k(t,x)|^{\op}) & \leq C +  C\E\bigg( \int_{(0,t] \times \R^d} p_{t-s}(x-y)^\alpha Y(s,y)^{\op} 1_{\{s \leq \sigma_k\}} ds dy\bigg).  
\end{align}
On the other hand, if $p < 1$, then $Y(s,y)^p \leq 1 + Y(s,y)$, and we obtain from \eqref{e_stochintegBDG1} that
\begin{align} \label{e_stochintegBDGG22}
\E(|Z_k(t,x)|^{\op}) &\leq  C \E\bigg( \int_{(0,t] \times \R^d} p_{t-s}(x-y)^\alpha \left[1 + Y(s,y)\right] 1_{\{s \leq \sigma_k\}} ds dy \bigg)^{1/\alpha} \notag
\\ & \leq C +  C\E\bigg( \int_{(0,t] \times \R^d} p_{t-s}(x-y)^\alpha Y(s,y) 1_{\{s \leq \sigma_k\}} ds dy\bigg) \notag
\\ &\hspace{1 cm} +  C \int_{(0,t] \times \R^d} p_{t-s}(x-y)^\alpha ds dy .
\end{align}
It is straightforward to show by scaling that
\begin{equation}\label{e_hkpowerinteg}
 \int_{(0,t] \times \R^d} p_{t-s}(x-y)^\alpha ds dy = C t^{1 -(\alpha-1)\frac d 2}
\end{equation}
for some $C>0$ that does not depend on $t$. Using \eqref{e_stochintegBDGG22} and \eqref{e_hkpowerinteg} when $p <1$, and \eqref{e_stochintegBDG2} when $p \geq 1$, we have shown that there is a positive constant $C_0$ (which depends only on $\alpha$, $\gamma$ and $d$) such that for any $(t,x) \in \cU$,
\begin{align} \label{e_stochintegBDG3}
&\E(|Z_k(t,x)|^{\op}) 
\\ &\hspace{0.6 cm} \leq C_0\bigg( 1+ t^{1-(\alpha-1)\frac d 2} + \E\bigg( \int_0^t ds\, (t-s)^{-(\alpha-1)\frac d 2} \int_{\R^d} p_{t-s}(x-y) Y(s,y)^{\op} 1_{\{s \leq \sigma_k\}} dy\bigg)\bigg).\notag
\end{align}
The upper bound of the integral term is obtained by writing $p_{t-s}(x-y)^\alpha = p_{t-s}(x-y) p_{t-s}(x-y)^{\alpha-1}$ and bounding the second term above by $\|p_{t-s}\|_\infty$. The next step is to obtain an upper bound for 
\[ \E \bigg( \int_{(0,T] \times \R^d} |Z(t,x)|^{\op} p_{T-t}(x_0 - x) 1_{\{t \leq \sigma_k\}} dt dx \bigg)\]
for a given $(T,x_0) \in \R_+ \times \R^d$. First, we make the elementary observation that \[|Z(t,x)| 1_{\{t \leq \sigma_k\}} \leq |Z_k(t,x)|\] almost surely. Given this, we may use the fact that \eqref{e_stochintegBDG3} holds for a.e. $(t,x) \in \R_+ \times \R^d$ and apply Fubini's theorem to obtain
\begin{align} \label{e_Zbd1}
&\E \bigg( \int_{(0,T] \times \R^d} |Z(t,x)|^{\op} p_{T-t}(x_0 - x) 1_{\{t \leq \sigma_k\}} dt dx \bigg) \notag
\\&\hspace{0.6 cm}\leq \E \bigg( \int_{(0,T] \times \R^d} |Z_k(t,x)|^{\op} p_{T-t}(x_0 - x) dt dx \bigg) \notag
\\ &\hspace{.6 cm} \leq C_0 T + C_0 T^{2 - (\alpha-1)\frac d 2} \notag
\\ &\hspace{1.2 cm} +C_0 \E\bigg( \int_0^T ds \,  \int_s^T dt (t-s)^{-(\alpha-1)\frac d 2} \int p_{T-s}(x_0-y) Y(s,y)^{\op} 1_{\{s \leq \sigma_k\}}dy \bigg) \notag 
\\ &\hspace{.6 cm} \leq C_1\bigg( T + T^{2 - (\alpha-1)\frac d 2}  + T^{1 - (\alpha-1)\frac d 2}\,  \E\bigg( \int_{(0,T] \times \R^d} p_{T-s}(x_0-y) \bar{Y}(s,y)^{\op} 1_{\{s \leq \sigma_k\}}ds dy \bigg)\bigg),  
\end{align}
where $C_1$ is a positive constant equal to $C_0$ multiplied by a term which depends only on $d$ and $\alpha$. The second inequality uses the semigroup property and a change of the order of integration. In the last line, we have replaced $Y(s,y)^{\op}$ with $\bar{Y}(s,y)^{\op}$, which is justified by Lemma~\ref{lemma_prelim_density}(b).

To complete the proof we need a bound involving (certain integrals of) $P_TY_0(x_0)^{\op}$. Recall that $Y_0(1)$ is the total mass of $Y_0$. For any $0 < t < T$ and $x_0 \in \R^d$, using the inequality
\[(P_tY_0(x))^{\op} = (P_tY_0(x)) (P_tY_0(x))^{\op - 1} \leq CP_tY_0(x) t^{-(\op - 1)\frac d 2} Y_0(1)^{\op-1},\] 
we obtain
\begin{align*}
\int (P_t Y_0(x))^{\op} p_{T-t}(x_0-x) dx & \leq C t^{-(\op-1)\frac d 2} Y_0(1)^{\op - 1} \int P_t Y_0(x) p_{T-t}(x-x_0) dx
\\ &= C t^{-(\op-1)\frac d 2} Y_0(1)^{\op - 1} P_T Y_0(x_0).
\end{align*}
Integrating over $t \in [0,T]$, we obtain that 
\begin{equation}  \label{e_PtYp_bd}
\int_0^T dt \int (P_tY_0(x))^{\op} p_{T-t}(x-x_0) dx \leq C_2 Y_0(1)^{\op-1} T^{1 - (\op-1)\frac d 2} P_TY_0(x_0)
\end{equation}
for all $T>0$ and $x_0 \in \R^d$, for some constant $C_2 >0$. Finally, from \eqref{e_Y_Z_formula} we have the elementary bound
\begin{equation} \label{e_qmoment_ineq}
\bar{Y}(t,x)^{\op} \leq C_3[ P_t Y_0(x)^{\op} + |Z(t,x)|^{\op}]
\end{equation}
for some $C_3>0$. We may use this bound with \eqref{e_Zbd1} and \eqref{e_PtYp_bd} to obtain that
\begin{align*}
&\E \bigg( \int_{(0,T] \times \R^d} \bar{Y}(t,x)^{\op} p_{T-t}(x_0-x) 1_{\{t \leq \sigma_k \}} dt dx  \bigg)  
\\& \hspace{.8 cm} \leq C_3\bigg( \int_{(0,T] \times \R^d} (P_t Y_0(x))^{\op} p_{T-t}(x_0-x) dtdx  \bigg) 
\\ &\hspace{1.6 cm} + C_3 \E \bigg( \int_{(0,T] \times \R^d} |Z(t,x)|^{\op} p_{T-t}(x_0-x) 1_{\{t \leq \sigma_k\}} dt dx   \bigg) 
\\ &\hspace{.8 cm}\leq C_4 \big[T + T^{2 - (\alpha-1)\frac d 2 } + Y_0(1)^{\op-1} T^{1-(\op-1)\frac d 2} P_TY_0(x_0)\big]  
\\ &\hspace{1.6 cm} +  C_4 T^{1 - (\alpha-1)\frac d 2} \E \bigg( \int_{(0,T] \times \R^d} \bar{Y}(s,y)^{\op} p_{T-s}(x_0-y) 1_{\{s \leq \sigma_k\}} dsdy  \bigg) , 
\end{align*}
where $C_4$ is a constant built from $C_1$, $C_2$ and $C_3$ and hence depends only on $(\alpha,\gamma,d)$. Note that the expectations in the first and last expressions are equal. By Lemma~\ref{lemma_finitemomentae}, for every $T>0$, the expectation is finite for all $k \in \N$ for a.e. $x_0 \in \R^d$. For such an $x_0$, it follows that if $T$ is sufficiently small such so that $C_4 T^{1-(\alpha-1)\frac d 2} \leq 1/2$, then
\begin{align*}
&\E \bigg( \int_{(0,T] \times \R^d} \bar{Y}(t,x)^{\op} p_{T-t}(x_0-x) 1_{\{t \leq \sigma_k\}} dtdx  \bigg)
\\ &\hspace{1.5 cm} \leq  2 C_4\big[T + T^{2 - (\alpha-1)\frac d 2 } + Y_0(1)^{\op-1} T^{1-(\op-1)\frac d 2} P_TY_0(x_0)\big]
\end{align*}
for all $k \in \N$. Since $\sigma_k \uparrow \infty$ a.s., we may let $k \to \infty$ and apply monotone convergence to conclude that
\begin{align*}
&\E \bigg( \int_{(0,T] \times \R^d} \bar{Y}(t,x)^{\op} p_{T-t}(x_0-x) dtdx \bigg) 
\\ &\hspace{1 cm}\leq  2 C_4 \big[T + T^{2 - (\alpha-1)\frac d 2 } + Y_0(1)^{\op-1} T^{1-(\op-1)\frac d 2} P_TY_0(x_0) \big]. 
\end{align*}
Fix $T_0 \in (0,1]$ so that $2 C_4 T_0^{1-(\alpha-1)\frac d 2} \leq 1$. For each $T \in (0,T_0]$, the above holds for a.e. $x_0 \in \R^d$. Since the positive powers of $T$ are maximized by $T_0$ (for $T \in (0,T_0]$), the proof is complete. 
\end{proof} 

\begin{lemma} \label{lemma_aux_integ2}
Let $r \in (0,1)$. There is a constant $K_2 = K_2(r,\alpha, \gamma,d) >0$ such that
\begin{align*}
&\E\bigg( \int_0^t ds\, (t-s)^{-r} \int  \bar{Y}(s,y)^{\op} p_{t-s}(x-y) dy \bigg) 
\\ &\hspace{0.13 cm} \leq K_2 \big[ t^3 +   t^{-r}\big]  \bigg( 1 + Y_0(1)^{\op-1} t^{1 - (\op-1)\frac d 2} P_t Y_0(x) +  \E\bigg( \int_{(0,t] \times \R^d}  \bar{Y}(s,y)^{\op} p_{t-s}(x-y) ds dy \bigg) \bigg) 
\end{align*}
for all $(t,x)\in \R_+ \times \R^d$.
\end{lemma}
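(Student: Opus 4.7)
The proof follows the scheme of Lemma~\ref{lemma_aux_integ1}. Using the density formula \eqref{e_Y_Z_formula} and the bound $\bar Y_s(y)^{\op} \leq C_3\bigl(P_sY_0(y)^{\op} + |Z(s,y)|^{\op}\bigr)$ from \eqref{e_qmoment_ineq}, I would split the quantity of interest into a deterministic piece, controlled directly, and a stochastic piece, controlled via the truncated BDG bound \eqref{e_stochintegBDG3} derived in the proof of Lemma~\ref{lemma_aux_integ1}; the truncation $1_{\{u\leq\sigma_k\}}$ is removed at the end via monotone convergence ($\sigma_k\uparrow\infty$ a.s.). For the deterministic piece, $P_sY_0(y)^{\op} \leq C s^{-(\op-1)d/2}Y_0(1)^{\op-1}P_sY_0(y)$ together with the semigroup identity $\int P_sY_0(y)p_{t-s}(x-y)dy = P_tY_0(x)$ reduces the contribution to the Beta integral $\int_0^t (t-s)^{-r}s^{-(\op-1)d/2}ds = B(1-r,1-(\op-1)d/2)\,t^{1-r-(\op-1)d/2}$ (convergent since $(\op-1)d/2<1$ in our dimension regime), which is bounded by $Ct^{-r}\cdot t^{1-(\op-1)d/2}Y_0(1)^{\op-1}P_tY_0(x)$, fitting inside the target RHS.

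For the stochastic piece, multiply the BDG bound by $(t-s)^{-r}p_{t-s}(x-y)$, take expectation, and apply Fubini and the semigroup identity $\int p_{s-u}(y-w)p_{t-s}(x-y)dy = p_{t-u}(x-w)$. After swapping $s$ and $u$, the main resulting term takes the form
\[
C\int_0^t\E\Bigl(\int p_{t-u}(x-w)\bar Y_u(w)^{\op}1_{\{u\leq\sigma_k\}}dw\Bigr)\Bigl[\int_u^t (t-s)^{-r}(s-u)^{-(\alpha-1)d/2}ds\Bigr]du,
\]
plus lower-order contributions $C\int_0^t(t-s)^{-r}[1+s^{1-(\alpha-1)d/2}]ds \leq C[t^{1-r}+t^{2-r-(\alpha-1)d/2}]$ that are immediately absorbed into the $K_2[t^4+t^{-r}]$ envelope. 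A substitution computes the inner $s$-integral as $B(1-r,1-(\alpha-1)d/2)\,(t-u)^{1-r-(\alpha-1)d/2}$, with finite Beta constant (both exponents lying in $(0,1)$).

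Set $\delta := 1-(\alpha-1)d/2$ (strictly positive because $d<2/(\alpha-1)$) and $\beta := \delta - r$, and write $F^k(t,x)$ for the truncated version of the non-singular integral $\E\int_{(0,t]\times\R^d}\bar Y_u(w)^{\op}p_{t-u}(x-w)du\,dw$ appearing on the RHS. When $\beta\geq 0$, $(t-u)^\beta\leq t^\beta$, and since $\beta\in[0,1]$ the elementary estimates $t^\beta\leq t^{-r}$ for $t\leq 1$ and $t^\beta\leq t\leq t^4$ for $t\geq 1$ bound the stochastic contribution by $C(t^{-r}+t^4)F^k(t,x)$ directly. When $\beta<0$, the appearing integral $\int_0^t (t-u)^{-|\beta|}g(u)du$, with $g(u):=\E\int p_{t-u}(x-w)\bar Y_u(w)^{\op}1_{\{u\leq\sigma_k\}}dw$, is itself a singular time integral of exactly the same form we set out to bound, but with smaller singular exponent $|\beta|=r-\delta<r$. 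This sets up a self-referential inequality, which I would close by iteration: each iteration shaves $\delta$ off the singular exponent, so after at most $N:=\lceil r/\delta\rceil$ applications the exponent becomes non-positive, at which point the weight is bounded by a power of $t$ and the remaining integral reduces to $F^k(t,x)$.

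The main obstacle is exactly the case $\beta<0$: a one-shot Beta computation produces a genuinely singular factor $(t-u)^\beta$ near $u=t$ that cannot be dominated by $F^k(t,x)$ in a single step. The iteration terminates in finitely many steps precisely because $\delta>0$ is bounded below in our parameter regime (a consequence of the dimension restriction $d<2/(\alpha-1)$), and all accumulated constants and powers of $t$ depend only on $r$, $\alpha$, $d$, fitting comfortably inside $K_2$ and the generous $t^4+t^{-r}$ envelope. Monotone convergence in $k$ then transfers the bound to the untruncated quantity, completing the proof.
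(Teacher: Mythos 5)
Your proof follows the same route as the paper's: split $\bar Y^{\op}$ into the deterministic $P_sY_0^{\op}$ part (handled by the smoothing bound $P_sY_0^{\op}\leq Cs^{-(\op-1)d/2}Y_0(1)^{\op-1}P_sY_0$, the semigroup identity, and a Beta integral; this is the paper's \eqref{e_sing_semi_qbd}) and the stochastic $|Z|^{\op}$ part (handled by the truncated BDG bound, Fubini, and the semigroup identity), and then close the resulting self-referential estimate — singular weight $(t-s)^{-r}$ reduced to $(t-s)^{-(r-\kappa)}$ at each step, with $\kappa=1-(\alpha-1)\tfrac d2>0$ — by iterating $\lceil r/\kappa\rceil$ times until the exponent becomes non-positive. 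Your case split between $\beta\geq0$ and $\beta<0$ is cosmetically different (the paper uniformly iterates $\lceil r/\kappa\rceil$ times and keeps the leftover power $\zeta=\kappa\lceil r/\kappa\rceil-r\geq0$), and you carry the truncation $1_{\{u\leq\sigma_k\}}$ throughout and remove it by monotone convergence, whereas the paper runs the same manipulations without the truncation (its inequalities hold in $[0,\infty]$, and the truncation step is only needed for the contraction argument in Lemma~\ref{lemma_aux_integ1}); neither of these is a substantively different argument.
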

\begin{proof}
For $\eta \in \R$, $T>0$ and $x_0 \in \R^d$, we define 
\begin{equation*}
G(T,x_0,\eta) = \E \bigg( \int_0^T dt (T-t)^{\eta} \int \bar{Y}(t,x)^{\op} p_{T-t}(x_0-x)dx \bigg).
\end{equation*}
The lemma proves an upper bound on $G(T,x_0,-r)$ for $r \in (0,1)$, but we will require this function for several different values of $\eta$. 

Now let $\eta < 1$. We may argue as we did to obtain \eqref{e_PtYp_bd} to show that for all $(T,x_0) \in \R_+ \times \R^d$,
\begin{align*} 
&\int_0^T dt (T-t)^{-\eta} \int P_t Y_0(x)^{\op} p_{T-t}(x_0-x) dx  
\\ &\hspace{1.5 cm} \leq CY_0(1)^{\op-1} P_T Y_0(x_0) \int_0^T (T-t)^{-\eta} t^{-(\op-1)\frac d 2} dt 
\\ &\hspace{1.5 cm} \leq C Y_0(1)^{\op-1} T^{1-\eta-(\op-1)\frac d 2} P_T Y_0(x_0)
\end{align*}
for a constant $C$ which depends only on $\alpha$, $\gamma$ and $\eta$. Then by \eqref{e_qmoment_ineq} and the above, for $(T,x_0) \in \R_+ \times \R^d$,
\begin{align} \label{e_singintegbdg1}
G(T,x_0,-\eta) &  \leq C \E\bigg( \int_0^T dt (T-t)^{-\eta} \int \big[ P_tY_0(x)^{\op} + |Z(t,x)|^{\op} \big] p_{T-t}(x_0-x) dx \bigg)  \notag
\\ & \leq C Y_0(1)^{\op-1} T^{1-\eta-(\op-1)\frac d 2} P_T Y_0(x_0)\notag
\\ & \hspace{1 cm} + C \E \bigg(\int_0^T dt (T-t)^{-\eta} \int |Z(t,x)|^{\op} p_{T-t}(x_0-x)dx  \bigg).
\end{align}
Next, we argue exactly as in \eqref{e_stochintegBDG1}-\eqref{e_stochintegBDG3}, but without the term $1_{\{s \leq \sigma_k\}}$, to obtain that
\begin{equation} \label{e_integ_qbd}
\E(|Z(t,x)|^{\op}) \leq  C \E \bigg(1 + t^{1 - (\alpha-1)\frac d 2} + \int_0^{t} ds\, (t-s)^{-(\alpha-1)\frac d 2} \int  p_{t-s}(x-y) \bar{Y}(s,y)^{\op} dy  \bigg)
\end{equation}
for all $(t,x) \in \cU$. We now proceed similarly to \eqref{e_Zbd1} in order to bound the last term appearing in \eqref{e_singintegbdg1}, and we use \eqref{e_integ_qbd} to compute
\begin{align*}
&\E \bigg( \int_0^T dt\, (T-t)^{-\eta} \int |Z(t,x)|^{\op} p_{T-t}(x_0-x) \,dx \bigg) 
\\ &\hspace{.1 cm} \leq C T^{1-\eta} + CT^{2-\eta-(\alpha-1)\frac d 2}
\\ &\hspace{0.22 cm} + C \E \bigg( \int_0^T ds \int_s^T dt \,(t-s)^{-(\alpha-1)\frac d 2} (T-t)^{-\eta} \iint \bar{Y}(s,y)^{\op} p_{t-s}(x-y) p_{T-t}(x_0-x)  dydx \bigg) 
\\ &\hspace{.1 cm} \leq C T^{1-\eta} + CT^{2-\eta-(\alpha-1)\frac d 2}
\\ &\hspace{0.22 cm} + C \E \bigg( \int_0^T ds \int \bar{Y}(s,y)^{\op} p_{T-s}(x_0-y) \, dy \int_s^T dt\, (T-t)^{-\eta} (t-s)^{-(\alpha-1)\frac d 2}\bigg) 
\\&\hspace{.1 cm} \leq C \bigg( T^{1-\eta} + T^{2-\eta-(\alpha-1)\frac d 2}  + \E\bigg( \int_0^T ds (T-s)^{1 - (\alpha-1)\frac d 2 - \eta} \int \bar{Y}(s,y)^{\op} p_{T-s}(x_0-y) dy \bigg) \bigg). 
\end{align*}
The constant $C$ depends on $\alpha$, $\gamma$, $d$ and $\eta$. We let $\kappa := 1 - (\alpha-1)\frac d 2 > 0$ and observe that the last term in the bottom line is equal to $G(T,x_0,-\eta + \kappa)$. In particular, the above combined with \eqref{e_singintegbdg1} gives
\begin{equation*}
G(T,x_0,-\eta) \leq C[T^{1-\eta} + T^{2-\eta - (\alpha-1)\frac d 2} +Y_0(1)^{\op-1} T^{1-\eta-(\op-1)\frac d 2} P_T Y_0(x_0) + G(T,x_0,\kappa -\eta)].
\end{equation*}
Hence, at the cost of several additive error terms, we can bound the expected value $G(T,x_0,-\eta)$, of the integral with singularity $(T-t)^{-\eta}$, by the expected value of the integral with singularity $(T-t)^{-(\eta - \kappa)}$. This process can then be iterated until the power of $(T-t)$ is no longer negative.

Let $r<1$, $x_0 \in \R^d$, and $T>0$. We apply the above bound iteratively, first with $\eta = r$, then $\eta = r - \kappa$, and so on. (This is the case unless $- r + \kappa > 0$, in which case the iteration which we now describe has been completed in a single step.) Doing so $\lceil r/\kappa \rceil$ times, we obtain the following: 
\begin{align*}
&G(T,x_0,-r)
\\ &\hspace{.4 cm} \leq C\sum_{l=0}^{\lceil r/\kappa \rceil - 1} \big[T^{1-r + l\kappa} + T^{2-r - (\alpha - 1)\frac d 2 + l\kappa} + Y_0(1)^{\op - 1} T^{1-r - (\op-1)\frac d 2 + l\kappa} P_T Y_0(x_0) \big] 
\\ &\hspace{1 cm} + C G(T,x_0,\zeta),
\end{align*}
where $\zeta = \kappa \cdot \lceil r/\kappa \rceil  - r \geq 0$. The constants may change with each iteration, but because there are only finitely many iterations we can and do choose the maximum constant which arises. Since $\zeta \geq 0$, we have $(T-t)^{\zeta} \leq T^\zeta$ for all $t \in (0, T]$, and hence
\[ G(T,x_0,\zeta) \leq T^\zeta \E \bigg( \int_{(0,T] \times \R^d} \bar{Y}(t,z)^{\op} p_{T-t}(x_0-x) \,dxdt \bigg) .\]
Using the above in the previous display and collecting the powers of $T^{-r}$, we have
\begin{align*}
&G(T,x_0,-r)
\\ &\hspace{.4 cm} \leq C T^{-r} \Bigg[  \sum_{l=0}^{\lceil r/\kappa \rceil - 1} \big[T^{1 + l\kappa} + T^{2 - (\alpha - 1)\frac d 2 + l\kappa} + Y_0(1)^{\op - 1} T^{1 - (\op-1)\frac d 2 + l\kappa} P_T Y_0(x_0) \big] 
\\ &\hspace{1 cm} + T^{\zeta+r} \E\bigg( \int_{(0,T] \times \R^d} \bar{Y}(t,x)^{\op} p_{T-t}(x-x_0) \,dtdx \bigg) \Bigg] 
\end{align*}
for a constant $C>0$ which depends only on $\alpha$, $\gamma$, $d$ and $r$. Note that all the remaining powers of $T$ inside the square-bracketed term are positive, and let $\sigma' >0 $ be the largest positive power of $T$ appearing in the square-bracketed term above. (Clearly $\sigma' > 1$, and a short calculation shows that $\sigma' < 2 - (\alpha-1)\frac d 2 + r < 3$.) Collecting terms and bounding above by the largest ones, it is easy to argue that there exists $K_2>0$ such that
\begin{align*}
G(T,x_0,-r) \leq & K_2 T^{-r} (1 +  T^{\sigma'}) \bigg[ 1 +  Y_0(1)^{\op - 1} T^{1 - (\op-1)\frac d 2} P_T Y_0(x_0) 
\\ &\hspace{1 cm} +  \E\bigg( \int_{(0,T] \times \R^d} \bar{Y}(t,x)^{\op} p_{T-t}(x_0-x) \,dxdt \bigg) \bigg].
\end{align*}
Since $\sigma' < 3$ and $r>0$, possibly increasing the value of $K_2$, we may bound the expression above by the one stated in the lemma, and the proof is complete.
\end{proof} 

We can now obtain moment bounds which hold a.e. for short times.
\begin{lemma} \label{lemma_moments_smalltime}
Let $T_0$ be as in Lemma~\ref{lemma_aux_integ1}. There is a constant $C = C(\alpha, \gamma, d)> 0$ such that for a.e. $(t,x) \in (0,T_0] \times \R^d$,
\begin{equation} \label{e_smalltimemomentslemma}
\E(\bar{Y}(t,x)^{\op}) \leq  C t^{-(\alpha-1)\frac d 2} \big[ 1 + Y_0(1)^{\op-1} t^{1-(\op-1)\frac d 2} P_tY_0(x)\big] + C P_tY_0(x)^{\op}.
\end{equation}
\end{lemma}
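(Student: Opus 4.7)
The plan is to decompose $\bar Y_t(x) = P_tY_0(x) + Z(t,x)$ as in \eqref{e_Y_Z_formula} and use the elementary bound $\bar Y_t(x)^q \leq C(q)(P_tY_0(x)^q + |Z(t,x)|^q)$, so that it suffices to bound $\E(|Z(t,x)|^q)$ for $q \in (0,\op]$. Since $\op \in [1,\alpha)$, the BDG inequality of Proposition~\ref{prop_bdg} is applicable at exponent $\op$; to handle the $q < 1$ case we first apply Jensen's inequality (to the concave map $x \mapsto x^{q/\op}$ on $[0,\infty)$) to obtain $\E(|Z(t,x)|^q) \leq (\E(|Z(t,x)|^\op))^{q/\op}$.

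Next I would apply BDG to get $\E(|Z(t,x)|^\op) \leq C\E(T^{\op/\alpha})$, where
\begin{equation*}
T = \int_{(0,t]\times\R^d} p_{t-s}(x-y)^\alpha Y_{s-}(y)^p \, ds\, dy.
\end{equation*}
Since $\op/\alpha \leq 1$ (indeed $\op/\alpha$ equals $\gamma$ or $1/\alpha$ depending on whether $p \geq 1$), Jensen's inequality again yields $\E(T^{\op/\alpha}) \leq (\E T)^{\op/\alpha}$. Together these give $\E(|Z(t,x)|^q) \leq C(\E T)^{q/\alpha}$. To bound $\E T$, I use $p_{t-s}(x-y)^{\alpha-1} \leq C(t-s)^{-(\alpha-1)d/2}$ together with the universal pointwise inequality $Y^p \leq 1 + Y^\op$ (valid in both the $p \geq 1$ case trivially and in the $p < 1$ case via $y^p \leq 1 + y$). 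After using Lemma~\ref{lemma_prelim_density}(b) to replace $Y_{s-}$ by $\bar Y_s$ in the space-time integral, this produces two terms: a deterministic $O(t^{1-(\alpha-1)d/2})$ piece from the ``$1$'' and an expectation bounded by Lemma~\ref{lemma_aux_integ2} with $r = (\alpha-1)d/2 \in (0,1)$ (this is where $d < 2/(\alpha-1)$ enters). The $\E$ term appearing on the right-hand side of Lemma~\ref{lemma_aux_integ2} is itself bounded by Lemma~\ref{lemma_aux_integ1} for $t \leq T_0$.

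Combining, for $t \leq T_0 \leq 1$ and a.e. $x \in \R^d$ (the a.e. restriction coming from Lemma~\ref{lemma_aux_integ1} and the fact that $Z(t,x)$ is defined on a set $\cU$ of full measure),
\begin{equation*}
\E T \leq C t^{-(\alpha-1)d/2}\bigl[1 + Y_0(1)^{\op-1} t^{1-(\op-1)d/2} P_tY_0(x)\bigr],
\end{equation*}
where the term $t^{1-(\alpha-1)d/2}$ and the $t^4$ factor in Lemma~\ref{lemma_aux_integ2} are both absorbed into $t^{-(\alpha-1)d/2}$ thanks to $t \leq 1$. Raising to the power $q/\alpha$ and using the elementary inequalities $q/\alpha \leq q/\op$ (because $\op = p \vee 1 \leq \alpha$) and $(1+u)^{q/\alpha} \leq (1+u)^{q/\op}$ for $u \geq 0$, together with $t^{-(\alpha-1)(d/2)(q/\alpha)} \leq t^{-(\alpha-1)(d/2)(q/\op)}$ for $t \leq 1$, yields the stated bound.

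The main obstacle is really the bookkeeping: one must package the two cases $p \geq 1$ and $p < 1$ uniformly via $\op$, verify that each power of $t$ appearing from Lemmas~\ref{lemma_aux_integ1}-\ref{lemma_aux_integ2} can be dominated by $t^{-(\alpha-1)d/2}$ on $(0,T_0]$, and track that every estimate ultimately produces constants depending only on $(\alpha,\gamma,d)$ rather than on $Y_0$ (the $Y_0(1)^{\op-1}$ factor is kept explicit inside the bracket). The passage from the naturally obtained exponent $q/\alpha$ to the weaker $q/\op$ form in the statement is a purely algebraic downgrade and is designed for convenience of subsequent bootstrapping to arbitrary $q \in (1,\alpha)$.
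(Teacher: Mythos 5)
Your proposal is correct and follows essentially the same route as the paper: both arguments bound $\E\, T$ via the heat-kernel inequality $p_{t-s}^{\alpha-1} \le C(t-s)^{-(\alpha-1)d/2}$, the pointwise bound $Y^p \le 1 + Y^{\op}$, Lemma~\ref{lemma_prelim_density}(b) to swap $Y_{s-}$ for $\bar Y_s$, and then Lemmas~\ref{lemma_aux_integ2} and \ref{lemma_aux_integ1}, finishing with Proposition~\ref{prop_bdg} and Jensen. The only cosmetic difference is that the paper linearizes the fractional power $u^{\op/\alpha} \le 1 + u$ (see \eqref{e_stochintegBDG2} and \eqref{e_integ_qbd}) before invoking Lemma~\ref{lemma_aux_integ2}, whereas you keep the power intact and obtain a slightly stronger intermediate exponent $q/\alpha$ which you then downgrade to the stated $q/\op$ using $t \le 1$ and $\op \le \alpha$; both routes give the lemma as stated.
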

\begin{proof}
We first observe that by Lemma~\ref{lemma_aux_integ1}, there is a subset $B$ of $(0,T_0] \times \R^d$ of full Lebesgue measure such that $B \subseteq \cU$ and \eqref{e_auxinteglemma1} holds for all $(t,x) \in B$. It suffices to show that \eqref{e_smalltimemomentslemma} holds for all $(t,x) \in B$. For such $(t,x)$, by \eqref{e_integ_qbd}, Lemma~\ref{lemma_aux_integ2} with $r = (\alpha - 1)\frac d 2$, and \eqref{e_auxinteglemma1}, we have
\begin{align}
&\E(|Z(t,x)|^{\op}) \notag 
\\ &\hspace{.6 cm} \leq C \bigg( 1 + t^{1-(\alpha-1)\frac d 2} +K_2 \big[t^3 + t^{-(\alpha-1)\frac d 2} \big] \bigg[ 1 +  Y_0(1)^{\op - 1} t^{1 - (\op-1)\frac d 2} P_t Y_0(x)  \notag
\\ &\hspace{1.2 cm}+  \E\bigg( \int_{(0,t] \times \R^d} \bar{Y}(s,y)^{\op} p_{t-s}(x-y) \,dsdy \bigg) \bigg]\bigg) \notag
\\ &\hspace{.6 cm} \leq C \bigg( 1 + t^{1-(\alpha-1)\frac d 2} +K_2 \big[t^3 + t^{-(\alpha-1)\frac d 2} \big] \bigg[ 1 +  Y_0(1)^{\op - 1} t^{1 - (\op-1)\frac d 2} P_t Y_0(x)  \notag
\\ &\hspace{1.2 cm}+  K_1\big[1 + Y_0(1)^{\op-1} t^{1 - (\op-1)\frac d 2} P_tY_0(x)\big] \bigg]\bigg), \notag
\end{align}
where $K_2 = K_2((\alpha-1)d/2)$, and $C>0$ is independent of $t$. Since $t \leq T_0$, we may collect terms in the above and conclude that for some constant $C>0$,
\begin{equation*}
\E(|Z(t,x)|^{\op}) \leq C(1 + t^{-(\alpha-1)\frac d 2})\big[ 1 + Y_0(1)^{\op-1} t^{1-(\op-1)\frac d 2} P_tY_0(x) \big].
\end{equation*}
By \eqref{e_qmoment_ineq}, we have
\begin{align*}
\E(\bar{Y}(t,x)^{\op}) &\leq CP_tY_0(x)^{\op} + C\E(|Z(t,x)|^{\op}), 
\end{align*}
and the bound above thus implies that
\begin{align*}
\E(\bar{Y}(t,x)^{\op}) &\leq  C(1 + t^{-(\alpha-1)\frac d 2})\big[ 1 + Y_0(1)^{\op-1} t^{1-(\op-1)\frac d 2} P_tY_0(x)\big] + C P_tY_0(x)^{\op}
\\ &\leq  C t^{-(\alpha-1)\frac d 2} \big[ 1 + Y_0(1)^{\op-1} t^{1-(\op-1)\frac d 2} P_tY_0(x)\big] + C P_tY_0(x)^{\op},
\end{align*}
where the second line holds by enlarging the constant, since $t \leq T_0$. Thus the desired inequality holds for all $(t,x) \in B$ for some constant $C \geq 1$, which completes the proof. \end{proof}

Given the moment bound holds for small times, one can bootstrap and iterate to prove that the bound from Lemma~\ref{lemma_aux_integ1} holds, with an enlarged constant, for $t \in (0,T]$ for any $T>0$. That is, there exists an increasing family of constants $K_1(T)>0$ such that for all $t \in (0,T]$,
\begin{equation} \label{e_integ_extra}
\E \bigg( \int_{(0,t] \times \R^d} \bar{Y}(s,y)^{\op} p_{t-s}(x-y) dsdy \bigg) \leq K_1(T) \big[ 1 + Y_0(1)^{\op-1} t^{1-(\op-1)\frac d 2} P_t Y_0(x) \big]
\end{equation}
for a.e. $x \in \R^d$. The process is iterative: given the moment estimate from Lemma~\ref{lemma_moments_smalltime} holds for $t \in (0,T_{n-1}]$ for some $T_{n-1}$, one can bootstrap to prove that \eqref{e_integ_extra} holds for $t \in (0,T_n]$, where $T_n > T_{n-1}$, then establishes that the moment estimate from Lemma~\ref{lemma_moments_smalltime} holds (with an enlarged constant) for $t \in (0,T_n]$, and so on, along a sequence $T_n \uparrow \infty$. This technical bootstrapping argument is carried out by Yang and Zhou; see Steps 3 and 4 of the proof of Proposition~2.4 in the Appendix of \cite{YZ2017}. Although elements of our proof in the steps above differ from theirs, the proofs are structurally the same and in particular their bootstrapping argument works in our setting, under our assumptions, with virtually no modification. We therefore omit the proof.

Given \eqref{e_integ_extra}, one can repeat the argument in the proof of Lemma~\ref{lemma_moments_smalltime} for $t \in (0,T]$ for any $T>0$. We now state the preliminary $\op$th moment bound with no restriction on the time parameter.
\begin{lemma} \label{lemma_moments_allT1}
For all $T>0$ there are constants $C_T = C(T,\alpha,\gamma,d)\geq 1$, increasing in $T$, such that for a.e. $(t,x) \in (0,T] \times \R^d$, 
\begin{equation*} 
\E(\bar{Y}(t,x)^{\op}) \leq  C_T t^{-(\alpha-1)\frac d 2}\big[ 1 + Y_0(1)^{\op-1} t^{1-(\op-1)\frac d 2} P_tY_0(x)\big] + C_T P_tY_0(x)^{\op}.
\end{equation*}
\end{lemma}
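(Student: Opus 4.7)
The plan is a two-stage argument following the structure the author has already set up. In the first stage I upgrade the short-time integrated bound of Lemma~\ref{lemma_aux_integ1} to hold on all intervals $(0,T]$, at the cost of a $T$-dependent constant: I claim that for each $T>0$ there exists $K_1(T)$, nondecreasing in $T$, such that
\begin{equation*}
\E \bigg( \int_{(0,t] \times \R^d} \bar{Y}_s(y)^{\op} p_{t-s}(x-y) \, ds dy \bigg) \leq K_1(T) \big[ 1 + Y_0(1)^{\op-1} t^{1-(\op-1)\frac d 2} P_t Y_0(x) \big]
\end{equation*}
for all $t \in (0,T]$ and a.e.\ $x \in \R^d$. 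This is exactly the content of \eqref{e_integ_extra}, and it is proved by iterating on a grid $0 = t_0 < t_1 < \cdots < t_N = T$ of mesh $\leq T_0$: on each subinterval $(t_k, t_{k+1}]$ one decomposes $Z(t,x)$ into its contribution from $(0,t_k]$ and from $(t_k,t]$. The first piece is controlled by the semigroup property (so that its $\op$-moment at $(t,x)$ reduces to a bound at time $t_k$ smeared by $p_{t-t_k}$), and the second piece is controlled by re-running the Lemma~\ref{lemma_aux_integ1} argument on the shorter window $(t_k,t]$ with the estimate already accumulated at step $k$ as input. Iterating $N$ times produces a constant that grows multiplicatively in the number of steps, hence in $T$. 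This is the bootstrapping procedure that the author has attributed to Steps~3--4 in \cite{YZ2017}.

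In the second stage I repeat the proof of Lemma~\ref{lemma_moments_smalltime} verbatim, but with \eqref{e_integ_extra} in place of Lemma~\ref{lemma_aux_integ1}. Concretely, for $(t,x)$ in a full-measure subset of $(0,T] \times \R^d$, I combine the three inputs already in hand: the BDG-based bound \eqref{e_integ_qbd} on $\E(|Z(t,x)|^{\op})$ in terms of the singular space-time integral
\[
\int_0^t (t-s)^{-(\alpha-1)d/2} \! \int p_{t-s}(x-y) \bar{Y}_s(y)^{\op}\, dy\, ds;
\]
Lemma~\ref{lemma_aux_integ2} with $r = (\alpha-1)d/2$ to convert this singular integral into the non-singular one plus polynomial-in-$t$ error terms; and the newly-upgraded bound \eqref{e_integ_extra} to control the non-singular integral. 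The resulting estimate on $\E(|Z(t,x)|^{\op})$ combines with the elementary inequality $\bar{Y}_t(x)^{\op} \leq C[P_tY_0(x)^{\op} + |Z(t,x)|^{\op}]$ from \eqref{e_qmoment_ineq}, and all dependence on $t \in (0,T]$ is absorbed into a single constant $C_T$ by collecting terms and using $t \leq T$ to dominate positive powers of $t$. This gives the desired estimate in the case $q = \op$.

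The extension to $q \in (0, \op]$ is then immediate: apply Jensen's inequality with the concave map $u \mapsto u^{q/\op}$ and use $(a+b)^{q/\op} \leq a^{q/\op} + b^{q/\op}$ to split the two terms on the right, producing exactly the exponent $q/\op$ on the first term and $q$ on the second (since $P_tY_0(x)^{\op \cdot q/\op} = P_tY_0(x)^q$). Throughout, ``a.e.\ $(t,x)$'' refers to the full-measure set of points where the prior almost-everywhere statements (Lemma~\ref{lemma_aeintegalpha}, Lemma~\ref{lemma_aux_integ1}) all hold simultaneously; this is preserved under countable operations so the exceptional set is harmless.

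The main obstacle is the first stage: the moment bounds and integrated bounds are only stated \emph{almost everywhere} in $x$, so the iterative step cannot treat $Y_{t_k}$ as a genuine initial datum with pointwise control. The trick is to propagate only the integral-type (semigroup-smeared) bound \eqref{e_integ_extra}, which is exactly the object that interacts cleanly with the semigroup property via $P_{t-t_k} p_{t_k - s} = p_{t-s}$ and which is invariant under the a.e.\ ambiguity. Once one accepts this, the arithmetic of how the constant $K_1(T)$ accumulates over the $N = \lceil T/T_0 \rceil$ steps is routine, and the second stage is essentially a line-by-line recycling of the proof of Lemma~\ref{lemma_moments_smalltime}.
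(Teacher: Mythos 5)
Your proposal follows essentially the same two-stage strategy as the paper: first extend the integrated bound \eqref{e_integ_extra} to all of $(0,T]$ by iterating the Lemma~\ref{lemma_aux_integ1} machinery on time windows of length at most $T_0$ (both you and the paper defer the technical bookkeeping to Steps 3--4 of the proof of Proposition~2.4 in \cite{YZ2017}), then re-run the proof of Lemma~\ref{lemma_moments_smalltime} with \eqref{e_integ_extra} in place of Lemma~\ref{lemma_aux_integ1}, and finally pass from $q=\op$ to $q<\op$ by Jensen and subadditivity of $u\mapsto u^{q/\op}$. One small imprecision worth flagging: your remark that the ``trick'' is to propagate only the integral-type bound is in mild tension with your own description of the first piece, whose $\op$-moment at $(t,x)$ is a pointwise moment at time $t_k$ smeared by $p_{t-t_k}$ --- the paper's sketch of the bootstrap makes this explicit by ping-ponging between the pointwise moment estimate of Lemma~\ref{lemma_moments_smalltime} and the integrated bound \eqref{e_integ_extra}, upgrading each in turn along a sequence $T_n \uparrow \infty$, rather than carrying only the latter.
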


Next, we use the $\op$th moment bound to improve Lemma~\ref{lemma_aeintegalpha} and show that $Z(t,x)$, and hence $\bar{Y}(t,x)$, is well-defined for all $(t,x) \in \R_+ \times \R^d$. It suffices to show that $\phi$ defined by $\phi(s,y) = p_{t-s}(x-y) \bar{Y}(s,y)^\gamma$ is in $\LL^{\alpha,t}_{\text{a.s.}}$. This will follow if we show that it belongs to $\LL^\alpha(\LL^{\alpha,t})$, which we can now do using moment estimates. First suppose $p\geq 1$ and $(t,x) \in \R_+ \times \R^d$. For $s \leq t$, from Lemma~\ref{lemma_moments_allT1} and the now-familiar upper bound $P_sY_0(x)^p \leq Cs^{-(p-1)\frac d 2} Y_0(1)^{p-1} P_sY_0(x)$, we have
\begin{align*}
\E(\bar{Y}(s,y)^p) &\leq C(t,Y_0(1))\big[s^{-(\alpha-1)\frac d 2} (1 + P_sY_0(y)) + s^{-(p-1)\frac d 2} P_s Y_0(y) \big]
\\& \leq C'(t,Y_0(1))s^{-(\alpha-1)\frac d 2} (1 + P_sY_0(y)),
\end{align*}
where in the second line we increase the constant as necessary to make it hold for all $s \leq t$. We thus obtain that 
\begin{align*} 
&\E \bigg(\int_{(0,t] \times \R^d} \bar{Y}(s,y)^p p_{t-s}(x-y)^\alpha \,dsdy \bigg) 
\\ &\hspace{0.6 cm} = \int_{(0,t] \times \R^d} \E (\bar{Y}(s,y)^p) p_{t-s}(x-y)^\alpha \,dsdy  
\\ &\hspace{0.6 cm} \leq C'(t,Y_0(1)) \int_0^t ds \,(t-s)^{-(\alpha-1)\frac d 2} s^{-(\alpha-1)\frac d 2} \int (1 + P_sY_0(y)) p_{t-s}(x-y) dy  
\\ & \hspace{0.6 cm} = C'(t,Y_0(1))(1+ P_tY_0(x)) \int_0^t ds \, (t-s)^{-(\alpha-1)\frac d 2} s^{-(\alpha-1)\frac d 2}   < \infty,
\end{align*}
where last line uses the semigroup property and the bound is finite because the powers of $s$ and $(t-s)$ are all integrable. To show that the expectation is finite when $p<1$ is similar and in fact easier; one uses the inequality $\bar{Y}(s,y)^p \leq 1 + \bar{Y}(s,y)$ and argues the same way. Thus $(s,y) \to p_{t-s}(x-y) \bar{Y}(s,y)^\gamma$ is in $\LL^\alpha(\LL^{\alpha,t})$ for all $(t,x) \in \R_+ \times \R^d$, and hence we may define $Z(t,x)$ for all $(t,x)$. Moreover, we may similarly show that the expectation from Lemma~\ref{lemma_finitemomentae} is finite for all $k\in \N$, for all $(t,x) \in \R_+ \times \R^d$. Thus the arguments from Lemmas~\ref{lemma_aux_integ1}-\ref{lemma_moments_allT1} may be repeated with no restriction on $(t,x)$ to obtain moment bounds for all values of $(t,x)$. 
 
We collect our results up to this point in the following proposition. Because we can define $Z(t,x)$ and $\bar{Y}(t,x)$ for all $(t,x)$, the ``almost everywhere" statements from Lemma~\ref{lemma_prelim_density} can now be made with no restrictions.

\begin{proposition} \label{prop_integral_summary} (a) For every $(t,x) \in \R_+ \times \R^d$, $Z(t,x)$ and $\bar{Y}(t,x)$ are well-defined. 

(b) For each $t>0$, $\bP(Y_t(dx) = \bar{Y}(t,x) dx) = 1$. We also have, with probability one, $\bar{Y}(t,x) = Y(t,x)$ a.e. on $\R_+ \times \R^d$.

(c) For all $T>0$ there are constants $C_T = C(T,\alpha,\gamma,d) >0$, increasing in $T$, such that 
\begin{equation*} 
\E(\bar{Y}(t,x)^{\op}) \leq  C_T t^{-(\alpha-1)\frac d 2}\big[ 1 + Y_0(1)^{\op-1} t^{1-(\op-1)\frac d 2} P_tY_0(x)\big] + C_T P_tY_0(x)^{\op}
\end{equation*}
for all $(t,x)\in (0,T] \times \R^d$. 
\end{proposition}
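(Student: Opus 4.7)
The plan is to leverage the a.e.~moment bound from Lemma~\ref{lemma_moments_allT1} as the one new ingredient that upgrades all previous ``a.e.~in $(t,x)$'' statements to ``for every $(t,x)$.'' Everything else recycles machinery already built up.

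For part (a), fix $(t,x) \in \R_+ \times \R^d$; I need to verify that the integrand $\phi(s,y) = p_{t-s}(x-y) Y_{s-}(y)^\gamma$ lies in $\LL^\alpha(\LL^{\alpha,t})$, which by Tonelli and Lemma~\ref{lemma_prelim_density}(b) reduces to showing
\begin{equation*}
\int_0^t \int \E(\bar{Y}_s(y)^p) \, p_{t-s}(x-y)^\alpha \,dyds < \infty.
\end{equation*}
When $p \geq 1$ (so $\op=p$), I would apply Lemma~\ref{lemma_moments_allT1} with $q=p$ pointwise in $(s,y)$, majorize the heat-kernel power by $p_{t-s}(x-y)^\alpha \leq C(t-s)^{-(\alpha-1)d/2}\,p_{t-s}(x-y)$, and use the semigroup property to collapse the spatial integral. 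What remains is $\int_0^t s^{-(\alpha-1)d/2}(t-s)^{-(\alpha-1)d/2}(1 + \text{stuff})\,ds$, which is finite precisely because the dimensional restriction $d < 2/(\alpha-1)$ forces $(\alpha-1)d/2 < 1$. When $p < 1$, I would use $u^p \leq 1+u$ and handle $\E(\bar{Y}_s(y))$ via Lemma~\ref{lemma_meanmeasure} by exactly the same calculation. The same estimate shows that the expectation in Lemma~\ref{lemma_finitemomentae} is finite at every $(t,x)$.

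For part (b), with $Z(t,x)$ now defined everywhere and the integrability condition \eqref{e_fublemma2_1} verified at every $t > 0$ (by the estimate above with $f$ a Schwartz function), Lemma~\ref{lemma_fubini2} applies to $\phi(s,y,x) = p_{t-s}(x-y)$ for arbitrary $t$. Repeating verbatim the argument of Lemma~\ref{lemma_prelim_density}(a) along a countable separating class in $\cS$ then gives $Y_t(dx) = \bar{Y}_t(x)dx$ a.s.~for every $t > 0$, removing the restriction $t \in I$. The second assertion of (b) is immediate: since $(Y_t)_{t\geq 0}$ is c\`adl\`ag in $\cM_f$, at Lebesgue-a.e.~$t$ we have $Y_{t-} = Y_t$, and combining this with the identity just proved and the fact that $Y_{t-}(x)$ is the density of $\mathbf{Y}_N$, the argument of Lemma~\ref{lemma_prelim_density}(b) goes through unchanged.

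For part (c), every a.e.~qualification in Lemmas~\ref{lemma_aux_integ1}--\ref{lemma_moments_allT1} originated from invoking Lemma~\ref{lemma_finitemomentae} to justify finiteness of an expectation; since that finiteness now holds for all $(t,x)$, each lemma in the chain can be reread with ``a.e.'' replaced by ``every.'' Concretely, the BDG computation producing \eqref{e_stochintegBDG3}, the singular-kernel estimate of Lemma~\ref{lemma_aux_integ2}, and the bootstrap iteration that upgrades Lemma~\ref{lemma_aux_integ1} from $t \in (0,T_0]$ to arbitrary $T$ all transport directly, yielding the stated moment bound at every $(t,x) \in (0,T] \times \R^d$ and every $q \in (0,\op]$. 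The one subtlety worth flagging — and the main obstacle — is ensuring the bootstrap constant $K_1(T)$ remains finite along the iteration $T_n \uparrow \infty$: this is why one first establishes the small-time bound in Lemma~\ref{lemma_moments_smalltime} with explicit dependence on $t^{1-(\alpha-1)d/2}$ (a positive power under the dimension restriction), so that the implicit constant over successive time blocks of length $T_0$ grows only by a bounded factor per step.
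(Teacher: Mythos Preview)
Your approach is essentially the same as the paper's, and the overall strategy is correct. There is, however, one circular citation: in the $p<1$ case of part~(a) you propose to bound $\E(\bar{Y}_s(y))$ via Lemma~\ref{lemma_meanmeasure}. That lemma is proved in Section~\ref{s_properties}, which in the paper's logical order comes \emph{after} Section~\ref{s_stochintegralrep}; its proof explicitly invokes Theorem~\ref{thm_stochinteg}(a), i.e.\ parts~(a) and~(b) of the very proposition you are proving. The fix is immediate and is what the paper does: instead of Lemma~\ref{lemma_meanmeasure}, use Lemma~\ref{lemma_moments_allT1} with $q=1$ (recall $\op=1$ when $p<1$), which gives $\E(\bar{Y}_s(y)) \leq C_T s^{-(\alpha-1)d/2}[1 + sP_sY_0(y)] + C_T P_sY_0(y)$ for a.e.\ $(s,y)$, and this a.e.\ bound suffices since you are integrating in $dsdy$. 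With that correction the argument goes through exactly as you describe.
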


Part (a) of Theorem~\ref{thm_stochint2} is established by parts (a) and (b) of the above. We remark that there exists a predictable version of $\bar{Y}(t,x)$ by \cite[Lemma A.2]{C2017b}. We still need to prove the moment estimates (as stated) and prove part (c). We first finish with the $q$th moment estimates for $q \in [1,\alpha)$, i.e. \eqref{e_thm_momentbd_pgo} and \eqref{e_thm_momentbd_plo}, with the following lemma.

\begin{lemma} \label{lemma_moments_final}
Let $q \in [1,\alpha)$. There exists an increasing family of constants $C_T = C(T,q,\alpha,\gamma,d)$, for $T>0$, such that the following holds: if $p>1$, for all $(t,x) \in (0,T] \times \R^d$, 
\begin{equation*}
\E(\bar{Y}(t,x)^q) \leq C_T t^{-(\alpha-1)\frac d 2 \frac q \alpha} ( 1 + t^{1-(p-1) \frac d 2}Y_0(1)^{p-1} P_t Y_0(x))^{\frac q \alpha} + C_T P_t Y_0(x)^q;
\end{equation*}
and if $p \leq 1$,
\begin{equation*}
\E(\bar{Y}(t,x)^q) \leq C_T t^{-(\alpha-1)\frac d 2 \frac q \alpha} ( 1 + t P_t Y_0(x))^{\frac q \alpha} + C_T P_t Y_0(x)^q.
\end{equation*}
\end{lemma}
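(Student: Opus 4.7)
The plan is to exploit the BDG inequality (Proposition~\ref{prop_bdg}) at the chosen exponent $q \in [1,\alpha)$, rather than at the exponent $\op$ used implicitly in the bootstrap leading to Proposition~\ref{prop_integral_summary}(c). Since $q/\alpha < 1$, Jensen's inequality pulls the expectation out of the $q/\alpha$ power, reducing everything to an estimate on $\E(T_\phi(t))$, and this quantity can be handled directly using Proposition~\ref{prop_integral_summary}(c) to control $\E(\bar{Y}_s(y)^p)$. The improvement from $q/\op$ to $q/\alpha$ is precisely a reflection of the BDG scaling; no new stochastic analysis is required.

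Concretely, I would begin from $\bar{Y}_t(x) = P_tY_0(x) + Z(t,x)$ and the convex inequality $\bar{Y}_t(x)^q \leq 2^{q-1}(P_tY_0(x)^q + |Z(t,x)|^q)$, which is valid for $q \geq 1$. Proposition~\ref{prop_bdg} then yields
\begin{equation*}
\E(|Z(t,x)|^q) \leq C_q \E\left( T_\phi(t)^{q/\alpha}\right), \quad T_\phi(t) = \int_{(0,t] \times \R^d} p_{t-s}(x-y)^\alpha Y_{s-}(y)^p \,ds\,dy.
\end{equation*}
Since $q/\alpha < 1$, Jensen's inequality gives $\E(T_\phi(t)^{q/\alpha}) \leq \E(T_\phi(t))^{q/\alpha}$, and by Fubini,
\begin{equation*}
\E(T_\phi(t)) = \int_0^t \!\!\int p_{t-s}(x-y)^\alpha \,\E(\bar{Y}_s(y)^p)\,dy\,ds,
\end{equation*}
where Lemma~\ref{lemma_prelim_density}(b) has been used to replace $Y_{s-}$ by $\bar{Y}_s$ in the $ds\,dy$ integral. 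We now apply Proposition~\ref{prop_integral_summary}(c) to estimate $\E(\bar{Y}_s(y)^p)$ (taking $q=p \leq \op$ there), and combine with the standard pointwise bound $p_{t-s}(x-y)^\alpha \leq C(t-s)^{-(\alpha-1)d/2} p_{t-s}(x-y)$ and the semigroup property $\int p_{t-s}(x-y) P_s Y_0(y)\,dy = P_tY_0(x)$.

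The $p \leq 1$ case is short: Jensen plus the mean-measure bound (Lemma~\ref{lemma_meanmeasure}) plus $u^p \leq 1+u$ gives $\E(\bar{Y}_s(y)^p) \leq 1 + P_sY_0(y)$, so $\E(T_\phi(t)) \leq C t^{1-(\alpha-1)d/2}[1 + P_tY_0(x)]$, which rearranges to $C_T t^{-(\alpha-1)d/2}[1 + t P_tY_0(x)]$ when $t \leq T$. The $p>1$ case produces several terms from Proposition~\ref{prop_integral_summary}(c), each of which, after integration against $p_{t-s}(x-y)^\alpha$ and use of $P_sY_0(y)^p \leq C s^{-(p-1)d/2} Y_0(1)^{p-1} P_s Y_0(y)$, yields a beta integral $\int_0^t (t-s)^{-(\alpha-1)d/2} s^{-a}\,ds$; convergence of all these integrals follows from $(\alpha-1)d/2 < 1$ and $(p-1)d/2 < 1$ (which hold in our parameter regime). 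Collecting powers of $t$, each term has leading power at least $t^{-(\alpha-1)d/2}$ and at least $t^{1-(\alpha-1)d/2 - (p-1)d/2}$ on the $P_tY_0(x)$-term, the excess $t^{1-(\alpha-1)d/2}$ being bounded by $T^{1-(\alpha-1)d/2}$ and absorbed into $C_T$. Raising the resulting bound on $\E(T_\phi(t))$ to the power $q/\alpha$ yields the stated estimate on $\E(|Z(t,x)|^q)$, and adding $C_q P_tY_0(x)^q$ finishes the proof.

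The only real work is the bookkeeping in the $p>1$ case — tracking all the beta-function exponents and verifying that each one falls within the convergence range and has the correct leading power of $t$ — but this is entirely mechanical and does not introduce any new analytic ideas.
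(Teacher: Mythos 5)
Your proof is correct, but it routes through different intermediate results than the paper does, so let me compare. Both proofs open identically: convexity of $u\mapsto u^q$, Proposition~\ref{prop_bdg}, and Jensen with $q/\alpha<1$ to reduce to $\E(T_\phi(t))$, where $T_\phi$ is the inner clock associated with $Z(t,x)$. The divergence is in how $\E(T_\phi(t))$ is bounded. The paper first applies the heat-kernel bound $p_{t-s}^\alpha \leq C(t-s)^{-(\alpha-1)d/2}p_{t-s}$ and then invokes Lemma~\ref{lemma_aux_integ2} (with $r=(\alpha-1)d/2$) together with the bootstrapped integral estimate \eqref{e_integ_extra}, so it never needs the pointwise moment bound $\E(\bar Y_s(y)^p)$; it works directly at the level of the expected space-time integral of $\bar Y_s^{\op}$. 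You instead pass the expectation through the $dsdy$-integral (Tonelli plus Proposition~\ref{prop_integral_summary}(b)), insert the already-established pointwise bound Proposition~\ref{prop_integral_summary}(c) for $\E(\bar Y_s(y)^p)$, and then do the $dy$ and beta-function $ds$ integrals explicitly. Since Proposition~\ref{prop_integral_summary}(c) was itself built from Lemma~\ref{lemma_aux_integ2} and \eqref{e_integ_extra}, the two routes rest on the same machinery, but yours is arguably more transparent at this final step (explicit beta integrals with exponents $(\alpha-1)d/2$ and $(p-1)d/2$, both $<1$ in the admissible regime, so there is nothing to check beyond elementary bookkeeping) at the cost of a slightly lossier constant.

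Your $p\leq 1$ case is a genuine simplification. The paper uses $\bar Y_s^p\le 1+\bar Y_s^{\op}$ (with $\op=1$) and again invokes \eqref{e_integ_extra}, which for $\op=1$ required the full bootstrap. You instead use $\bar Y_s^p\le 1+\bar Y_s$ followed by the mean-measure bound $\E(\bar Y_s(y))\le P_sY_0(y)$ of Lemma~\ref{lemma_meanmeasure}, which immediately gives $\E(T_\phi(t))\le Ct^{1-(\alpha-1)d/2}(1+P_tY_0(x))$ and rearranges to the stated form. This is cleaner, and there is no circularity: the proof of Lemma~\ref{lemma_meanmeasure} (given in Section~\ref{s_properties}, which follows Section~\ref{s_stochintegralrep} only in presentation order) uses only Theorem~\ref{thm_stochinteg}(a) (i.e.\ Proposition~\ref{prop_integral_summary}(a,b)) and Lemma~\ref{lemma_fubini2}, all of which are available before Lemma~\ref{lemma_moments_final}. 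You should note this ordering point explicitly if writing up, since the paper's sectioning could otherwise suggest a logical dependency that doesn't exist.
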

\begin{proof}
Let $T>0$, $(t,x) \in (0,T] \times \R^d$ and $q \in [1,\alpha)$. By Lemma~\ref{lemma_bdg},
\begin{align} \label{e_lastmomentlemmaaux1}
\E(|Z(t,x)|^q) &\leq C_q  \E  \bigg( \int_{(0,t] \times \R^d} p_{t-s}(x-y)^\alpha Y(s,y)^p dsdy \bigg)^{q/\alpha}  \notag
\\ &\leq C_q \E \bigg( \int_0^t ds \, (t-s)^{-(\alpha-1)\frac d 2} \int p_{t-s}(x-y) \bar{Y}(s,y)^p dy\bigg)^{q/\alpha}.
\end{align} 
First suppose that $p > 1$. In view of the discussion preceding the statement of Proposition~\ref{prop_integral_summary}, \eqref{e_integ_extra} holds for all $(t,x) \in (0,T]\times \R^d$, not merely almost everywhere. Combining this with Lemma~\ref{lemma_aux_integ2}, we therefore have that 
\begin{align}
&\E \bigg( \int_0^t ds \, (t-s)^{-(\alpha-1)\frac d 2} \int p_{t-s}(x-y) \bar{Y}(s,y)^p dy \bigg)  \notag
\\ &\hspace{0.6 cm} \leq K_2[t^3 + t^{-(\alpha-1)\frac d 2}]\bigg[1 + Y_0(1)^{p-1}t^{1-(p-1)\frac d 2} P_tY_0(x)  \notag
\\ &\hspace{ 1.2 cm} +  \E \bigg( \int_{(0,t] \times \R^d} \bar{Y}(s,y)^p p_{t-s}(x-y) dsdy \bigg)\bigg] \notag
\\ &\hspace{0.6 cm} \leq C(T) t^{-(\alpha-1)\frac d 2}(1 + t^{1-(p-1)\frac d 2}Y_0(1)^{p-1} P_tY_0(x)). \notag
\end{align}
Substituting this into \eqref{e_lastmomentlemmaaux1} and using the fact that $\bar{Y}(t,x)^q \leq C(P_tY_0(x)^q + |Z(t,x)|^q)$ for a constant $C$ which only depends on $q$, the stated bound follows. 

To prove the result for $p<1$, one simply notes that $\bar{Y}(s,y)^p \leq 1 + \bar{Y}(s,y)^{\op}$ (recall $\op = 1$ in this case), and argues as in the previous case. Since $\op=1$, we have $Y_0(1)^{\op-1} = t^{-(\op-1)\frac d 2} = 1$, which leads to the simpler form of the bound.
\end{proof}

The next lemma establishes Theorem~\ref{thm_stochint2}(c).
\begin{lemma} \label{lemma_moll} Let $\psi \in C^\infty_c(\R^d)$ be non-negative and satisfy $\int \psi = 1$, and for $\eps>0$ define $\psi_\eps(x) = \eps^{-d} \psi(x/\eps)$. Then for all $(t,x) \in \R_+ \times \R^d$, $\psi_\eps * Y_t(x) \to \bar{Y}(t,x)$ in $\LL^q(\bP)$ as $\eps \downarrow 0$ for all $q \in [1,\alpha)$.
\end{lemma}
\begin{proof}
Let $(t,x) \in \R_+ \times \R^d$ and let $\psi_\eps$ be as in the statement of the lemma. Since $Y_t(dx) = \bar{Y}(t,x)dx$ a.s., by \eqref{e_Y_Z_formula} we have
\begin{align*}
 \psi_\eps * Y_t(x) = \psi_\eps * P_t Y_0(x) + \int \psi_\eps(x-z) Z(t,z)dz 
\end{align*}
almost surely. It is straightforward to see that the conditions of Lemma~\ref{lemma_fubini2} are satisfied and we can apply the result to the last term, which gives
\begin{align*}
\int \psi_\eps(x-z) Z(t,z)dz &= \int \psi_\eps(x-z) \bigg[ \int_{(0,t] \times \R^d} Y(s,y)^\gamma  p_{t-s}(z-y) \,L(ds,dy) \bigg] dz 
\\ &= \int_{(0,t]\times \R^d}  \bigg[\int \psi_\eps(x-z) p_{t-s}(z-y) \, dz\bigg] Y(s,y)^\gamma L(ds,dy) 
\\ &= \int_{(0,t]\times \R^d} (\psi_\eps * p_{t-s})(x-y)Y(s,y)^\gamma L(ds,dy). 
\end{align*}
Let $g^\eps(s,y) =  p_s(y) - \psi_{\eps}* p_s(y)$. Then for any $q \in [1,\alpha)$, there is some constant $C>0$ such that
\begin{align*}
|\bar{Y}(t,x) - \psi_\eps * Y_t(x)|^q & \leq C|P_t Y_0(x) - \psi_\eps * P_t Y_0(x)|^q 
\\ &\hspace{1 cm}+ C\bigg| \int_{(0,t]\times \R^d}g^\eps(t-s,x-y) Y(s,y)^\gamma L(ds,dy) \bigg|^q.
\end{align*} 
The regularity and integrability properties of $P_t Y_0$ are such that $|P_t Y_0(x) - \psi_\eps * P_t Y_0(x)|^q$ vanishes as $\eps \to 0$. It thus suffices to control the second term in the above. By Lemma~\ref{lemma_bdg} and Fubini, the expectation of this term is bounded above by a constant multiple of 
\begin{align} \label{e_diffint0}
 \bigg( \int_{(0,t)\times \R^d} |g^\eps(t-s,x-y)|^\alpha  \E(\bar{Y}(s,y)^p) \,dsdy \bigg)^{q/\alpha}.
\end{align}
It suffices to show that the integral in the above vanishes as $\eps \downarrow 0$. To do so, we break it into the integral over $(0,t/2)$ and $[t/2,t]$. First, we note that by Lemma~\ref{lemma_moments_final}, 
\[ \sup_{s \in [t/2,t], y \in \R^d} \E(\bar{Y}(s,y)^p) < \infty.\]
Hence, to show that the integral over $[t/2,t] \times \R^d$ vanishes is equivalent, by a change of variables, to proving that
\begin{align} \label{e_diffint1}
\lim_{\eps \downarrow 0} \int_{[0,t/2] \times \R^d} |p_s(y) - \psi_\eps * p_s(y)|^\alpha dsdy = 0.
\end{align}
By Young's convolution inequality (see \cite[Theorem~8.7]{Folland}), for each $s \in (0,t/2]$ we have
\begin{equation} \label{e_Youngconv}
\| \psi_\eps * p_s \|_\alpha \leq \|\psi_\eps\|_1 \|p_s\|_\alpha = \|p_s \|_\alpha
\end{equation}
for all $\eps \in (0,1]$. We also have, for $s\in (0,t/2]$,
\begin{align*}
\|p_s - \psi_\eps * p_s\|_\alpha \leq \|p_s\|_\alpha + \|\psi_\eps * p_s\|_\alpha &\leq  2 \|p_s\|_\alpha \leq 2C s^{\frac{(\alpha-1)}{\alpha}\frac d 2 }
\end{align*}
for all $\eps \in (0,1]$, where the last inequality can be seen by scaling, similar to \eqref{e_hkpowerinteg}. Now let $\delta \in (0,t/2)$. Then
\begin{align*}
\int_{(0,\delta]\times \R^d} |p_s(y) - \psi_\eps * p_s(y)|^\alpha ds dy &= \int_0^\delta \|p_s - \psi_\eps * p_s\|_\alpha^\alpha ds
\\ &\leq (2C)^\alpha \int_0^\delta s^{ (\alpha-1)\frac d 2} ds 
\\ &\leq C \delta^{1 - (\alpha-1)\frac d 2}
\end{align*}
for some enlarged constant $C$. Thus the part of the integral over $(0,\delta]$ is bounded, uniformly for $\eps \in (0,1]$, by a quantity which vanishes as $\delta \downarrow 0$. Thus we may restrict our attention to the integral over $s\in (\delta,t/2]$ for $\delta >0$.  

Arguing as above, we note that 
\[\int_{(\delta,t/2] \times \R^d} |p_s(y) - \psi_\eps * p_s(y)|^\alpha ds dy = \int_\delta^{t/2} \|p_s - \psi_\eps * p_s\|_\alpha^\alpha ds.\] 
By \cite[Theorem~8.14]{Folland}, $\psi_\eps * p_s \to p_s$ in $\LL^\alpha$ for all $s \in (\delta,t/2]$, and hence the integrand vanishes point-wise as $\eps \downarrow 0$. The bound \eqref{e_Youngconv} allows us to apply dominated convergence. This proves \eqref{e_diffint1}. Thus we have handled the part of the integral in \eqref{e_diffint0} from $s \in [t/2,t)$. 

We now handle the integral over $s \in (0,t/2)$. Since $\psi_\eps$ is an approximation of the identity (i.e. $\psi_\eps \to \delta_0$ as a distribution as $\eps \downarrow 0$), $g^\eps(t-s,x-y) \to 0$ point-wise as $\eps \downarrow 0$, and hence we just need to apply dominated convergence.  We need only obtain a uniform upper bound for $|g^\eps(t-s,x-y)|^\alpha  \E(\bar{Y}(s,y)^p)$ over $(s,y) \in (0,t/2) \times \R^d$. 

Again, suppose that $x=0$. We claim that, uniformly in $s \in (0,t/2)$ and $\eps \in (0,1]$, $|g^\eps(t-s,\cdot)|$ is bounded above by a bounded function with Gaussian tails. That it is bounded is straightforward to see: $\sup_{s \in (0,t/2)} \|p_{t-s}\| \leq C (t/2)^{-d/2}$ for a positive constant $C$, and hence $\sup_{s\in(0,t/2), y \in \R^d} |g^\eps(t-s,y)|$ is finite. 

We may assume without loss of generality that $\psi$ is supported on $B(0,1)$, the closed ball of radius one centered at $0$. Since the heat kernel is radially decreasing and $\psi_\eps$ has support of radius at most $\eps$, for all $u \in (t/2,t)$, $\eps \in (0,1]$, and $y \in \R^d$ with $|y| \geq 1$,
\begin{align*}
\psi_\eps * p_u(y) \leq  p_u(|y| - \eps) \leq p_u(|y| - 1),
\end{align*}
where, if $d>1$, for $r>0$ we define $p_u(r)$ as $p_u(y)$ for any $y \in \R^d$ with $|y| = r$. It is straightforward to see that there exists some $R \geq 2$ which depends on $t$ such that for all $u \in (t/2,t)$ and $y \in \R^d$ with $|y| \geq R-1$, $p_u(y) \leq p_t(y)$. Combined with the previous inequality, it follows that for $\eps \in (0,1]$, $u \in (t/2,t)$, and $|y| \geq R$,
\begin{align*}
|g^\eps(u,y)| \leq p_u(y) + \psi_\eps * p_u(y) \leq p_u(y) + p_u(|y|-1)
& \leq 2 p_t(|y|-1).
\end{align*}
This establishes that $|g^\eps(t-s,\cdot)|$ has an upper bound with Gaussian tails, uniformly in $s \in (0,t/2)$ and $\eps \in (0,1]$. Consequently, so does $|g^\eps(t-s,\cdot)|^\alpha$. Since we have shown it is bounded as well, it is now easy to argue using Lemma~\ref{lemma_moments_final} that for $(s,y) \in (0,t/2) \times \R^d$, $|g^\eps(t-s,x-y)|^\alpha \E(\bar{Y}_s(y)^p)$ has an integrable upper bound which is uniform in $\eps \in (0,1]$. Hence the dominated convergence theorem applies and this part of the integral vanishes as $\eps \downarrow 0$. This completes the proof. \end{proof}

Between Proposition~\ref{prop_integral_summary}, Lemma~\ref{lemma_moments_final} and Lemma~\ref{lemma_moll}, we have proved every claim in Theorem~\ref{thm_stochint2} except for \eqref{e_thm_meanmeasure}. It is enough to prove the first moment bound $\E(\bar{Y}(t,x)) \leq P_tY_0(x)$, since the bound for $q \in (0,1)$ then follows by Jensen's inequality. The first moment bound is established in the following lemma by a martingale argument. If the stochastic integral in the definition of $\bar{Y}(t,x)$ were a true martingale, we would obtain the mean-measure formula $\E(\bar{Y}(t,x)) = P_t Y_0(x)$. However, the stochastic integral is a priori only a local martingale; we content ourselves then with the following one-sided mean measure bound.

\begin{lemma} \label{lemma_meanmeasure} For every $(t,x) \in \R_+ \times \R^d$,
\begin{equation*} \E ( \bar{Y}(t,x)) \leq P_t Y_0(x).\end{equation*}
\end{lemma}
\begin{proof}
For $x \in \R^d$, $t>0$ and $0\leq s \leq t$, define
\begin{equation*}
M^t_s(x) := \int_{(0,s] \times \R^d} p_{t-u}(x-y) Y(u,y)^\gamma L(du,dy).
\end{equation*}
Then for any $(t,x) \in \R_+ \times \R^d$, by definition of $\bar{Y}(t,x)$,
\begin{equation} \label{e_lemma_mm_mart0}
\bar{Y}(t,x) = P_tY_0(x) + M^t_t(x).
\end{equation}
To prove the result, it suffices to show that $(M^t_s(x))_{s \in [0,t]}$ is a supermartingale, as this implies it has expectation at most $0$. We will show that $M^t_s(x)$ is bounded below, since a local martingale which is bounded below is a supermartingale. (For a non-negative local martingale, this fact follows from an application of Fatou's Lemma, and any local martingale which is bounded below can be shifted to be non-negative, whence the claim follows.)

Fix $t>0$ and $s \in (0,t]$. By the semigroup property,
\begin{align*}
M^t_s(x) &= \int_{(0,s] \times \R^d} P_{t-s}p_{s-u}(x-y) Y(u,y)^\gamma L(du,dy)
\\ &= \int_{(0,s] \times \R^d} \bigg(  \int p_{t-s}(x-z) p_{s-u}(z-y)dz \bigg) Y(u,y)^\gamma L(du,dy).
\end{align*}
The conditions of Lemma~\ref{lemma_fubini2} are satisfied by $\phi(u,y,z) = p_{s-u}(z-y)$ and $\mu(dz) = p_{t-s}(x-z)dz$. Applying that result, we obtain
\begin{align*}
M^t_s(x) &= \int p_{t-s}(x-z) M^s_s(z) dz = (P_{t-s}M^s_s)(x).
\end{align*}
Hence by \eqref{e_lemma_mm_mart0} (at time $s$),
\begin{align*}
M^t_s(x) &=  (P_{t-s} \bar{Y}(s,\cdot))(x) - P_{t-s} P_sY_0(x)
\\ &=  P_{t-s}Y_s(x)- P_tY_0(x) 
\\ &\geq - P_tY_0(x)
\end{align*}
almost surely. The second line uses Theorem~\ref{thm_stochinteg}(a) to assert that $\bar{Y}(s,y) dy = Y_s(dy)$ a.s. Let $(s_n)_{n \in \N}$ be a countable and dense subset of $(0,t]$. Then with probability one, 
\begin{equation*}
M^t_{s_n}(x) \geq - P_tY_0(x) \quad \text{ for all $n\in \N$.}
\end{equation*}
Since the process $(M^t_{s})_{s \in [0,t]}$ is c\`adl\`ag, to prove that it is bounded below it suffices to show that it is bounded below on a dense subset. This is exactly what we have shown, and the proof is complete.
\end{proof}

\appendix
\section{Appendix: proofs of properties of the stochastic integral}
In this appendix, we provide a few more details on the construction of the stable stochastic integral and give proofs for some results in Section~\ref{s_stochcalc}. For convenience, we follow the construction of Balan \cite{B2013}, which is particular to the stable case, rather than a more general construction, e.g. that in \cite{Bichteler}.  

As is usual, one first defines the stochastic integral for elementary processes, that is processes $\phi$ of the form
\begin{equation} \label{e_elem_function}
\phi(\omega,s,x) = \xi(\omega)1_{(t_1,t_2]}(s) 1_A(x).
\end{equation}
where $0 \leq t_1 < t_2<\infty$, $\xi$ is an $\cF_{t_1}$-measurable random variable, and $A \in \underline{\cB}(\R^d)$, the collection of Borel sets with finite Lebesgue measure. The stochastic integral of $\phi$ with respect to $L$ is defined to be
\begin{equation*} 
(\phi \cdot L)_t := \xi (L_{t \wedge t_2}(A) - L_{t \wedge t_1}(A)).
\end{equation*}
A process is simple if it can be expressed as a finite sum of elementary processes. Starting with the elementary processes, it is straightforward to define the stochastic integral for all simple processes. One then defines the stochastic integral for more general processes by approximation. In particular, the bounded simple processes are dense in $\LL^{\alpha}(\LL^{\alpha,t})$. One then extends the integral to integrands in $\LL^\alpha(\LL^{\alpha,t})$ by way of the upper bound from Proposition~\ref{prop_isometryint}. This bound was proved in \cite{B2013} as Theorem 13 and Lemma 14, and allows the integral to be extended to $\LL^\alpha(\LL^{\alpha,t})$. It is first proved for all simple integrands, and implies that the stochastic integrals associated to a $\LL^{\alpha}(\LL^{\alpha,t})$-convergent sequence of simple integrands is Cauchy with respect to the quasi-norm on processes given by $\sup_{\lambda > 0 }\lambda^\alpha \bP(\sup_{s \in [0,t]} |X_s| > \lambda)$. The stochastic integral of $\phi \in \LL^\alpha(\LL^{\alpha,t})$ is then defined as the limit of the integrals of the approximating sequence, and the upper bound in Proposition~\ref{prop_isometryint} is subsequently shown to hold for all $\phi \in \LL^\alpha(\LL^{\alpha,t})$. 

As in \cite{RW1986}, the definition of the stochastic integral can be extended from $\LL^\alpha(\LL^{\alpha,t})$ to $\LL^{\alpha,t}_{\text{a.s.}}$ by truncating along a sequence of stopping times. Let $\phi \in \LL^{\alpha,t}_{\text{a.s.}}$. For each $k\in \N$ we define the stopping time
\begin{equation} \label{def_tau_k}
\tau_k(\phi) := \inf \{s> 0 : T_\phi(s) > k\}.
\end{equation}
Our convention is that $\inf \emptyset = \infty$. By definition of $\LL^{\alpha,t}_{\text{a.s.}}$, it follows that $\lim_{k \to \infty} \bP(\tau_k(\phi) \leq t) = 0$. We then define
\begin{equation*} 
\phi^{(k)}(s,x) := \phi(s,x) 1_{\{s \leq \tau_k(\phi)\}}.
\end{equation*}  
Then $T_{\phi^{(k)}}(t) = T_{\phi}(t \wedge \tau_k(\phi)) \leq k$, and hence $\phi^{(k)} \in \LL^{\alpha}(\LL^{\alpha,t})$ and we may define the stochastic integral $(\phi^{(k)} \cdot L)$ on $[0,t]$. For $s \leq \tau_k(\phi)$, we define $(\phi \cdot L)_s := (\phi^{(k)} \cdot L)_s$. Thus we can define the stochastic integral $(\phi \cdot L)$ on $[0,t]$ on the event $\{\tau_k(\phi) >t\}$, which allows us to give an almost sure construction because $\bP(\cup_{k=1}^\infty \{\tau_k(\phi) > t\}) = 1$.

We now prove the results from Section~\ref{s_stablecalc} whose proofs we postponed. In order, they are the lower bound in Proposition~\ref{prop_isometryint}, Lemma~\ref{lemma_bdg}, Proposition~\ref{prop_integral_rep}, and Lemma~\ref{lemma_dctbdg}. We recall the notation introduced in Section~\ref{s_dimone} that, for a stochastic process $(X_s)_{s \in [0,t]}$, $|X_t|^* :=\sup_{s \in [0,t]} |X_s|$. To prove the lower bound in Proposition~\ref{prop_isometryint}, we first prove that it holds for simple functions. 

\begin{lemma} \label{lemma_lwr_iso}
There is a constant $c_\alpha > 0$ such that for any $t>0$ and simple $\phi \in \LL^\alpha(\LL^{\alpha,t})$,
\begin{equation} \label{iso_lwr_bd}
\sup_{\lambda > 0} \lambda^\alpha \bP(|(\phi \cdot L)_t|^* > \lambda) \geq c_\alpha \E(T_\phi(t)).
\end{equation}
\end{lemma}
\begin{proof}
For this proof, we realize the stable martingale measure $L(ds,dx)$ as a compensated Poisson random measure as in \eqref{e_Poisson_integral}. In particular, the jumps of $L(ds,dx)$ are the points $(s,x,r)$ of a Poisson random measure $N(ds,dx,dr)$ on $\R_+ \times \R^d \times \R_+$ with compensator $\hat{N}(ds,dx,dr) =  ds dx \sigma_\alpha  r^{-1-\alpha}dr$, where $\sigma_\alpha = \alpha(\alpha-1)/\Gamma(2-\alpha)$.

Let $\phi \in \LL^\alpha(\LL^{\alpha,t})$ be simple. Without loss of generality we can write
\begin{equation*}
\phi(s,x) = \sum_{i = 1}^n \xi_{i} 1_{A_{i}}(x) 1_{(t_{i},t_{i+1}]}(s),
\end{equation*}
with $0 \leq t_1 < t_2 < \dots < t_{n+1} \leq t$, and $A_i \in \underline{\cB}$ and $\xi_i \in \cF_{t_i}$ for all $i = 1,\dots,n$. (With such a representation, some of the $\xi_i$ may be identically zero, but this is not an issue.) We further define $T_i = |\xi_i|^\alpha |A_i| (t_{i+1} - t_i)$ for $i = 1,\dots,n$, and remark that $\sum_{i=1}^n T_i = T_\phi(t)$.

We define the process of jumps $(\Delta (\phi \cdot L)_s)_{s \in (0,t]}$ given by $\Delta (\phi \cdot L)_s = (\phi\cdot L)_s - (\phi\cdot L)_{s-}$, and we define $\Delta^* = \max_{s \leq t} |\Delta (\phi \cdot L)_s |$ to be the largest jump which occurs by time $t$. (There a.s. exists a largest jump, so the supremum of the jump sizes is achieved and hence is a maximum.) Let $\lambda>0$. We observe that if $(\phi \cdot L)_s$ has a jump of absolute value greater than $2\lambda$ by time $t$, then $|(\phi \cdot L)_t|^* > \lambda$. In particular,
\begin{equation*}
\bP( |(\phi\cdot L)_t|^* > \lambda) \geq \bP (\Delta^* > 2\lambda).
\end{equation*}
We will show that 
\begin{equation} \label{e_simpleiso_suff}
\lim_{\lambda \to \infty} \lambda^\alpha \bP (\Delta^* > 2\lambda) = c_\alpha \E(T_\phi(t)),
\end{equation}
for some $c_\alpha>0$. By the previous inequality, this implies that $\liminf_{\lambda \to \infty} \lambda^\alpha \bP( |(\phi\cdot L)_t|^* > \lambda) \geq c_\alpha \E(T_\phi(t))$, which implies the desired result.

For $i=1,\dots,n$, let $\Delta_i^* = \max_{s \in (t_i,t_{i+1}]} |\Delta (\phi \cdot L)_s|$. We remark that, conditioned on $\cF_{t_i}$, $\Delta_i^* >2\lambda$ if and only if there is a point $(s,x,r)$ in $N$ such that $s \in (t_i,t_{i+1}]$, $x \in A_i$, and $r \geq 2\lambda / |\xi_i|$. If $\xi_i \neq 0$, the intensity of such points is 
\begin{equation*}
\int_{t_i}^{t_{i+1}} \int_{A_i} \int_{2\lambda / |\xi_i|}^\infty \hat{N}(ds,dx,dr) = \frac{\sigma_\alpha}{\alpha} |A_i|(t_{i+1} - t_i) \bigg( \frac{2\lambda}{|\xi_i|} \bigg)^{-\alpha} =  c_\alpha \lambda^{-\alpha} T_i,
\end{equation*}
where $c_\alpha = 2^{-\alpha} \sigma_\alpha/\alpha$, and the intensity is $0$ if $\xi_i = 0$, which also equals $c_\alpha \lambda^{-\alpha} T_i$ in this case, because $T_i = 0$. Hence, the conditional probability (given $\cF_{t_i}$) that there is at least one such point equals $1-e^{-c_\alpha\lambda^{-\alpha}T_i}$. In particular,
\begin{align} \label{e_simpleiso_pois}
\bP(\Delta_i^* > 2\lambda \, | \, \cF_{t_i}) = 1 - e^{-c_\alpha \lambda^{-\alpha} T_i}.
\end{align}
Similarly to as in the proof of \cite[Theorem~3.1]{RW1986}, we expand $\bP(\Delta^* > 2\lambda)$ as follows:
\begin{align*}
\bP (\Delta^* > 2\lambda)  & = \sum_{i=1}^n \bP( \Delta_1^* \leq 2\lambda,\dots, \Delta_{i-1}^* \leq 2\lambda, \Delta_i^* > 2\lambda) 
\\& = \sum_{i=1}^n \E( 1_{\{\Delta_1^* \leq 2\lambda,\dots, \Delta_{i-1}^* \leq 2\lambda \}} \bP( \Delta_i^* > 2\lambda \,|\,  \cF_{t_i})) 
\\ &= \sum_{i=1}^n \E( 1_{\{\Delta_1^* \leq 2\lambda,\dots, \Delta_{i-1}^* \leq 2\lambda \}} (1 - e^{-c_\alpha \lambda^{-\alpha} T_i})). 
\end{align*}
The second line uses the fact that $\Delta_j \in \cF_{t_i}$ for $j < i$, and the last line uses \eqref{e_simpleiso_pois}. Since $\Delta_j^*$ is a.s. finite for each $j$, $1_{\{\Delta_1^* \leq 2\lambda,\dots, \Delta_{i-1}^* \leq 2\lambda \}}$ increases to $1$ a.s. as $\lambda \to \infty$ for each $i = 1,\dots,n$. Furthermore, elementary reasoning shows that $\lambda^\alpha(1 - e^{-c_\alpha \lambda^{-\alpha} T_i})$ increases to $c_\alpha T_i$ as $\lambda \to \infty$. In particular, the random variable $\lambda^\alpha 1_{\{\Delta_1^* \leq 2\lambda,\dots, \Delta_{i-1}^* \leq 2\lambda \}} (1 - e^{-c_\alpha \lambda^{-\alpha} T_i})$ converges increasingly to $c_\alpha T_i$ a.s. as $\lambda \to \infty$. Hence, multiplying the previous equation by $\lambda^\alpha$ and applying monotone convergence, we deduce that
\begin{align*}
\lim_{\lambda \to \infty} \lambda^\alpha \bP (\Delta^* > 2\lambda) &= \sum_{i=1}^n \E(\lim_{\lambda \to \infty} \lambda^\alpha 1_{\{\Delta_1^* \leq 2\lambda,\dots, \Delta_{i-1}^* \leq 2\lambda \}} (1 - e^{-c_\alpha \lambda^{-\alpha} T_i})) 
\\&= \sum_{i=1}^n \E(c_\alpha T_i) \\ &= c_\alpha \E(T_\phi(t)).
\end{align*}
This establishes \eqref{e_simpleiso_suff}, and thus we have proved the result. \end{proof}

The result for simple integrands implies the following corollary, which gives the promised lower bound in Proposition~\ref{prop_isometryint}.

\begin{corollary} For any $t>0$, \eqref{iso_lwr_bd} holds for all $\phi \in \LL^\alpha(\LL^{\alpha,t})$.
\end{corollary}
\begin{proof}
Let $\phi \in \LL^\alpha(\LL^{\alpha,t})$. By density of the simple processes in $\LL^\alpha(\LL^{\alpha,t})$, there exists a sequence of simple processes $(\phi_n)_{n \in \N}$ such that $\lim_{n \to \infty} \phi_n = \phi$ in $\LL^\alpha(\LL^{\alpha,t})$. By the upper bound in \eqref{e_prop_isometry}, $(\phi_n \cdot L)$ converges to $(\phi \cdot L)$ with respect to the quasi-norm $\|X\| = \sup_{\lambda > 0} \lambda^\alpha \bP(|X_t|^* > \lambda)$. By Lemma~\ref{lemma_lwr_iso}, for all $n \in \N$ we have
\[c_\alpha \E(T_{\phi_n}(t)) \leq \sup_{\lambda >0} \lambda^\alpha \bP(|(\phi_n\cdot L)_t|^* > \lambda).\]
We take $n \to \infty$ on both sides. Since $\phi_n$ and $(\phi_n \cdot L)$ converge respectively to $\phi$ and $(\phi \cdot L)$ with respect to the norm/quasi-norm in the above, we may exchange limit and expectation on both sides, and we obtain \eqref{iso_lwr_bd} for $\phi$.
\end{proof}

Next, we prove Lemma~\ref{lemma_bdg} as an easy consequence of (the upper bound from) Proposition~\ref{prop_isometryint}.

\begin{proof}[Proof of Lemma~\ref{lemma_bdg}]
It suffices to prove the result for $\phi \in \LL^\alpha(\LL^{\alpha,t})$, since if $\phi \not \in \LL^\alpha(\LL^{\alpha,t})$ the right hand side of \eqref{e_lemma_bdg} is infinite, so the inequality is trivial. Let $\phi \in \LL^\alpha(\LL^{\alpha,t})$ and $q \in [1,\alpha)$. For simplicity, we denote $Z = |(\phi \cdot L)_t|^*$, so that the upper bound in Proposition~\ref{prop_isometryint} gives
\[ \sup_{\lambda >0 } \lambda^\alpha \bP(Z \geq \lambda) \leq C_\alpha \E(T_\phi(t)).\]
Using the layer cake representation for the $q$th moment of $Z$, we obtain
\begin{align*}
\E(Z^q) &= \int_0^\infty \bP (Z^q \geq \lambda) d\lambda 
\\ & \leq \E(T_\phi(t))^{q/\alpha} + \int_{\E(T_\phi(t))^{q/\alpha}}^\infty \bP (Z \geq \lambda^{1/q}) d\lambda 
\\ &\leq \E(T_\phi(t))^{q/\alpha} + C_\alpha \E(T_\phi(t))\int_{\E(T_\phi(t))^{q/\alpha}}^\infty \lambda^{-\alpha/q} d\lambda
\\ &= \E(T_\phi(t))^{q/\alpha} + C_\alpha \frac{q}{\alpha - q} \,\E(T_\phi(t))^{1 + q/\alpha(1-\alpha/q)}
\\ &= \left(1+C_\alpha\frac{q}{\alpha - q}\right) \E(T_\phi(t))^{q/\alpha}.
\end{align*}
This completes the proof.
\end{proof}

We continue with the proof of Proposition~\ref{prop_integral_rep}, the representation of stochastic integrals with respect to $L$ as time-changed stable processes. For a proof using approximation by simple functions in the non-spatial setting, see \cite[Theorem 3.1]{RW1986}. Our proof with It\^{o}'s lemma is more along the lines of the proof of \cite[Lemma~2.15]{FMW2010}.

\begin{proof}[Proof of Proposition~\ref{prop_integral_rep}]
We have defined the one-sided $\alpha$-stable process and noise in terms of their Laplace transforms, but for this proof it is more convenient to use the Fourier transform. For a spectrally positive $\alpha$-stable process $(W_t)_{t \geq 0}$, for $\lambda \in \R$ we have
\begin{equation} \label{e_stablechar}
\log \E(e^{i \lambda W_t}) = -t \Psi(\lambda),
\end{equation}
where 
\begin{equation} \label{e_stableexpfn}
\Psi(\lambda) = |\lambda|^\alpha \left(1 - i \text{sgn}(\lambda) \tan\left(\frac{\pi \alpha}{2}\right)\right).
\end{equation}
(We again refer to Chapter VIII of Bertoin \cite{Bertoin96}.) Let $\phi \in \LL^{\alpha,t}_{\text{a.s.}}$ with $\phi \geq 0$. For $s \in [0,t]$, $\lambda \in \R$, we define
\begin{equation*}
M_s^\lambda = \exp(i \lambda (\phi \cdot L)_s + \Psi(\lambda) T_\phi(s)).
\end{equation*}
Suppose that we have constructed the stable noise $L$ via a compensated Poisson random measure $\tilde{N}(ds,dx,dr)$ as in \eqref{e_Poisson_integral}. The compensator is $\hat{N}(ds,dx,dr) = ds dx \nu(dr)$, with $\nu(dr)$ as in \eqref{def_jumpmeasure}. Then by It\^{o}'s lemma (see e.g. \cite[Theorem~II.5.1]{IK}),
\begin{align} \label{e_tchange_ito}
M_s^\lambda - 1 &= \int_0^s \int_{\R^d} \int_0^\infty  M_{u-}^\lambda \left[ e^{i \lambda \phi(u,x) r} - 1 \right] \tilde{N}(du,dx,dr) \notag
\\ & \hspace{0.4 cm} + \int_0^s \int_{\R^d} \int_0^\infty M^\lambda_u \left[ e^{i\lambda \phi(u,x)r} - 1 - i \lambda \phi(u,x) r \right] du dx \nu(dr) \notag
\\ & \hspace{0.4 cm} + \Psi(\lambda) \int_0^s M_u^\lambda  \bigg( \int_{\R^d} \phi(u,x)^\alpha dx \bigg) du. 
\end{align}
Since $\nu$ is the jump measure of the $\alpha$-stable process, we have
\begin{equation*}
\int_0^\infty \left[ e^{i\lambda \phi(u,x)r} - 1 - i \lambda  \phi(u,x) r \right] \nu(dr) = -\Psi (\lambda \phi(u,x)) = -\phi(u,x)^\alpha \Psi(\lambda),
\end{equation*}
where the second inequality follows from \eqref{e_stableexpfn} and the fact that $\phi(u,x) \geq 0$. In particular, the second and third terms on the right-hand side of \eqref{e_tchange_ito} cancel each other out and we are left with a stochastic integral of a complex integrand with respect to a compensated Poisson point measure. Thus, $(M_s(\phi) : s \in [0,t])$ is a complex local martingale. The result now follows, essentially, by applying optional stopping at the first passage time of $T_\phi(s)$ at level $u$ for all $u>0$. However, to do so we first extend the process because of the possibility that $T_\phi(t) < u$.

Let $(\hat{W}_s)_{s \geq 0}$ be an independent spectrally positive $\alpha$-stable process which we use to extend $(\phi \cdot L)$ as follows: we define a process $X_s$ by
\[X_s = (\phi \cdot L)_{s \wedge t} + \hat{W}_{(s-t) \vee 0},\] 
so that $X_s = (\phi \cdot L)_s$ for $s \leq t$ and is continued by an independent stable process afterwards. For $\lambda \in \R$, for all $s>0$ we define
\[\hat{M}^\lambda_s = M_{s \wedge t}^\lambda \cdot \exp(i\lambda \hat{W}_{(s-t)\vee  0} + \Psi(\lambda)((s-t)\vee  0)),\]
which, by \eqref{e_stablechar} and the previous argument concerning $M^\lambda_s$, is a complex local martingale. Next, for $s > 0$ we define
\begin{equation*}
\hat{T}_\phi(s) =\begin{cases} T_\phi(s) &\text{ if } s \leq t,
\\T_\phi(t) + (s-t) & \text{ if } s > t. \end{cases}
\end{equation*}
Since $\phi \in \LL^{\alpha,t}_{\text{a.s.}}$, $T_\phi(t) < \infty$ a.s. and so the above is well-defined. Moreover, we remark that
\begin{equation} \label{eq:martmod}
\hat{M}^\lambda_s = \exp(i\lambda X_s + \Psi(\lambda) \hat{T}_\phi(s)).
\end{equation} 
Finally, for $u\geq 0$ we define the stopping times $\tau(u) = \inf\{ s > 0 : \hat{T}_\phi(s) > u\}$. Its definition implies that $\tau(u) \leq t+u$, and in particular $\tau(u)$ is bounded. We remark that $|\hat{M}_{s \wedge \tau(u)}^\lambda| \leq \exp(|\lambda|^\alpha u)$ for all $s>0$, and hence $M^\lambda_{s \wedge \tau(u)}$ is a bounded complex martingale. Since $\tau(u)$ is a bounded stopping time, we conclude from optional stopping that $\E(\hat{M}^\lambda_{\tau(u)}) = 1$ for all $\lambda \in \R$. In particular, by \eqref{eq:martmod}, for all $\lambda$ we have
\begin{align*}
\log \E(\exp(i\lambda X_{\tau(u)})) = -\Psi(\lambda)u.
\end{align*}
In the above, we have used the fact that $\hat{T}_\phi(\tau(u)) = u$. Hence $(X_{\tau(u)})_{u \geq 0}$ is a spectrally positive $\alpha$-stable process. We note that by definition, $X_{\tau(u)} = (\phi \cdot L)_{\tau(u)}$ for all $u$ such that $\tau(u) \leq t$. This is the same as $u \leq T_\phi(t)$, and it follows that $((\phi \cdot L)_{\tau(u)})_{u \in [0,T_\phi(t)]}$ is a stable process run until time $T_\phi(t)$. Since $\tau$, when restricted to $[0,T_\phi(t)]$, is just the right continuous inverse of $T_\phi(s)$, changing time again by $T_\phi$ proves the result. \end{proof}

Finally, we give the proof of the dominated convergence theorem for stochastic integrals, which is elementary. For a similar result with more general noises but stated in terms of a different norm, see \cite[Lemma A.1]{CDH2019}.

\begin{proof}[Proof of Lemma~\ref{lemma_dctbdg}]
Let $(\phi_n)_{n \in \N}$ and $\phi$ be as in the statement, and let $\tau_k(\phi)$ be as in \eqref{def_tau_k}. By Lemma~\ref{lemma_bdg},
\begin{align} \label{e_dctbdg1}
\E\bigg(\sup_{s \in [0,t \wedge \tau_k(\phi)]} |(\phi_n \cdot L)_s| \bigg) \leq C_1 \E (T_{\phi_n}(t \wedge \tau_k(\phi)))^{1/\alpha}.
\end{align}
Since $\phi_n \to 0$ point-wise and $|\phi_n|^\alpha \leq \phi^\alpha$, which is integrable on $[0,t \wedge \tau_k(\phi)] \times \R^d$ almost surely, $T_{\phi_{n}}(t \wedge \tau_k(\phi))$ vanishes almost surely as $n \to \infty$ for all $k \in \N$ by dominated convergence. Next, for each $n \in \N$, $T_{\phi_n}(t \wedge \tau_k(\phi)) \leq T_\phi(t \wedge \tau_k(\phi)) \leq k$. Thus $T_{\phi_n}(t \wedge \tau_k(\phi))$ has an integrable upper bound, uniformly in $n$, for each $k$. Since it converges to $0$ a.s., dominated convergence implies that $\E(T_{\phi_n}(t \wedge \tau_k(\phi))) \to 0$ as $n \to \infty$ for each $k \in \N$. In particular, from \eqref{e_dctbdg1} this implies that
\begin{equation} \label{e_dctbdg2}
\sup_{s \in [0,t \wedge \tau_k(\phi)]} |(\phi_n \cdot L)_s| \to 0 
\end{equation}
in probability for each $k \in \N$. 

Let $\delta, \eps > 0$.  We have $\lim_{k \to \infty} \bP(\tau_k(\phi) < t) = 0$, so in particular there exists $k = k(\delta) $ depending $\delta$ such that $\bP(\tau_k(\phi) < t) < \delta / 2$. By \eqref{e_dctbdg2}, there exists $N \in \N$ depending on $k(\delta)$ such that for $n \geq N$,
\[  \bP \bigg(\sup_{s \in [0,t \wedge \tau_k(\phi)]} |(\phi_n \cdot L)_s| > \eps \bigg) < \delta / 2.\]
Combining these estimates, we obtain that
\begin{align*}
\bP\bigg(\sup_{s \in [0,t]} |(\phi_n \cdot L)_s| > \eps \bigg) &\leq \bP(\tau_k(\phi) \leq t)  + \bP\bigg(\sup_{s \in [0,t \wedge \tau_k(\phi)]} |(\phi_n \cdot L)_s| > \eps\bigg)
\\&\leq \delta / 2 + \delta / 2 =\delta.
\end{align*}
This completes the proof.
\end{proof}

\providecommand{\bysame}{\leavevmode\hbox to3em{\hrulefill}\thinspace}
\providecommand{\MR}{\relax\ifhmode\unskip\space\fi MR }
% \MRhref is called by the amsart/book/proc definition of \MR.
\providecommand{\MRhref}[2]{%
  \href{http://www.ams.org/mathscinet-getitem?mr=#1}{#2}
}
\providecommand{\href}[2]{#2}

\vspace{0.5 cm}

%%%%%%%%%%%%%%%%%%%%%%%%%%%%%%%%%%%%%%%%%%%%%%%%%%%%%%%%%%%%%%%%%%%
%%                                                               %%
%% You may add acknowledgments (optional).                       %%
%%                                                               %%
%%%%%%%%%%%%%%%%%%%%%%%%%%%%%%%%%%%%%%%%%%%%%%%%%%%%%%%%%%%%%%%%%%%
\noindent \textbf{Acknowledgements.} The author thanks Ed Perkins, Raluca Balan, and Carsten Chong for useful discussions and comments, and two anonymous referees for their thorough reports and helpful suggestions. This work was partially completed while the author was supported by an NSERC Postdoctoral Fellowship which was held at McGill University.

%%%%%%%%%%%%%%%%%%%%%%%%%%%%%%%%%%%%%%%%%%%%%%%%%%%%%%%%%%%%%%%%%%%
%%                                                               %%
%% You have reached the end of your document.                    %%
%%                                                               %%
%%%%%%%%%%%%%%%%%%%%%%%%%%%%%%%%%%%%%%%%%%%%%%%%%%%%%%%%%%%%%%%%%%%


\begin{thebibliography}{10}

\bibitem{B2013}
R.~Balan, \emph{S{PDE}s with {$\alpha$}-stable {L}\'evy noise: a random field
  approach}, Int. J. Stoch. Anal. (2014), Art. ID 793275, 22. \MR{3166748}

\bibitem{BCL2023}
Q.~Berger, C.~Chong, and H.~Lacoin, \emph{The stochastic heat equation with
  multiplicative {L}\'evy noise: existence, moments, and intermittency}, Comm.
  Math. Phys. \textbf{402} (2023), no.~3, 2215--2299. \MR{4630474}

\bibitem{Bertoin96}
J.~Bertoin, \emph{L\'evy processes}, Cambridge Tracts in Mathematics, vol. 121,
  Cambridge University Press, Cambridge, 1996. \MR{1406564}

\bibitem{Bichteler}
K.~Bichteler, \emph{Stochastic integration with jumps}, Encyclopedia of
  Mathematics and its Applications, vol.~89, Cambridge University Press,
  Cambridge, 2002. \MR{1906715}

\bibitem{BJ}
K.~Bichteler and J.~Jacod, \emph{Random measures and stochastic integration},
  Theory and application of random fields ({B}angalore, 1982), Lect. Notes
  Control Inf. Sci., vol.~49, Springer, Berlin, 1983, pp.~1--18. \MR{799929}

\bibitem{C2017b}
C.~Chong, \emph{L\'evy-driven {V}olterra equations in space and time}, J.
  Theoret. Probab. \textbf{30} (2017), no.~3, 1014--1058. \MR{3687248}

\bibitem{C2017}
\bysame, \emph{Stochastic {PDE}s with heavy-tailed noise}, Stochastic Process.
  Appl. \textbf{127} (2017), no.~7, 2262--2280. \MR{3652413}

\bibitem{CDH2019}
C.~Chong, R.C. Dalang, and T.~Humeau, \emph{Path properties of the solution to
  the stochastic heat equation with {L}\'evy noise}, Stoch. Partial Differ.
  Equ. Anal. Comput. \textbf{7} (2019), no.~1, 123--168. \MR{3916265}

\bibitem{CK2019}
C.~Chong and P.~Kevei, \emph{Intermittency for the stochastic heat equation
  with {L}\'evy noise}, Ann. Probab. \textbf{47} (2019), no.~4, 1911--1948.
  \MR{3980911}

\bibitem{CK2015}
C.~Chong and C.~Kl\"uppelberg, \emph{Integrability conditions for space-time
  stochastic integrals: theory and applications}, Bernoulli \textbf{21} (2015),
  no.~4, 2190--2216. \MR{3378464}

\bibitem{Dawson}
D.A. Dawson, \emph{Measure-valued {M}arkov processes}, \'Ecole d'\'Et\'e{} de
  {P}robabilit\'es de {S}aint-{F}lour {XXI}---1991, Lecture Notes in Math.,
  vol. 1541, Springer, Berlin, 1993, pp.~1--260. \MR{1242575}

\bibitem{FMW2010}
K.~Fleischmann, L.~Mytnik, and V.~Wachtel, \emph{Optimal local {H}\"older index
  for density states of superprocesses with {$(1+\beta)$}-branching mechanism},
  Ann. Probab. \textbf{38} (2010), no.~3, 1180--1220. \MR{2674997}

\bibitem{Folland}
G.B. Folland, \emph{Real analysis}, second ed., Pure and Applied Mathematics
  (New York), John Wiley \& Sons, Inc., New York, 1999, Modern techniques and
  their applications, A Wiley-Interscience Publication. \MR{1681462}

\bibitem{GM1983}
E.~Gin\'e and M.B. Marcus, \emph{The central limit theorem for stochastic
  integrals with respect to {L}\'evy processes}, Ann. Probab. \textbf{11}
  (1983), no.~1, 58--77. \MR{682801}

\bibitem{HKY2023}
B.~Han, K.~Kim, and J.~Yi, \emph{The compact support property for solutions to
  the stochastic partial differential equations with colored noise}, SIAM J.
  Math. Anal. \textbf{55} (2023), no.~6, 7665--7703. \MR{4665042}

\bibitem{IK}
N.~Ikeda and S.~Watanabe, \emph{Stochastic differential equations and diffusion
  processes}, second ed., North-Holland Mathematical Library, vol.~24,
  North-Holland Publishing Co., Amsterdam; Kodansha, Ltd., Tokyo, 1989.
  \MR{1011252}

\bibitem{I1988}
I.~Iscoe, \emph{On the supports of measure-valued critical branching {B}rownian
  motion}, Ann. Probab. \textbf{16} (1988), no.~1, 200--221. \MR{920265}

\bibitem{JS}
J.~Jacod and A.N. Shiryaev, \emph{Limit theorems for stochastic processes},
  second ed., Grundlehren der mathematischen Wissenschaften [Fundamental
  Principles of Mathematical Sciences], vol. 288, Springer-Verlag, Berlin,
  2003. \MR{1943877}

\bibitem{K1992}
O.~Kallenberg, \emph{Some time change representations of stable integrals, via
  predictable transformations of local martingales}, Stochastic Process. Appl.
  \textbf{40} (1992), no.~2, 199--223. \MR{1158024}

\bibitem{KS1988}
N.~Konno and T.~Shiga, \emph{Stochastic partial differential equations for some
  measure-valued diffusions}, Probab. Theory Related Fields \textbf{79} (1988),
  no.~2, 201--225. \MR{958288}

\bibitem{Krylov1997}
N.V. Krylov, \emph{On a result of {C}. {M}ueller and {E}. {P}erkins}, Probab.
  Theory Related Fields \textbf{108} (1997), no.~4, 543--557. \MR{1465641}

\bibitem{M2021}
S.~Maitra, \emph{Weak uniqueness for the stochastic heat equation driven by a
  multiplicative stable noise}, arXiv:2111.07293.

\bibitem{MF2014}
G.R. Moreno~Flores, \emph{On the (strict) positivity of solutions of the
  stochastic heat equation}, Ann. Probab. \textbf{42} (2014), no.~4,
  1635--1643. \MR{3262487}

\bibitem{M1991}
C.~Mueller, \emph{On the support of solutions to the heat equation with noise},
  Stochastics Stochastics Rep. \textbf{37} (1991), no.~4, 225--245.
  \MR{1149348}

\bibitem{M1998}
\bysame, \emph{The heat equation with {L}\'evy noise}, Stochastic Process.
  Appl. \textbf{74} (1998), no.~1, 67--82. \MR{1624088}

\bibitem{MP1992}
C.~Mueller and E.A. Perkins, \emph{The compact support property for solutions
  to the heat equation with noise}, Probab. Theory Related Fields \textbf{93}
  (1992), no.~3, 325--358. \MR{1180704}

\bibitem{M2002}
L.~Mytnik, \emph{Stochastic partial differential equation driven by stable
  noise}, Probab. Theory Related Fields \textbf{123} (2002), no.~2, 157--201.
  \MR{1900321}

\bibitem{P2002}
E.A. Perkins, \emph{Dawson-{W}atanabe superprocesses and measure-valued
  diffusions}, Lectures on probability theory and statistics ({S}aint-{F}lour,
  1999), Lecture Notes in Math., vol. 1781, Springer, Berlin, 2002,
  pp.~125--324. \MR{1915445}

\bibitem{PZbook}
S.~Peszat and J.~Zabczyk, \emph{Stochastic partial differential equations with
  {L}\'evy noise}, Encyclopedia of Mathematics and its Applications, vol. 113,
  Cambridge University Press, Cambridge, 2007, An evolution equation approach.
  \MR{2356959}

\bibitem{Port1970}
S.C. Port, \emph{The exit distribution of an interval for completely asymmetric
  stable processes}, Ann. Math. Statist. \textbf{41} (1970), 39--43.
  \MR{253421}

\bibitem{R1989}
M.~Reimers, \emph{One-dimensional stochastic partial differential equations and
  the branching measure diffusion}, Probab. Theory Related Fields \textbf{81}
  (1989), no.~3, 319--340. \MR{983088}

\bibitem{RevuzYor}
D.~Revuz and M.~Yor, \emph{Continuous martingales and {B}rownian motion}, third
  ed., Grundlehren der mathematischen Wissenschaften [Fundamental Principles of
  Mathematical Sciences], vol. 293, Springer-Verlag, Berlin, 1999. \MR{1725357}

\bibitem{RW1986}
J.~Rosi\'nski and W.A. Woyczy\'nski, \emph{On {I}t\^o{} stochastic integration
  with respect to {$p$}-stable motion: inner clock, integrability of sample
  paths, double and multiple integrals}, Ann. Probab. \textbf{14} (1986),
  no.~1, 271--286. \MR{815970}

\bibitem{S1994}
T.~Shiga, \emph{Two contrasting properties of solutions for one-dimensional
  stochastic partial differential equations}, Canad. J. Math. \textbf{46}
  (1994), no.~2, 415--437. \MR{1271224}

\bibitem{Walsh}
J.B. Walsh, \emph{An introduction to stochastic partial differential
  equations}, \'Ecole d'\'et\'e{} de probabilit\'es de {S}aint-{F}lour,
  {XIV}---1984, Lecture Notes in Math., vol. 1180, Springer, Berlin, 1986,
  pp.~265--439. \MR{876085}

\bibitem{YZ2017}
X.~Yang and X.~Zhou, \emph{Pathwise uniqueness for an {SPDE} with {H}\"older
  continuous coefficient driven by {$\alpha$}-stable noise}, Electron. J.
  Probab. \textbf{22} (2017), Paper No. 4, 48. \MR{3613697}

\end{thebibliography}
\end{document}